\documentclass[11pt]{article}
\usepackage{amsmath,amssymb,amsthm,float}
\usepackage{latexsym, yfonts}
\usepackage{graphicx,color}
\usepackage[all]{xy}
\usepackage{longtable}
\usepackage{enumitem}
\usepackage{tikz}
\usepackage{array}
\usepackage{calc}
\usepackage{adjustbox}
\usepackage[bf,width=.9\textwidth,hang]{caption}
\usepackage[text={7in,9in},centering]{geometry}
\usepackage[title]{appendix}
\usepackage[parfill]{parskip}
\begingroup
    \makeatletter
    \@for\theoremstyle:=definition,remark,plain\do{%
        \expandafter\g@addto@macro\csname th@\theoremstyle\endcsname{%
            \addtolength\thm@preskip\parskip
            }%
        }
\endgroup

\newtheorem{thm*}{Theorem}
\newtheorem{thm}{Theorem}[section]

\newtheorem{lem}[thm]{Lemma}
\newtheorem{cor}[thm]{Corollary}
\newtheorem{cor*}{Corollary}

\newenvironment{pf}[1]{\begin{trivlist}
\item[\hskip \labelsep {\bfseries #1}]}{\end{trivlist}}

\setlength{\tabcolsep}{1.5mm}

\title{Irreducible $A_1$ Subgroups of Exceptional Algebraic Groups}
\author{Adam R. Thomas}

\begin{document}

\maketitle

\begin{abstract}
A closed subgroup of a semisimple algebraic group is called irreducible if it lies in no proper parabolic subgroup. In this paper we classify all irreducible $A_1$ subgroups of exceptional algebraic groups $G$. Consequences are given concerning the representations of such subgroups on various $G$-modules: for example, the conjugacy classes of irreducible $A_1$ subgroups are determined by their composition factors on the adjoint module of $G$.  
\end{abstract}

\section{Introduction} \label{intro}

Let $G$ be a reductive connected algebraic group over an algebraically closed field $K$ of characteristic $p$. A subgroup $X$ of $G$ is called \emph{$G$-irreducible} (or just \emph{irreducible} if $G$ is clear from the context) if it is closed and not contained in any proper parabolic subgroup of $G$. This definition, as given by Serre in \cite{ser04}, generalises the standard notion of an irreducible subgroup of $\text{GL}(V)$. Indeed, if $G = \text{GL}(V)$, a subgroup $X$ is $G$-irreducible if and only if $X$ acts irreducibly on $V$. Similarly, the notion of complete reducibility can be generalised (see \cite{ser04}): a subgroup $X$ of $G$ is said to be \emph{$G$-completely reducible} (or \emph{$G$-cr} for short) if, whenever it is contained in a parabolic subgroup $P$ of $G$, it is contained in a Levi subgroup of $P$.      

Now let $G$ be a simple algebraic group of exceptional type. In \cite{tho1}, the author classified the simple, $G$-irreducible connected subgroups of rank at least 2. In this paper we classify the $G$-irreducible $A_1$ subgroups, completing the classification of simple, $G$-irreducible connected subgroups. We note that the $G$-irreducible $A_1$ subgroups can be deduced from \cite[Theorem~1]{law} when $p > N(A_1,G)$ (see the preceding table to Theorem \ref{A1samecomp} for the definition), in particular when $p > 7$. In low characteristics there are fewer classes of irreducible $A_1$ subgroups but the existence of non-$G$-cr subgroups complicates the proof. We also note that if $G \neq E_8$ then partial results can be found in \cite{bon}; we require a set of conjugacy class representatives without repeat for the $G$-irreducible $A_1$ subgroups for the $E_8$ case and therefore classify the irreducible $A_1$ subgroups independently. The following theorem summarises the individual cases for each exceptional algebraic group $G$; it classifies the $G$-irreducible $A_1$ subgroups of $G$. 

\begin{thm*} \label{mainthm}
Suppose $X$ is a $G$-irreducible subgroup $A_1$ of a simple exceptional algebraic group $G$. Then $X$ is conjugate to exactly one subgroup of Tables \ref{G2tab} to \ref{E8tab}, found in Sections \ref{secG2} to \ref{secE8}, respectively and each subgroup in the tables is $G$-irreducible. 
\end{thm*}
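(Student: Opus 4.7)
The plan is to prove both halves of the statement case-by-case across $G \in \{G_2, F_4, E_6, E_7, E_8\}$. Since any $G$-irreducible subgroup is automatically $G$-completely reducible, the argument is naturally organised around the connected reductive overgroups of $X$: either $X$ is maximal among connected subgroups of $G$ (handled by the classification of maximal connected subgroups of exceptional algebraic groups due to Liebeck--Seitz--Testerman), or $X$ lies in a proper $G$-irreducible connected reductive subgroup $M$ of $G$, which we may take minimal subject to being $G$-irreducible and containing $X$. The candidates for $M$ of semisimple rank at least $2$ are already classified in \cite{tho1}; the remaining possibilities for $M$ are of small rank and reduce to short explicit checks.

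For the existence direction, for each entry of each table I would exhibit the reductive overgroup $M$ through which $X$ is defined, verify that $M$ is $G$-irreducible, and then verify that $X$ is $M$-irreducible by projecting $X$ onto each simple factor $M_i$ of $M'$ and checking the projection is $M_i$-irreducible; for classical $M_i$ this amounts to the natural module being an irreducible $A_1$-representation, and for exceptional $M_i$ it is handled recursively (using $G_2$ and $F_4$ as base cases). For exhaustiveness I would run through each candidate $M$ and enumerate the $M$-irreducible $A_1$ subgroups inside $M$ as compatible tuples of irreducible rational $A_1$-modules for the simple factors of $M'$, taken modulo the action of $N_G(M)/M$ and modulo field twists. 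The ``exactly one'' assertion is then confirmed by computing the composition factors of each listed $X$ on the adjoint module $L(G)$ and showing these are pairwise distinct, which is the content of the companion Theorem~\ref{A1samecomp} cited in the introduction.

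The main obstacle will be low characteristics---precisely $p \le N(A_1,G)$, where \cite{law} does not apply. There the admissible irreducible rational $A_1$-modules are restricted by Steinberg's tensor product theorem (every tensor factor must have highest weight at most $p-1$), and new genuinely $p$-specific families of $G$-irreducible $A_1$ subgroups can appear that do not exist in large characteristic. These must be constructed explicitly via cocharacter data and $\mathfrak{sl}_2$-triple (or analogous unipotent-class) considerations, and then checked not to coincide under Frobenius twists or outer automorphisms of $M$; I also need to rule out that any irreducible $A_1$ under study is a non-$G$-cr subgroup of an intermediate reductive overgroup masquerading as a new class. Verifying both exhaustiveness and non-repetition in these low characteristics is the technically most involved part of the proof.
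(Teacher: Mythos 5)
Your overall architecture (reduce to reductive connected overgroups $M$, enumerate the $M$-irreducible $A_1$ subgroups factor by factor, recurse on exceptional factors, separate classes by composition factors on $L(G)$) matches the paper's, but your existence argument has a genuine gap at its centre. You propose to certify each tabulated $X$ as $G$-irreducible by checking that its overgroup $M$ is $G$-irreducible and that $X$ is $M$-irreducible. That implication is false: an $M$-irreducible subgroup of a reductive, maximal connected (hence $G$-irreducible) subgroup $M$ can perfectly well lie in a parabolic subgroup of $G$. Corollary \ref{nongcr} and Table \ref{cortab} exhibit explicit counterexamples --- already in $G_2$ with $p=2$ the diagonal $A_1 \hookrightarrow A_1\tilde{A}_1$ via $(1,1)$ is irreducible in the maximal subgroup $A_1\tilde{A}_1$ yet fixes a $1$-space of $V_7$ whose stabiliser is a parabolic; the maximal $A_1$ of the maximal $B_2 < E_8$ for $p=5,7$ is another. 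Establishing $G$-irreducibility is where essentially all of the work in Sections \ref{secG2}--\ref{secE8} lies: one compares the composition factors of $X$ on $L(G)$ or on a minimal module with those of every irreducible $A_1$ in every Levi subgroup (Lemma \ref{wrongcomps}, Corollary \ref{notrivs}, and the tables in Appendix \ref{app}), and for the genuinely reducible cases one must exhibit a fixed vector or an $X$-invariant unipotent subgroup, sometimes by explicit computation. Your plan contains no mechanism for this step; your closing remark about non-$G$-cr subgroups ``masquerading as a new class'' addresses only possible duplication, not the correctness of the irreducibility claim itself.

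A second, smaller issue concerns the ``exactly one'' clause. Quotienting the $M$-irreducible subgroups by $N_G(M)/M$ and by field twists controls fusion inside $N_G(M)$, but distinct maximal overgroups frequently share $G$-classes of irreducible $A_1$ subgroups (for instance the maximal $A_1$ of $G_2 < F_4$ at $p=7$ is $F_4$-conjugate to a diagonal subgroup of $A_1C_3$), and such identifications are proved in the paper via involution centralisers, triality, or Theorem \ref{A1samecomp} when $p > N(A_1,G)$ --- none of which your plan provides for. Note also that Theorem \ref{A1samecomp} is not the statement that distinct composition factors give distinct classes (that direction is trivial and is all that is needed to finish each case); it is the converse, and it is unavailable precisely in the low characteristics you identify as the hard case. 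Finally, the low-characteristic classes are not produced by cocharacter or $\mathfrak{sl}_2$-triple constructions: they arise, exactly as in large characteristic, as diagonal subgroups of products of $A_1$'s inside the reductive maximal connected subgroups, with the additional difficulty concentrated in the irreducibility verification described above.
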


The validity of Theorem \ref{mainthm} will be established by proving Theorems \ref{thmG2} to \ref{thmE8} found below, which classify the $G$-irreducible $A_1$ subgroups of $G$ when $G$ is of type $G_2$ through $E_8$. Each subgroup in Tables \ref{G2tab} to \ref{E8tab} is described by its embedding in some reductive, maximal connected subgroup, given in Theorem \ref{maximalexcep}. When we say a reductive, maximal connected subgroup we mean a subgroup that is maximal among all closed connected subgroups and is reductive. We note that the case $p=0$ can be recovered by simply removing any subgroup in the tables for which a non-zero field twist is necessary and assuming $p= \infty$ when reading inequalities; this yields only finitely many classes of irreducible $A_1$ subgroups when $p=0$.   

A natural question to ask is whether $G$-irreducible subgroups of a certain type exist, especially in small characteristics. As an immediate consequence of Theorems \ref{thmG2} to \ref{thmE8}, we reprove the following corollary, first proved by Liebeck and Testerman in \cite{LT} with a correction by Amende in \cite{bon}, showing that $G$-irreducible $A_1$ subgroups almost always exist.   

\begin{cor*} \label{A1subgroups}
Let $G$ be an exceptional algebraic group. Then $G$ contains an irreducible subgroup $A_1$, unless $G = E_6$ and $p=2$. 
\end{cor*}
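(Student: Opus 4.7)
The plan is to deduce this corollary directly by inspection of the classification Tables \ref{G2tab}--\ref{E8tab} furnished by Theorems \ref{thmG2}--\ref{thmE8}. Since those theorems list \emph{every} $G$-irreducible $A_1$ subgroup up to conjugacy, the existence question reduces, for each exceptional $G$ and each prime $p$, to checking whether at least one row of the corresponding table has its characteristic hypotheses satisfied.

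For $G \in \{G_2, F_4, E_7, E_8\}$, I would exhibit for each $p$ a row of the relevant table that is valid. In large characteristic (roughly $p > h(G)$, with $h$ the Coxeter number) the principal $A_1$ is $G$-irreducible and appears in each table, so it handles all but finitely many primes. For the remaining small primes, one typically points to an $A_1$ embedded diagonally in a reductive maximal connected subgroup of type $A_1^n$, or contained inside a maximal classical or exceptional subgroup, with the requisite Frobenius twists arranged so that the composition factors on the appropriate natural module are pairwise non-isomorphic and collectively generate an irreducible representation. Each such row is already listed in the tables, so the verification is a finite, routine check.

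For $G = E_6$ the assertion has two halves. First, for $p \neq 2$ I would identify a row of Table \ref{E6tab}: for example, an $A_1$ inside the maximal $F_4 < E_6$ subgroup (using the $F_4$ case already handled) when $3 \leq p$ is small, and the principal $A_1$ in large characteristic. Second, for $p = 2$ one observes that \emph{every} row of Table \ref{E6tab} carries a characteristic hypothesis excluding $p = 2$. This second half is the main obstacle of the corollary: it is not a priori obvious that no $A_1$ embedding into $E_6$ remains irreducible in characteristic $2$, and the statement is a genuine consequence of the detailed case analysis underlying the proof of Theorem \ref{thmE6}, where one must rule out irreducibility for each candidate $A_1$ arising from the reductive maximal connected subgroups listed in Theorem \ref{maximalexcep}. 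Once that theorem is granted, however, the corollary follows immediately from the table inspection described above.
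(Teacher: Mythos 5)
Your proposal is correct and takes essentially the same route as the paper, which disposes of this corollary in a single sentence by noting it is an immediate consequence of Theorems \ref{thmG2}--\ref{thmE8}: for each $G \neq E_6$ some row of the relevant table is valid for every $p$ (e.g.\ $G_2(\#1)$, $F_4(\#1)$, $E_7(\#12)$, $E_8(\#8)$), while every row of Table \ref{E6tab} excludes $p=2$. Your additional remark that the $E_6$, $p=2$ non-existence is the genuinely hard content, absorbed into the proof of Theorem \ref{thmE6}, is accurate but does not change the argument.
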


Given the existence of irreducible $A_1$ subgroups, we can use the proofs of Theorems \ref{thmG2} to \ref{thmE8} to study their overgroups. The next result shows the existence of a reductive, maximal connected subgroup that contains representatives of each conjugacy class of $G$-irreducible $A_1$ subgroups in small characteristics, with one exception. 

\vspace{0.1cm}

\begin{cor*} \label{A1overgroups}
Let $G$ be an exceptional algebraic group and $p=2$ or $3$. Then there exists a reductive, maximal connected subgroup $M$ containing representatives of every $G$-conjugacy class of $G$-irreducible $A_1$ subgroups, unless $G=F_4$ and $p=3$ (in which case two reductive, maximal connected subgroups are required). The following table lists such subgroups $M$.   
\end{cor*}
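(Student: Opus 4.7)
The plan is to proceed case-by-case, leveraging the classification of $G$-irreducible $A_1$ subgroups given in Theorems \ref{thmG2}--\ref{thmE8}. For each exceptional group $G$ and each $p \in \{2,3\}$, the relevant tables list representatives of every $G$-conjugacy class of irreducible $A_1$ subgroups, together with an explicit embedding of each representative into some reductive, maximal connected subgroup appearing in Theorem \ref{maximalexcep}. The task thus reduces to showing that, for each pair $(G,p)$ other than $(F_4,3)$, one such maximal subgroup $M$ may be chosen that simultaneously contains a $G$-conjugate of every class representative, and that for $(F_4,3)$ no single such $M$ works but two suffice.

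My strategy is, for each $(G,p)$, to extract the list of irreducible $A_1$ classes and determine for each class all of its reductive, maximal connected overgroups. Many of the $A_1$ subgroups in the tables are given as (twisted) diagonal subgroups inside a classical reductive maximal subgroup of $G$, and the same $A_1$ can typically be realised inside several maximal subgroups after $G$-conjugation; this flexibility is what makes a common overgroup possible in small characteristic, where fewer classes are present. I would therefore select $M$ of large rank -- generally a classical maximal subgroup or a maximal-rank product -- whose $A_1$ subgroups admit enough variation in their composition factors on the natural module to cover every class. Verification is class by class, using the embedding data recorded in the tables alongside a comparison of composition factors on $L(G)$: by Theorem \ref{A1samecomp}, an $A_1$ displayed in the table as lying inside one maximal subgroup is $G$-conjugate to an $A_1$ lying inside the chosen $M$ precisely when the two have equal composition factors on $L(G)$, so the existence of the required conjugate can be read off from the restriction patterns to $M$.

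The main obstacle is the exceptional case $(G,p)=(F_4,3)$, where I have to prove both that two maximal subgroups suffice and that one does not. For the latter, I would argue by contradiction: assuming a single reductive, maximal connected $M$ accommodated all classes, I would list the composition factors on $L(F_4)$ of every irreducible $A_1$ from the $F_4$ table in Section \ref{secF4}, then for each candidate $M$ from Theorem \ref{maximalexcep} compute the possible composition factors arising from its $A_1$ subgroups, and exhibit a class whose factors cannot be realised inside $M$. Since each reductive, maximal connected subgroup of $F_4$ in characteristic $3$ must be compared against the same fixed list of class invariants, this non-existence step is a finite (if delicate) inspection, and identifies the unavoidable pair of maximal subgroups that must be used together.
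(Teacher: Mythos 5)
Your overall plan — a case-by-case inspection of Tables \ref{G2tab}--\ref{E8tab} for $p=2,3$, choosing a large-rank maximal $M$ and verifying class by class — is the same as the paper's. For $G_2$, $E_7$, $E_8$ and for $F_4$ with $p=2$ the tables already list every class inside a single maximal subgroup, so nothing further is needed; the only cases requiring work are $E_6$ with $p=3$ (where classes listed under $A_1A_5$ and $A_2^3$ must be relocated into $C_4$) and the exceptional case $(F_4,3)$.

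However, the verification mechanism you propose would fail. You plan to detect whether an $A_1$ listed inside one maximal subgroup is $G$-conjugate to an $A_1$ inside the chosen $M$ by comparing composition factors on $L(G)$ and invoking Theorem \ref{A1samecomp}. That theorem requires $p > N(A_1,G)$, and $N(A_1,G) \geq 3$ for every exceptional $G$, so it never applies when $p=2$ or $3$ — precisely the characteristics the corollary is about. (The low-characteristic analogue, Corollary \ref{corconj}, is only established a posteriori from the tables, and even it gives conjugacy in $G$, not containment in a specific maximal subgroup up to conjugacy.) The paper instead uses the explicit subgroup containments already established in the proofs of Theorems \ref{thmG2}--\ref{thmE8}: for instance, for $E_6$ with $p=3$, one has $E_6(\#2) < A_1C_3 < A_1A_5$ with $A_1C_3 < C_4$, and $E_6(\#3) < C_4$ via the involution-centraliser argument in the $A_2^3$ case of the proof of Theorem \ref{thmE6}. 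You would need to replace your composition-factor criterion by these structural containments. Your treatment of $(F_4,3)$ — showing by inspection of invariants that no single reductive, maximal connected subgroup accommodates all four classes (e.g.\ $F_4(\#9)$ is not conjugate to any irreducible $A_1$ of $B_4$, while $F_4(\#1)$, $F_4(\#4)$, $F_4(\#5)$ do not all lie in $A_1C_3$) — is sound and in fact spelled out more explicitly than in the paper.
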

\vspace{0.1cm}
\begin{longtable}{p{0.07\textwidth - 2\tabcolsep}>{\raggedright\arraybackslash}p{0.18\textwidth-2\tabcolsep}>{\raggedright\arraybackslash}p{0.1\textwidth-\tabcolsep}@{}}

\caption{Maximal connected overgroups for $G$-irreducible $A_1$ subgroups. \label{A1overgroupstab}} \\

\hline

$G$ & $p=3$ & $p=2$ \\

\hline

$G_2$ & $A_1 \tilde{A}_1$ & $A_1 \tilde{A}_1$ \\

$F_4$ & $B_4$ and $\bar{A}_1 C_3$ & $B_4$ \\

$E_6$ & $C_4$ & \textbf{---} \\

$E_7$ & $\bar{A}_1 D_6$ & $\bar{A}_1 D_6$ \\

$E_8$ & $D_8$ & $D_8$ \\
 
\hline

\end{longtable}

The choice of $M$ is not unique; for example, if $G = F_4$ and $p=2$ then $C_4$ also contains a representative of every $G$-conjugacy class of $G$-irreducible subgroup $A_1$. We also note that in larger characteristics more reductive, maximal subgroups are required. In particular, when $p \geq 19$ we need seven such subgroups for $G = E_7$. 

The next corollary shows that the $G$-conjugacy class of a $G$-irreducible subgroup $A_1$ is determined by its composition factors on the adjoint module for $G$. This is similar to \cite[Theorems 4, 6]{LS3} and extends part of Theorem \ref{A1samecomp} to low characteristics for irreducible $A_1$ subgroups. 

We must first explain a definition we will use throughout the paper. Let $X$ and $Y$ be semisimple subgroups of a semisimple algebraic group $G$ and let $V$ be a $G$-module. Then we say that $X$ and $Y$ \emph{have the same composition factors on $V$} if there exists an isomorphism from $X$ to $Y$ sending the set of composition factors of $V \downarrow X$ to the set of composition factors $V\downarrow Y$ (counted with multiplicity).

\begin{cor*} \label{corconj}
Let $G$ be an exceptional algebraic group and $X$ and $Y$ be irreducible $A_1$ subgroups. If $X$ and $Y$ have the same composition factors on $L(G)$ then $X$ is conjugate to $Y$. 
\end{cor*}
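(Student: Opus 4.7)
The plan is to leverage the classification in the main theorem and reduce the statement to a finite, case-by-case verification. By that theorem, Tables~\ref{G2tab}--\ref{E8tab} list an irredundant set of $G$-conjugacy class representatives of $G$-irreducible $A_1$ subgroups. It therefore suffices to show that the map sending such a class to the multiset of composition factors of $L(G)\!\downarrow\!X$ is injective, where injectivity is understood up to a simultaneous Frobenius twist of all high weights, since the definition of ``same composition factors'' permits an abstract isomorphism between $X$ and $Y$.

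For each representative $X$ in the tables I would compute $L(G)\!\downarrow\!X$ in two steps. First, using Theorem~\ref{maximalexcep} together with the standard decompositions of $L(G)$ restricted to reductive, maximal connected subgroups $M$, I would write down $L(G)\!\downarrow\!M$. Second, I would restrict each $M$-composition factor to $X$ via the explicit embedding of $X$ in $M$ that is recorded in the tables; only composition factors (not full module structure) are needed, so extension data can be ignored. Much of this bookkeeping is already produced during the proof of the main theorem, because composition factors on $L(G)$ are a primary invariant used there to separate conjugacy classes of irreducible $A_1$ subgroups, so the data can largely be imported from that proof.

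The corollary then follows by inspection for each of $G_2$, $F_4$, $E_6$, $E_7$ and $E_8$: for every pair of distinct entries in the relevant table, one confirms that the resulting multisets of $A_1$-high weights on $L(G)$ differ after normalising by a common Frobenius twist. The main obstacle I anticipate is the low-characteristic range $p = 2, 3$, where the proliferation of new conjugacy classes and the presence of non-$G$-cr subgroups make composition factor dimensions coincide much more often across classes; there one must fall back on the precise high-weight labels rather than dimensions alone to distinguish classes. Provided the classification in the main theorem is indeed irredundant in this stronger sense, the verification goes through uniformly, and any spurious coincidence discovered would signal a defect in that classification rather than a genuine obstruction to the corollary.
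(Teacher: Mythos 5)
Your proposal matches the paper's proof: the paper disposes of Corollary \ref{corconj} in one line, stating that it ``follows from careful consideration of the composition factors listed in Tables \ref{G2tabcomps}--\ref{E8tabcomps}'', which is exactly the injectivity check (normalised by a common Frobenius twist, as the tables' conditions such as $rs=0$ already enforce) that you describe, with the composition factors computed by restriction from the reductive maximal overgroups just as you outline. The only quibble is your closing remark: a coincidence of $L(G)$-composition factors between two distinct classes would falsify the corollary rather than the classification, since the main theorem's non-conjugacy arguments sometimes lean on the minimal module as well --- but this aside does not affect the verification itself.
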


We also deduce that the $G$-conjugacy class of a simple connected subgroup of $G$ is determined by its composition factors on a smallest dimensional non-trivial module for $G$, which we will abbreviate to ``minimal module'' throughout. The dimensions of such a module are $7$ ($6$ if $p=2$), $26$ ($25$ if $p=3$), $27$, $56$ and $248$ for $G = G_2$, $F_4$, $E_6$, $E_7$ and $E_8$, respectively. 

\begin{cor*} \label{corconjmin}
Let $G$ be an exceptional algebraic group and $X$ and $Y$ be irreducible $A_1$ subgroups. If $X$ and $Y$ have the same composition factors on a minimal module for $G$ then $X$ is conjugate to $Y$. 
\end{cor*}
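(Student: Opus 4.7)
The plan is to reduce this to Corollary \ref{corconj} by a direct case-by-case inspection of Tables \ref{G2tab}--\ref{E8tab}. For $G = E_8$ the minimal module and the adjoint module coincide (both have dimension $248$), and the statement is then exactly Corollary \ref{corconj}; hence we may assume $G \in \{G_2, F_4, E_6, E_7\}$.

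By the Main Theorem, $X$ and $Y$ are each conjugate to a unique entry, call them $X_0$ and $Y_0$, of the table for $G$. It suffices to show that if $X_0 \neq Y_0$ as entries, then the multisets of composition factors on the minimal module $V$ differ for $X_0$ and $Y_0$, up to the equivalence induced by abstract group isomorphisms of $A_1$, which on composition factors amounts to applying a Frobenius twist.

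For this I would use the composition factor data on $V$ that is either explicitly recorded in, or readily extracted from, the proofs of Theorems \ref{thmG2}--\ref{thmE8}. Each table entry $X_0$ is specified by an embedding into a reductive maximal connected subgroup $M$ from Theorem \ref{maximalexcep}, and the $M$-composition factors of $V$ are standard; restricting these further to $X_0$ is a routine $A_1$-calculation via Clebsch--Gordan and Steinberg's tensor product theorem. Once this data is tabulated, one scans each table and checks that no two distinct entries produce the same multiset of $A_1$-composition factors up to Frobenius twist.

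The main obstacle is that $V$ is strictly smaller than $L(G)$, so a priori two distinct $G$-irreducible $A_1$ classes could agree on $V$ while being distinguished only by $L(G)$; the corollary asserts this does not occur. The verification is finite but requires care, particularly for $E_6$ and $E_7$, where the tables are longer and several Frobenius twists of the natural $A_1$-module appear simultaneously. Once no such collision is observed, Corollary \ref{corconjmin} follows.
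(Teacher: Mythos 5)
Your proposal is correct and is essentially the paper's own argument: the composition factors of each irreducible $A_1$ class on the minimal module are already tabulated in Tables \ref{G2tabcomps}--\ref{E7tabcomps} (and for $E_8$ the minimal module is $L(G)$, so the claim reduces to Corollary \ref{corconj}), and the proof is exactly the finite check that no two distinct table entries agree up to Frobenius twist.
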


The next corollary lists some of the interesting $A_1$ subgroups that are $M$-irreducible but not $G$-irreducible for some reductive, maximal connected subgroup $M$. Here ``interesting'' means that the $M$-irreducible subgroup is not obviously $G$-reducible, i.e. $M'$-reducible for some other reductive, maximal connected subgroup $M'$ or contained in a proper Levi subgroup. 

To describe one of the subgroups we first define a piece of notation from \cite{tho1}. Suppose $G=E_8$ and $p=2$. There are two $D_8$-conjugacy classes of $B_4$ subgroups in $D_8$ acting irreducibly on the natural module for $D_8$. Since $p=2$, one is $E_8$-irreducible (by \cite[Lemma 7.5]{tho1}) and denoted by $B_4(\dagger)$ and the other is $E_8$-reducible (by \cite[Lemma 7.4]{tho1}) and denoted by $B_4(\ddagger)$. Furthermore, there are two $D_8$-conjugacy classes of $A_1$ subgroups acting irreducibly on the natural module for $D_8$ and one of these is contained in $B_4(\dagger)$ and the other in $B_4(\ddagger)$; thus we can differentiate them by giving the $B_4$ overgroup they are contained in. 

\begin{cor*} \label{nongcr}
Let $G$ be an exceptional algebraic group and $X$ be a subgroup $A_1$ of $G$. Suppose that whenever $X$ is contained in a reductive, maximal connected subgroup $M$ it is $M$-irreducible and assume that such an overgroup $M$ exists. Assume further that $X$ is not contained in a proper Levi subgroup of $G$. Then either:
\begin{enumerate}[leftmargin=*,label=\normalfont(\arabic*)]
\item $X$ is $G$-irreducible, or
\item $X$ is conjugate to a subgroup in Table \ref{cortab} below. Such $X$ are non-$G$-cr and satisfy the hypothesis.  
\end{enumerate}
\end{cor*}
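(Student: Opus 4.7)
The plan is to split into cases according to whether $X$ is $G$-completely reducible. First, if $X$ is contained in a proper parabolic subgroup $P$ of $G$ with Levi decomposition $P = LR_u(P)$ and is conjugate to a subgroup of $L$, then by hypothesis $X$ is not contained in any proper Levi subgroup of $G$, giving a contradiction. Hence either $X$ is $G$-irreducible (case (1)), or $X$ lies in a proper parabolic but is not conjugate into its Levi, so $X$ is non-$G$-cr. The task therefore reduces to identifying, among all non-$G$-cr $A_1$ subgroups of $G$, those which are $M$-irreducible in every reductive, maximal connected overgroup $M$ that contains them.

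To carry this out, I would proceed case by case through $G = G_2, F_4, E_6, E_7, E_8$, using the classification of non-$G$-cr $A_1$ subgroups established as part of the proofs of Theorems \ref{thmG2}--\ref{thmE8} (since classifying all $G$-irreducible $A_1$s forces one to control the $G$-reducible, $G$-cr embeddings into parabolics and thus also to identify the non-$G$-cr subgroups obtained from them by conjugation by unipotent elements of $R_u(P)$). For each candidate non-$G$-cr $A_1$ subgroup $X$, I would then enumerate the reductive, maximal connected subgroups $M$ of $G$ from Theorem \ref{maximalexcep} that contain a $G$-conjugate of $X$, which one can detect by comparing composition factors of $X$ on $L(G)$ with those of $M$-irreducible $A_1$ subgroups of each $M$.

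For each such pair $(X, M)$ I would check $M$-irreducibility by examining the composition factors of $X$ on the natural module(s) of the simple factors of $M$: $X$ is $M$-irreducible iff it is irreducible on each natural module for the classical factors (in the sense of \cite{ser04}) and $M_i$-irreducible for each exceptional factor $M_i$ (which can be handled inductively on the rank of $G$). Any candidate $X$ surviving this check for every $M$ containing it is placed in Table~\ref{cortab}. Conversely, for any candidate that fails the check in some $M$, the existence of that $M$ witnesses the failure of the hypothesis, ruling it out.

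The main obstacle is the combinatorial bookkeeping for $G = E_7$ and $G = E_8$ in characteristics $2$ and $3$: here there are many non-$G$-cr $A_1$ subgroups, their embeddings into distinct reductive maximal connected subgroups must be matched up (for instance distinguishing between the $B_4(\dagger)$ and $B_4(\ddagger)$ overgroups in $D_8 < E_8$ when $p=2$, via the notation introduced before the corollary), and several reductive maximal overgroups may contain a given $X$ simultaneously, so failure of $M$-irreducibility in even one such $M$ excludes $X$. I expect the bulk of the work to be the systematic verification, using data already gathered in the proofs of Theorems \ref{thmG2}--\ref{thmE8}, that the subgroups appearing in Table \ref{cortab} are $M$-irreducible in each of their reductive maximal connected overgroups, and that no other non-$G$-cr $A_1$ subgroup has this property.
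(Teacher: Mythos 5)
Your overall framing is right: by the Borel--Tits theorem the candidates are exactly the $M$-irreducible $A_1$ subgroups that are $G$-reducible, and these are already listed in the proofs of Theorems \ref{thmG2}--\ref{thmE8} (note, though, that the paper does not classify all non-$G$-cr $A_1$ subgroups, only those irreducible in some reductive maximal connected overgroup, which is all the hypothesis requires). The genuine gap is in your verification step. You propose to detect which reductive maximal connected subgroups contain a conjugate of $X$, and implicitly to decide whether $X$ lies in a proper Levi subgroup, by comparing composition factors on $L(G)$. For a $G$-reducible subgroup this cannot work: since $X$ lies in a proper parabolic, there is always a subgroup $Y$ of a Levi factor with exactly the same composition factors as $X$ on $L(G)$ and on the minimal module, so composition factors can never certify that $X$ is \emph{not} contained in that Levi, nor can they reliably decide containment in another maximal $M'$. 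Yet proving non-containment in a Levi is precisely what conclusion (2) ("Such $X$ \ldots satisfy the hypothesis") demands, and proving reducible containment in some other $M'$ is how most candidates get excluded.

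The paper's proof therefore has to pass from composition factors to exact module structure. For instance: for $G=E_7$, $p=7$, the surviving subgroups act on $V_{56}$ as $T(11)+T(9)^2+T(7)$ and are kept because $V_{56}\downarrow A_1A_2A_3$ has no direct summand of dimension at least $14$; for $G=E_8$, $p=2$, Lemma \ref{tensorsquare} is needed to exhibit a $120$-dimensional indecomposable summand of $L(E_8)\downarrow X$, which no Levi subgroup or other reductive maximal overgroup can accommodate; and for $G=E_8$, $p=5$, Lemma \ref{bada1b2p5} (a cohomology computation $H^1(Z,Q)$ inside an $A_7$-parabolic, together with a Magma construction) is needed to show that the maximal $A_1<B_2$ is in fact contained reducibly in $A_8$ and hence must be excluded from Table \ref{cortab}. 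Further conjugacy questions (such as identifying $X_1$ with $X_2$ in the $E_7$, $p=7$ case via \cite{littho}) also cannot be settled by composition factors. None of these steps is available from the bookkeeping you describe, so as written the proposal can neither justify the entries of Table \ref{cortab} nor rule out the remaining candidates.
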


\begin{longtable}{p{0.07\textwidth - 2\tabcolsep}p{0.16\textwidth - 2\tabcolsep}p{0.09\textwidth - 2\tabcolsep}>{\raggedright\arraybackslash}p{0.68\textwidth-\tabcolsep}@{}}

\caption{Non-$G$-cr subgroups that are irreducible in every (and at least one) maximal, reductive overgroup \label{cortab}} \\

\hline 

$G$ & Max. sub. $M$ & $p$ & $M$-irreducible subgroup $X$ \\

\hline 

$G_2$ & $A_1 \tilde{A}_1$ & $p=2$ & $A_1 \hookrightarrow A_1 \tilde{A}_1$ via $(1,1)$ \\

\hline

$E_7$ & $A_7$ & $p=2$ & $V_{A_7}(\lambda_1) \downarrow A_1 = 1 \otimes 1^{[r]} \otimes 1^{[s]}$ $(0 < r < s)$ \\

& $A_1 G_2$ & $p=7$ & $A_1 \hookrightarrow A_1 A_1 < A_1 G_2$ via $(1,1)$ where $A_1 < G_2$ is maximal \\

\hline

$E_8$ & $D_8$ & $p=2$ & $A_1 < B_4 (\ddagger)$ where $V_{D_8}(\lambda_1) \downarrow A_1 = 1 \otimes 1^{[r]} \otimes 1^{[s]} \otimes 1^{[t]}$ $(0<r<s<t)$ \newline and $V_{B_4}(\lambda_1) \downarrow A_1 = 2 \oplus 2^{[r]} \oplus 2^{[s]} \oplus 2^{[t]}$  \\

\hline

\end{longtable}

The notation in the fourth column of Table \ref{cortab} is explained in Section \ref{nota}. 

\section{Notation} \label{nota}

Let $G$ be a simple algebraic group over an algebraically closed field $K$ of characteristic $p$. Let $\Phi$ be the root system of $G$ and $\Phi^+$ be the set of positive roots in $\Phi$. Write $\Pi = \{ \alpha_1, \ldots, \alpha_l \}$ for the simple roots of $G$ and $\lambda_1, \ldots, \lambda_l$ for the fundamental dominant weights of $G$, both with respect to the ordering of the Dynkin diagram as given in \cite[p. 250]{bourbaki}. We sometimes use $a_1 a_2 \ldots a_l$ to denote a dominant weight $a_1 \lambda_1 + a_2 \lambda_2 + \cdots + a_l \lambda_l$. We denote by $V_G(\lambda)$ (or just $\lambda$) the irreducible $G$-module of dominant high weight $\lambda$. Similarly, the Weyl module of high weight $\lambda$ is denoted $W(\lambda) = W_G(\lambda)$ and the tilting module of highest weight $\lambda$ is denoted by $T(\lambda)$. Another module we refer to frequently is the adjoint module for $G$, which we denote $L(G)$. We let $V_7 : = W_{G_2}(10)$, $V_{26}:=W_{F_4}(0001)$, $V_{27} := V_{E_6}(\lambda_1)$ and $V_{56}:=V_{E_7}(\lambda_7)$. We note that $V_{7}$ ($V_{26}$) is irreducible unless $p=2$ ($p=3$). For $G$-modules $V$, $W$ we write $V + W$ for the module $V \oplus W$ and let $V^*$ denote the dual module of $V$. If $Y = Y_1 Y_2 \ldots Y_k$, a commuting product of simple algebraic groups, then $(V_1, \ldots, V_k)$ denotes the $Y$-module $V_1 \otimes \dots \otimes V_k$  where each $V_i$ is an irreducible $Y_i$-module. The notation $\bar{X}$ denotes a subgroup of $Y$ that is generated by long root subgroups of $Y$. If $Y$ has short root elements then $\tilde{X}$ means $\tilde{X}$ is generated by short root subgroups.  

Now suppose $p > 0$. Let $F: G \rightarrow G$ be the standard Frobenius endomorphism (acting on root groups $U_\alpha = \{ u_\alpha(c) | c \in K\}$ by $u_{\alpha}(c) \mapsto u_{\alpha}(c^p)$) and $V$ be a $G$-module afforded by a representation $\rho: G \rightarrow \text{GL}(V)$. Then $V^{[r]}$ denotes the module afforded by the representation $\rho^{[r]} : = \rho \circ F^r$. Let $M_1, \ldots, M_k$ be $G$-modules and $n_1, \ldots, n_k$ be positive integers. Then $M_1^{n_1} / \ldots / M_k^{n_k}$ denotes a $G$-module having the same composition factors as $M_1^{n_1} \oplus \dots \oplus M_k^{n_k}$. Furthermore, $V = M_1 | \ldots | M_k$ denotes a $G$-module with a socle series as follows: $M_k \cong \text{Soc}(V) = \text{Soc}^1(V)$ and for $i > 0$, we have $M_{k-i}$ is $\text{Soc}^{i+1}(V) = \text{Soc}(V/N_{i})$ where $N_{i}$ is the inverse image in $V$ of $\text{Soc}^{i}(V)$ under the quotient mapping $V \rightarrow V / N_{i-1}$ (so $N_0 = 0$ and $N_1 = M_k$). Sometimes, to make things clearer, we will use a tower of modules $$\cfrac{M_1}{\cfrac{M_2}{M_3}}$$ to mean the same as $M_1 | M_2 | M_3$.

We need a notation for diagonal subgroups of $Y = H_1 H_2 \ldots H_k$, a commuting product of subgroups of type $A_1$. Let $H$ be a simply connected subgroup $A_1$ and $\hat{Y} = H \times H \ldots \times H$, the direct product of $k$ copies of $H$. Then we may regard $Y$ as $\hat{Y} / Z$ where $Z$ is a subgroup of the centre of $\hat{Y}$ and $H_i$ is the image of the $i$th projection map. A diagonal subgroup of $\hat{Y}$ is a subgroup $\hat{X} \cong H$ of the following form: $\hat{X} = \{ (\phi_1(h), \ldots, \phi_k(h)) | h \in H \}$ where each $\phi_i$ is a non-trivial endomorphism of $H$. A diagonal subgroup $X$ of $Y$ is the image of a diagonal subgroup of $\hat{Y}$ under the natural map $\hat{Y} \rightarrow Y$. To describe such a subgroup it therefore suffices to give a non-trivial endomorphism, $\phi_i$, of $H$ for each $i$. By \cite[Chapter 12]{stei}, $\phi_i = \alpha \theta_i  F^{r_i}$ where $\alpha$ is an inner automorphism, $\theta_i$ is a graph morphism and $F^{r_i}$ is a power of the standard Frobenius endomorphism. We only wish to distinguish these diagonal subgroups up to conjugacy and therefore assume $\alpha$ is trivial. Moreover, there are no non-trivial graph automorphisms of $A_1$. It therefore suffices to give a non-negative integer $r_i$ for each $i$. Such a diagonal subgroup $X$ is denoted ``$X \hookrightarrow H_1 H_2 \ldots H_k$ via $(1^{[r_1]}, 1^{[r_2]},  \ldots, 1^{[r_k]})$''. We often abbreviate this to ``$X$ via $(1^{[r_1]}, \ldots, 1^{[r_k]})$'' if the group $Y$ is clear. We note that to avoid any redundancies we always take the minimum of the $r_1, \dots, r_k$ to be zero. 

Now let $G$ be a simple exceptional algebraic group. In Tables \ref{G2tab} to \ref{E8tab} we give an ID number to each of the conjugacy classes of $G$-irreducible $A_1$ subgroups in Theorems \ref{thmG2} to \ref{thmE8}. The notation $G(\#a)$ (or simply $a$ if $G$ is clear from the context) means the $G$-irreducible subgroup corresponding to the ID number $a$. Sometimes $G(\#a)$ will refer to infinitely many conjugacy classes of $G$-irreducible subgroups. This only occurs for diagonal subgroups and the conjugacy class will depend on field twists $r_1, \ldots, r_k$. Sometimes we need to refer to a subset of the conjugacy classes that $G(\#a)$ represents, described by an ordered set of field twists $s_1, \ldots, s_k$ and this will be denoted by $G(\#a^{\{s_1, \ldots, s_k\}})$. Let us give a concrete example to make this clearer. Consider $G_2(\#1)$, the conjugacy classes of diagonal subgroups $A_1 \hookrightarrow A_1 \tilde{A}_1$ via $(1^{[r]},1^{[s]})$ ($rs=0$; $r \neq s$) (see Table \ref{G2tab}). Then the notation $G_2(\#1^{\{r,0\}})$ refers to the conjugacy classes with $s=0$ and the notation $G_2(\#1^{\{1,0\}})$ refers to the single conjugacy class $A_1 \hookrightarrow A_1 \tilde{A}_1$ via $(1^{[1]},1)$.

In Tables \ref{G2tab} to \ref{E8tab} we also need a notation to be able to describe $M$-irreducible $A_1$ subgroups $X$ of reductive, maximal connected subgroups $M$ of $G$. Suppose $M = M_1 M_2 \ldots M_r$, with each $M_i$ simple. If all of the factors are simple classical algebraic groups then we define $$V_M := V_{M_1}(\lambda_1) \otimes V_{M_2}(\lambda_1) \otimes \cdots \otimes V_{M_r}(\lambda_1)$$  and let $V_M \downarrow X$ be the usual restriction of the $M$-module $V_M$ to $X$. If $M = F_4$ then we define $V_M \downarrow X$ to be $F_4(\#a)$, where $a$ is the ID number of the subgroup $X$ of $F_4$. The final case we need to describe the notation for is $M = M_1 M_2$ where $M$ is one of $A_1 G_2, A_2 G_2$, $G_2 C_3$, $A_1 F_4$, $G_2 F_4$ or $A_1 E_7$. The projection of $X$ to both $M_1$ and $M_2$ is an $M_i$-irreducible subgroup $A_1$, say $X_i$ (by Lemma \ref{easy}) and therefore $X$ is a diagonal subgroup of $X_1 X_2$ via $(1^{[r_1]},1^{[r_2]})$. We need to give $X_1$, $X_2$ and the field twists $r_1$, $r_2$. If $M_i$ is classical then write $V_{M_i}(\lambda_1) \downarrow X_i$ for $X_i$ and otherwise write $M_i(\#a)$ for $X_i$, where $a$ is the ID number of $X_i$ in $M_i$. Then we define $V_M \downarrow X = (X_1^{[r]},X_2^{[s]})$. We make a slight modification if $X_1$ (or similarly $X_2$ but not both) is a diagonal subgroup of some subgroup $Y$ of $M_1$, which is of exceptional type, so  $X_1 = M_1(\#a) < Y$ via $(1^{[s_1]}, \ldots 1^{[s_k]})$. In this case, $X$ is a diagonal subgroup of $Y X_2$ and we define $V_M \downarrow X = (M_1(\#a^{\{s_1, \ldots, s_k\}}),X_2^{[s_{k+1}]})$. 

 Again, let us give a concrete example to make this clearer. Suppose $G = E_7$ and $M = A_1 F_4$. Then $F_4$ has a maximal subgroup $A_1$ when $p \geq 13$, which is of course $F_4$-irreducible and denoted by $F_4(\#10)$. Letting the factor $A_1$ of $M$ be $X_1$ and the maximal subgroup of $F_4$ be $X_2$ we have an $M$-irreducible subgroup $X \hookrightarrow X_1 X_2$ via $(1^{[r]},1^{[s]})$ $(rs=0)$. Then $V_M \downarrow X = (1^{[r]},F_4(\#10)^{[s]})$. The notation changes slightly when we consider another $F_4$-irreducible subgroup. Let $X_1$ be as before but this time let $X_2$ be the subgroup $F_4(\#8)$, i.e. $A_1 \hookrightarrow A_1 A_1 < A_1 C_3$ via $(1^{[u]},1^{[v]})$ ($p \geq 7$; $uv=0$) where the second $A_1$ factor is maximal in $C_3$. Then $X \hookrightarrow X_1 X_2$ via $(1^{[r]},1^{[w]})$ $(rw=0)$ is $M$-irreducible and represents $X \hookrightarrow X_1 A_1 A_1$ via $(1^{[r]},1^{[s]},1^{[t]})$ $(rst=0)$ where $s = u+w$ and $t = v+w$. We then write $V_M \downarrow X = (1^{[r]},F_4(\#8^{\{s,t\}}))$ $(rst=0)$.  
 
Let $J = \{\alpha_{j_1}, \alpha_{j_2}, \ldots, \alpha_{j_r}\} \subseteq \Pi$ and define $\Phi_J = \Phi \cap \mathbb{Z}J$. Then the standard parabolic subgroup corresponding to $J$ is the subgroup $P = \left< T, U_\alpha \, : \, \alpha \in \Phi_J \cup \Phi^+ \right>$. The Levi decomposition of $P$ is $P = Q L$ where $Q = R_{u}(P) = \left< U_\alpha \, | \, \alpha \in \Phi^+ \setminus \Phi_J \right>$, and $L = \left< T, U_\alpha \, | \, \alpha \in \Phi_J \right>$. For $i \geq 1$ we define
\[ Q(i) = \left< U_\alpha \, \middle| \, \alpha = \sum_{j \in \Pi} c_j \alpha_j \mathrm{ \ where \ } \sum_{j \in \Pi \setminus J} c_j \geq i \right>, \]
which is a subgroup of $Q$. The \emph{$i$th level} of $Q$ is $Q(i)/Q(i+1)$, and this is central in $Q/Q(i+1)$.

\section{Preliminaries} \label{prelims}

Let $G$ be a simple algebraic group over an algebraically closed field of characteristic $p$. The first result needed for the proofs of Theorems \ref{thmG2} to \ref{thmE8} is the classification of reductive, maximal connected subgroups of exceptional algebraic groups. 

\begin{thm}[{\cite[Corollary 2]{LS1}}] \label{maximalexcep}
The following tables give the conjugacy classes of reductive, maximal connected subgroups $M$ for $G$ a simple exceptional algebraic group. We also give the composition factors of the restrictions to $M$ of $V_7$, $V_{26}$, $V_{27}$, $V_{56}$ and $L(G)$. 
\end{thm}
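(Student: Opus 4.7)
The statement is quoted as \cite[Corollary 2]{LS1}, so the classification of reductive, maximal connected subgroups $M$ is already in the literature; the only content I need to produce is the composition factors of the restrictions to $M$ of $V_7$, $V_{26}$, $V_{27}$, $V_{56}$ and $L(G)$. The plan is to go table by table (that is, $G = G_2, F_4, E_6, E_7, E_8$) and for each listed $M$ compute these restrictions in two stages: first determine the restriction at the level of Weyl modules, and then replace each Weyl factor by its actual composition factors in characteristic $p$.

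For the maximal-rank (subsystem) subgroups, the restriction of $L(G)$ to $M$ is read straight off the extended Dynkin diagram: $L(G)\downarrow M = L(M) \oplus \bigoplus_{w} W_M(\mu_w)$, where $w$ runs over coset representatives of $\Phi(M)$ in $\Phi(G)$ and $\mu_w$ is the high weight of the corresponding $M$-module. The restriction of the minimal module is computed the same way, by examining the weights of $V_{\min}(G)$ under a maximal torus of $M$. In these cases the resulting Weyl modules are for classical groups of small rank, and the characteristic-$p$ composition factors can be looked up in standard tables (for instance L\"ubeck's lists, or the appendices of \cite{LS3}).

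For the finitely many non-subsystem maximal subgroups, namely the various $A_1$, $A_1 A_1$, $A_1 G_2$, $A_1 F_4$, $G_2$, $G_2 C_3$, $G_2 F_4$ and so on, the embedding is pinned down by its characteristic-zero restriction. These restrictions are stated in \cite{LS1} and reproved for all $p$ in \cite{LS3}. I would simply quote these restrictions and, once again, pass from Weyl modules on the $M$-side to composition factors by quoting the relevant small-rank decomposition data. For the handful of classes with $M$ of type $F_4$, $C_3$ or $G_2$, the Weyl-module structure is explicitly known and the substitution is routine.

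The main obstacle is the low-characteristic bookkeeping, which splits into two independent issues. First, the $G$-module $V_{\min}(G)$ is strictly smaller than its Weyl module in the exceptional cases $V_7$ at $p=2$ and $V_{26}$ at $p=3$ (and similarly $L(G)$ versus $W_G(\tilde\alpha)$ in characteristics dividing the order of $Z(G_{\mathrm{sc}})$ for the relevant types), so to obtain composition factors on the irreducible $G$-module I have to first compute them on the Weyl module and then strip off the trivial factors. Second, several of the Weyl modules $W_M(\mu)$ that appear on the $M$-side are themselves reducible in small characteristics and must be unpacked using linkage and the Jantzen sum formula. Once these two substitutions are carried out systematically for every listed pair $(M, V)$, the tables are complete, and the theorem follows from \cite[Corollary 2]{LS1} together with this bookkeeping.
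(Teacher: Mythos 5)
Your proposal matches the paper's treatment: the theorem is imported wholesale from \cite[Corollary 2]{LS1}, and the composition factors are obtained exactly as you describe, except that the paper cites \cite[Lemmas 11.2(iii), 11.8, 11.10, 11.11, 11.12(ii)]{LS9} for the maximal-rank (subsystem) cases rather than recomputing them from the extended Dynkin diagram, and cites \cite{LS1} for the remaining embeddings. The only detail you omit is the paper's remark that for $(G,M) = (E_6,A_2^3)$, $(E_7,A_2 A_5)$, $(E_8,D_8)$ and $(E_8,A_4^2)$ a choice of simple system within each factor has been fixed, which affects how the tables are read but not the substance of the argument.
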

\pagebreak
$G = G_2$

\vspace{-0.15cm}
\begin{longtable*}{p{0.16\textwidth - 2\tabcolsep}>{\raggedright\arraybackslash}p{0.32\textwidth-2\tabcolsep}>{\raggedright\arraybackslash}p{0.52\textwidth-\tabcolsep}@{}}

\hline

$M$ & Comp. factors of $V_7 \downarrow M$ & Comp. factors of $L(G_2) \downarrow M$ \\

\hline

$A_2$ & $10 / 01 / 00$ & $W(11) / 10 / 01$ \\

$\tilde{A}_2$ $(p=3)$ & $11$ & $11 / 30 / 03 / 00$ \\

$A_1 \tilde{A}_1$ & $(1,1) / (0,W(2))$ & $(W(2),0) / (0,W(2)) / (1,W(3))$ \\

$A_1$ $(p \geq 7)$ & 6 & $W(10) / 2$ \\

\hline

\end{longtable*}

$G = F_4$
\vspace{-0.15cm}
\begin{longtable*}{p{0.16\textwidth - 2\tabcolsep}>{\raggedright\arraybackslash}p{0.32\textwidth-2\tabcolsep}>{\raggedright\arraybackslash}p{0.52\textwidth-\tabcolsep}@{}}

\hline
$M$ & Comp. factors of $V_{26} \downarrow M$ & Comp. factors of $L(F_4) \downarrow M$ \\

\hline
$B_4$ & $W(1000) / 0001 / 0000$ & $W(0100) / 0001$ \\

$C_4$ $(p=2)$ & $0100$ & $2000 / 0100 / 0001 / 0000^2$ \\

$\bar{A}_1 C_3$ $(p \neq 2)$ & $(1,100) / (0,W(010))$ & $(2,000) / (0,200) / (1,001)$ \\

$A_1 G_2$ $(p \neq 2)$ & $(2,10) / (W(4),00)$ & $(2,00) / (0,W(01)) / (W(4),10)$ \\

$A_2 \tilde{A}_2$ & $(10,10) / (01,01) / (00,W(11))$ & $(W(11),00) / (00,W(11)) / (10,W(02)) / (01,W(20))$ \\

$G_2$ $(p=7)$ & $20$ & $01 / 11$ \\

$A_1$ $(p \geq 13)$ & $W(16) / 8$ & $W(22) / W(14) / 10 / 2$ \\

\hline
\end{longtable*}

$G = E_6$
\vspace{-0.15cm}
\begin{longtable*}{p{0.21\textwidth - 2\tabcolsep}>{\raggedright\arraybackslash}p{0.29\textwidth-2\tabcolsep}>{\raggedright\arraybackslash}p{0.5\textwidth-\tabcolsep}@{}}

\hline
$M$ & Comp. factors of $V_{27} \downarrow M$ & Comp. factors of $L(E_6) \downarrow M$ \\

\hline

$\bar{A}_1 A_5$ & $(1,\lambda_1) /$ $\!\! (0,\lambda_4)$ & $(W(2),0) /$ $\!\! (0,W(\lambda_1 + \lambda_5)) /$ $\!\! (1,\lambda_3)$ \\
 $F_4$ & $W(0001) /$ $\!\! 0000$ & $W(1000) /$ $\!\! W(0001)$ \\
 $C_4$ $(p \neq 2)$ & $0100$ & $2000 /$ $\!\! W(0001)$ \\
 $A_2^3$ & $(10,01,00) /$ $\!\! (00,10,01) /$ $\!\! (01,00,10)$ & $(W(11),00,00) /$ $\!\! (00,W(11),00) /$ $\!\! (00,00,W(11)) /$ $\!\! (10,10,10) /$ $\!\! (01,01,01)$ \\
 $A_2 G_2$ & $(10,W(10)) /$ $\!\! (W(02),00)$ & $(W(11),W(10)) /$ $\!\! (W(11),00) /$ $\!\! (00,W(01))$  \\
 $G_2$ ($2$ classes; $p \neq 7)$ & $W(20)$ & $W(01) /$ $\!\! W(11)$ \\
 $A_2$ ($2$ classes; $p \geq 5)$ & $W(22)$ & $11 /$ $\!\! 41 /$ $\!\! 14$ \\

\hline

\end{longtable*}

$G = E_7$
\vspace{-0.15cm}
\begin{longtable*}{p{0.16\textwidth - 2\tabcolsep}>{\raggedright\arraybackslash}p{0.34\textwidth-2\tabcolsep}>{\raggedright\arraybackslash}p{0.5\textwidth-\tabcolsep}@{}}

\hline
$M$ & Comp. factors of $V_{56} \downarrow M$ & Comp. factors of $L(E_7) \downarrow M$ \\

\hline
$\bar{A}_1 D_6$ & $(1,\lambda_1) /$ $\!\! (0,\lambda_5)$ & $(W(2),0) /$ $\!\! (0,W(\lambda_2)) /$ $\!\! (1,\lambda_6)$ \\
 $A_7$ & $\lambda_2 /$ $\!\! \lambda_6$ & $W(\lambda_1 + \lambda_7) /$ $\!\! \lambda_4$ \\
 $A_2 A_5$ & $(10,\lambda_1) /$ $\!\! (01,\lambda_5) /$ $\!\! (00,\lambda_3)$ & $(W(11),0) /$ $\!\! (00,W(\lambda_1 + \lambda_5)) /$ $\!\! (10,\lambda_4) /$ $\!\! (01,\lambda_2)$ \\
 $G_2 C_3$ & $(W(10),100) /$ $\!\! (00,W(001))$ & $(W(10),W(010)) /$ $\!\! (W(01),000) /$ $\!\! (00,W(200))$  \\
$A_1 G_2$ $(p \neq 2)$ & $(1,W(01)) /$ $\!\! (W(3),10)$ & $(2,00) /$ $\!\! (0,W(01)) /$ $\!\! (W(4),10) /$ $\!\! (2,W(20))$ \\
$A_1 F_4$ & $(1,W(0001)) /$ $\!\! (W(3),0000)$ & $(W(2),0000) /$ $\!\! (0,W(1000)) /$ $\!\! (W(2),W(0001))$  \\
 $A_2$ $(p \geq 5)$ & $W(60) /$ $\!\! W(06)$ & $W(44) /$ $\!\! 11$ \\
$A_1 A_1$ $(p \geq 5)$ & $(W(6),3) /$ $\!\! (4,1) /$ $\!\! (2,W(5))$ & $(2,0) /$ $\!\! (0,2) /$ $\!\! (2,W(8)) /$ $\!\! (W(6),4) /$ $\!\! (4,W(6)) /$ $\!\! (4,2) /$ $\!\! (2,4)$ \\
$A_1$ $(p \geq 17)$ & $W(21) / 15 / 11 / 5$ & $W(26) /$ $\!\! W(22) /$ $\!\! W(18) /$ $\!\! 16 /$ $\!\! 14 /$ $\!\! 10^2 /$ $\!\! 6 /$ $\!\! 2 $ \\
$A_1$ $(p \geq 19)$ & $W(27) / 17 / 9$  & $W(34) /$ $\!\! W(26) /$ $\!\! W(22) /$ $\!\! 18 /$ $\!\! 14 /$ $\!\! 10 /$ $\!\! 2$ \\
\hline

\end{longtable*}

$G = E_8$
\vspace{-0.15cm}
\begin{longtable*}{p{0.16\textwidth - 2\tabcolsep}>{\raggedright\arraybackslash}p{0.84\textwidth-\tabcolsep}@{}}

\hline
$M$ & Comp. factors of $L(E_8) \downarrow M$ \\

\hline

 $D_8$ & $W(\lambda_2) /$ $\!\! \lambda_7$  \\
 $A_8$ & $W(\lambda_1 + \lambda_8) /$ $\!\! \lambda_3 /$ $\!\! \lambda_5$ \\
$\bar{A}_1 E_7$ & $(W(2),0) /$ $\!\! (0,\lambda_1) /$ $\!\! (1, \lambda_7)$ \\
 $A_2 E_6$  & $(W(11),0) /$ $\!\!(00,W(\lambda_2)) /$ $\!\! (10,\lambda_6) /$ $\!\! (01,\lambda_1)$  \\
 $A_4^2$  & $(W(1001),0000) /$ $\!\! (0000,W(1001)) /$ $\!\! (1000,0100) /$ $\!\! (0001,0010) /$ $\!\! (0100,0001) /$ $\!\! (0010,1000)$ \\
 $G_2 F_4$  & $(W(10),W(0001)) /$ $\!\! (W(01),0000) /$ $\!\! (00,W(1000))$ \\
 $B_2$ $(p \geq 5)$  & $02 /$ $\!\! W(06) /$ $\!\! W(32)$ \\ 
$A_1 A_2$ $(p \geq 5)$ & $(W(6),11) /$ $\!\! (W(2),W(22)) /$ $\!\! (4,30) /$ $\!\! (4,03) /$ $\!\! (2,00) /$ $\!\! (0,11)$ \\
$A_1$ $(p \geq 23)$ & $W(38) /$ $\!\! W(34) /$ $\!\! W(28) /$ $\!\! W(26) /$ $\!\! 22^2 /$ $\!\! 18 /$ $\!\! 16 /$ $\!\! 14 /$ $\!\! 10 /$ $\!\! 6 /$ $\!\! 2$ \\
$A_1$ $(p \geq 29)$ & $W(46) /$ $\!\! W(38) /$ $\!\! W(34) /$ $\!\! 28 /$ $\!\! 26 /$ $\!\! 22 /$ $\!\! 18 /$ $\!\! 14 /$ $\!\! 10 /$ $\!\! 2$ \\
$A_1$ $(p \geq 31)$ & $W(58) /$ $\!\! W(46) /$ $\!\! W(38) /$ $\!\! W(34) /$ $\!\! 26 /$ $\!\! 22 /$ $\!\! 14 /$ $\!\! 2$ \\

\hline
\end{longtable*}

Note that in the cases in Theorem \ref{maximalexcep} where $M$ is of maximal rank, the composition factors are not given in \cite{LS1} but can be found in \cite[Lemmas 11.2(iii), 11.8, 11.10, 11.11, 11.12(ii)]{LS9}; moreover, for $(G,M) = (E_6,A_2^3)$, $(E_7, A_2 A_5)$, $(E_8,D_8)$ and $(E_8,A_4^2)$ we have made a choice of simple system within each factor. 

We next recall the algorithm of Borel and de Siebenthal, described in  \cite[14.2]{don}. The algorithm describes a way of using the extended Dynkin diagram to find subsystem subgroups of $G$. When $G$ is exceptional it finds all such subsystem subgroups unless $(G,p) = (G_2,3)$ or $(F_4,2)$, in which case certain subgroups containing short root subgroups need to be added. The subgroups of maximal rank in Theorem \ref{maximalexcep} come from this algorithm, although a maximal rank subsystem subgroup need not be maximal amongst connected subgroups. For example, we have the following inclusion of maximal rank subsystem subgroups $A_1^8 < A_1^4 D_4 <  D_4^2 < D_8 < E_8$. Throughout the proofs of Theorems \ref{thmG2} to \ref{thmE8} we will implicitly make use of this algorithm to describe maximal rank subsystem subgroups. 

For the following lemmas let $G$ be a semisimple connected algebraic group. We describe some elementary results about $G$-irreducible subgroups. 

\begin{lem} [{\cite[Lemma 2.1]{LT}}] \label{semirr}
If $X$ is a $G$-irreducible connected subgroup of $G$, then $X$ is semisimple and $C_G(X)$ is a finite subgroup.   
\end{lem}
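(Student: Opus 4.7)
The plan is to prove each of the two conclusions by contradiction, in each case producing a proper parabolic of $G$ containing $X$ and so contradicting $G$-irreducibility. The two main tools I would use are the Borel--Tits theorem (any connected unipotent subgroup $U$ of $G$ satisfies $N_G(U) \leq P$ for some parabolic $P$ with $U \leq R_u(P)$) and the fact that the centraliser of a nontrivial torus in a semisimple group is a proper Levi subgroup---the properness coming from $Z(G)$ being finite, so a nontrivial torus is never central.

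First I would show $X$ is semisimple. Suppose not; then the radical $R(X)$ is a nontrivial closed connected solvable normal subgroup of $X$, and I would split into two subcases. If $R_u(X) \neq 1$, then since $R_u(X)$ is characteristic in $X$ we have $X \leq N_G(R_u(X))$, and Borel--Tits places this normaliser inside a proper parabolic subgroup of $G$. Otherwise $X$ is reductive but not semisimple, so $S := Z(X)^\circ$ is a nontrivial torus, and then $X \leq C_G(S)$, which is a proper Levi subgroup of some parabolic. Either subcase contradicts irreducibility.

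Next I would show $C_G(X)$ is finite. Assume instead that $H := C_G(X)^\circ$ is positive-dimensional. If $H$ contains a nontrivial torus $S$ then $X \leq C_G(S)$ as above, giving a proper Levi overgroup. If not, then $H$ is unipotent (a connected algebraic group whose maximal torus is trivial must be unipotent, as the reductive quotient $H/R_u(H)$ would otherwise contain a nontrivial torus that lifts to $H$); since $X$ centralises and hence normalises $H$, Borel--Tits again yields $X \leq N_G(H) \leq P$ for a proper parabolic $P$, a contradiction.

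The only mild obstacle is ensuring the two subcases in each half genuinely exhaust the possibilities, in particular the structural fact that a nontrivial connected subgroup containing no nontrivial torus is unipotent. Beyond this the argument is a direct application of Borel--Tits together with the description of Levi subgroups as centralisers of tori, so no delicate calculation is required.
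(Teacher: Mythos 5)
Your argument is correct: both halves reduce to producing a proper parabolic overgroup via Borel--Tits (for a nontrivial connected unipotent normal or centralised subgroup) or via the fact that the centraliser of a non-central torus is a proper Levi, and the exhaustiveness of your subcases is justified as you indicate. The paper does not prove this lemma itself but imports it from \cite[Lemma 2.1]{LT}, and the proof given there runs along essentially the same lines, so there is nothing to add.
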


\begin{lem} [{\cite[Lemma 3.6]{tho1}}] \label{easy}
Suppose a $G$-irreducible connected subgroup $X$ is contained in $K_1 K_2$, a commuting product of connected non-trivial subgroups $K_1$, $K_2$ of $G$. Then $X$ has a non-trivial projection to both $K_1$ and $K_2$. Moreover, each projection is a $K_i$-irreducible subgroup. 
\end{lem}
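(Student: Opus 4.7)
The plan is to treat the two assertions separately, both relying on the characterisation of $G$-irreducibility via parabolic subgroups and on Lemma \ref{semirr}.

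For the first part, suppose for contradiction that $X$ has trivial projection to $K_1$. Then every $x \in X$, written as $x = k_1 k_2$ with $k_i \in K_i$, has $K_1$-component lying in $K_1 \cap K_2$, so $X \subseteq K_2$. Since $K_1$ and $K_2$ commute, this forces $K_1 \subseteq C_G(X)$. But $X$ is $G$-irreducible, so by Lemma \ref{semirr} the centralizer $C_G(X)$ is finite; combined with $K_1$ being connected and non-trivial, this is a contradiction.

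For the second part, suppose instead that $\pi_1(X)$ is contained in a proper parabolic $P_1$ of $K_1$. I would use the cocharacter description of parabolic subgroups: write $P_1 = P_{K_1}(\lambda)$ for a cocharacter $\lambda \colon \mathbb{G}_m \to K_1$, which is non-central in $K_1$ because $P_1 \neq K_1$. Composing with the inclusion $K_1 \hookrightarrow G$, view $\lambda$ as a cocharacter of $G$ and set
\[
P := P_G(\lambda) = \bigl\{ g \in G : \lim_{t \to 0} \lambda(t)\, g\, \lambda(t)^{-1} \text{ exists} \bigr\}.
\]
This is a parabolic subgroup of $G$, and it is proper: any element of $K_1$ that fails to commute with $\lambda(\mathbb{G}_m)$ witnesses this inside $G$ as well. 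By construction $P_1 \subseteq P$, and because $K_1$ and $K_2$ commute we have $K_2 \subseteq C_G(\lambda) \subseteq P$. For any $x \in X$ written $x = k_1 k_2$ with $k_i \in K_i$, the factor $k_1$ lies in $\pi_1(X) \subseteq P_1 \subseteq P$ (up to the finite intersection $K_1 \cap K_2 \subseteq P$) and $k_2 \in K_2 \subseteq P$, so $x \in P$. Therefore $X \subseteq P$, contradicting $G$-irreducibility.

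The main technical step is producing a single proper parabolic of $G$ that simultaneously captures the two projections: the containment of $\pi_1(X) \subseteq P_1 \subset K_1$ in a parabolic of $G$ is not automatic, but the cocharacter formalism accomplishes this cleanly, since a cocharacter defining $P_1$ inside $K_1$ automatically defines a parabolic of $G$ whose Levi already contains $K_2$. This is the only place where any care is required; the non-triviality of projections is almost immediate from Lemma \ref{semirr}.
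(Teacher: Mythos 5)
Your proof is correct. The first assertion matches the cited argument exactly: trivial projection to $K_1$ gives $X \leq K_2$, hence $K_1 \leq C_G(X)$, contradicting the finiteness of $C_G(X)$ from Lemma \ref{semirr}. For the second assertion you take a genuinely different route from the proof in \cite[Lemma 3.6]{tho1} (the present paper only cites the result). There, if the projection to $K_1$ lies in a proper parabolic $P_1$ of $K_1$, one observes that $X \leq P_1 K_2$ normalises the non-trivial connected unipotent group $R_u(P_1)$, so the Borel--Tits theorem places $N_G(R_u(P_1))$, and hence $X$, in a proper parabolic of $G$ --- the same device invoked in Section \ref{strat}. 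You instead realise $P_1$ as $P_{K_1}(\lambda)$ for a cocharacter $\lambda$ of $K_1$ and pass to $P_G(\lambda)$, which is proper since $\lambda$ is non-central in $G$, contains $P_1$, and contains $K_2 \leq C_G(\lambda(\mathbb{G}_m)) = L_G(\lambda)$. This avoids Borel--Tits altogether, produces the parabolic of $G$ explicitly together with the extra fact that $K_2$ lies in its Levi subgroup, and is the natural argument in the $R$-parabolic framework. Two cosmetic remarks: properness of $P_G(\lambda)$ is most cleanly witnessed by an element $g \in K_1$ for which $\lim_{t \to 0} \lambda(t) g \lambda(t)^{-1}$ fails to exist (for instance a non-identity element of $R_u(P_{K_1}(-\lambda))$, whose limit cannot exist in $G$ because $K_1$ is closed), rather than merely by an element failing to commute with $\lambda(\mathbb{G}_m)$; and your use of the cocharacter description of parabolic subgroups of $K_1$, like the notion of $K_i$-irreducibility itself, tacitly assumes the $K_i$ are reductive, which holds in every application here. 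Neither point affects the correctness of the argument.
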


We now need some results that allow us to deduce whether or not a subgroup  $A_1$ of $G$ is $G$-irreducible. The first result allows us to do that when $G$ is a classical simple group. Recall that if $G$ is not of type $A_n$, then $G$ has a natural non-degenerate bilinear form on $V_{G}(\lambda_1)$ (noting that we are factoring out the radical when $(G,p)$ is $(B_n,2)$) and the notation $V = V_1 \perp V_2$ denotes an orthogonal decomposition with respect to this form. 

\begin{lem}[{\cite[Lemma 2.2]{LT}}] \label{class}
Suppose $G$ is a classical simple algebraic group, with natural module $V = V_G(\lambda_1)$. Let $X$ be a semisimple connected closed subgroup of $G$. If $X$ is $G$-irreducible then one of the following holds:

\begin{enumerate}[label=\normalfont(\roman*),leftmargin=*,widest=iii, align=left]

\item $G = A_n$ and $X$ is irreducible on $V$. 

\item $G = B_n, C_n$ or $D_n$ and $V \downarrow X = V_1 \perp \ldots \perp V_k$ with the $V_i$ all non-degenerate, irreducible and inequivalent as $X$-modules.  

\item $G = D_n$, $p=2$, $X$ fixes a non-singular vector $v \in V$, and $X$ is a $G_v$-irreducible subgroup of $G_v = B_{n-1}$.
\end{enumerate}
\end{lem}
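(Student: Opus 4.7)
The plan is to proceed by a case analysis on the type of $G$, exploiting the standard descriptions of the parabolic subgroups of classical groups.

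For $G = A_n = SL(V)$, every parabolic is the stabilizer of a flag of subspaces of $V$, so $X$ is $G$-reducible if and only if $X$ preserves some proper nonzero subspace. This immediately gives (i).

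For $G$ of type $B_n$, $C_n$, or $D_n$ with $p\neq 2$, the parabolics are stabilizers of flags of totally singular subspaces, and crucially ``isotropic'' and ``singular'' coincide. For any $X$-submodule $W \leq V$, the radical $W \cap W^\perp$ is totally singular and $X$-invariant, hence zero by $G$-irreducibility. Therefore every $X$-submodule is non-degenerate, and inductively $V$ decomposes as an orthogonal direct sum $V = V_1 \perp \cdots \perp V_k$ of irreducible non-degenerate $X$-submodules. Each $V_i$ is self-dual as an $X$-module since it carries a non-degenerate $X$-invariant form, so by Schur's lemma the space of $X$-invariant bilinear forms on $V_i$ is at most one-dimensional. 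If some pair $V_i, V_j$ ($i\neq j$) were $X$-isomorphic via $\phi: V_i \to V_j$, one could rescale $\phi$ so that the graph $\{v + \phi(v) : v \in V_i\}$ is a nonzero totally singular $X$-invariant subspace, a contradiction. This yields (ii).

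The delicate case is $G = D_n$ with $p = 2$, where isotropic subspaces (with respect to the alternating form $b$) need not be totally singular (with respect to the quadratic form $Q$). Suppose there is a nonzero $X$-invariant isotropic subspace $W' \leq V$ that fails to be totally singular. Since $b \equiv 0$ on $W'$, the restriction $Q|_{W'}: W' \to K$ is additive and satisfies $Q(\lambda w) = \lambda^2 Q(w)$, so it is Frobenius-semilinear. Its kernel is a totally singular $X$-invariant subspace, hence zero. A short linear algebra argument over the algebraically closed field $K$ shows that a nonzero Frobenius-semilinear map out of a space of dimension $\geq 2$ into $K$ always has nontrivial kernel, so $\dim W' = 1$. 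Since the connected semisimple group $X$ has no non-trivial characters, it fixes any generator $v$ of $W'$ pointwise; such $v$ is non-singular, so $X \leq G_v = B_{n-1}$. Finally, any proper parabolic of $G_v$ stabilizes a nonzero proper totally singular subspace $U$ of the natural $B_{n-1}$-module (which may be identified with $v^\perp$), and $U$ remains totally singular in $V$, hence is stabilized by a proper parabolic of $G$. Thus $X$ is $G_v$-irreducible, yielding (iii). If instead every $X$-invariant isotropic subspace is totally singular then the argument of the previous paragraph applies verbatim to give (ii).

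The main obstacle is the $D_n$, $p=2$ case: one must separate the behavior of the bilinear and quadratic forms carefully, use the semilinearity of $Q$ restricted to an isotropic subspace to force the dimension down to $1$, and then verify that a parabolic of $G_v$ pulls back to a parabolic of $G$ by noting that totally singular subspaces of the $B_{n-1}$-natural module are automatically totally singular in $V$.
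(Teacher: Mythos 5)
The paper does not prove this statement at all: it is quoted verbatim as \cite[Lemma 2.2]{LT} and used as a black box, so there is no in-paper argument to compare against. Your proposal reconstructs what is essentially the standard Liebeck--Testerman argument: parabolic subgroups of classical groups are (contained in) stabilisers of nonzero totally singular subspaces, so $G$-irreducibility forbids invariant totally singular subspaces; this forces every invariant radical $W\cap W^\perp$ to vanish, giving the orthogonal decomposition, and the Schur-plus-rescaling argument correctly rules out isomorphic summands (the graph of a suitably scaled isomorphism would be invariant and totally singular). Your treatment of $D_n$ in characteristic $2$ is the right one and is correct: the restriction of $Q$ to a $b$-isotropic invariant subspace is Frobenius-semilinear, its kernel is an invariant totally singular subspace, and the codimension-one property of semilinear functionals forces dimension $1$, a fixed non-singular vector, and $G_v$-irreducibility (the identification of the natural $G_v$-module with $v^\perp$, so that its totally singular subspaces are totally singular in $V$, is the clean way to close that loop).

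The one genuine gap is that your case division does not cover $B_n$ and $C_n$ in characteristic $2$: your conclusion (ii) paragraph is explicitly restricted to $p\neq 2$, and your characteristic-$2$ analysis is explicitly restricted to $D_n$. For $C_n$ this is harmless --- there is no quadratic form, parabolics are stabilisers of totally isotropic subspaces of the symplectic form, and your paragraph for (ii) goes through verbatim in any characteristic (the rescaling $\lambda^2=1/c$ still has a solution as $K$ is algebraically closed). For $B_n$ with $p=2$ you must first say which module you mean: either work with the $(2n+1)$-dimensional orthogonal (Weyl) module, where the bilinear form has a one-dimensional radical and the singular/isotropic distinction reappears, or pass through the special isogeny $B_n\to C_n$ identifying $V_G(\lambda_1)$ with the $2n$-dimensional symplectic module and reduce to the $C_n$ case; either route works, but as written neither is carried out, and the paper does apply the lemma to $B_4$ with $p=2$. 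Adding a sentence to dispose of these two subcases would make the proof complete.
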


Applications of this lemma often implicitly invoke some facts about the representation theory of $X$. For instance, suppose that $X$ is of type $A_1$ and that $n < p$. Then we implicitly use that the dimension of $V = V_{X}(n)$ is $n+1$; and that $X$ preserves a symplectic form on $V$ when $n$ is odd and preserves an orthogonal form when $n$ is even.     

The next lemma and corollary are used heavily in the proofs of Theorems \ref{thmG2} to \ref{thmE8} to show a subgroup is $G$-irreducible for a simple exceptional algebraic group $G$. 

\begin{lem}  [{\cite[Lemma 3.8]{tho1}}] \label{wrongcomps}
Let $X$ be a semisimple connected subgroup of $G$ and let $V$ be a $G$-module. Suppose that $X$ does not have the same composition factors as any semisimple connected subgroup $H$ of the same type as $X$ with $H \leq L'$ and $L$-irreducible, for some proper Levi subgroup $L$ of $G$. If $X$ is of type $B_n$ and $p=2$ then assume further that there is no subgroup $H$ of type $C_n$ with $H \leq L'$ and $L$-irreducible, for some Levi subgroup $L$ of $G$, such that there is an isogeny $\phi: X \rightarrow H$ inducing a mapping which takes the composition factors of $V \downarrow X$ to those of $V \downarrow H$. Then $X$ is $G$-irreducible.  
\end{lem}

\begin{cor}  [{\cite[Corollary 3.9]{tho1}}] \label{notrivs}
Suppose $X < G$ is semisimple and $L(G) \downarrow X$ has no trivial composition factors. Then $X$ is $G$-irreducible. 
\end{cor}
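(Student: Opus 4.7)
The plan is to deduce the statement directly from Lemma \ref{wrongcomps} applied with $V = L(G)$. Under this substitution, it suffices to exhibit, for every subgroup $H$ satisfying conditions (i) and (ii) of Lemma \ref{wrongcomps}, at least one trivial composition factor in $L(G) \downarrow H$. Once that is done, the hypothesis that $L(G) \downarrow X$ has no trivial composition factors immediately forces $L(G) \downarrow X$ and $L(G) \downarrow H$ to differ, and Lemma \ref{wrongcomps} concludes that $X$ is $G$-irreducible. So the entire content of the proof is the production of a trivial composition factor in $L(G) \downarrow H$ whenever $H$ lies inside the derived subgroup $L'$ of a proper Levi $L$ of $G$.

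To supply that trivial composition factor, I would use the standard decomposition of $L(G)$ relative to a parabolic $P = QL$ in which $L$ sits as a Levi, namely
\[
L(G) = L(Q^-) \oplus L(L) \oplus L(Q)
\]
as an $L$-module, arising from the root-space decomposition of $L(G)$ with respect to a maximal torus contained in $L$. Because $P$ is a \emph{proper} parabolic, $L$ is defined by a proper subset $J \subsetneq \Pi$ of the simple roots, and so $Z(L)^\circ$ is a torus of dimension $|\Pi \setminus J| \geq 1$. The subspace $L(Z(L)^\circ) \subseteq L(L)$ is acted on trivially by $L$ (the centre of $L$ acts trivially on its own Lie algebra), hence trivially by $L'$ and by any subgroup $H \leq L'$. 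This produces at least $\dim Z(L)^\circ \geq 1$ trivial composition factors in $L(G) \downarrow H$, as required.

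The only step requiring care is the characteristic-independence of the observation about trivial factors. I would avoid any appeal to the clean decomposition $L(L) = L(L') \oplus L(Z(L)^\circ)$ (which could be subtle in bad characteristic) by working instead with weights for the central torus $Z(L)^\circ$: the zero-weight space of the $Z(L)^\circ$-action on $L(G)$ contains $L(Z(L)^\circ)$, and any $Z(L)^\circ$-trivial subspace lying inside $L(L)$ is also $L'$-trivial because $L'$ centralises $Z(L)^\circ$, so the trivial factors descend to any $H \leq L'$ in every characteristic. I do not anticipate a genuine obstacle; the main conceptual point is simply recognising that a proper Levi always contributes the required trivial composition factors, after which Lemma \ref{wrongcomps} delivers the corollary immediately.
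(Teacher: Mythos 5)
Your proposal is correct and is essentially the argument behind the cited result: one applies Lemma \ref{wrongcomps} with $V = L(G)$ after observing that for any proper Levi subgroup $L$ the positive-dimensional central torus $Z(L)^\circ$ contributes $L(Z(L)^\circ)$ as a subspace of $L(G)$ on which $L'$, and hence any $H \leq L'$, acts trivially. Your extra care about not invoking the splitting $L(L) = L(L') \oplus L(Z(L)^\circ)$ in bad characteristic is sensible but does not change the substance of the proof.
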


\begin{proof}
Suppose $X$ is $G$-reducible. Then by Lemma \ref{wrongcomps} (with $V = L(G)$) there exists a subgroup $H$ of some Levi subgroup $L_1$ such that the composition factors of $L(G) \downarrow H$ are the same as $L(G) \downarrow X$.  But $H < L_1$, so $L(G) \downarrow H$ has trivial composition factors coming from $L(Z(L_1))$, a contradiction. 
\end{proof}

The next result is \cite[Prop. 1.4]{LS7} with $S$ allowed to be any closed subgroup of $X$; the proof is the same. 

\begin{lem} \label{subspaces}
Let $X$ be a linear algebraic group over $K$ and let $S$ be a closed subgroup of $X$. Suppose $V$ is a finite-dimensional $X$-module satisfying the following conditions:

\begin{enumerate}[leftmargin=*,label=\normalfont(\roman*),widest=iii, align=left]
\item every $X$-composition factor of $V$ is an irreducible $S$-module;

\item for any $X$-composition factors $M$, $N$ of $V$, the restriction map $\mathrm{Ext}^1_X(M,N) \rightarrow \mathrm{Ext}^1_S(M,N)$ is injective;

\item for any $X$-composition factors $M,N$ of $V$, if $M \downarrow S \cong N \downarrow S$, then $M \cong N$ as $X$-modules.  

\end{enumerate}
Then $X$ and $S$ fix precisely the same subspaces of $V$. 
\end{lem}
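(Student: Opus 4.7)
The plan is to induct on $\dim V$. The base case, where $V$ is $X$-irreducible, is immediate from condition (i): $V$ is then also $S$-irreducible, and both $X$ and $S$ fix only $0$ and $V$. For the induction step, I would fix an $X$-irreducible submodule $M$ of $V$ and let $U$ be any $S$-invariant subspace. Because $M$ and $V/M$ inherit conditions (i)--(iii) (their composition factors being among those of $V$) and have strictly smaller dimension, the induction hypothesis applies to them, showing that $U \cap M$ is $X$-invariant in $M$ and $(U+M)/M$ is $X$-invariant in $V/M$. Hence $U \cap M \in \{0,M\}$ and $U+M$ is $X$-invariant in $V$. The cases $M \subseteq U$ and $U+M \subsetneq V$ are handled either by lifting through $V/M$ or by applying the induction hypothesis once more to $U+M$; we are left with the key case $V = U \oplus M$ as $S$-modules.

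In this reduced setting, writing $s : V/M \to V$ for the $S$-linear section with image $U$, the task becomes to show that $s$ is $X$-linear. My plan has two steps. Step (a): use (ii) to show that the extension $0 \to M \to V \to V/M \to 0$ splits as $X$-modules, producing an $X$-linear section $s'$. Step (b): use (iii) to show that the $S$-linear difference $\phi := s - s' : V/M \to M$ is actually $X$-linear, whence $s = s' + \phi$ is, and $U = s(V/M)$ is $X$-invariant.

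Step (b) is the Schur-theoretic heart. I would argue that $\ker\phi$ and $\mathrm{im}\,\phi$ are $S$-invariant subspaces of $V/M$ and $M$, so are $X$-invariant by the induction hypothesis. Since $M$ is $X$-irreducible, either $\phi = 0$ or $\phi$ induces an $S$-isomorphism $(V/M)/\ker\phi \cong M$. In the latter case, $(V/M)/\ker\phi$ is an $X$-module whose restriction to $S$ is the $S$-irreducible module $M \!\downarrow\! S$ (using (i)), so it is itself $X$-irreducible and hence an $X$-composition factor of $V$. Hypothesis (iii) then yields $(V/M)/\ker\phi \cong M$ as $X$-modules, and Schur's lemma over the algebraically closed field $K$ (so $\mathrm{End}_S(M) = K$) forces the $S$-isomorphism to be a scalar multiple of any $X$-isomorphism, hence $X$-linear.

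The main obstacle is step (a): hypothesis (ii) provides $\mathrm{Ext}^1_X \hookrightarrow \mathrm{Ext}^1_S$ injectivity only for pairs of composition factors, whereas $V/M$ may have larger composition length. I plan to establish the required injectivity $\mathrm{Ext}^1_X(V/M, M) \hookrightarrow \mathrm{Ext}^1_S(V/M, M)$ by d\'evissage on the $X$-composition length of $V/M$. Picking a short exact sequence $0 \to A \to V/M \to Y \to 0$ with $Y$ an $X$-irreducible quotient and comparing the long exact $\mathrm{Ext}$-sequences with target $M$ for $X$ and for $S$, the five-lemma reduces the inductive step to injectivity of $\mathrm{Ext}^1_X(Y,M) \to \mathrm{Ext}^1_S(Y,M)$ (from (ii)), injectivity of $\mathrm{Ext}^1_X(A,M) \to \mathrm{Ext}^1_S(A,M)$ (from the inductive hypothesis on composition length), and the equality $\mathrm{Hom}_X(A,M) = \mathrm{Hom}_S(A,M)$, which is proved by exactly the same Schur/(iii) argument as in step (b).
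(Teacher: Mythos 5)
The paper itself offers no proof of this lemma: it simply defers to \cite[Prop.\ 1.4]{LS7}, remarking that allowing $X$ to be semisimple rather than simple changes nothing. Your argument is correct and self-contained, and it runs along the same lines as the cited Liebeck--Seitz proof: induct on $\dim V$, reduce to the case where the $S$-invariant subspace $U$ is a complement to an $X$-irreducible submodule $M$, use (ii) to split the extension over $X$, and use (iii) plus Schur's lemma to show the $S$-splitting differs from the $X$-splitting by an $X$-homomorphism. The two places that genuinely need care are both handled: the d\'evissage extending (ii) from irreducible quotients to all of $V/M$ requires, in the four-lemma step, surjectivity of $\mathrm{Hom}_X(A,M)\to\mathrm{Hom}_S(A,M)$, which your Schur/(iii) argument supplies (the main induction hypothesis applied to $V/M$ and to $M$ makes $\ker\phi$ and $\mathrm{im}\,\phi$ $X$-invariant); and the application of (iii) is legitimate because $(V/M)/\ker\phi$ is an irreducible quotient of $V/M$ by an $X$-submodule, hence an $X$-composition factor of $V$, and its restriction to $S$ is irreducible by (i), so $\mathrm{End}_S$ of it is $K$ and the $S$-isomorphism to $M$ is a scalar multiple of an $X$-isomorphism.
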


The following well-known result will be used implicitly to prove certain extensions of $A_1$-modules exist. 

\begin{lem}[{\cite[Corollary 3.9]{Andersen}}] \label{h1fora1}
Suppose $X$ is an algebraic group of type $A_1$ and that $M$ is an irreducible $X$-module such that $H^{1}(X,M) \neq 0$. Then $M$ is a Frobenius twist of $(p-2) \otimes 1^{[1]}$ and $H^1(X,M) \cong K$.
\end{lem}

When considering the conjugacy class of a subgroup $A_1$ in an exceptional algebraic group, the following result is useful. We define a prime number $N(A_1,G)$ for each exceptional algebraic group $G$ as in the table below.  

\begin{longtable*}{>{\raggedleft\arraybackslash}p{0.1\textwidth - 2\tabcolsep}>{\raggedleft\arraybackslash}p{0.06\textwidth - 2\tabcolsep}>{\raggedleft\arraybackslash}p{0.06\textwidth - 2\tabcolsep}>{\raggedleft\arraybackslash}p{0.06\textwidth - 2\tabcolsep}>{\raggedleft\arraybackslash}p{0.06\textwidth - 2\tabcolsep}>{\raggedleft\arraybackslash}p{0.06\textwidth-\tabcolsep}@{}}

\hline

$G$ & $G_2$ & $F_4$ & $E_6$ & $E_7$ & $E_8$ \\

$N(A_1,G)$ & 3 & 3 & 5 & 7 & 7 \\

\hline

\end{longtable*}

\begin{thm}[{\cite[Theorem 4]{law}}] \label{A1samecomp}
Let $G$ be an exceptional algebraic group in characteristic $p > N(A_1,G)$ and $X_1$ and $X_2$ be $A_1$ subgroups of $G$ that have the same composition factors on $L(G)$. Then $X_1$ and $X_2$ are $G$-conjugate.  
\end{thm}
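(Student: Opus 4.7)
The plan is to reduce the conjugacy question to the classification of nilpotent $G$-orbits, via Jacobson--Morozov, exploiting that in characteristic $p > N(A_1,G)$ the restrictions of $L(G)$ to an $A_1$ subgroup have tame $A_1$-representation theory. Concretely, I would first invoke the fact that under the hypothesis $p > N(A_1,G)$, the bound is large enough to be a good prime for $G$ and to allow Jacobson--Morozov: every unipotent element of order $p$ embeds in an $A_1$ subgroup, and the resulting map induces a bijection between $G$-conjugacy classes of (non-trivial) $A_1$ subgroups and non-zero nilpotent $G$-orbits on $L(G)$. Each nilpotent orbit is determined by its weighted Dynkin diagram, i.e.\ the labels $(d_1,\ldots,d_\ell) \in \{0,1,2\}^\ell$ giving the weights of an associated cocharacter on the simple roots.

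Next I would link the weighted Dynkin diagram of $X$ to the composition factors of $L(G) \downarrow X$. Choose a maximal torus $T_X$ of $X$ compatible with the Jacobson--Morozov cocharacter; then the $T_X$-weight of the root vector $e_\alpha \in L(G)$ with $\alpha = \sum_i c_i \alpha_i$ equals $\sum_i c_i d_i$, and the $T_X$-weights on the Cartan $L(G)^T$ are zero. Thus the weight multiset of $T_X$ on $L(G)$ is a combinatorial invariant of the weighted Dynkin diagram, and in particular is fixed by the $G$-orbit. Conversely, using the explicit classification of $A_1$ subgroups for $p > N(A_1,G)$ (Lawther--Testerman), one checks case-by-case that every composition factor arising in $L(G) \downarrow X$ has highest weight $m < p$, so that $W_{A_1}(m) = V_{A_1}(m)$ is irreducible with weight multiset $\{m,m-2,\ldots,-m\}$. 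Hence the multiset of composition factors of $L(G) \downarrow X$ determines, and is determined by, the $T_X$-weight multiset on $L(G)$.

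Combining the two previous paragraphs: if $X_1$ and $X_2$ have the same composition factors on $L(G)$, then they give rise to the same weight multiset, hence to the same weighted Dynkin diagram, hence to the same nilpotent $G$-orbit (the weighted Dynkin diagram being a complete invariant of the orbit, in good characteristic), and therefore $X_1$ and $X_2$ are $G$-conjugate via the Jacobson--Morozov correspondence.

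I expect the main obstacle to lie in Step~2: verifying that the numerical bound $p > N(A_1,G)$ is actually sharp enough to force every composition factor of $L(G) \downarrow X$ to have highest weight less than $p$ for \emph{every} $A_1$ class $X$. The values $3,3,5,7,7$ for $G_2, F_4, E_6, E_7, E_8$ are precisely calibrated to this, but confirming it requires walking through the list of $A_1$ classes (e.g.\ those obtained as principal $A_1$'s of Levi factors, diagonal subgroups of maximal-rank subgroups, and the maximal $A_1$'s from Theorem~\ref{maximalexcep}) and recording the top weight appearing in $L(G) \downarrow X$. Once this tabulation is in hand, the remaining steps are formal.
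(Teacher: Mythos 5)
This statement is not proved in the paper at all: it is quoted verbatim from \cite[Theorem~4]{law} (the Lawther--Testerman memoir on $A_1$ subgroups of exceptional groups), so there is no internal proof to compare against. Judged on its own merits, your proposal has a fatal gap at its very first step. In characteristic $p>0$ there is \emph{no} bijection between $G$-conjugacy classes of non-trivial $A_1$ subgroups and non-zero nilpotent orbits on $L(G)$: Frobenius twists produce infinitely many pairwise non-conjugate $A_1$ subgroups (already in $G_2$ the diagonal subgroups $A_1\hookrightarrow A_1\tilde A_1$ via $(1^{[r]},1^{[s]})$ of Table~\ref{G2tab} form infinitely many classes, and the theorem is asserted for all of them), whereas $G$ has only finitely many nilpotent orbits. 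Consequently the reduction to weighted Dynkin diagrams with labels in $\{0,1,2\}$ collapses; the torus weights of such twisted subgroups on simple root vectors involve powers of $p$ and cannot be encoded by a classical weighted Dynkin diagram. A second, independent error is the claim that every composition factor of $L(G)\downarrow X$ has highest weight less than $p$: this fails badly (e.g.\ $E_8(\#10)$ has a factor coming from $W(28)$ with $p=17$, and the maximal $A_1$ of $E_8$ has a factor of high weight $58$ with $p\geq 31$). The direction you actually need --- that the multiset of composition factors determines the $T_X$-weight multiset --- does survive, via Steinberg's tensor product theorem, but it does not need restrictedness.

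The genuine proof replaces your nilpotent-orbit dictionary with the notion of a \emph{labelled diagram}: the dominant tuple of $T_X$-weights on the simple root vectors, with labels allowed to be arbitrary non-negative integers. The hard content (Theorem~1 of \cite{law}) is that for $p>N(A_1,G)$ two $A_1$ subgroups with the same labelled diagram are conjugate --- this is a substantial case analysis, not a formal consequence of Jacobson--Morozov, precisely because $N(A_1,G)<p$ is far below the Coxeter number in general. One must then also verify, again by inspection of the finitely many possible diagrams, that the weight multiset on $L(G)$ determines the labelled diagram. Your proposal treats both of these as automatic; neither is.
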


In the proofs of Theorems \ref{thmE7} and \ref{thmE8}, we use Lemma \ref{wrongcomps} to prove $A_1$ subgroups are $G$-irreducible when $p=2$ for $G = E_7$ and $E_8$. To do this, we need to know the $L'$-irreducible $A_1$ subgroups of Levi subgroups $L$ of $G$ when $p=2$, which we list in the following lemma. For a Levi subgroup $L$ such that all factors of $L' = L_1 \dots L_m$ are classical, we define $V_L$ to be the module $V_{L_1}(\lambda_1) \otimes \cdots \otimes V_{L_m}(\lambda_1)$.  

\begin{lem} \label{bada1p2}

Let $G=E_7$ or $E_8$ with $p=2$ and let $L$ be a proper Levi subgroup of $G$. Then the following table contains each $L$-irreducible subgroup $A_1$. 

\end{lem}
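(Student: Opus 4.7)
The plan is an enumeration. First I would write down, up to conjugacy, all Levi subgroups $L < G$ for $G=E_7$ and $G=E_8$; these are parameterised by Weyl-group orbits of subsets of the simple roots, and $L'$ is the commuting product of the simple factors corresponding to connected components of the resulting subdiagram. Given $L' = L_1 L_2 \cdots L_k$ with each $L_i$ simple, Lemma \ref{easy} reduces the problem: an $L'$-irreducible $A_1$ projects non-trivially to each $L_i$ as an $L_i$-irreducible $A_1$, and hence is a diagonal subgroup of $X_1 X_2 \cdots X_k$ with each $X_i$ an $L_i$-irreducible $A_1$. So the task splits into (a) determining, for each simple type $L_i$ occurring, the conjugacy classes of $L_i$-irreducible $A_1$ subgroups, and (b) listing the diagonal combinations.

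For classical factors $L_i$ I would apply Lemma \ref{class}. Since $p=2$, every irreducible $A_1$-module is a tensor product $1\otimes 1^{[r_1]}\otimes\cdots\otimes 1^{[r_{s-1}]}$ with $0<r_1<\cdots<r_{s-1}$, and hence has $2$-power dimension. Consequently an $A_1$ can act irreducibly on the natural module of $A_n$ only when $n+1$ is a power of $2$; within Levi subgroups of $E_7$ and $E_8$ this leaves only type $A_1$, $A_3$ and (for $E_8$) $A_7$ as $A$-type factors carrying irreducible $A_1$'s. For $D_n$ factors I would use cases (ii) and (iii) of Lemma \ref{class}: either decompose the natural module into non-degenerate, pairwise inequivalent irreducible $A_1$-summands, or pass to the point-stabiliser $B_{n-1}$ of a non-singular vector and recurse. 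Factors of types $B$, $C$, $G_2$ or $F_4$ do not appear as Levi factors in $E_7$ or $E_8$. For the exceptional factors $E_6$ (in $E_7$ or $E_8$) or $E_7$ (in $E_8$), I would quote the corresponding cases of Theorems \ref{thmG2}--\ref{thmE7} already established earlier in the paper; in particular, for the $E_8$ case of the lemma one may use the completed $E_7$ classification.

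Once the possible $X_i$ are known, the table is assembled by recording, for each factorisation, a diagonal $A_1$ via $(1^{[r_1]}, \ldots, 1^{[r_k]})$ normalised by $\min r_i = 0$ (a normalisation up to $G$-conjugacy using Weyl-group twists). The main obstacle will be the $D_n$ analysis in characteristic $2$: one must faithfully carry out both cases of Lemma \ref{class} without repetition, in particular accounting for the phenomenon flagged in the $B_4(\dagger)$ versus $B_4(\ddagger)$ discussion preceding the lemma, where two $D_n$-classes of $A_1$'s act irreducibly on the natural module and one must be careful which representative is chosen when the Levi sits inside $G$. A secondary bookkeeping task is to ensure that if a single $A_1$ is $L'$-irreducible for more than one Levi subgroup, it is listed only once, by convention under the smallest such $L'$, and that any restrictions on the Frobenius twists forced by $W(L)$-conjugacy within $W(G)$ are correctly recorded.
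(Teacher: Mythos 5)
Your proposal is correct and follows essentially the same route as the paper: reduce to diagonal subgroups of products of $L_i$-irreducible $A_1$'s via Lemma \ref{easy}, use Steinberg's tensor product theorem with Lemma \ref{class} to see that in characteristic $2$ the $A$-type factors must have $2$-power rank plus one and to handle the $D_n$ factors through cases (ii) and (iii), and quote the already-proved Theorems \ref{thmE6} and \ref{thmE7} for the exceptional factors. The only cosmetic difference is your invocation of the $B_4(\dagger)/B_4(\ddagger)$ distinction, which concerns the subsystem subgroup $D_8 < E_8$ rather than any Levi factor; the genuine multiplicity issue for Levis is the two triality-related classes acting as $1\otimes 1^{[r]}\otimes 1^{[s]}$ in $D_4$, which the paper records directly.
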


\begin{longtable}{p{0.1\textwidth - 2\tabcolsep}>{\raggedright\arraybackslash}p{0.9\textwidth-1\tabcolsep}}

\caption{$L$-irreducible $A_1$ subgroups of Levi factors $L$ of $E_7$ and $E_8$ when $p=2$} \\

\hline \noalign{\smallskip}

Levi $L'$ & $V_L \downarrow A_1$ \\

\hline \noalign{\smallskip}

$E_7$ & see Theorem \ref{thmE7} \\

$D_7$ & $0|(2 + 2^{[r]} + 2^{[s]} + 2^{[t]} + 2^{[u]} + 2^{[v]})|0$ ($0 < r < s < t < u < v$) \\

& $0|(2^{[r]} + 2^{[s]})|0 + 1^{[t]} \otimes 1^{[u]} \otimes 1^{[v]}$ ($rt=0$; $r < s$; $t < u < v$) \\

& $0|(2^{[r]} + 2^{[s]})|0 + 1^{[t]} \otimes 1^{[u]} + 1^{[v]} \otimes 1^{[w]}$ ($rt=0$; $r < s$; $t < u$; $t \leq v$; $v < w$)  \\

$D_6$ & $0|(2 + 2^{[r]} + 2^{[s]} + 2^{[t]} + 2^{[u]})|0$ ($0< r < s < t < u$) \\

& $0|(2^{[r]} + 2^{[s]} + 2^{[t]})|0 + 1^{[u]} \otimes 1^{[v]}$ ($ru=0$; $r < s < t$; $u < v$) \\

& $1^{[r]} \otimes 1^{[s]} \otimes 1^{[t]} + 1^{[u]} \otimes 1^{[v]}$  ($ru=0$; $r < s < t$; $u \leq v$) \\

& $1 \otimes 1^{[r]} + 1^{[s]} \otimes 1^{[t]} + 1^{[u]} \otimes 1^{[v]}$ ($s \leq \text{min}\{t,u,v\}$; $|\{0,r\}| + |\{s,t\}| + |\{u,v\}| \geq 5$; $|\{\{0,r\}, \{s,t\}, \{u,v\}\}|=3$)  \\

$A_1 D_5$ & $(1^{[r]}, 0|(2^{[s]} + 2^{[t]} + 2^{[u]} + 2^{[v]})|0)$ ($rs=0$; $s < t < u < v$)  \\

$D_5$ & $0|(2 + 2^{[r]} + 2^{[s]} + 2^{[t]})|0$ ($0 < r < s < t$)  \\

$A_1 D_4$ & $(1^{[r]}, 0|(2^{[s]} + 2^{[t]} + 2^{[u]})|0)$ ($rs=0$; $s < t < u$) \\

& $(1^{[r]}, 1^{[s]} \otimes 1^{[t]} \otimes 1^{[u]})$ ($rs=0$; $s < t < u$) \\

& $(1^{[r]}, 1^{[s]} \otimes 1^{[t]} + 1^{[u]} \otimes 1^{[v]})$ ($rs=0$; $s < t$; $u \leq v$; if $s=u$ then $t < v$) \\

$D_4$ & $0|(2 + 2^{[r]} + 2^{[s]})|0$ ($0 < r < s$)\\

& $1 \otimes 1^{[r]} \otimes 1^{[s]} $ ($2$ classes; $0 < r < s$) \\

& $1 \otimes 1^{[r]} + 1^{[s]} \otimes 1^{[t]}$ ($0 < r$; $s \leq t$; if $s=0$ then $r < t$) \\

$A_7$ & $1 \otimes 1^{[r]} \otimes 1^{[s]}$ ($0 < r < s$) \\

$A_3^2$ & $(1^{[r]} \otimes 1^{[s]}, 1^{[t]} \otimes 1^{[u]})$ ($rt=0$; $r < s$; $t < u$) \\

$A_1^2 A_3$ & $(1^{[r]},1^{[s]}, 1^{[t]} \otimes 1^{[u]})$ ($rst=0$; $t < u)$ \\

$A_1 A_3$ & $(1^{[r]},1^{[s]} \otimes 1^{[t]})$ $(s < t)$ \\

$A_1^4$ & $(1^{[r]},1^{[s]},1^{[t]},1^{[u]})$ $(rstu=0)$  \\

$A_3$ & $1 \otimes 1^{[r]}$ $(r \neq  0)$  \\

$A_1^3$ & $(1^{[r]},1^{[s]},1^{[t]})$ $(rst=0)$   \\

$A_1^2$ & $(1^{[r]},1^{[s]})$ $(rs=0)$ \\

$A_1$ & 1 \\

\hline

\end{longtable}

\begin{proof}

Let $L$ be a Levi subgroup of $G$. Write $L' = L_1 \dots L_m$ where each $L_i$ is a simple factor. Given the $L_i$-irreducible subgroups of type $A_1$, then all $L'$-irreducible subgroups of type $A_1$ follow, since they are just diagonal subgroups (by Lemma \ref{easy}). We therefore give a brief description of the $L_i$-irreducible subgroups of type $A_1$ to conclude the proof. 

Suppose $L_i$ is of classical type. We use Lemma \ref{class} to find all $L_i$-irreducible $A_1$ subgroups. First let $L_i \cong A_n$, then $V_n := V_{A_n}(\lambda_1) \downarrow A_1$ is irreducible. By Steinberg's Tensor Product Theorem, it follows that $V_n \downarrow A_1 = 1^{[r_1]} \otimes 1^{[r_2]} \otimes \dots \otimes 1^{[r_l]}$ for distinct $r_1, \ldots, r_l$, since $p=2$. Therefore, $A_n$ has an $A_n$-irreducible subgroup $A_1$ if and only if $n+1 = 2^{[k]}$ for some $k \geq 1$. Now let $L_i \cong D_n$ ($4 \leq n \leq 7$). In all cases $L_i$ has an $L_i$-irreducible subgroup $A_1$, acting as $0 | (2^{[r_1]} + \dots + 2^{[r_{n-1}]}) | 0$ on $V_{D_n}(\lambda_1)$, coming from part (iii) of Lemma \ref{class}. If $n=4$ or $6$ then $L_i$ has an $L_i$-irreducible subgroup $A_1$ acting as $1^{[r_1]} \otimes 1^{[r_2]} + \dots + 1^{[r_{n-1}]} \otimes 1^{[r_n]}$ on $V_{D_n}(\lambda_1)$. Finally, if $n=4$ then there are two further classes of $L_i$-irreducible $A_1$ subgroups, acting as $1 \otimes 1^{[r]} \otimes 1^{[s]}$ on $V_{D_4}(\lambda_1)$. 

Now suppose $L_i$ is of exceptional type and hence isomorphic to $E_6$ or $E_7$. We use Theorem \ref{thmE6} and \ref{thmE7}, respectively, to find the $L_i$-irreducible $A_1$ subgroups. We note that we are permitted to do this since we prove Theorems \ref{thmE6} to \ref{thmE8} successively and so are only using Theorem \ref{thmE6} and \ref{thmE7} after they have been proved.  In particular, there are no $E_6$-irreducible $A_1$ subgroups when $p=2$. All $E_7$-irreducible $A_1$ subgroups are contained in $\bar{A}_1 D_6$ when $p=2$ and are listed in Table~\ref{E8tab}. 
\end{proof}

\section{Strategy for the proof of Theorem \ref{mainthm}} \label{strat}

To prove Theorem \ref{mainthm} we prove Theorems \ref{thmG2} to \ref{thmE8} in Sections \ref{secG2} to \ref{secE8}, respectively and successively. In this section we describe the strategy used in proving Theorems \ref{thmG2} to \ref{thmE8}. Let $G$ be an exceptional algebraic group over an algebraically closed field of characteristic $p$. Suppose $X$ is a $G$-irreducible subgroup $A_1$ of $G$. Then $X$ is contained in a maximal connected subgroup $M$ of $G$. Since $X$ is $G$-irreducible, $M$ is reductive. Furthermore, $X$ is $M$-irreducible as any parabolic subgroup of $M$ is contained in a parabolic subgroup of $G$ by the Borel-Tits Theorem \cite{BT}. Therefore, $X$ is contained $M$-irreducibly in some reductive, maximal connected subgroup $M$ of $G$ and the following strategy finds all such $X$. 

Take a reductive, maximal connected subgroup $M$ from Theorem \ref{maximalexcep} and find all $M$-irreducible $A_1$ subgroups of $M$, up to $M$-conjugacy. To do this we use Lemma \ref{class} for classical simple components of $M$, and Theorems \ref{thmG2} to \ref{thmE7} for exceptional simple components of $M$ of smaller rank than $G = F_4, E_6, E_7, E_8$. For each class of $M$-irreducible $A_1$ subgroups $X$ we then check whether there exists another reductive, maximal connected subgroup containing $X$ that we have already considered. If there is, then we have already considered $X$ and are done. If not, we must then decide whether $X$ is $G$-irreducible or not. To do this we  heavily use Lemma \ref{wrongcomps} and Corollary \ref{notrivs}. To apply these results we must find the composition factors of the action of $X$ on the minimal or adjoint module. These can be found by restricting the composition factors of $M$ to $X$. This can be done for all $M$-irreducible $A_1$ subgroups and the composition factors for the $G$-irreducible ones can be found in Section \ref{tabs}, Tables \ref{G2tabcomps} to \ref{E8tabcomps}. To apply Lemma \ref{wrongcomps} we also need the composition factors for the action of the Levi subgroups of $G$ on the minimal and adjoint modules. These can be found in Appendix \ref{app}, Tables \ref{levig2} to \ref{levie8}. In most cases, an $M$-irreducible subgroup $A_1$ is $G$-irreducible: Corollary \ref{nongcr} lists the subgroups which are irreducible in every reductive, maximal connected overgroup yet $G$-reducible. To prove an $M$-irreducible subgroup $X$ is $G$-reducible can be difficult and can require precise knowledge of the action of $X$ on the minimal or adjoint module for $G$ as well as computation in Magma \cite{magma}.  

\section{Proof of Theorem \ref{thmG2}: $G_2$-irreducible $A_1$ subgroups} \label{secG2}

In this section we find the irreducible $A_1$ subgroups of $G_2$, proving Theorem \ref{thmG2} below. We note that Theorem \ref{thmG2} is \cite[Theorem 5.4]{bon} and can also be deduced from \cite[Theorem 1]{g2dav}. We give a proof for completeness and also to show how the strategy described in Section \ref{strat} works. Recall the notation $V_M$ from Section \ref{nota}. 

\begin{thm*} \label{thmG2}
Suppose $X$ is an irreducible subgroup $A_1$ of $G_2$. Then $X$ is conjugate to exactly one subgroup of Table \ref{G2tab} and each subgroup in Table \ref{G2tab} is irreducible. 
\end{thm*}

\begin{longtable}{>{\raggedright\arraybackslash}p{0.05\textwidth - 2\tabcolsep}>{\raggedright\arraybackslash}p{0.18\textwidth - 2\tabcolsep}>{\raggedright\arraybackslash}p{0.5\textwidth - 2\tabcolsep}>{\raggedright\arraybackslash}p{0.07\textwidth-\tabcolsep}@{}}

\caption{The $G_2$-irreducible $A_1$ subgroups of $G_2$ \label{G2tab}} \\

\hline

ID & $M$ & $V_M \downarrow X$ & $p$ \\

\hline

1 & $A_1 \tilde{A}_1$ & $(1^{[r]},1^{[s]})$ $(rs=0; r \neq s)$ & any \\

2 & & $(1,1)$ & $\geq 3$ \\

\hline

3 &  $A_1$ & 1 & $\geq 7$ \\

\hline

\end{longtable}

We note that Table \ref{G2tabcomps} gives the composition factors of $V_{7}$ and $L(G_2)$ restricted to each irreducible subgroup $A_1$ in Table \ref{G2tab}. 

\begin{proof}

The conjugacy classes of reductive, maximal connected subgroups $M$ of $G_2$ are listed in Theorem~\ref{maximalexcep}. They are $\bar{A}_2$, $\tilde{A}_2$ $(p=3)$, $A_1 \tilde{A}_1$ and $A_1$ $(p \geq 7)$. Let $X$ be an $M$-irreducible subgroup $A_1$ of $M$. 

First suppose $M = A_1 \tilde{A}_1$. By Lemma \ref{easy}, $X$ has non-trivial projection to both $A_1$ and $\tilde{A}_1$. Hence $X$ is a diagonal subgroup of $M$ and we have $X \hookrightarrow A_1 \tilde{A}_1$ via $(1^{[r]},1^{[s]})$ ($rs=0$; $r \neq s$) or $(1,1)$. The diagonal subgroups with distinct field twists are $G_2$-irreducible for all $p$. Indeed, to show $X$ via $(1^{[r]},1^{[s]})$ ($rs=0$; $r \neq s$) is $G_2$-irreducible we use Lemma \ref{wrongcomps}. From Table \ref{G2tabcomps}, the restriction of $V_{G_2}(10)$ to $X$ has a 4-dimensional composition factor, namely $1^{[r]} \otimes 1^{[s]}$ (since $r \neq s$). Neither a Levi subgroup $A_1$ nor a Levi subgroup $\tilde{A}_1$ has a 4-dimensional composition factor on $V_{G_2}(10)$ (the composition factors are listed in Table \ref{levig2}) and hence $X$ does not have the same composition factors as either Levi subgroup on $V_{G_2}(10)$. Therefore, $X$ is $G_2$-irreducible by Lemma \ref{wrongcomps}. 

Now consider $X$ via $(1,1)$. When $p > 3$, we see from Table \ref{G2tabcomps} that $X$ has no trivial composition factors on $L(G_2)$ and hence $X$ is $G_2$-irreducible by Corollary \ref{notrivs}. When $p=3$, we see that $X$ has two $3$-dimensional composition factors on $V_{G_2}(10)$. Neither $A_1$ nor $\tilde{A}_1$ has two $3$-dimensional composition factors on $V_{G_2}(10)$ and hence $X$ is $G_2$-irreducible by Lemma \ref{wrongcomps}. Finally, let $p=2$. Then $V_{G_2}(10) \downarrow A_1 \tilde{A}_1 = (1,1) + (0,2)$ and so $V_{G_2}(10) \downarrow X = (0|2|0) + 2$ and $X$ fixes a non-zero vector of $V_{G_2}(10)$. The stabiliser of this non-zero vector in $G_2$ is a parabolic subgroup. Indeed, $G_2$ is transitive on $1$-spaces of $V_{G_2}(10)$ by \cite[Theorem B]{lss} and the stabiliser of some $1$-space is a parabolic subgroup. Hence $X$ is $G_2$-reducible. 

Now suppose $M = \bar{A}_2$. The only irreducible subgroup $A_1$ of $\bar{A}_2$ is embedded via the representation with high weight $2$, when $p \neq 2$, by Lemma \ref{class} and therefore $X$ is such a subgroup $A_1$. By \cite[Table 10.3]{LS1}, we have $\bar{A}_2.2$ is contained in $G_2$. The subgroup $\bar{A}_2.2$ contains an involution $t$ such that $X$ is the centraliser of $t$ in $\bar{A}_2.2$. The centraliser in $G_2$ of $t$ is $A_1 \tilde{A}_1$, by \cite[Table~4.3.1]{gls3}. Therefore $X < A_1 \tilde{A}_1$ and has already been considered. In fact, $X$ is conjugate to $G_2(\#2)$.  

Now let $M = \tilde{A}_2$ $(p =3)$. Then as before, $X$ is embedded in $\tilde{A}_2$ via the representation of high weight $2$. The same argument as for $M = \bar{A}_2$ shows that $X$ is contained in $A_1 \tilde{A}_1$. In particular, $X$ is $G_2$-irreducible and conjugate to $G_2(\#1^{\{0,1\}})$. 

Finally, when $M = A_1$ $(p \geq 7)$ we have $X = M$ and hence $X$ is $G_2$-irreducible.  

To finish the proof of Theorem \ref{thmG2}, we use the composition factors in Table \ref{G2tabcomps} to check that $G_2(\#1)$, $G_2(\#2)$ and $G_2(\#3)$ are pairwise non-conjugate.  
\end{proof}

\section{Proof of Theorem \ref{thmF4}: $F_4$-irreducible $A_1$ subgroups} \label{secF4}

In this section, we classify all $F_4$-irreducible $A_1$ subgroups of $F_4$, proving Theorem \ref{thmF4}. 

\begin{thm*} \label{thmF4}
Suppose $X$ is an irreducible subgroup $A_1$ of $F_4$. Then $X$ is conjugate to exactly one subgroup of Table \ref{F4tab} and each subgroup in Table \ref{F4tab} is irreducible. 
\end{thm*}

\begin{longtable}{>{\raggedright\arraybackslash}p{0.05\textwidth - 2\tabcolsep}>{\raggedright\arraybackslash}p{0.18\textwidth - 2\tabcolsep}>{\raggedright\arraybackslash}p{0.5\textwidth - 2\tabcolsep}>{\raggedright\arraybackslash}p{0.07\textwidth-\tabcolsep}@{}}

\caption{The $F_4$-irreducible $A_1$ subgroups of $F_4$ \label{F4tab}} \\

\hline

ID & $M$ & $V_M \downarrow X$ & $p$ \\

\hline

1 & $B_4$ & $1 \otimes 1^{[r]} + 1^{[s]} \otimes 1^{[t]} + 0$ $(0 < r < s < t)$ & any \\

2 & & $2^{[r]} + 2^{[s]} + 1^{[t]} \otimes 1^{[u]}$  ($rt=0$; $r < s$; $t < u$) & $=2$ \\

3 & & $2 + 2^{[r]} + 2^{[s]} + 2^{[t]}$  ($0 <  r < s < t$) & $=2$  \\

4 & & $2 + 2^{[r]} + 2^{[s]}$  $(0 <  r < s)$ & $\geq 3$ \\

5 & & $2 \otimes 2^{[r]}$ $(r \neq 0)$ & $\geq 3$ \\

6 & & $4^{[r]} + 1^{[s]} \otimes 1^{[t]}$  ($rs=0$; $s  \leq t$) & $\geq 5$  \\

7 & & $8$ & $\geq 11$ \\ 

\hline

8 &  $\bar{A}_1 C_3$ $(p \neq 2)$ & $(1^{[r]},5^{[s]})$ ($rs=0$) &  $\geq 7$ \\

9 & & $(1^{[r]},2^{[s]} \otimes 1^{[t]})$ $(rst=0; s \neq t)$ & $\geq 3$ \\

\hline

10 & $A_1$ & $1$ & $\geq 13$ \\

\hline

11 & $A_1 G_2 $ & $(1^{[r]}, G_2(\#3)^{[s]})$  ($rs=0$; $r \neq s$) & $\geq 7$ \\

\hline

\end{longtable}

The composition factors of $V_{26}$ and $L(F_4)$ restricted to each irreducible subgroup $A_1$ in Table \ref{F4tab} are found in Table \ref{F4tabcomps}.

\begin{proof}

The conjugacy classes of reductive, maximal connected subgroups $M$ of $F_4$ are listed in Theorem~\ref{maximalexcep}. They are $B_4$, $C_4$ $(p=2)$, $\bar{A}_1 C_3$ $(p \neq 2)$, $A_1 G_2$ $(p \neq 2)$, $A_2 \tilde{A}_2$, $G_2$ $(p=7$) and $A_1$ $(p \geq 13)$. Let $X$ be an $M$-irreducible subgroup $A_1$ of $M$. 

Firstly, let $M = B_4$. The $M$-irreducible $A_1$ subgroups are straightforward to find, using Lemma \ref{class}. They are the subgroups $F_4(\#1)$--$F_4(\#7)$ listed in Table \ref{F4tab} (without the constraints imposed on the field twists) as well as the subgroups $Y_1$ and $Y_2$ acting as $3^{[r]} \otimes 1^{[s]} + 0$ ($p \geq 5$) and $1 \otimes 1^{[r]} \otimes 1^{[s]}$ $(p=2; 0<r<s)$ on $V_{B_4}(\lambda_1)$, respectively. Note that $Y_1, Y_2 < D_4 < B_4$ and are conjugate by a triality automorphism to subgroups acting on $V_{B_4}(\lambda_1)$ as $4^{[r]} + 2^{[s]} + 0$ or $0 | (2 + 2^{[r]} + 2^{[s]}) | 0$, respectively. The first is $F_4(\#6^{\{r,s,s\}})$ and the second is $B_4$-reducible, by Lemma \ref{class}. Now consider $F_4(\#1)$ and $F_4(\#3)$. These are diagonal subgroups of the maximal rank subsystem subgroups $\bar{A}_1^4$ and $\tilde{A}_1^4$, respectively. Indeed, the subgroup $\bar{A}_1^4$ is a maximal subgroup of $D_4 < B_4$, corresponding to the chain $\text{SO}_4 \text{SO}_4 < \text{SO}_8 < \text{SO}_9$. The subgroup $\tilde{A}_1^4$ only exists when $p=2$ and is the image of $\bar{A}_1^4$ under the graph automoprhism of $F_4$. The Weyl group of $F_4$ induces an action of $S_4$ on both $\bar{A}_1^4$ and $\tilde{A}_1^4$. The field twists in the embeddings of $F_4(\#1)$ and $F_4(\#3)$ can hence be chosen such that $0 < r < s < t$, as in Table \ref{F4tab}. The constraints on the field twists in the remaining subgroups in Table \ref{F4tab} all come from considering the $M$-conjugacy classes of the $M$-irreducible $A_1$ subgroups.   

We must now prove that $F_4(\#1)$--$F_4(\#7)$ are $F_4$-irreducible. We first treat the cases when $p > 2$. 

Let $X$ be $F_4(\#1)$ $(p \neq 2)$ or $F_4(\#6)$, so $X$ is contained in $\bar{A}_1^2 B_2$. By restricting the composition factors of $L(F_4) \downarrow M$ (from Theorem \ref{maximalexcep}) to $\bar{A}_1^2 B_2$ we have \[ L(F_4) \downarrow \bar{A}_1^2 B_2 = (2,0,00) / (0,2,00) / (0,0,02) / (1,1,10) / (1,0,01) / (0,1,01).\] Let $X = F_4(\#1)$ $(p \neq 2)$. Then $X$ is contained in $\bar{A}_1^4$ and we have \begin{align*} L(F_4) \downarrow \bar{A}_1^4 = & (2,0,0,0) / (0,2,0,0) / (0,0,2,0) / (0,0,0,2) / (1,1,1,1) / (1,1,0,0) / \\  & (1,0,1,0) / (1,0,0,1) / (0,1,1,0) / (0,1,0,1) / (0,0,1,1).\end{align*} Since $0 < r < s < t$, there are no trivial composition factors occurring in $L(F_4) \downarrow X$ and thus $X$ is $F_4$-irreducible by Corollary~\ref{notrivs}. Now let $X = F_4(\#6)$, so the projection of $X$ to $B_2$ is a maximal subgroup $A_1$ $(p \geq 5)$ and $X$ is a subgroup of $\bar{A}_1^2 A_1$. The composition factors of $L(F_4)$ restricted to $\bar{A}_1^2 A_1$ are then as follows: \[L(F_4) \downarrow \bar{A}_1^2 A_1 = (2,0,0) / (0,2,0) / (0,0,2) / (0,0,W(6)) / (1,1,4) / (1,0,3) / (0,1,3).\] Therefore, Corollary \ref{notrivs} shows that $X$ is $F_4$-irreducible unless $p=5$ and $X = F_4(\#6^{\{0,0,1\}})$, in which case $L(F_4) \downarrow X = 10^2 /$ $\!\! 8^3 /$ $\!\! 6 /$ $\!\! 4 /$ $\!\! 2^4 /$ $\!\! 0$. To prove $X$ is $F_4$-irreducible in this case we use Lemma \ref{wrongcomps}. Suppose $Y$ is an $L'$-irreducible subgroup $A_1$ of a Levi subgroup $L$ having the same composition factors as $X$ on $L(F_4)$. Then using Table \ref{levif4}, we see that $L' = B_3$, $A_2 \tilde{A}_1$ and $\tilde{A}_2 A_1$ are the only possibilities since $X$ and hence $Y$, by definition, has only one trivial composition factor on $L(F_4)$. Suppose $L' = B_3$. Then from Table \ref{levif4}, we see that $V_{B_3}(100)$ occurs as a multiplicity two composition factor of $L(F_4) \downarrow B_3$. But it is impossible to construct two isomorphic $7$-dimensional modules from the composition factors of $L(F_4) \downarrow Y$, hence $Y$ is not contained in $B_3$. Now suppose $L' = A_2 \tilde{A}_1$ or $\tilde{A}_2 A_1$. Then from Table \ref{levif4}, we have that $(V_{A_2}(00),V_{A_1}(1))$ occurs as a composition factor of $L(F_4) \downarrow L'$ and hence $Y$ has a $2$-dimensional composition factor on $L(F_4)$. This is a contradiction. Hence we conclude that $Y$ does not exist and $X$ is $F_4$-irreducible by Lemma \ref{wrongcomps}.   

Next, if $X = F_4(\#4)$ or $F_4(\#7)$ then $X$ is $F_4$-irreducible by Corollary \ref{notrivs}, with the composition factors of $L(F_4) \downarrow X$ given in Table \ref{F4tabcomps}. 

The final case when $p \neq 2$ is when $X = F_4(\#5)$, acting on $V_{B_4}(\lambda_1)$ as $2 \otimes 2^{[r]}$ $(r \neq 0)$. From Table \ref{F4tabcomps}, if $p > 3$ then $X$ has no trivial composition factors on $L(F_4)$. Hence Corollary \ref{notrivs} applies and $X$ is $F_4$-irreducible. If $p=3$ then the composition factors of $V_{26} \downarrow X$ have dimensions $9,4^4,1$ or $9,4^3,3,1^2$ (if $r = 1$). From Table \ref{levif4}, we see that the only Levi subgroup with a composition factor of dimension at least 9 on $V_{26}$ is $L' = C_3$. Moreover, the composition factors of $C_3$ acting on $V_{26}$ have dimensions $13,6^2,1$. Therefore, no subgroup $A_1$ of $C_3$ has the same composition factors as $X$ on $V_{26}$. Hence $X$ is $F_4$-irreducible by Lemma \ref{wrongcomps}. 

We now assume $p=2$. If $X$ is either $F_4(\#1)$ or $F_4(\#3)$,  acting as $1 \otimes 1^{[r]} + 1^{[s]} \otimes 1^{[t]}$  or $2 + 2^{[r]} + 2^{[s]} + 2^{[t]}$  ($0 <  r < s < t$ in both cases), respectively, we use Lemma \ref{wrongcomps}. From Table \ref{F4tabcomps}, we see that $1 \otimes 1^{[r]} \otimes 1^{[s]} \otimes 1^{[t]}$ occurs as a composition factor of $L(F_4) \downarrow X$. Table \ref{levif4} shows that there are no Levi subgroups with $L(F_4) \downarrow L'$ having a composition factor of dimension at least 16 when $p=2$. Hence $X$ is $F_4$-irreducible by Lemma \ref{wrongcomps}. 

Finally, suppose $X = F_4(\#2)$ and so contained in $\bar{A}_1^2 \tilde{A}_1^2$, a maximal rank connected subgroup of $F_4$, corresponding, for example, to the chain $\text{SO}_4 \text{Sp}_2 \text{Sp}_2 < \text{Sp}_4 \text{Sp}_4 < \text{Sp}_8$. From $L(F_4) \downarrow B_4$ we have \begin{align*} L(F_4) \downarrow \bar{A}_1^2 \tilde{A}_1^2 = \hspace{0.1cm} & \hspace{0.1cm} (2,0,0,0) / (0,2,0,0) / (0,0,2,0) / (0,0,0,2) / (1,1,2,0) / (1,1,0,2) / (1,1,0,0) / \\ \hspace{0.1cm} & \hspace{0.1cm} (1,0,1,1) /  (0,1,1,1) / (0,0,2,2) / (0,0,0,0)^4. \end{align*} It follows that $X$ has at most 5 trivial composition factors on $L(F_4)$ with the possible extra one coming from $(1,1,2,0)$, $(1,1,0,2)$, $(1,0,1,1)$ or $(0,1,1,1)$. Using Table \ref{levif4}, we see that only $L'=B_3$, $C_3$ can have an irreducible subgroup $A_1$ with the same composition factors as $X$ on $L(F_4)$. The only $L'$-irreducible $A_1$ subgroups of $B_3$ or $C_3$ when $p=2$ are diagonal subgroups of $\bar{A}_1^3$, $\bar{A}_1^2 \tilde{A}_1$, $\bar{A}_1 \tilde{A}_1^2$ or $\tilde{A}_1^3$. Any such subgroup has at least 6 trivial composition factors on $L(F_4)$. Hence $X$ is $F_4$-irreducible by Lemma \ref{wrongcomps}. This completes the analysis of the $M$-irreducible $A_1$ subgroups contained in $M = B_4$. 

Now let $M = C_4$ $(p=2)$. Then another application of Lemma \ref{class} shows that $X$ acts as $1 \otimes 1^{[r]} + 1^{[s]} \otimes 1^{[t]}$ ($r \neq 0$; $s \neq t$; $\{0,r\} \neq \{s,t\}$), $1^{[r]} \otimes 1^{[s]} + 1^{[t]} + 1^{[u]}$ ($r < s$; $t < u$), $1 + 1^{[r]} + 1^{[s]} + 1^{[t]}$ ($0<r<s<t$) or $1 \otimes 1^{[r]} \otimes 1^{[s]}$ ($0 < r < s$). The first three are contained in the subsystem subgroup $C_2^2$ and the latter is contained in the subsystem subgroup $\tilde{D}_4$. Suppose $X$ is contained in $C_2^2$. There is only one $F_4$-conjugacy class of subgroups $C_2^2$ in $F_4$ because $C_{F_4}(C_2)^\circ = C_2$ (\cite[p.333, Table 2]{LS6}). Therefore $X$ is contained in $B_4$ and has already been considered. Now suppose $X$ is contained in $\tilde{D}_4$ acting on $V_{C_4}(\lambda_1)$ as $1 \otimes 1^{[r]} \otimes 1^{[s]}$ $(0 < r < s)$. In this case $X$ is contained in $A_1 B_2$ which is contained in a subgroup $B_3$ since $p=2$. By \cite[Table 8]{car}, we have $N_{F_4}(\bar{D}_4) / \bar{D}_4 \cong S_3$ and hence applying the graph automorphism of $F_4$, we have $N_{F_4}(\tilde{D}_4) / \tilde{D}_4 \cong S_3$. It follows that all three $\tilde{D}_4$-conjugacy classes of $B_3$ are conjugate in $F_4$. Therefore $X$ is contained in a $C_4$-reducible $B_3$ acting as $000|100|000$ on $V_{C_4}(\lambda_1)$ and hence $X$ is $F_4$-reducible. 

Next, we consider the case $M = \bar{A}_1 C_3$ $(p \neq 2)$. Using Lemma \ref{class}, we see that the projection of $X$ to $C_3$ acts as $5$ $(p \geq 5)$, $3^{[r]} + 1^{[s]}$ $(p \geq 7)$, $2^{[r]} \otimes 1^{[s]}$ $(p \neq 2; r \neq s)$, or $1 + 1^{[r]} + 1^{[s]}$. In the second and fourth cases, the projection of $X$ is contained in $\bar{A}_1 C_2$ and so $X$ is contained in $\bar{A}_1^2 C_2$, which is also a subgroup of $B_4$. Therefore we have already considered them. Now consider the first case, where the projection of $X$ to $C_3$ is a maximal subgroup $A_1$ $(p \geq 7)$ and so $X$ is a diagonal subgroup of $\bar{A}_1 A_1$. This gives the conjugacy classes $F_4(\#8)$ in Table \ref{F4tab}. From Table \ref{F4tabcomps}, we see that $X$ has no trivial composition factors on $L(F_4)$. Hence Corollary \ref{notrivs} applies and $X$ is $F_4$-irreducible.  

The final possibility is that the projection of $X$ to $C_3$ is contained in a maximal subgroup $A_1 A_1$ $(p \neq 2)$, which acts as $(2,1)$ on $V_{C_3}(100)$. In this case $X \hookrightarrow \bar{A}_1 A_1 A_1$ via $(1^{[r]},1^{[s]},1^{[t]})$ $(rst=0$). First, we suppose $s=t$. Then if $p \geq 5$ we have $X$ is contained in $\bar{A}_1^2 B_2$ and hence $B_4$ because $2^{[r]} \otimes 1^{[r]} = 3^{[r]} + 1^{[r]}$. If $p=3$ then $X$ is $M$-reducible by Lemma \ref{class} because $2^{[r]} \otimes 1^{[r]} = 1^{[r]} | 3^{[r]} | 1^{[r]}$. Therefore, we only need to consider the case $s \neq t$, yielding the conjugacy classes $F_4(\#9)$ in Table \ref{F4tab}. We now prove that they are all $F_4$-irreducible. From the composition factors of $L(F_4) \downarrow \bar{A}_1 C_3$ in Theorem \ref{maximalexcep}, we find that \[L(F_4) \downarrow \bar{A}_1 A_1 A_1 = (2,0,0) / (0,2,0) / (0,0,2) / (1,W(4),1) / (1,0,W(3)) / (0,W(4),2).\] If $p > 5$ then $X$ has no trivial composition factors on $L(F_4)$ and so Corollary \ref{notrivs} shows that $X$ is $F_4$-irreducible. If $p=5$ then Corollary \ref{notrivs} applies unless $X = F_4(\#9^{\{0,0,1\}})$, which is embedded via $(1,1,1^{[1]})$. In this case $L(F_4) \downarrow X = 14 /$ $\!\! 10^2 /$ $\!\! 8^3 /$ $\!\! 2^2 /$ $\!\! 0$. To show that $X$ is $F_4$-irreducible we use Lemma \ref{wrongcomps}. Suppose $Y$ is an $L$-irreducible subgroup $A_1$ of a Levi factor $L'$ having the same composition factors as $X$ on $L(F_4)$. Then using Table \ref{levif4}, we find that $L' = B_3$ since $X$, and hence $Y$, has a $15$-dimensional composition factor and only one trivial composition factor on $L(F_4)$. Further inspection of the dimensions of the composition factors of $X$ on $L(F_4)$ shows that there are three composition factors of dimension $8$ as well as the one of dimension 15. The dimensions of the composition factors of $L(F_4) \downarrow B_3$ are $21,8^2,7^2,1$. It follows that $Y$ is not contained in $B_3$. This contradiction shows that $Y$ does not exist and hence $X$ is $F_4$-irreducible by Lemma~\ref{wrongcomps}.   

Now let $p=3$. From the restriction of $V_{26} \downarrow \bar{A}_1 C_3$ in Theorem \ref{maximalexcep}, we have \[V_{26} \downarrow \bar{A}_1 A_1 A_1 = (1,2,1) / (0,4,0) / (0,2,2) / (0,0,0).\] Using Table \ref{levif4}, we see that $L' = C_3$ is the only Levi factor that can contain a subgroup $A_1$ with the same composition factors as $X$ on $V_{26}$ because $X$ has a 9-dimensional composition factor, namely $2^{[s]} \otimes 2^{[t]}$ (recalling $s \neq t$). We want to apply Lemma \ref{wrongcomps} to conclude that $X$ is $F_4$-irreducible. From Table \ref{levif4}, we have $V_{26} \downarrow C_3 = 100^2 / 010$. If the field twists for the embedding of $X$ in $\bar{A}_1 A_1 A_1$ are all distinct then $X$ has a 12-dimensional composition factor as well as a 9-dimensional one, and hence there is no subgroup $A_1$ of $C_3$ with the same composition factors as $X$ on $V_{26}$. The cases which remain are $X$ embedded via $(1^{[r]},1^{[r]},1^{[s]})$ $(rs=0)$ and $(1^{[r]},1^{[s]},1^{[r]})$ $(rs=0)$. The dimensions of the composition factors on $V_{26}$ are then $9,4^4,1$ (or $9,4^3,3,1^2$ if $s=r+1$) or $9^2,4,3,1$ respectively. None of these are compatible with a subgroup $A_1$ of $C_3$ and hence $X$ is $F_4$-irreducible. This completes the analysis of the $M$-irreducible $A_1$ subgroups contained in $M = \bar{A}_1 C_3$.

Now suppose $M = A_1 G_2$ $(p \neq 2)$. By Theorem \ref{thmG2}, the projection of $X$ to $G_2$ is either contained in $A_1 A_1$ or is maximal with $p \geq 7$. In the first case we claim that $X$ is contained in $\bar{A}_1 C_3$. Indeed, since the factor $G_2$ of $M$ is contained in $D_4$ by \cite[3.9]{se2}, it follows that the long root subgroup $A_1$ of $G_2$ is a long root subgroup $A_1$ of $D_4$ and hence $F_4$. Therefore $X$ is contained in $\bar{A}_1 C_{F_4}(\bar{A}_1)^\circ = \bar{A}_1 C_3$. Now consider the second case. Then $X \hookrightarrow A_1 A_1 < A_1 G_2$ $(p \geq 7)$ via $(1^{[r]},1^{[s]})$ $(rs=0)$. From Table \ref{F4tabcomps}, we see that if $r \neq s$ then $X$ has no trivial composition factors on $L(F_4)$. Hence $X$ is $F_4$-irreducible by Corollary \ref{wrongcomps}, yileding $F_4(\#11)$. Now consider $X \hookrightarrow A_1 A_1$ via $(1,1)$. Then \[ L(F_4) \downarrow X = W(10)^2 / W(8) / 6 / 4 / 2^3.\] From Table \ref{wrongcomps}, we have $Y = F_4(\#8^{\{0,0\}}) < \bar{A}_1 C_3$ has the same composition factors on $L(F_4)$. Since $p \geq 7 > 3 = N(A_1,F_4)$, Theorem \ref{A1samecomp} applies. Hence $X$ is conjugate to $Y$ and has already been considered.  

Now suppose $M = A_2 \tilde{A}_2$. Then $X$ is contained in $Y = Y_1 Y_2 = A_1 A_1 < M$ $(p \neq 2)$ (both factor $A_1$ subgroups are irreducibly embedded in $A_2$). We claim that $Y$ is contained in $\bar{A}_1 C_3$ and hence so is $X$, which has therefore already been considered. Indeed, by \cite[Table 8]{car}, we have that $F_4$ contains an involution which acts as a graph automorphism on both $A_2$ factors of $M$. Therefore, there exists an involution $t$ such that $Y < C_{F_4}(t)^\circ$. One calculates that $C_{F_4}(t)^\circ = \bar{A}_1 C_3$, as required. 

Now let $M = G_2$ $(p=7)$. By Theorem \ref{thmG2}, up to $M$-conjugacy, $X$ is contained in $A_1 A_1$ or is a maximal subgroup. Consider the first case. By \cite[Table 4.3.1]{gls3}, the subgroup $A_1 A_1 < G_2$ is the centraliser in $G_2$ of a semisimple element of order 2. By \cite[Proposition 1.2]{LS8} the connected centraliser in $F_4$ of $t$ is $B_4$ or $\bar{A}_1 C_3$ with the trace of $t$ on $V_{26}$ being $-6$ or $2$, respectively. We calculate that the trace of $t$ on $V_{26}$ is $2$ using $V_{26} \downarrow A_1 A_1 = (2,2) / (1,1) / (1,W(3)) / (0,W(4))$ and the fact the element $t$ can be seen as minus the identity in both $A_1$ factors. Therefore the subgroup $A_1 A_1$ is contained in $\bar{A}_1 C_3$, and in particular $X \hookrightarrow A_1 A_1$ via $(1^{[r]},1^{[s]})$ $(rs=0)$ is conjugate to $F_4(\#9^{\{r,r,s\}})$.

Now consider the second case, when $X$ is a maximal subgroup $A_1$ of $M$. By restricting from $L(F_4) \downarrow M$, it follows that $L(F_4) \downarrow X = 16 / 14 / 10^3 / 6 / 2^3$. Now let $Y = F_4(\#8^{\{1,0\}})$ from Table \ref{F4tab}. Then from Table \ref{F4tabcomps}, we have $L(F_4) \downarrow Y = 16 / 14 / 10^3 / 6 / 2^3$. As $p=7$, Theorem \ref{A1samecomp} applies and hence $X$ is conjugate to $Y$, and has already been considered.    

If $M = A_1$ $(p \geq 13)$ then $X = M$ and $X$ is $F_4$-irreducible and not conjugate to any other subgroup $A_1$ (this follows immediately from Theorem \ref{maximalexcep}).

Finally, we use the composition factors given in Table \ref{F4tabcomps} to show that there are no further conjugacies between the $A_1$ subgroups in Table \ref{F4tab}. 
\end{proof}

\section{Proof of Theorem \ref{thmE6}: $E_6$-irreducible $A_1$ subgroups} \label{secE6}

In this section we prove Theorem \ref{thmE6}, which classifies the $E_6$-irreducible $A_1$ subgroups of $E_6$. 

\begin{thm*} \label{thmE6}
Suppose $X$ is an irreducible subgroup $A_1$ of $E_6$. Then $X$ is conjugate to exactly one subgroup of Table \ref{E6tab} and each subgroup in Table \ref{E6tab} is irreducible. 
\end{thm*}

\begin{longtable}{>{\raggedright\arraybackslash}p{0.05\textwidth - 2\tabcolsep}>{\raggedright\arraybackslash}p{0.15\textwidth - 2\tabcolsep}>{\raggedright\arraybackslash}p{0.35\textwidth - 2\tabcolsep}>{\raggedright\arraybackslash}p{0.07\textwidth-\tabcolsep}@{}}

\caption{The $E_6$-irreducible $A_1$ subgroups of $E_6$ \label{E6tab}} \\

\hline

ID & $M$ & $V_M \downarrow X$ & $p$ \\

\hline

1 & $\bar{A}_1 A_5$ & $(1^{[r]},5^{[s]})$ $(rs=0)$ & $\geq 7$ \\

2 & &  $(1^{[r]},2^{[s]} \otimes 1^{[t]})$ ($rst = 0$; $s \neq t$) & $\geq 3$ \\

\hline

3 & $A_2^3$ & $(2,2^{[r]},2^{[s]})$ ($0 < r < s)$ & $\geq 3$ \\

\hline

4 & $A_2 G_2$ & $(2^{[r]},G_2(\#3)^{[s]})$ $(rs=0; r \neq s)$ & $\geq 7$ \\

\hline

5 & $F_4$ & $F_4(\#10)$ & $\geq 13$ \\

\hline

6 & $C_4$ $(p \neq 2)$ & 7 & $\geq 11$ \\

\hline

\end{longtable}

The composition factors of $V_{27}$ and $L(E_6)$ restricted to each irreducible subgroup $A_1$ in Table \ref{E6tab} are found in Table \ref{E6tabcomps}.

\begin{proof}
We use the same method as for $F_4$, taking each reductive, maximal connected subgroup $M$ of $E_6$ in turn (from Theorem \ref{maximalexcep}) and finding all $E_6$-irreducible $A_1$ subgroups contained in them, up to $E_6$-conjugacy. Let $X$ be an $M$-irreducible subgroup $A_1$ of $M$. 

First, consider $M = \bar{A}_1 A_5$. Then using Lemma \ref{class}, we see that the projection of $X$ to $A_5$ acts on $V_{A_5}(\lambda_1)$ either as $5$ $(p \geq 7)$ or $2^{[r]} \otimes 1^{[s]}$ $(p \neq 2$; $r \neq s)$. Suppose we are in the first case and so $X \hookrightarrow \bar{A}_1 A_1$ $(p \geq 7)$ via $(1^{[r]},1^{[s]})$, where the second factor $A_1$ acts as $5$ on $V_{A_5}(\lambda_1)$. From Table \ref{E6tabcomps}, we see that $X$ has no trivial composition factors on $L(E_6)$ and is thus $E_6$-irreducible by Corollary \ref{notrivs}, yielding $E_6(\#1)$. 

In the second case $X = E_6(\#2)$, a diagonal subgroup of $\bar{A}_1 A_1 A_1 < \bar{A}_1 A_5$, where $A_1 A_1 < A_5$ acts on $V_{A_5}(\lambda_1)$ as $(2,1)$. From the restriction of $L(E_6) \downarrow \bar{A}_1 A_5$ in Theorem \ref{maximalexcep}, it follows that \begin{align*} L(E_6) \downarrow \bar{A}_1 A_1 A_1 = & \hspace{0.15cm}  (2,0,0)/ (1,W(4),1)/(1,2,1)/(1,0,W(3)) / (0,W(4),2) / (0,W(4),0)/ \\ & \hspace{0.15cm} (0,2,0) / (0,2,2)/(0,0,2) . \end{align*} If $p > 5$ then $X$ is $E_6$-irreducible by Corollary \ref{notrivs}. If $p=5$ then Corollary \ref{notrivs} applies unless $X = E_6(\#2^{\{0,0,1\}})$. In this case $V_{27} \downarrow X = 12 / 8 / 6 / 4 / 0$ (from Table \ref{E6tabcomps}) so the dimensions of the composition factors are $9,8,5,4,1$. We use Lemma \ref{wrongcomps} to show that $X$ is $E_6$-irreducible. Suppose not, then there exists a subgroup $A_1$ with the same composition factors as $X$ on $V_{27}$ contained (not necessarily $L$-irreducibly) in $L'=D_5, A_1 A_4, A_1 A_2^2$ or $A_5$. But the dimensions of their composition factors on $V_{27}$ are $16, 10, 1$ for $D_5$, $10^2, 5, 2$ for $A_1 A_4$, $9, 6^2, 3^2$ for $A_1 A_2^2$ and $15, 6^2$ for $A_5$. Therefore, no subgroup $A_1$ of $L'$ has the same composition factors as $X$, a contradiction. When $p=3$, we use Lemma \ref{wrongcomps} again. There are four possibilities for the dimensions of the composition factors of $X$ on $V_{27}$: $12,9,4,1^2$ ($r,s,t$ distinct), $9^2,4,3,1^2$ ($r = t$), $9,4^4,1^2$ $(r = s \neq t-1)$ and $9,4^3,3,1^3$ $(r=s=t-1)$. It follows that only $L'=D_5$ can contain a subgroup $A_1$ with the same composition factors as $X$ on $V_{27}$. Further consideration of the dimensions (and recalling that $p=3$) leads to the only possibility being $Y < D_5$ with $V_{D_5}(\lambda_1) \downarrow Y = 2 \otimes 2^{[a]} + 0$ $(a \neq 0)$. Then $V_{27} \downarrow Y = 2 \otimes 2^{[a]} / 3 \otimes 1^{[a]} / 1 \otimes 3^{[a]} / (1 \otimes 1^{[a]})^2 / 0^2$. The composition factors of $X$ and $Y$ do not agree on $V_{27}$ regardless of the choice of $r,s,t$ and $a$. Hence $X$ is $E_6$-irreducible, completing the analysis of the $M$-irreducible $A_1$ subgroups of $M = \bar{A}_1 A_5$.        

Now let $M = A_2^3$. Then $p \neq 2$ and $X$ is a diagonal subgroup of $A_1^3 < A_2^3$, where each factor $A_1$ is irreducibly embedded in $A_2$. By Theorem \ref{maximalexcep}, we have $V_{27} \downarrow A_2^3 = (10,01,00) / (00,10,01) / (01,00,10)$ and hence $V_{27} \downarrow A_1^3 = (2,2,0) / (0,2,2) / (2,0,2)$.  First consider the case where all of the field twists in the embedding of $X$ are distinct. Then the action of $X$ on $V_{27}$ has three composition factors, all of dimension 9. Using Table \ref{levie6}, we see that no subgroup $A_1$ of a Levi subgroup can have the same composition factors as $X$ on $V_{27}$. Hence $X$ is $E_6$-irreducible by Lemma \ref{wrongcomps} and this yields $E_6(\#3)$.

If at least two of the field twists in the embedding of $X$ are equal then we claim that $X$ is contained in $\bar{A}_1 A_5$. To prove the claim, we first show that $A_1^3 < A_2^3$ is contained in $C_4$, acting as $(1,1,1)$ on $V_{C_4}(\lambda_1)$. Consider the standard graph automorphism of $E_6$, call it $\tau$. Then $w_0 = - \tau$ and so $t := \tau w_o$ acts as $-1$ on a maximal torus of $E_6$. Therefore $t$ induces a graph automorphism on each $A_2$ factor of $A_2^3$. It follows that $A_1^3 < C_{E_6}(t)$ because an irreducible $A_1$ in a subgroup $A_2$ is centralised by a graph automorphism of $A_2$. We check that dim($C_{L(E_6)}(t)$) = 36 and so dim($C_{E_6}(t)$) = 36 (by \cite[9.1]{borel}, since $t$ is semisimple). Therefore, $C_{E_6}(t)^\circ = C_4$ by \cite[Table 4.3.1]{gls3} and we have $A_1^3 < C_4$. Considering the composition factors of $A_1^3$ on $V_{27}$, it follows that it is conjugate to a subgroup $A_1^3$ acting as $(1,1,1)$ on $V_{C_4}(\lambda_1)$, as required. Therefore, $X$ is contained in $\bar{A}_1 C_3 < C_4$ since $1^{[r]} \otimes 1^{[r]} \otimes 1^{[s]} = 2^{[r]} \otimes 1^{[s]} + 1^{[s]}$ ($p \neq 2$). The factor $A_1$ of $\bar{A}_1 C_3$ is generated by root subgroups of $E_6$ and so $X < \bar{A}_1 C_{E_6}(\bar{A}_1)^\circ = \bar{A}_1 A_5$, proving the claim. If only two of the twists are equal then $X$ is $E_6$-irreducible and conjugate to  $E_6(\#2^{\{r,r,s\}})$. If all three twists are equal then $X$ is $C_4$-reducible and hence $E_6$-reducible. This completes the case $M = A_2^3$.   

Next, we let $M = A_2 G_2$. By Theorem \ref{thmG2}, up to $M$-conjugacy, the projection of $X$ to $G_2$ is either contained in $A_1 A_1$ or is maximal $(p \geq 7)$. Assume the former. Since the $G_2$ factor of $M$ is contained in $D_4$ by \cite[3.15]{se2}, the first $A_1$ factor of $A_1 A_1$ is generated by root subgroups of $E_6$. Therefore, $A_2 \bar{A}_1 A_1 < \bar{A}_1 C_{E_6}(\bar{A}_1)^\circ = \bar{A}_1 A_5$. Therefore, $X$ has already been considered in the $\bar{A}_1 A_5$ case. Now assume the projection to $G_2$ is maximal, so $p \geq 7$. Then $X \hookrightarrow A_1 A_1 < A_2 G_2$ via $(1^{[r]},1^{[s]})$ $(rs = 0)$ or $(1,1)$ where each factor $A_1$ is maximal. If $X$ is embedded via $(1^{[r]},1^{[s]})$ then $X$ is $E_6$-irreducible by Corollary \ref{notrivs}, yielding $E_6(\#4)$. If $X$ is embedded via $(1,1)$ then $X$ is conjugate to $Y = E_6(\#1^{\{0,0\}})$, by Theorem \ref{A1samecomp} since $p > 5 = N(A_1,E_6)$ and $X$ and $Y$ have the same composition factors on $L(E_6)$. 

Now suppose $M = F_4$. Theorem \ref{thmF4} gives all of the conjugacy classes of $F_4$-irreducible $A_1$ subgroups, showing they are all contained in $B_4$, $\bar{A}_1 C_3$ $(p \neq 2)$, $A_1 G_2$ $(p \neq 2)$ or $A_1$ $(p \geq 13)$. If $X$ is contained in $B_4$ then $X$ is $E_6$-reducible because $B_4$ is contained in a $D_5$ Levi subgroup. If $X$ is contained in $\bar{A}_1 C_3$ $(p \neq 2)$ then $X$ is also contained in $\bar{A}_1 A_5$ since  $C_{E_6}(\bar{A}_1)^\circ = A_5$, as above and has already been considered. If $X$ is contained in $A_1 G_2$ then $X$ is contained in the maximal subgroup $A_2 G_2$ (since $C_{E_6}(G_2)^\circ = A_2$) and has also been considered already. Finally, if $X$ is a maximal subgroup $A_1$ of $F_4$ then $X$ is $E_6$-irreducible by Corollary~\ref{notrivs}, yielding $E_6(\#5)$.      

Now let $M = C_4$ $(p \neq 2)$. By considering the action of $X$ on $V_{C_4}(\lambda_1)$ and using Lemma \ref{class}, it follows that $X$ is contained in $C_2^2$, $\bar{A}_1 C_3$, $A_1^3$ or $A_1$ $(p \geq 11)$. If $X$ is contained in $C_2^2$ then $X$ is $E_6$-reducible because $C_{E_6}(C_2)^\circ = C_2 T_1$, by \cite[p.333, Table 3]{LS6} and so $C_2^2$ is contained in a Levi subgroup of $E_6$. If $X$ is contained in $\bar{A}_1 C_3$ then $X$ is also contained in $\bar{A}_1 A_5$, hence considered in the $\bar{A}_1 A_5$ case above. If $X$ is contained in $A_1^3$, acting as $(1,1,1)$ on $V_{C_4}(\lambda_1$) then an argument in the $A_2^3$ case showed that $X$ is contained in $A_2^3$. The last possibility is $p \geq 11$ and $X$ is maximal in $C_4$ acting as $7$ on $V_{C_4}(\lambda_1)$. From Table \ref{E6tabcomps}, we see that $X$ is $E_6$-irreducible by Corollary \ref{notrivs}, yielding $E_6(\#6)$ in Table \ref{E6tab}. 

Now let $M$ be one of the two conjugacy classes of $G_2$ $(p \neq 7)$. By Theorem \ref{thmG2}, an $M$-irreducible subgroup $A_1$ is contained in $A_1 A_1$ or is maximal with $p > 7$. If $X$ is maximal then $X$ is conjugate to $E_6(\#6)$ by Theorem \ref{A1samecomp}, since both subgroups have the same composition factors on $L(E_6)$ and $p > 5 = N(A_1,E_6)$. Now suppose $X$ is contained in $A_1 A_1$. When $p \neq 2$, we claim that $A_1 A_1$ is contained in $\bar{A}_1 A_5$ and $X$ has already been considered. In $G_2$, we have that $A_1 A_1$ is the centraliser of a semisimple element of order 2, and by \cite[Table~4.3.1]{gls3}, the centraliser in $E_6$ of this involution is either $\bar{A}_1 A_5$ or $T_1 D_5$. An easy check shows it to be the former, proving the claim. 

When $p=2$, we claim that $X$ is $E_6$-reducible. To prove this we consider the action of $A_1 A_1$ on $L(E_6)$. By \cite[Table 10.1]{LS1}, we have $L(E_6) \downarrow G_2 = 11 + 01$. In Table \ref{G2tabcomps}, the composition factors of $V_{G_2}(01) \downarrow A_1 A_1$ are given and moreover, $V_{G_2}(01) \downarrow A_1 A_1 = ((0,0) | ((2,0) + (0,2)) | (0,0)) + (1,3)$. Therefore $A_1 A_1$, and hence $X$, fixes a non-zero vector of $L(E_6)$. By \cite[Lemma~1.3]{se2}, we have $X$ is contained in a parabolic subgroup, $\bar{A}_1 A_5$ or $A_2^3$. The $\bar{A}_1 A_5$ and $A_2^3$ cases show that neither $\bar{A}_1 A_5$ nor $A_2^3$ contain an $E_6$-irreducible subgroup $A_1$ when $p=2$. Therefore $X$ is $E_6$-reducible, as claimed. 

Finally, let $M$ be one of the two conjugacy classes of $A_2$ $(p \geq 5)$. There is just one $M$-irreducible subgroup $X$, acting as $2$ on $V_{A_2}(10)$. If $p \geq 7$ then Theorem \ref{A1samecomp} shows that $X$ is conjugate to $E_6(\#1^{\{0,0\}})$, which is contained in $\bar{A}_1 A_5$. When $p=5$, we show that $X$ is $E_6$-reducible. By \cite[Table 10.2]{LS1}, we have $V_{27} \downarrow A_2 = W(22) = 22 | 11$ or $W(22)^*$ and $V_{A_2}(20) \otimes V_{A_2}(02) = (11|22|11) + 00$.  Using \cite[1.2]{donkin}, which states that a tensor product of tilting modules is again tilting, we have $4 \otimes 4 = (0|8|0) + (2|6|2) + 4$. Since $V_{A_2}(20) \downarrow X = 4 + 0$ and $V_{A_2}(11) \downarrow X = 4 + 2$, it follows that  $V_{27} \downarrow X = (0 | 8 | 0) + (6 | 2) + 4^2$ or $ (0 | 8 | 0) + (2 | 6) + 4^2$ . This shows that $X$ fixes a $1$-space of $V_{27}$. The dimension of the centraliser in $E_6$ of this $1$-space is at least $51=78-27$ and hence $X$ is either contained in a parabolic subgroup or $F_4$. Assume the latter. By \cite[Table 10.2]{LS1}, we have $V_{27} \downarrow F_4 = 0001 + 0000$ and so any subgroup of $F_4$ has a trivial direct summand on $V_{27}$. Since $X$ does not have such a summand on $V_{27}$, it is not contained in $F_4$. Therefore $X$ is $E_6$-reducible.  

Using the composition factors listed in Table \ref{E6tabcomps}, we see there are no further conjugacies between the $A_1$ subgroups in Table \ref{E6tab}, which completes the proof. 
\end{proof}

\section{Proof of Theorem \ref{thmE7}: $E_7$-irreducible $A_1$ subgroups} \label{secE7}

In this section we find the $E_7$-irreducible $A_1$ subgroups of $E_7$, proving Theorem \ref{thmE7}. 

\begin{thm*} \label{thmE7}

Suppose $X$ is an irreducible subgroup $A_1$ of $E_7$. Then $X$ is conjugate to exactly one subgroup of Table \ref{E7tab} and each subgroup in Table \ref{E7tab} is irreducible. 

\end{thm*}

\begin{longtable}{>{\raggedright\arraybackslash}p{0.04\textwidth - 2\tabcolsep}>{\raggedright\arraybackslash}p{0.08\textwidth - 2\tabcolsep}>{\raggedright\arraybackslash}p{0.68\textwidth - 2\tabcolsep}>{\raggedright\arraybackslash}p{0.07\textwidth-\tabcolsep}@{}}

\caption{The $E_7$-irreducible $A_1$ subgroups of $E_7$ \label{E7tab}} \\

\hline

ID  & $M$ & $V_{M} \downarrow X$ & $p$ \\

\hline

1 & $A_1 D_6$ & $(1^{[r]},5^{[s]} \otimes 1^{[t]})$ $(rst = 0; s \neq t)$ & $\geq 7$ \\

2 & & $(1^{[r]},5^{[s]} \otimes 1^{[t]})$ $(rst = 0; s \neq t)$ & $\geq 7$  \\

3 & & $(1^{[r]},2^{[s]} \otimes 1^{[t]} \otimes 1^{[u]})$  ($rstu=0$; $s, t, u$ distinct) & $ \geq 3$  \\

4 & & $(1^{[r]}, 10^{[s]} + 0)$  $(rs=0)$ & $ \geq 11$ \\

5 & & $(1^{[r]},8^{[s]} + 2^{[t]})$  $(rst=0)$ & $\geq 11$ \\

6 & & $(1^{[r]},6^{[s]} + 4^{[t]})$  $(rst=0)$ & $\geq 7$ \\

7 & & $(1^{[r]}, 6^{[s]} +  1^{[t]} \otimes 1^{[u]} + 0)$  $(rs=0; r < t < u)$ & $\geq 7$ \\

8 & & $(1^{[r]},4^{[s]} + 2^{[t]} + 1^{[u]} \otimes 1^{[v]})$  ($rstu=0$; $s \neq t$; $u \leq v$; if $u=v$ then $t < u$)  & $\geq 5$  \\

9 & & $(1^{[r]},4^{[s]} + 2^{[s]} + 1^{[t]} \otimes 1^{[u]})$  ($rst =0$; $r < t < u$ or $t=u \neq s$) & $\geq 5$ \\

10 & & $(1^{[r]},2^{[s]} \otimes 2^{[t]} + 2^{[u]})$  $(rsu=0; s<t)$ & $\geq 3$ \\

11 & & $(1^{[r]},2^{[s]} + 2^{[t]} + 2^{[u]} + 2^{[v]})$  $(rs=0; s<t<u<v)$ & $\geq 3$ \\

12 & & $(1,1^{[r]} \otimes 1^{[s]} + 1^{[t]} \otimes 1^{[u]} + 1^{[v]} \otimes 1^{[w]})$  (see Table \ref{conditions1} for conditions \newline on $r, \ldots, w$) & all \\

13 & & $(1^{[r]},0|(2^{[s]} + 2^{[t]} + 2^{[u]})|0 + 1^{[v]} \otimes 1^{[w]})$  $(rsv=0; s < t < u; v < w)$ & $2$ \\

14 & & $(1^{[r]},0|(2^{[s]} + 2^{[t]} + 2^{[u]} + 2^{[v]} + 2^{[w]})|0)$  $(rs = 0; s < t < u < v < w)$ & $2$ \\

\hline

15 & $G_2 C_3$ & $(G_2(\#3)^{[r]},5^{[s]})$  $(rs=0; r \neq s)$ & $\geq 7$ \\

16 & & $(G_2(\#3)^{[r]},2^{[s]} \otimes 1^{[t]})$  $(rst=0; r \neq s; s \neq t)$ & $\geq 7$ \\

\hline

17 & $A_1 G_2$ & $(1^{[r]},G_2(\#3)^{[s]})$  $(rs=0; r \neq s)$ & $\geq 7$ \\

\hline

18 & $A_1 F_4$ & $(1^{[r]},F_4(\#10)^{[s]})$  $(rs=0)$ & $\geq 13$ \\

\hline

19 & $A_1 A_1$ & $(1^{[r]},1^{[s]})$  $(rs=0; r \neq s)$ & $\geq 5$ \\

\hline

20 & $A_1$ & $1$ & $\geq 17$ \\

\hline

21 & $A_1$ & $1$ & $\geq 19$ \\

\hline

\end{longtable}

The composition factors of the irreducible $A_1$ subgroups in Table \ref{E7tab} acting on $V_{56}$ and $L(E_7)$ are listed in Table \ref{E7tabcomps}. 

\begin{proof}
We consider each reductive, maximal connected subgroup $M$ of $E_7$ in turn. By Theorem \ref{maximalexcep}, they are $A_1 D_6$, $A_7$, $A_2 A_5$, $G_2 C_3$, $A_1 G_2$ $(p \neq 2)$, $A_1 F_4$, $A_2$ $(p \geq 5)$, $A_1 A_1$ $(p \geq 5)$, $A_1$ $(p \geq 17)$ and $A_1$ $(p \geq 19)$. Let $X$ be an $M$-irreducible subgroup $A_1$. 

First let $M = A_1 D_6$. First, we need to find the $E_7$-conjugacy classes of the $A_1 D_6$-irreducible $A_1$ subgroups contained in $A_1 D_6$. Using Lemma \ref{class}, we see that the $A_1 D_6$-irreducible $A_1$ subgroups are the subgroups listed in lines $1$ to $14$ of Table \ref{E7tab} without the constraints on the field twists, as well as $Y = A_1 < A_1 D_6$ acting as $(1^{[r]},3^{[s]} \otimes 1^{[t]} + 1^{[u]} \otimes 1^{[v]})$ $(p \geq 5; s \neq t)$ and $Z = A_1 < A_1 D_6$ acting as $(1^{[r]},1^{[s]} \otimes 1^{[t]} \otimes 1^{[u]} + 1^{[v]} \otimes 1^{[w]})$ ($p=2$; $s,t,u$ distinct). For most of the subgroups, the $E_7$-conjugacy classes are just the $A_1 D_6$-conjugacy classes. This is the case for subgroups $E_7(\#1)$--$E_7(\#6)$, $E_7(\#10)$, $E_7(\#11)$, $E_7(\#14)$ (we can check that $E_7$ does not fuse any of the $A_1 D_6$-conjugacy classes by considering the composition factors on $V_{56}$ given in Table \ref{E7tabcomps}). 

All of the remaining $A_1 D_6$-irreducible $A_1$ subgroups are contained in $A_1^3 D_4$, a maximal rank connected subgroup. By \cite[Table 10]{car}, we have $N_{E_7}(A_1^3 D_4) = (A_1^3 D_4).S_3$ where the $S_3$ acts simultaneously as the outer automorphism group of $A_1^3$ and $D_4$. First, suppose that the projection of $X$ to $D_4$ acts as $6^{[r]} + 0$ $(p \geq 7)$ or $4^{[r]} + 2^{[r]}$ $(p \geq 5)$ on $V_{D_4}(\lambda_1)$. Then the projection of $X$ is contained in the centraliser of both a triality automorphism (since $X$ is contained in a $G_2$ or $A_2$ respectively) and an involutory automorphism of $D_4$ (since $X$ is contained in a $B_3$ or $A_1 B_2$ respectively). The conjugacy classes of $E_7(\#7)$ and $E_7(\#9)$ follow. Next, assume the projection of $X$ to $D_4$ acts as $4^{[r]} + 2^{[s]}$ $(p \geq 3; r \neq s)$ or $3^{[r]} \otimes 1^{[s]}$ $(p\geq 5; r \neq s)$. Then, using the triality automorphism, we assume that $X$ acts as $4^{[r]} + 2^{[s]}$ on $V_{D_4}(\lambda_1)$, hence excluding $Y$ from Table~\ref{E7tab}. The projection of $X$ to $D_4$ is contained in $A_1 B_2$ and is therefore fixed by an involution in the outer automorphism group of $D_4$. By considering composition factors, we see that this involution swaps the two $A_1$ factors contained in $D_6$. The conjugacy classes of $X$ now follow, which are $E_7(\#8)$ in Table \ref{E7tab}. The same argument applies when the projection to $D_4$ acts as $0 | (2^{[r]} + 2^{[s]} + 2^{[t]}) | 0$ ($p = 2$; $r < s < t$) or $1^{[r]} \otimes 1^{[s]} \otimes 1^{[t]}$ ($p =2$; $r, s, t$ distinct). Therefore, we exclude $Z$ from Table \ref{E7tab} and obtain the conjugacy classes $E_7(\#13)$. 

The last possibility to consider is when the projection of $X$ to $D_4$ is contained in $\text{SO}_4  \text{SO}_4$ and so $X$ is $E_7(\#12)$. In this case $X$ is contained in $A_1^7$ and $N_{E_7}(A_1^7) = (A_1^7).\text{PSL}(2,7)$  by \cite[Table 10]{car}. The automorphism group of the Fano plane is $\text{PSL}(2,7)$ and this leads to an isomorphism between $\text{PSL}(2,7)$ and the subgroup of $S_7$ generated by $(1,2,3) (5,6,7)$ and $(2,4) (3,5)$. This subgroup is 2-transitive and has two orbits on sets of 3 points. One orbit is made up of all 3 point sets that form a line in the Fano plane and the other orbit is the 3 point sets that do not form a line. 

As $\text{PSL}(2,7)$ is transitive we may assume that the field twist in the first $A_1$ is zero and we let $X \hookrightarrow A_1^7$ via $(1,1^{[r]},1^{[s]},1^{[t]},1^{[u]},1^{[v]},1^{[w]})$. In particular, we have fixed that $(0,r,s)$, $(0,w,v)$ and $(0,t,u)$ form lines in the Fano plane. We claim that the $7$-tuples $0, r, \ldots, w$ satisfying the conditions in Table \ref{conditions1} yield a set of representatives of the $E_7$-conjugacy classes of $X$ without repetition. We will prove the first few lines of Table \ref{conditions1}. The others are similar and easier. 

Assume $r, \ldots, w$ are all non-zero, so we are in the first seven rows of Table~\ref{conditions1}. By Lemma \ref{class}, we have $X$ is $A_1 D_6$-irreducible if and only if the following conditions hold: the sets $\{r,s\}$, $\{t,u\}$ and $\{v,w\}$ are distinct and at most one of the sets has cardinality one. The stabiliser in $\text{PSL}(2,7)$ of a point is isomorphic to $S_4$.

First suppose $r, \ldots, w$ are all distinct. The action of $S_4$ on $r, \ldots, w$ is given by the natural action of $S_4$ on pairs of $\{1,2,3,4\}$. This action of $S_4$ on $r , \ldots, w$ is transitive and so we may assume that $r$ is the smallest integer in $r, \ldots, w$. Now consider the stabiliser in $S_4$ of $r$, a Klein four-group, $V_4$. Since $0,r,s$ form a line, it follows that $s$ is fixed and so no further conditions can be imposed on $s$. The action of $V_4$ on $t,u,v,w$ allows us to assume that $t$ is the smallest integer of $t,u,v,w$. The stabiliser of $t$ in $V_4$ is trivial and hence we have the conditions given in the first row of Table \ref{conditions1}.   

Next, suppose exactly two of $r, \ldots, w$ are the same. The action of $S_4$ on $r, \ldots, w$ has two orbits on pairs. Hence we assume either $r=s$ or $r=t$. If $r=s$, then the stabiliser of the pair $r, s$ is isomorphic to $\text{Dih}_8$, the dihedral group of order $8$. The action of $\text{Dih}_8$ on $t,u,v,w$ is transitive. Therefore we may assume that $t$ is the smallest integer of these. The stabiliser of $t$ in $\text{Dih}_8$ is isomorphic to $\mathbb{Z}_2$, swapping $v$ and $w$ and we therefore assume $v < w$. If $r=t$, then the stabiliser in $S_4$ of $\{r,t\}$ is $\mathbb{Z}_2$, swapping $s$ and $u$, and so we assume $s < u$. This yields the second and third rows of Table~\ref{conditions1}. 

For the final example, suppose exactly three of $r, \ldots, w$ are the same. The action of $S_4$ on $r, \ldots, w$ has two orbits on triples. Hence we assume either $r=s=t$ or $r=t=w$. The stabiliser of $r,s,t$ in $S_4$ is trivial and so no further conditions can be imposed. The stabiliser of $r,t,w$ is isomorphic to $\mathbb{Z}_3$, acting as a $3$-cycle on $t,u,v$. Therefore, we assume that $t$ is the smallest. 

We now need to show that the subgroups $E_7(\#1)$--$E_7(\#14)$ are $E_7$-irreducible. 

The subgroups with ID numbers $1$, $2$ $(p > 7)$, $3$ $(p > 5)$, $4$, $5$ $(p > 11)$, $6$ $(p  > 7)$, $7$ $(p  > 7)$, $8$ $(p >5)$, $9$ $(p >5)$,  $10$ $(p > 5)$, $11$ and $12$ $(p >3)$ are all $E_7$-irreducible by Corollary \ref{notrivs} (the composition factors of $L(E_7) \downarrow X$ are listed in Table \ref{E7tabcomps}). 

In many of the remaining cases, Corollary \ref{notrivs} still applies. We present the cases where we use Lemma \ref{wrongcomps} with Table \ref{levie7} to prove the remaining subgroups are $E_7$-irreducible. The arguments are all very similar and so we will omit the details for some of them. 

Firstly, consider $X = E_7(\#2)$ when $p=7$. Then Corollary \ref{notrivs} applies unless $r=s=t-1$ in which case $X$ has one trivial composition factor on $L(E_7)$ and $L(E_7) \downarrow X = 22 /$ $\!\!  18^2 /$ $\!\!  16 /$ $\!\!  14^2 /$ $\!\!  12^2 /$ $\!\!  10^3 /$ $\!\!  8 /$ $\!\!  6 /$ $\!\!  4 /$ $\!\!  2^5 /$ $\!\!  0$. We will use Lemma \ref{wrongcomps} by showing that the composition factors of any irreducible subgroup $A_1$ of a Levi subgroup on $L(E_7)$ are not the same as those of $X$. Suppose, for a contradiction, that $Y$ is an $L$-irreducible subgroup $A_1$ of a Levi subgroup $L$, having the same composition factors as $X$ on $L(E_7)$. Since $X$ has only one trivial composition factor on $L(E_7)$, we have $L'$ has only one trivial composition factor on $L(E_7)$. Therefore, using Table \ref{levie7}, we find the possibilities for $L'$ are $E_6$, $A_1 D_5$, $A_6$, $A_1 A_5$, $A_2 A_4$ and $A_1 A_2 A_3$. Since $p = 7$, Lemma \ref{class} shows there are no $A_6$-irreducible $A_1$ subgroups and so we immediately rule out $L' = A_6$. Suppose $Y$ is contained in $E_6$. Then by Theorem \ref{thmE6}, we see that $Y$ is conjugate to a subgroup in Table \ref{E6tab}. Using the composition factors in Table \ref{E6tabcomps}, we find that the composition factors of $L(E_7) \downarrow E_6(\#n)$ are not the same as $L(E_7) \downarrow X$ for any $n$ and hence $Y$ is not contained in $E_6$. Now suppose $Y$ is contained in $A_1 D_5$. From Table \ref{levie7}, we see that $(V_{A_1}(0),V_{D_5}(\lambda_1))$ occurs as a multiplicity two composition factor of $L(E_7) \downarrow A_1 D_5$. As there is no combination of composition factors of $L(E_7) \downarrow X$ that form two isomorphic 10-dimensional modules, it follows that $Y$ is not contained in $A_1 D_5$. Using Table \ref{levie7}, we see that $A_1 A_5$ has a $2$-dimensional composition factor and hence $Y$ is not contained in $A_1 A_5$. Now suppose $Y$ is contained in $A_2 A_4$. Since $Y$ is $A_2 A_4$-irreducible, it follows that $Y$ acts as $2^{[r]} \otimes 4^{[s]}$ on $(V_{A_2}(10),V_{A_4}(1000))$. Both $(V_{A_2}(00),V_{A_4}(1000))$ and $(V_{A_2}(00),V_{A_4}(0001))$ occur as composition factors of $L(E_7) \downarrow A_2 A_4$ and hence $Y$ has two $5$-dimensional composition factors on $L(E_7)$, a contradiction. Finally, suppose $Y$ is irreducibly contained in $A_1 A_2 A_3$. Consider the composition factors of $L(E_7) \downarrow A_1 A_2 A_3$ given in Table \ref{levie7}. Since all $A_1$-modules are self-dual it follows that the restriction to $Y$ of the composition factors $(V_{A_1}(2),V_{A_2}(00),V_{A_3}(000))$, $(V_{A_1}(0),V_{A_2}(11),V_{A_3}(000))$ and $(V_{A_1}(0),V_{A_2}(00),V_{A_3}(101))$ yield a copy of each non-trivial odd-multiplicity composition factor of $L(E_7) \downarrow X$. This is a contradiction, because the sum of the dimensions of one copy of each non-trivial odd-multiplicity composition factor is 46, which is greater than 26. Therefore, no such subgroup $Y$ exists and $X$ is indeed $E_7$-irreducible by Lemma \ref{wrongcomps}.     

Next we consider $X = E_7(\#3)$. If $p=5$, then Corollary \ref{notrivs} applies unless $r=s=u-1$, in which case $X$ has one trivial composition factor on $L(E_7)$. A similar argument to the previous one shows that $X$ is $E_7$-irreducible. If $p=3$, then there are more cases when Corollary \ref{notrivs} does not apply. If $r=u+1$, $r=s=u-1$, $r=s=t-1=u-2$ or $r=t=s-1=u-2$ then $X$ has one trivial composition factor on $L(E_7)$ and $X$ is $E_7$-irreducible by a similar argument to before. The only other case where Corollary \ref{notrivs} does not apply is $r=u$ (so $r,s,t$ are distinct), in which case $X$ has two trivial composition factors on $L(E_7)$. The composition factors of $X$ on $L(E_7)$ are $4^{[r]} /$ $\!\!  2^{[r]} \otimes 4^{[s]} /$ $\!\!  (2^{[r]} \otimes 2^{[s]} \otimes 2^{[t]})^2 /$ $\!\!  (2^{[r]})^5 /$ $\!\!  4^{[s]}  \otimes 2^{[t]} /$ $\!\!  4^{[s]} /$ $\!\!  2^{[s]} \otimes 2^{[t]} /$ $\!\!  2^{[s]} /$ $\!\!  (2^{[t]})^2 /$ $\!\!  0^2$. Suppose, for a contradiction, that $Y$ is an $L$-irreducible subgroup $A_1$ of a Levi subgroup $L$, having the same composition factors as $X$ on $L(E_7)$. Then using Table \ref{levie7}, we find the possibilities for $L'$ are $E_6$, $A_1 D_5$, $A_6$, $A_1 A_5$, $A_2 A_4$, $A_1 A_2 A_3$ and $A_1 A_4$. Since $p=3$, neither $A_6$ nor $A_4$ contain an irreducible subgroup $A_1$ and so we immediately rule out $A_6$, $A_2 A_4$ and $A_1 A_4$. The only $E_6$-irreducible $A_1$ subgroups when $p=3$ are $E_6(\#2)$ and $E_6(\#3)$. Using Tables \ref{E6tabcomps} and \ref{levie7}, we check that neither subgroup has the same composition factors as $X$ on $L(E_7)$ for any $r,s,t$. Now suppose $Y$ is contained in $A_1 D_5$. Since $p=3$, the projection of $Y$ to $D_5$ acts on $V_{D_5}(\lambda_1)$ as $2^{[a]} + 2^{[b]} + 1^{[c]} \otimes 1^{[d]}$ $(a \neq b)$ or $2^{[a]} \otimes 2^{[b]} + 0$ ($a \neq b$). In the first case when $c=d$ or in the second case, $L(E_7) \downarrow Y$ has at least three trivial composition factors since $(V_{A_1}(0),V_{D_5}(\lambda_1))$ is a multiplicity two composition factor of $L(E_7) \downarrow A_1 D_5$, which is a contradiction. Now suppose $c \neq d$. Then by considering the multiplicity of the 3-dimensional composition factors of $X$ on $L(E_7)$, it follows that $\{a,b\} = \{r,t\}$. But then the projection of $Y$ to $D_5$ will have a $2^{[r]} \otimes 2^{[t]}$ composition factor on  $V_{D_5}(\lambda_2)$, a contradiction. We can also rule out $Y$ being contained in $A_1 A_5$, since $X$ and hence $Y$, has no $2$-dimensional composition factors on $L(E_7)$. Finally, we rule out $A_1 A_2 A_3$ as it has no composition factors of dimension at least 27. Therefore, no such $Y$ exists and $X$ is $E_7$-irreducible by Lemma \ref{wrongcomps}.       

Now let $p=11$, and $X_1 = E_7(\#5^{\{1,0,0\}})$ and $X_2 = E_7(\#5^{\{0,0,1\}})$ (Corollary \ref{notrivs} applies for all of the other cases). Then from Table \ref{E7tabcomps}, we find that $V_{56} \downarrow X_1 = 19 / 13 / 11 / 9^2 / 5 / 3$ and $V_{56} \downarrow X_2 = 23 / 21 / 15 / 9 / 7$. In particular, neither $X_1$ nor $X_2$ have a trivial composition factor on $V_{56}$. Then by Table \ref{levie7}, if there exists a subgroup $A_1$ having the same composition factors as $X$ on $V_{56}$ contained in a Levi subgroup, it will be contained in one of the following Levi subgroups: $D_6$, $A_1 D_5$, $A_6$, $A_1 A_5$, $A_2 A_4$ or $A_1 A_2 A_3$. The dimensions of composition factors of $X_1$ and $X_2$ on $V_{56}$ are $18, 10^2, 6^2, 4, 2$ and $22, 10^2, 8, 6$, respectively. Using Table \ref{levie7}, we see that this is incompatible with any subgroup of such a Levi subgroup. Hence Lemma \ref{wrongcomps} shows that both $X_1$ and $X_2$ are $E_7$-irreducible. 

Similarly, let $p=7$ and $X = E_7(\#6^{\{1,0,0\}})$. Then $V_{56} \downarrow X = 13 / 11 / 9 / 7 / 5^2 / 3^2$ with dimensions $14,10,6^3,4^2,2$. These dimensions are incompatible with any subgroup of a Levi factor, using Table \ref{levie7}. Hence $X$ is $E_7$-irreducible by Lemma \ref{wrongcomps}. Similar arguments show that $E_7(\#7)$ $(p = 7)$, $E_7(\#8)$ and $E_7(\#9)$ (both with $p=5)$ are $E_7$-irreducible. 

Now consider $E_7(\#10)$. Firstly, if $p = 5$ then the only case for which Corollary \ref{notrivs} does not apply is $X = E_7(\#10^{\{0,0,1,0\}})$. From Table \ref{E7tabcomps}, we have $V_{56} \downarrow X = 17 /$ $\!\!  15 /$ $\!\!  13 /$ $\!\!  11 /$ $\!\!  9 /$ $\!\!  7 /$ $\!\!  3 /$ $\!\!  1$. The dimensions of these composition factors are incompatible with any subgroup of a Levi factor and hence $X$ is $E_7$-irreducible by Lemma \ref{wrongcomps}. 

Now suppose $p=3$. There are many cases where Corollary \ref{notrivs} does not apply. Let $X_1 = E_7(\#10^{\{0,0,1,0\}})$ and $X_2 = E_7(\#10^{\{1,0,1,1\}})$. Then both $X_1$ and $X_2$ have three trivial composition factors on $L(E_7)$. Suppose $Y$ is a subgroup of a Levi factor $L$ having the same composition factors as $X_1$ on $V_{56}$ and $L(E_7)$. Using Table \ref{levie7} and the number of trivial composition factors on $L(E_7)$, it follows that $Y$ is an $L'$-irreducible subgroup of $L' = E_6, D_6, A_6, A_1 D_5, A_1 A_5, A_2 A_4, A_1 A_4$ or $A_1 A_2 A_3$. From Table \ref{E7tabcomps}, we have $V_{56} \downarrow X_1 = 11 / 9^2 / 7^3 / 5^2 / 3^5 / 1^3$. Therefore $Y$ is not a subgroup of $E_6$, $A_6$, $A_2 A_4$ or $A_1 A_4$ by considering the dimensions of the composition factors on $V_{56}$. Suppose $Y$ is contained in $D_6$. Then by Lemma \ref{class}, we have $V_{D_6}(\lambda_1) \downarrow Y = 2^{[r]} \otimes 2^{[s]} + 2^{[t]}$ $(r \neq s)$, $2^{[r]} + 2^{[s]} + 2^{[t]} + 2^{[u]}$ ($r,s,t,u$ distinct) or $1^{[r]} \otimes 1^{[s]} + 1^{[t]} \otimes 1^{[u]} + 1^{[v]} \otimes 1^{[w]}$ (the sets $\{r,s\}$, $\{t,u\}$, $\{v,w\}$ are distinct and at least two of them have cardinality two). But restricting from $D_6$, we find that $Y$ has a 9-dimensional, 3-dimensional or 4-dimensional composition factor on $V_{56}$, respectively, which is a contradiction. Now suppose $Y$ is contained in $A_1 D_5$. Then by Lemma \ref{class}, we have $(V_{A_1}(1),V_{D_5}(\lambda_1)) \downarrow Y = 1^{[r]} \otimes 2^{[s]} \otimes 2^{[t]} + (1^{[r]})^2$ ($s \neq t$) or $1^{[r]} \otimes 1^{[s]} \otimes 1^{[t]} + 1^{[r]} \otimes 2^{[u]} + 1^{[r]} \otimes 2^{[v]}$ ($u \neq v$ and if $s = t$ then $s,u,v$ distinct). This leads to a contradiction as the composition factors of $Y$ do not match those of $X_1$. Similarly, if $Y$ is contained in $A_1 A_2 A_3$ then $V_{56} \downarrow Y$ has a 4-dimensional composition factor, a contradiction. Finally, suppose $Y$ is contained in $A_1 A_5$. Then $(V_{A_1}(1),V_{A_5}(\lambda_1)) \downarrow Y = 1^{[r]} \otimes 2^{[s]} \otimes 1^{[t]}$ ($s \neq t$) and $V_{56} \downarrow Y = (1^{[r]} \otimes 2^{[s]} \otimes 1^{[t]})^2 / (2^{[s]} \otimes 1^{[t]})^3 / (1^{[s+1]} \otimes 1) / 1^{[t+1]} / (1^{[t]})^2$. Hence $Y$ has a 4-dimensional composition factor again, a contradiction. Therefore $Y$ does not exist and $X_1$ is $E_7$-irreducible. The proof is almost identical for $X_2$ and is similar and easier for the other cases as they all have fewer trivial composition factors on $L(E_7)$. 

Similar arguments show that $E_7(\#11)$ is $E_7$-irreducible when $p=3,5$ and $E_7(\#12)$ is $E_7$-irreducible when $p=3$. 

Now suppose $p=2$. First consider $X = E_7(\#14)$. Suppose $Y$ is an $L'$-irreducible subgroup $A_1$ of a Levi subgroup $L$ with the same composition factors as $X$ on $V_{56}$ .  From Table \ref{E7tabcomps}, we see that $V_{56} \downarrow X$ has a 32-dimensional composition factor. Therefore, $L'=D_6$ and from Table \ref{levie7}, we have $V_{56} \downarrow D_6 = \lambda_1^2 / \lambda_5$. It follows that the remaining composition factors of $V_{56} \downarrow Y$ have even multiplicity, a contradiction. Therefore $X$ is $E_7$-irreducible by Lemma \ref{wrongcomps}.  

Finally, let $X$ be $E_7(\#12)$ or $E_7(\#13)$. Using Lemma \ref{bada1p2} and Table \ref{levie7}, we find the composition factors on $V_{56}$ of each $L$-irreducible subgroup $A_1$ of a Levi factor $L'$. We carefully check that they do not match the composition factors of $X$ on $V_{56}$. Thus $X$ is $E_7$-irreducible by Lemma \ref{wrongcomps}. This completes the analysis of the case $M = A_1 D_6$. 

The next case to consider is $M = A_7$. By Lemma \ref{class}, it follows that $X$ acts on $V_{A_7}(\lambda_1)$ as $7$ $(p \geq 11)$, $1 \otimes 1^{[r]} \otimes 1^{[s]}$ $(0 < r < s)$ or $3^{[r]} \otimes 1^{[s]}$ ($p \geq 5$; $r \neq s$). In the first two cases $X$ preserves a symplectic form on $V_{A_7}(\lambda_1)$ and hence $X$ is contained in $C_4$. By \cite[Lemma~6.1]{tho1}, this subgroup $C_4$ is $E_7$-reducible and hence so is $X$. In the final case, $X$ acts as $3^{[r]} \otimes 1^{[s]}$ ($p \geq 5$; $r \neq s$) and is hence contained in $D_4$. The normaliser of $D_4$ in $E_7$ contains a triality automorphism of $D_4$, by \cite[Lemma 2.15]{clss}. Hence $X$ is $E_7$-conjugate to an $A_7$-reducible subgroup $A_1$ acting as $4^{[r]} + 2^{[s]}$ and there are no $E_7$-irreducible $A_1$ subgroups contained in $M$.  

Now let $M = A_2 A_5$. Then using Lemma \ref{class}, we see that the projection of $X$ to $A_5$ is contained in $C_3$ and $p \geq 3$. By \cite[Table 8.2]{LS3}, the connected centraliser of this subgroup $C_3$ is $G_2$ and hence the factor $A_2$ of $M$ is contained in $G_2$. Moreover, by Theorem \ref{thmG2} the $A_2$-irreducible subgroup $A_1$ is contained in $\bar{A}_1 A_1$ and hence $X$ is contained in $\bar{A}_1 A_1 C_3 < \bar{A}_1 D_6$. Therefore $X$ has already been considered.  

Now suppose $M = G_2 C_3$. We note that the factor $G_2$ is contained in $D_4$ and hence subgroups generated by long root subgroups of $G_2$ are generated by long root subgroups of $E_7$. By Theorem \ref{thmG2}, the projection of $X$ to $G_2$ is contained in $\bar{A}_1 A_1$ or is maximal with $p \geq 7$. If the projection of $X$ to $G_2$ is contained in $\bar{A}_1 A_1$ then $X$ is contained in $\bar{A}_1 D_6$ and has already been considered. We therefore assume the projection of $X$ to $G_2$ is maximal and so $p \geq 7$. Using Lemma \ref{class}, we find that the projection of $X$ to $C_3$ is contained in $\bar{A}_1 C_2$, $A_1 A_1$ or is maximal. If the projection of $X$ is contained in $\bar{A}_1 C_2$ then $X$ is contained in $\bar{A}_1 D_6$ and has already been considered. Now suppose the projection is contained in $A_1 A_1$ acting as $(2,1)$ on $V_{C_3}(100)$. Then $X \hookrightarrow A_1 A_1 A_1 < G_2 C_3$ via $(1^{[r]},1^{[s]},1^{[t]})$ $(rst=0; s \neq t)$. If $r=s$ we claim that $X$ is contained in $\bar{A}_1 D_6$. To show this we first note that $X$ is also contained in $A_1 A_1 G_2 < A_1 F_4$, since the factor $G_2$ of $A_1 A_1 G_2$ is contained in a $D_4$ Levi subgroup and is hence conjugate to the factor $G_2$ of $M$. It follows that $X$ is conjugate to $Y \hookrightarrow A_1 A_1 A_1 < A_1 A_1 G_2 < A_1 F_4$ via $(1^{[t]},1^{[r]},1^{[r]})$. Moreover, by Theorem \ref{thmF4}, we have $Y$ is conjugate to a subgroup of $A_1 \bar{A}_1 C_3 < A_1 F_4$ and hence to a subgroup of $\bar{A}_1 D_6$. Specifically, $X$ is conjugate to $E_7(\#1^{\{r,r,t\}})$ and is $E_7$-irreducible. If $r \neq s$ then $X$ is $E_7$-irreducible by Corollary \ref{notrivs}, yielding $E_7(\#16)$. Finally, suppose the projection of $X$ to $C_3$ is maximal, so $X \hookrightarrow A_1 A_1 < G_2 C_3$ via $(1^{[r]},1^{[s]})$ $(rs=0)$. If $r \neq s$ then $X$ is $E_7$-irreducible by Corollary \ref{notrivs}, giving $E_7(\#15)$. If $p > 7$ and $r=s=0$ then Theorem \ref{A1samecomp} shows that $X$ is conjugate to $E_7(\#5^{\{0,0,0\}})$ in $\bar{A}_1 D_6$. 

When $p=7$ and $r=s=0$ then we note a correction to \cite[Theorem 8.13]{bon} and show that $X$ is $E_7$-reducible. This is almost shown in the proof of \cite[Lemma 4.6]{LS4} but we provide the full argument here. Assume $X$ is $E_7$-irreducible. This is almost shown in the proof of \cite[Lemma 4.6]{LS4} but we provide the full argument here.  From Theorem \ref{maximalexcep}, we have $V_{56} \downarrow G_2 C_3 = (10,100) / (0,001)$ and hence $V_{56} \downarrow X = 11 / 9^2 / 7 / 5^2 / 3^4 / 1^2$. It is easy to check that the following Weyl modules have the indicated structure: $W(11) = 11 | 1$, $W(9) = 9 | 3$, $W(7) = 7 | 5$, $W(5) = 5$, $W(3) = 3$. By \cite[II 4.14]{Jan}, only $11$ extends $1$ and $\text{Ext}_{A_1}^1(11,1) \cong K$, so $X$ stabilises a module $W \cong 1$. We wish to investigate $N := N_{E_7}(W)^\circ$. The variety of all 2-spaces in $V_{56}$ has dimension 108 and so $N$ has dimension at least 25 ($=\text{dim}(E_7) - 108$). Consider a maximal connected subgroup $M_1$ containing $N$ and hence $X$. This subgroup is reductive (otherwise $X$ is $E_7$-reducible, a contradiction) and hence listed in Theorem \ref{maximalexcep}. The possibilities for $M_1$ are $A_7$, $A_1 D_6$, $A_2 A_5$, $A_1 F_4$ and $G_2 C_3$.  Since $X$ is $E_7$-irreducible and contained in $N$, it follows that $M_1$ contains an $E_7$-irreducible subgroup $A_1$ with the same composition factors as $X$ on $V_{56}$. By the previous cases, $A_7$ does not contain any $E_7$-irreducible $A_1$ subgroups and so $M_1$ is not $A_7$. Now suppose $M_1 = A_1 D_6$. Then $X$ is conjugate to $E_7(\#n)$ where $n$ is one of $1,2,3,6,7, \dots,12$. Using the composition factors given in Table \ref{E7tabcomps} we see this is not possible. Next, suppose $M_1 = A_2 A_5$. Then since all $A_2 A_5$-irreducible $A_1$ subgroups are contained in $A_1 D_6$, this is also impossible. Suppose $M_1 = A_1 F_4$. Since $p=7$, the subgroup $A_1 F_4$ does not fix a $2$-space on $V_{56}$ and therefore $N$ is properly contained in $A_1 F_4$. Since $N$ has dimension at least 25, it follows from Theorem \ref{maximalexcep} that $N$ is contained in $A_1 B_4$ or $A_1 \bar{A}_1 C_3$. In both cases, it follows that $N$ is contained in $A_1 D_6$ (see the $M = A_1 F_4$ case below). Hence $X$ is contained in $A_1 D_6$, which is again impossible. Finally, suppose $M_1 = G_2 C_3$. Since $G_2 C_3$ does not fix a $2$-space of $V_{56}$, we have $N$ is contained in a proper reductive, maximal connected subgroup of $G_2 C_3$, which contains $X$. The only $A_1$ subgroups of $G_2 C_3$ with the same composition factors as $X$ on $V_{56}$ are all $G_2 C_3$-conjugate to $X$. It follows that the only reductive, connected proper subgroups of $G_2 C_3$ containing $X$ are $A_1 C_3$, $G_2 A_1$ and $A_1 A_1$, where the factor $A_1$ subgroups are maximal in their respective factors of $G_2 C_3$. All three subgroups have dimension less than $25$. This is a contradiction, proving $X$ is $E_7$-reducible.  

Next, we suppose $M = A_1 G_2$ ($p \neq 2)$. By Theorem \ref{thmG2}, the projection of $X$ to $G_2$ is contained in $A_1 A_1$ or is maximal with $p \geq 7$. Consider the first case. We claim that $X$ is also contained in $A_1 D_6$ and has already been considered. This follows by calculating the centraliser in $E_7$ of the involution that the $A_1 A_1$ centralises in $G_2$, and finding it to be $A_1 D_6$. Now suppose the projection of $X$ to $G_2$ is maximal, so $p \geq 7$ and $X \hookrightarrow A_1 A_1$ via $(1^{[r]},1^{[s]})$ $(rs=0)$. If $r \neq s$ then $X$ is $E_7$-irreducible by Corollary \ref{notrivs}, yielding $E_7(\#17)$ in Table \ref{E7tabcomps}. If $p \geq 11$ and $r=s=0$ then Theorem \ref{A1samecomp} shows that $X$ is conjugate to $E_7(\#5^{\{0,0,0\}})$, a subgroup of $A_1 D_6$. Another correction to \cite[Theorem 8.13]{bon} is that if $p=7$ and $r=s=0$ then $X$ is $E_7$-reducible. This follows immediately from the argument given in the case $M = G_2 C_3$ because here again we have a subgroup $A_1$, $X$, such that $V_{56} \downarrow X = 11 / 9^2 / 7 / 5^2 / 3^4 / 1^2$ and the argument only relied upon the composition factors on $V_{56}$.

Now let $M = A_1 F_4$. Theorem \ref{thmF4} shows that the projection of $X$ to $F_4$ is contained in $B_4$, $\bar{A}_1 C_3$ ($p \neq 2$), $A_1 G_2$ $(p \neq 2)$ or $A_1$ $(p \geq 13)$. Any subgroup of $A_1 B_4$ is contained in $\bar{A}_1 D_6$. Indeed, $B_4$ (or its Lie algebra if $p=2$) has a non-trivial centre and the full centraliser of this centre is $\bar{A}_1 D_6$. Similarly, if $X$ is contained in $A_1 \bar{A}_1 C_3$ then it is contained in $\bar{A}_1 D_6$ because the connected centraliser of $\bar{A}_1$ in $E_7$ is $D_6$. We saw in the $M = G_2 C_3$ case that $A_1 A_1 G_2$ is contained in $G_2 C_3$ and so $X$ has already been considered when its projection to $F_4$ is contained in $A_1 G_2$. That leaves us to consider $X \hookrightarrow A_1 A_1 < A_1 F_4$ via $(1^{[r]},1^{[s]})$ ($p \geq 13$; $rs=0$), where the second factor $A_1$ is maximal in $F_4$. In this case Corollary \ref{notrivs} shows that $X$ is $E_7$-irreducible, yielding $E_7(\#18)$.

Now suppose $M = A_2$ $(p \geq 5)$. Then $X$ acts on $V_{A_2}(10)$ as 2. First, let $p \geq 11$. By Theorem \ref{maximalexcep}, we have $L(E_7) \downarrow M = 44 / 11$. From this, it follows that $L(E_7) \downarrow X = 16 / 14 / 12^2 / 10^2 / 8^3 / 6 / 4^3 / 2 / 0$. By Theorem \ref{maximalexcep}, we have $L(E_7) \downarrow A_7 = (\lambda_1 + \lambda_7) / \lambda_4$. Letting $Y = A_1 < A_7$ with $V_{A_7}(\lambda_1) \downarrow Y = 7$, it follows that $Y$ has the same composition factors as $X$ on $L(E_7)$. Since $p \geq 11 > 7 = N(A_1,E_7)$, Theorem \ref{A1samecomp} applies. Hence $X$ is conjugate to $Y$, which is contained in a parabolic subgroup of $E_7$. Therefore $X$ is $E_7$-reducible. 

For $p = 5,7$ we show that $X$ fixes a $1$-space of $V_{56}$. It then follows that $X$ is contained in a parabolic subgroup of $E_7$ since the dimension of the centraliser of this $1$-space is at least 77. From \cite[Table 10.2]{LS1}, we see that $V_{56} \downarrow M = 60 + 06$ $(p = 7)$ and $V_{56} \downarrow M = 22 | (60 + 06) | 22$ $(p=5)$. When $p=7$, we have $V_{A_2}(60) = S^6(V_{A_2}(10))$ and restricting to $X$ yields $S^6(2) = (0|12|0) + (4|8|4)$ (this final calculation follows since $p > 6$ and thus if $W$ is tilting then so is $S^6(W)$). Therefore $X$ fixes a $1$-space of $V_{56}$. When $p=5$, we have $V_{A_2}(20) \otimes V_{A_2}(02) = (11|22|11) + 00$ and restricting to $X$ gives $(4+0) \otimes (4+0) = (0|8|0) + (2|6|2)  + 4^3 + 0$. Since $V_{A_2}(11) \downarrow X = 4 + 2$, it follows that $V_{A_2}(22) \downarrow X =  (0|8|0) + 6 + 4$ and $X$ fixes a $1$-space of $V_{56}$. In both cases $X$ fixes a $1$-space and is hence $E_7$-reducible.

Now let $M = A_1 A_1$ $(p \geq 5)$. Then $X$ is a diagonal subgroup of $M$ embedded via $(1^{[r]},1^{[s]})$ $(rs=0)$. If $r \neq s$ then $X$ is $E_7$-irreducible by Corollary \ref{notrivs}, yielding $E_7(\#19)$ in Table \ref{E7tab}. Now suppose $r=s=0$. If $p > 7$ then Theorem \ref{A1samecomp} shows that $X$ is conjugate to $E_7(\#6^{\{0,0,0\}})$ and is hence $E_7$-irreducible. If $p=7$, we claim that $X$ is also conjugate to $Y = E_7(\#6^{\{0,0,0\}})$. Restricting from $M$ and $A_1 D_6$, we see that $X$ and $Y$ have the same composition factors on $L(E_7)$ and on $V_{56}$. We note that $Y$ was already shown to be $E_7$-irreducible when we considered $A_1 D_6$ above, using only the composition factors of $Y$ on $L(E_7)$ as Corollary \ref{notrivs} applies. Therefore, $X$ is also $E_7$-irreducible. To prove $X$ is conjugate to $Y$ we follow the proof of \cite[Lemma~6.7]{LS3}. From Table \ref{E7tabcomps}, we see that $L(E_7) \downarrow X$ has no composition factors of the form $5 \otimes c^{[1]}$ where $c > 0$. Since $X$ and $Y$ have the same composition factors on $L(E_7)$ they have the same labelled diagram. Hence the hypothesis of \cite[Lemma~6.7]{LS3} holds and the proof of it shows that $L(X) = L(Y_1)$, where $Y_1$ is a suitable $E_7$-conjugate of $Y$. Since $X$ is $E_7$-irreducible, we claim that $C:= C_{E_7}(L(X))^\circ = 1$. Indeed, $X$ normalises $C$ and so $C$ is reductive, otherwise $X$ would be contained in a parabolic subgroup of $E_7$. Furthermore, since $C$ is a connected reductive group, the connected group $X$ centralises $C$ and hence by Lemma \ref{semirr}, we have $C = 1$. Thus $C_{E_7}(L(X))^\circ = 1$ and $N_{E_7}(L(X))^\circ = X$, showing that $Y_1 = X$ and $X$ is $E_7$-conjugate to $Y$.    

If $p=5$ we note a final correction to \cite[Theorem 8.13]{bon}. We claim that $X \hookrightarrow M$ via $(1,1)$ is $E_7$-reducible. Suppose, for a contradiction, that $X$ is $E_7$-irreducible. First, we see that $V_{56} \downarrow X = 9 / 7^2 / 5^3 / 3^6 / 1^2$ (this follows from $V_{56} \downarrow M$ which is given in Theorem \ref{maximalexcep}). We claim the only composition factor that extends $1$ is $7 = 2 \otimes 1^{[1]}$ and that $\text{Ext}^1_{A_1}(7,1) \cong K$. This follows from \cite[II 4.14]{Jan} and the structure of the following Weyl modules: $W(9) = 9 = 4 \otimes 1^{[1]}$, $W(7) = 7 | 1$, $W(5) = 5 | 3$ and $W(3) = 3$. Since $V_{56}$ is self-dual, $V_{56} \downarrow X$ has a submodule $W \cong 1$. By a previous argument, $N := N_{E_7}(W)^\circ$ is of dimension at least 25 and we may assume that it is contained in a reductive, maximal connected subgroup $M_1$ of $E_7$. The possibilities for $M_1$ are $A_7$, $A_2 A_5$, $A_1 D_6$, $A_1 F_4$ and $G_2 C_3$. Since $X$ is $E_7$-irreducible and contained in $N$, it follows that $M_1$ contains an $E_7$-irreducible subgroup $A_1$ with the same composition factors as $X$ on $V_{56}$. By the previous cases, $A_7$ does not contain any $E_7$-irreducible $A_1$ subgroups and so $M_1$ is not $A_7$. Since $p=5$, it also follows that every $E_7$-irreducible subgroup $A_1$ of $A_2 A_5$, $G_2 C_3$ and $A_1 F_4$ is conjugate to a subgroup of $A_1 D_6$. Therefore, $A_1 D_6$ contains an $E_7$-irreducible subgroup $A_1$ with the same composition factors as $X$ on $V_{56}$. By the $M = A_1 D_6$ case, it follows that $E_7(\#n)$, where $n$ is one of $3, 8, 9, 10, 11$ or $12$, has the same composition factors as $X$ on $V_{56}$. Using Table \ref{E7tabcomps}, we see that this is a contradiction. Therefore $X$ is $E_7$-reducible, as claimed.    

Now suppose $M$ is one of the two conjugacy classes of maximal $A_1$ subgroups in $E_7$. Then $M = X$ and $X$ is $E_7$-irreducible. This accounts for the subgroups $E_7(\#20)$ and $E_7(\#21)$. 

Finally, we check there are no more $E_7$-conjugacies between any of the irreducible $A_1$ subgroups by comparing the composition factors in Table \ref{E7tabcomps}.  
\end{proof}
\vspace{-0.6cm}

\section{Proof of Theorem \ref{thmE8}: $E_8$-irreducible $A_1$ subgroups} \label{secE8}

In this section we classify the $E_8$-irreducible $A_1$ subgroups of $E_8$.

\begin{thm*} \label{thmE8}

Suppose $X$ is an irreducible subgroup $A_1$ of $E_8$. Then $X$ is conjugate to exactly one subgroup of Table \ref{E8tab} and each subgroup in Table~\ref{E8tab} is irreducible. 

\end{thm*}

\begin{longtable}{>{\raggedright\arraybackslash}p{0.05\textwidth - 2\tabcolsep}>{\raggedright\arraybackslash}p{0.10\textwidth - 2\tabcolsep}>{\raggedright\arraybackslash}p{0.75\textwidth - 2\tabcolsep}>{\raggedright\arraybackslash}p{0.10\textwidth-\tabcolsep}@{}}

\caption{The $E_8$-irreducible $A_1$ subgroups of $E_8$ \label{E8tab}} \\

\hline

ID  & $M$ & $V_{M} \downarrow X$ & $p$ \\

\hline

1 & $D_8$ & $7^{[r]} \otimes 1^{[s]}$  $(rs=0; r \neq s)$ & $ \geq 11$ \\

2 & & $7^{[r]} \otimes 1^{[s]}$  $(rs=0; r \neq s)$ & $ \geq 11$  \\

3 & & $3 \otimes 3^{[r]}$  $(r \neq 0)$ & $\geq 5$  \\

4 & & $3 \otimes 3^{[r]}$  $(r \neq 0)$ & $\geq 5$  \\

5 & & $5^{[r]} \otimes 1^{[s]} + 1^{[t]} \otimes 1^{[u]}$ $(rstu=0; r \neq s)$ & $\geq 7$ \\

6 & & $2^{[r]} \otimes 1^{[s]} \otimes 1^{[t]} + 1^{[u]} \otimes 1^{[v]}$ ($r,s,t$ distinct; $rstu=0$; $u \leq v$; if $u = v$ then $s < t$) &  $\geq 3$ \\

7 & & $4^{[r]} \otimes 2^{[s]} + 0$  $(rs=0; r \neq s)$ & $\geq 5$  \\

8 & & $1 \otimes 1^{[r]} \otimes 1^{[s]} \otimes 1^{[t]}$ ($0 < r < s < t$) & all  \\

9 & & $1 \otimes 1^{[r]} \otimes 1^{[s]} \otimes 1^{[t]}$ ($0 < r < s < t$) & $ \geq 3$  \\

10 & & $14 + 0$ & $\geq 17$  \\

11 & & $12^{[r]} + 2^{[s]}$  $(rs=0)$ & $\geq 13$  \\

12 & & $10^{[r]} + 4^{[s]}$  $(rs=0)$  & $\geq 11$  \\     

13 & & $10^{[r]} + 1^{[s]} \otimes 1^{[t]} + 0$ $(rs=0; s < t)$ & $\geq 11$  \\

14 & & $8^{[r]} + 6^{[s]}$  $(rs=0)$ & $\geq 11$  \\

15 & & $8^{[r]} + 2^{[s]} + 1^{[t]} \otimes 1^{[u]}$ ($rst=0$; $t \leq u$; if $t=u$ then $s < t$) & $\geq 11$ \\

16 & & $6^{[r]} + 2^{[s]} \otimes 2^{[t]}$ ($rs=0$; $s < t$) & $\geq 7$ \\

17 & & $6^{[r]} + 4^{[s]} + 1^{[t]} \otimes 1^{[u]}$  ($rst = 0$; $t \leq u$) & $\geq 7$  \\

18 & & $6^{[r]} + 1^{[s]} \otimes 1^{[t]} + 1^{[u]} \otimes 1^{[v]} + 0$  ($rs = 0$; $s < t < u < v$) & $\geq 7$ \\

19 & & $6^{[r]} + 2^{[s]} + 2^{[t]} + 2^{[u]}$  ($rs=0$; $s < t < u$) & $\geq 7$ \\ 

20 & & $4 + 4^{[r]} + 4^{[s]}  + 0$  ($0 < r < s$) & $\geq 5$ \\

21 & & $4^{[r]} + 4^{[s]} + 2^{[t]} + 2^{[u]}$  ($rt = 0$; $r < s$; $t < u$) & $\geq 5$ \\ 

22 & & $4^{[r]} + 2^{[s]} + 3^{[t]} \otimes 1^{[u]}$  ($rsu = 0$; $r \neq s$; $r \leq t$; $t \neq u$; if $r=t$ then $s \leq u$)  & $\geq 5$ \\ 

23 & & $4^{[r]} + 2^{[s]} + 1^{[t]} \otimes 1^{[u]} + 1^{[v]} \otimes 1^{[w]}$ ($rstuvw=0$; see Table \ref{23conditions} for the further conditions on $r, \ldots, w$) & $\geq 5$ \\

24 & & $2^{[r]} \otimes 2^{[s]} + 2^{[t]} + 1^{[u]} \otimes 1^{[v]}$ ($rtu =0$; $r < s$; $u \leq v$; if $u=v$ then $u < t$) & $\geq 3$ \\

25 & & $2^{[r]} + 2^{[s]} + 2^{[t]} + 2^{[u]} + 1^{[v]} \otimes 1^{[w]}$ ($rv=0$; $r < s < t < u $; $v \leq w$; if $v=w$ then $u < v$) & $\geq 3$ \\ 

26 & & $1 \otimes 1^{[r]} + 1^{[s]} \otimes 1^{[t]} + 1^{[u]} \otimes 1^{[v]} + 1^{[w]} \otimes 1^{[x]}$ (see Table \ref{8conditions1} for conditions \newline on $r, \ldots, x$) & any \\

27 & & $0|(2^{[r]} + 2^{[s]} + 2^{[t]})|0 + 1^{[u]} \otimes 1^{[v]} + 1^{[w]}  \otimes 1^{[x]}$ ($ruv=0$; see Table \ref{27conditions} for the further conditions on $r, \ldots, x$)  & $=2$ \\

28 & & $0|(2 + 2^{[r]} + 2^{[s]})|0 + 1^{[t]} \otimes 1^{[u]} \otimes 1^{[v]}$ ($0 < r < s$; $t<u<v$; if $t=0$ then $r \leq u$; if $t=0$ and $r=u$ then $s \leq v$) & $=2$ \\

29 & & $0|(2^{[r]} + 2^{[s]} + 2^{[t]} + 2^{[u]} + 2^{[v]})|0 + 1^{[w]}  \otimes 1^{[x]}$ ($rw=0$; $r < s < t < u < v$; $w < x$)  & $=2$ \\

30 & & $0|(2 + 2^{[r]} + 2^{[s]} + 2^{[t]} + 2^{[u]} + 2^{[v]}+ 2^{[w]})|0$ ($0 < r < s < t < u < v < w$) & $=2$ \\

\hline

31 & $A_1 E_7$ & $(1^{[r]},E_7(\#15^{\{s,t\}}))$ $(rst=0; s \neq t)$ & $\geq 7$ \\

32 & & $(1^{[r]},E_7(\#16^{\{s,t,u\}}))$ $(rstu=0; s \neq t; t \neq u)$ & $\geq 7$ \\

33 & & $(1^{[r]},E_7(\#17^{\{s,t\}}))$ ($rst=0; s \neq t$) & $\geq 7$  \\

34 & & $(1^{[r]},E_7(\#18^{\{s,t\}}))$ ($rst=0$)& $\geq 13$  \\

35 & & $(1^{[r]},E_7(\#19^{\{s,t\}}))$ ($rst=0$; $s \neq t$)  & $\geq 5$  \\

36 & & $(1^{[r]},E_7(\#20)^{[s]})$ $(rs=0)$ & $\geq 17$  \\

37 & & $(1^{[r]},E_7(\#21)^{[s]})$ $(rs=0)$ & $\geq 19$  \\

\hline

38 & $G_2 F_4$ & $(G_2(\#3)^{[r]},F_4(\#10)^{[s]})$ $(rs=0; r \neq s)$ & $\geq 13$ \\

39 & & $(G_2(\#3)^{[r]},F_4(\#11^{\{s,t\}}))$ ($rs=0$; $r < t$; $r \neq s$; $s \neq t$) & $\geq 7$ \\

\hline

40 & $A_1$ & 1 & $\geq 23$  \\

\hline

41 & $A_1$ & 1 & $\geq 29$ \\

\hline

42 & $A_1$ & 1 & $\geq  31$ \\

\hline

\end{longtable}

The composition factors of $L(E_8)$ restricted to each irreducible subgroup $A_1$ are given in Table \ref{E8tabcomps}.

\begin{proof}

We consider each reductive, maximal connected subgroup $M$ of $E_8$ in turn. By Theorem \ref{maximalexcep}, they are $D_8$, $A_8$, $\bar{A}_1 E_7$, $A_2 E_6$, $A_4^2$, $G_2 F_4$, $B_2$ $(p \geq 5)$, $A_1 A_2$ $(p \geq 5)$, $A_1$ $(p \geq 23)$, $A_1$ $(p \geq 29)$ and $A_1$ $(p \geq 31)$. Let $X$ be an $M$-irreducible subgroup $A_1$. 

Firstly, suppose $M = D_8$. We start by finding the $E_8$-conjugacy classes of $D_8$-irreducible subgroups of $D_8$; we claim that these are $E_8(\#1)$--$E_8(\#30)$ in Table \ref{E8tab} as well as the class of $A_1$ subgroups acting as $1 \otimes 1^{[r]} \otimes 1^{[s]} \otimes 1^{[t]}$ that are excluded from $E_8(\#9)$ when $p=2$. This is entirely similar to the case $A_1 D_6 < E_7$ and is a mainly routine task of using Lemma \ref{class} to find all of the $D_8$-conjugacy classes of $D_8$-irreducible subgroups and then considering which classes are fused in $E_8$. We will just give some details on the $D_8$-classes which are fused in $E_8$. 

First, we note that the excluded class of $A_1$ subgroups acting as $1 \otimes 1^{[r]} \otimes 1^{[s]} \otimes 1^{[t]}$  on $V_{D_8}(\lambda_1)$ when $p=2$ are contained in $B_4 (\ddagger)$, with notation from \cite[Lemma 7.1]{tho1}. By \cite[Lemma 7.4]{tho1}, this subgroup $B_4$ is contained in a parabolic subgroup of $E_8$ and hence so is the class of $A_1$ subgroups. 

The $D_8$-classes of $A_1$ subgroups which are fused in $E_8$ are all contained in the maximal rank subsystem subgroups $A_1^2 D_6$ or $D_4^2$. By \cite[Table 11]{car}, we have $N_{E_8}(A_1^2 D_6) = (A_1^2 D_6).2$ where the involution simultaneously acts a graph automorphism of $D_6$ and swaps the two $A_1$ factors. Consider a subgroup $A_1$ acting on $V_{D_8}(\lambda_1)$ as $5^{[r]} \otimes 1^{[s]} + 1^{[t]} \otimes 1^{[u]}$ $(r \neq s; rstu=0)$ when $p \geq 7$. There are two $D_8$-classes of such $D_8$-irreducible $A_1$ subgroups, since there are two $D_6$-classes of $A_1$ subgroups acting as $5^{[r]} \otimes 1^{[s]}$ $(r \neq s)$ on $V_{D_6}(\lambda_1)$. These classes are fused in $E_8$ by an involution in $N_{E_8}(A_1^2 D_6)$, yielding $E_8(\#5)$. Similarly, consider a subgroup $A_1$ acting on $V_{D_8}(\lambda_1)$ as $2^{[r]} \otimes 1^{[s]} \otimes 1^{[t]} + 1^{[u]} \otimes 1^{[v]}$ ($r,s,t$ distinct; $rstuv=0$) when $ p \neq 2$. There is just one $D_8$-class of such $D_8$-irreducible $A_1$ subgroups, since the graph automorphism of $D_6$ just swaps $s$ and $t$. In $E_8$ we may swap $u$ and $v$ or if $u=v$ then we may swap $s$ and $t$. Therefore, to have a complete set of representatives without repeats we need $u < v$ or $u = v$ and $s < t$, as in $E_8(\#6)$. Similar arguments apply to yield $E_8(\#13)$, $E_8(\#15)$, $E_8(\#17)$ and $E_8(\#25)$. 

We now consider subgroups of $D_4^2$. We have $N_{E_8}(D_4^2) = (D_4^2).(2 \times S_3)$ by \cite[Table 11]{car}, where the $S_3$ acts simultaneously on both $D_4$ factors and the involution commuting with $S_3$ swaps the $D_4$ factors. Firstly, let $Y$ be a subgroup $A_1$ of $D_4^2$ acting as $4^{[r]} \otimes 2^{[s]}$ or $3^{[r]} \otimes 1^{[s]}$ (2 classes) on the first $D_4$ factor and as $4^{[t]} \otimes 2^{[u]}$ or $3^{[t]} \otimes 1^{[u]}$ (2 classes) on the second $D_4$ factor. By using a triality automorphism, we may assume $Y$ acts as $4^{[r]} + 2^{[s]}$ on the first factor $D_4$. The projection of $Y$ to the first factor $D_4$ lies in $\text{SO}_5 \text{SO}_3$ and is hence contained in the centraliser of an involution in the $S_3$. Therefore, we may act by this involution on the second factor $D_4$ reducing the possibilities, up to $E_8$-conjugacy, to $4^{[t]} + 2^{[u]}$ or $3^{[t]} \otimes 1^{[u]}$ (1 class). Furthermore, if $r=s$ then the projection of $Y$ lies in $A_2$, which is the centraliser of a triality automorphism. We may therefore assume that the projection of $Y$ to the second factor $D_4$ acts as $4^{[t]} + 2^{[u]}$. This analysis leads to the classes $E_8(\#21)$ and $E_8(\#22)$ in Table \ref{E8tab}. We note that in $E_8(\#21)$ we may assume $r \leq s$ (and $t \leq u$) since the Weyl group of $D_8$ contains an involution swapping the stabilisers of the two $5$-spaces, $4^{[r]}$ and $4^{[s]}$ (the stabilisers of the two $3$-spaces, $2^{[t]}$ and $2^{[u]}$). Also, the involution swapping the two $D_4$ factors allows us to assume $r \leq t$ in $E_8(\#22)$.     

A similar analysis when $p=2$ and $Y$ acts as either $0|(2^{[r]} + 2^{[s]} + 2^{[t]})|0$ or $1^{[r]} \otimes 1^{[s]} \otimes 1^{[t]}$ on each of the $D_4$ factors, leads to two collections of conjugacy classes, namely an $E_8$-reducible one and $E_8(\#28)$.   

Next we consider subgroups contained in $A_1^4 D_4$. By \cite[Table 11]{car}, we see that $N_{E_8}(A_1^4 D_4) = (A_1^4 D_4).S_4$, where the $S_4$ acts naturally on the four $A_1$ factors and induces an action of $S_3$ on the $D_4$ (with the normal Klein four-subgroup acting trivially). Let $Y$ be a $D_8$-irreducible subgroup $A_1$ of $A_1^4 D_4$ that is not contained in $A_1^8$ (we will consider this in the next paragraph). Then the projection of $Y$ to $D_4$ acts as $6^{[r]} + 0$ $(p \geq 7)$,  $4^{[r]} + 2^{[s]}$ $(p \geq 3)$, $3^{[r]} \otimes 1^{[s]}$ $(p \geq 3; r \neq s)$, $0 | (2 + 2^{[r]} + 2^{[s]}) | 0$ $(p=2; 0 < r < s)$ or $1 \otimes 1^{[r]} \otimes 1^{[s]}$ $(p=2; 0 < r < s)$ on $V_{D_4}(\lambda_1)$.  In the first case the projection of $Y$ is contained in $G_2$ and hence centralised by the action of $S_3$. In this case the action of $S_4$ on $A_1^4$ allows us to assume $s < t < u < v$, yielding $E_8(\#18)$. In the second and third cases, we may use the triality automorphism to assume $Y$ acts as $4^{[r]} + 2^{[s]}$. If $r=s$ then the projection of $Y$ is centralised by the action of $S_3$; whereas when $r \neq s$ the projection of $Y$ is only centralised by an involution in $S_3$. This yields the constraints on the field twists in $E_8(\#23)$. Similarly, the fourth and fifth cases yield $E_8(\#27)$. 

Finally, we consider the classes of irreducible $A_1$ subgroups contained in $A_1^8$. By \cite[Table 11]{car}, we have $N_{E_8}(A_1^8) = (A_1^8).\text{AGL}(3,2)$, where $\text{AGL}(3,2) < S_8$ acts on the eight $A_1$ factors. The subgroup $\text{AGL}(3,2) < S_8$ is generated by $(2,4) (6,8)$, $(2,5,3)(4,6,7)$ and $(1,2)(3,4)(5,6)(7,8)$. To find the $E_8$-classes of the $D_8$-irreducible $A_1$ subgroups, we follow the same method as for $A_1^7 < E_7$, systematically formulating constraints on the field twists, ensuring that each ordered set $0,r,s,t,u,v,w,x$ gives a $D_8$-irreducible and there are no repeated classes. We note that $\text{AGL}(3,2)$ is $3$-transitive, with two orbits on $4$-sets, with representatives (in terms of the eight field twists) given by $0,r,s,t$ and $0,r,s,u$. Moreover the stabiliser of a singleton is isomorphic to $\text{PSL}(2,7)$, the stabiliser of a pair is isomorphic to $\mathbb{Z}_2 \times S_4$, the stabiliser of a triple is isomorphic to $S_4$, as is the stabiliser of either class of quadruples. From this, it is straightforward to prove the ordered sets $0, r, \ldots, x$ satisfying the conditions of Table \ref{8conditions1} yield a complete set of $E_8$-conjugacy classes of $D_8$-irreducible $A_1$ subgroups contained in $A_1^8$, without repeat. This gives $E_8(\#26)$ in Table \ref{E8tab}. 

For the case $M = D_8$, it remains to prove that $E_8(\#1)$--$E_8(\#30)$ are $E_8$-irreducible. Firstly, by considering the composition factors from Table \ref{E8tabcomps}, Corollary \ref{notrivs} shows that $E_8(\#n)$ is $E_8$-irreducible for the following ID numbers $n$: $1$, $2$ ($p \geq 13$), $3$ ($p \geq 7$), $4$, $5$ ($p \geq 11$), $6$ ($p \geq 7$), $7$, $8$ ($p \neq 2$), $9$ ($p \geq 5$), $10$, $11$, $12$, $13$ ($p \geq 13$), $14$, $15$ $(p \geq 13)$, $16$ $(p \geq 11)$, $17$ $(p \geq 11)$,  $18$ $(p \geq 11)$, $19$, $20$, $21$, $22$ $(p \geq 7)$, $23$ $(p \geq 7)$, $24$ $(p \geq 7)$, $25$ $(p \geq 5)$ and $26$ $(p \geq 5)$. We now prove the remaining subgroups are $E_8$-irreducible using Lemma \ref{wrongcomps}.  

First, let $X = E_8(\#2)$ when $p=11$. From Table \ref{E8tabcomps}, we have $L(E_8) \downarrow X$ has a trivial composition factor only when $r = 0, s=1$ and Corollary \ref{notrivs} applies otherwise. In the case $r=0, s=1$, we have $L(E_8) \downarrow X = 40 /$ $\!\! 34 /$ $\!\! 30^2 /$ $\!\! 26^2 /$ $\!\! 22^2 /$ $\!\! 20^2 /$ $\!\! 16^2 /$ $\!\! 14^2 /$ $\!\! 12 /$ $\!\! 10 /$ $\!\! 6^2 /$ $\!\! 4 /$ $\!\! 2 /$ $\!\! 0 $. Assume there exists an $L$-irreducible subgroup $Y$ of a Levi factor $L$ with the same composition factors as $X$ on $L(E_8)$. Using Table \ref{levie8}, we find that the possibilities for $L'$ are $A_1 E_6$, $D_7$, $A_2 D_5$, $A_7$, $A_3 A_4$, $A_1 A_6$ and $A_1 A_2 A_4$. We rule out $A_1 E_6$ since it has a $2$-dimensional composition factor on $L(E_8)$. Suppose $Y$ is contained in $D_7$. Using Table \ref{levie8}, we see that $V_{D_7}(\lambda_1)$ occurs as a multiplicity two composition factor of $L(E_8) \downarrow D_7$. Therefore, $Y$ does not have the same composition factors as $X$ on $L(E_8)$, since there are no combination of composition factors of $L(E_8) \downarrow X$ that form two isomorphic $14$-dimensional modules. Now suppose $Y$ is contained in $A_2 D_5$. It follows from the composition factors of $X$, and by assumption $Y$, on $L(E_8)$ that $(V_{A_2}(00),V_{D_5}(\lambda_1)) \downarrow Y = 6 + 2^{[1]}$ and thus $(V_{A_2}(00),V_{D_5}(\lambda_4)) \downarrow Y = 6 \otimes 1^{[1]} + 1^{[1]}$. Since $(V_{A_2}(00),V_{D_5}(\lambda_4))$ occurs as a composition factor of $A_2 D_5$ on $L(E_8)$, we see that $Y$ has a $2$-dimensional composition factor on $L(E_8)$, a contradiction. Similarly, if $Y$ is contained in $A_7$ then $V_{A_7}(\lambda_1) \downarrow Y = 14 = 3 \otimes 1^{[1]}$. Therefore, $V_{A_7}(\lambda_3) \downarrow Y$, which occurs as a composition factor of $L(E_8) \downarrow A_7$, has a composition factor of high weight 36, a contradiction. Now suppose $Y$ is contained in $A_3 A_4$. From Table \ref{levie8}, we see that $(V_{A_3}(100),V_{A_4}(0000))$ and $(V_{A_3}(001),V_{A_4}(0000))$ both occur as composition factors of $L(E_8) \downarrow A_3 A_4$. But $L(E_8) \downarrow X$ has only one composition factor of dimension four and so the projection of $Y$ to $A_3$ is not $A_3$-irreducible, a contradiction. Now suppose $Y$ is contained in $A_1 A_6$. Then we find that $V_{A_6}(\lambda_1) \downarrow Y = 6$ and so both $V_{A_6}(\lambda_3) \downarrow Y$ and $V_{A_6}(\lambda_4) \downarrow Y$ have a composition factor of high weight 12. Therefore, $L(E_8) \downarrow Y$ has at least two composition factors of high weight 12, a contradiction. Finally, suppose $Y$ is contained in $A_1 A_2 A_4$. The largest dimension of a composition factor of $L(E_8) \downarrow A_1 A_2 A_4$ is 30 and hence $L(E_8) \downarrow Y$ does not have a composition factor of dimension 32, a contradiction. We have hence shown that no such subgroup $Y$ exists and so Lemma \ref{wrongcomps} shows that $X$ is $E_8$-irreducible.           

Similar arguments show that $E_8(\#3)$ $(p=5)$, $E_8(\#6)$ $(p=5)$, $E_8(\#13)$ $(p=11)$, $E_8(\#16)$ $(p=7)$, $E_8(\#18)$ $(p=7)$, $E_8(\#24)$ $(p=5)$ , $E_8(\#25)$ $(p=3)$ and $E_8(\#26)$ $(p=3)$ are $E_8$-irreducible, as they have at most one trivial composition factor on $L(E_8)$. 

We next consider the remaining cases when $p \neq 2$. First let $X = E_8(\#5)$ when $p=7$. Then $X$ has a trivial composition factor on $L(E_8)$ when $r=s-1=u$ and $X$ has two trivial composition factors when $r=s-1=t=u$. Using Lemma \ref{wrongcomps}, we will show that $X$ is $E_8$-irreducible when $r=s-1=t=u$. The case $r=s-1=u \neq t$ is similar and in all other cases Corollary \ref{notrivs} applies. Since $rstu=0$, we have $r=t=u=0$, $s=1$ and from Table \ref{E8tabcomps}, we see that $L(E_8) \downarrow X = 22^2 /$ $\!\! 20 /$ $\!\! 18^3 /$ $\!\! 16 /$ $\!\! 14^3 /$ $\!\! 12^5 /$ $\!\! 10^5 /$ $\!\! 8^2 /$ $\!\! 6 /$ $\!\! 4^2 /$ $\!\! 2^7 /$ $\!\! 0^2 $. Suppose there exists an $L$-irreducible subgroup $Y$ of a Levi factor $L$ with the same composition factors as $X$ on $L(E_8)$. Using Table \ref{levie8} and considering the number of trivial composition factors of $L(E_8) \downarrow X$, we find that the possibilities for $L'$ are $A_1 E_6$, $D_7$, $A_2 D_5$, $A_2 D_4$, $A_7$, $A_3 A_4$, $A_1 A_6$, $A_1 A_2 A_4$, $A_1^2 A_4$, $A_3^2$ and $A_1^2 A_2^2$. We rule out $L'$ being $A_1 E_6$, $A_1^2 A_4$ or $A_1^2 A_2^2$ since they have $2$-dimensional composition factors on $L(E_8)$. We also rule out $A_1 A_6$ since $A_6$ does not contain an $A_6$-irreducible subgroup $A_1$ when $p=7$, by Lemma \ref{class}. Now suppose $Y$ is contained in $D_7$. From Table \ref{levie8}, we see $V_{D_7}(\lambda_1)$ occurs as a multiplicity two composition factor of $L(E_8) \downarrow D_7$. As $L(E_8) \downarrow X$ has only one $7$-dimensional composition factor, two $5$-dimensional composition factors and two trivial composition factors, it follows that $V_{D_7}(\lambda_1) \downarrow Y = 2^{[a]} + 2^{[b]} + 1^{[c]} \otimes 1^{[d]} + 1^{[e]} \otimes 1^{[f]}$ with $c \neq d$ and $e \neq f$. Therefore, $L(E_8) \downarrow Y$ has at least four $4$-dimensional composition factors, a contradiction. Now suppose $Y$ is contained in $A_2 D_5$. Then \begin{align*} L(E_8) \downarrow A_2 D_5 = & (W(11),0) / (10,\lambda_1) / (10,\lambda_4) / (10,0) /   (01,\lambda_1) / (01,\lambda_5) /  (01,0) / (00,W(\lambda_2)) / \\ &  (00,\lambda_4) / (00,\lambda_5) / (00,0). \end{align*} The projection of $Y$ to $A_2$ acts as $2^{[a]}$ on $10$ and hence has composition factors $4^{[a]} / 2^{[a]}$ on $W(11)$. We also have $V_{D_5}(\lambda_4) = V_{D_5}(\lambda_5)^*$. Therefore, $(10,\lambda_1) \downarrow Y = (01,\lambda_1) \downarrow Y$, $(10,\lambda_4) \downarrow Y = (01,\lambda_5) \downarrow Y$ and $(00,\lambda_4) \downarrow Y = (00,\lambda_5) \downarrow Y$. It follows that $(00,\lambda_2) \downarrow Y$ contains at least one copy each $Y$-composition factor of $L(E_8)$ occurring with odd multiplicity (except for possibly a composition factor of high weight $2$). The sum of the dimensions of such composition factors is 75 which is greater than $45 = \text{dim} (V_{D_5}(\lambda_2))$, a contradiction. The previous argument does not use the $D_5$-irreducibility of $Y$ and hence also shows that $Y$ is not contained in $A_2 D_4$. 

Now suppose $Y$ is contained in $A_7$. Since $p=7$ and $Y$ is $A_7$-irreducible, $Y$ acts as $3^{[a]} \otimes 1^{[b]}$ $(a \neq b)$ or $1^{[a]} \otimes 1^{[b]} \otimes 1^{[c]}$ ($a,b,c$ distinct) on $V_{A_7}(\lambda_1)$. In the latter case, $Y$ is contained in $C_4$, which has three trivial composition factors on $L(E_8)$ (by \cite[Table 8.1]{LS3}), hence $Y$ has at least three trivial composition factors, a contradiction. So $Y$ acts as $3^{[a]} \otimes 1^{[b]}$ on $V_{A_7}(\lambda_1)$. From Table \ref{levie8}, we have $L(E_8) \downarrow A_7 = (\lambda_1 + \lambda_7) /$ $\!\!  \lambda_1 /$ $\!\!  \lambda_2 /$ $\!\!  \lambda_3 /$ $\!\!  \lambda_5 /$ $\!\!  \lambda_6 /$ $\!\!  \lambda_7 /$ $\!\! 0$. Since $\lambda_i = \lambda_{8-i}^*$ for $i=1, 2, 3$ it follows that $(\lambda_1 + \lambda_7) \downarrow Y$ has at least one copy of each odd multiplicity composition factor of $L(E_8) \downarrow Y$. The sum of the dimensions of such composition factors is at least 78, which is greater than $63$, a contradiction. Now suppose $Y$ is contained in $A_3 A_4$. Then $L(E_8) \downarrow A_3 A_4$ has one trivial composition factor. All of the other composition factors occur in pairs with their duals, except for $(V_{A_3}(101),V_{A_4}(0000))$ and $(V_{A_3}(000),V_{A_4}(1001))$. Since $Y$ has exactly two trivial composition factors, it follows that $V_{A_3}(101)$ restricted to the projection of $Y$ to $A_3$ or $V_{A_4}(1001)$ restricted to the projection of $Y$ to $A_4$ has exactly one trivial composition factor (and not both). However, the projection of $Y$ to $A_3$ and the projection to $A_4$ are irreducible and so act as $1 \otimes 1^{[a]}$ $(a \neq 0)$ or $3^{[a]}$ and $4^{[a]}$ on the natural module, respectively. Neither action on $V_{A_3}(100)$ yields a trivial composition factor on $V_{A_3}(101)$ and the action on $V_{A_4}(1000)$ does not yield a trivial composition factor on $V_{A_4}(1001)$ either. Hence $Y$ is not contained in $A_3 A_4$. A similar argument also rules out $A_1 A_2 A_4$. Finally, suppose $Y$ is contained in $A_3^2$. Then $(V_{A_3}(101),V_{A_3}(000)) \downarrow Y$ and $(V_{A_3}(101),V_{A_3}(000)) \downarrow Y$ have a least one copy of each odd multiplicity composition factor of $L(E_8) \downarrow Y$. As before, the sum of the dimensions of one copy of each odd multiplicity isomorphism class of composition factors is 78, which is greater than 30, a contradiction. We have shown that no such subgroup $Y$ exists and hence $X$ is $E_8$-irreducible by Lemma \ref{wrongcomps}.

Similar arguments show that $E_8(\#15)$ ($p=11$), $E_8(\#17)$ ($p=7$), $E_8(\#22)$ ($p=7$) and $E_8(\#24)$ ($p=3$) are $E_8$-irreducible, as they have at most two trivial composition factors on $L(E_8)$. The remaining cases when $p \neq 2$ are $E_8(\#6)$ $(p=3)$, $E_8(\#9)$ $(p=3)$, $E_8(\#22)$ $(p=5)$ and $E_8(\#23)$ $(p=3)$. They all have at most four trivial composition factors on $L(E_8)$ (in fact, $E_8(\#12)$ has at most three). We will consider one of the cases in which $E_8(\#6)$ has four trivial composition factors and prove it is $E_8$-irreducible. The other cases are all similar. 

Let $X = E_8(\#6)$ and $p=3$. When $s=u=v=r-1=t-2$, we see from Table \ref{E8tabcomps} that $X$ has four trivial composition factors on $L(E_8)$. Since $rst=0$, we have $s=u=v=0$, $r=1$, $t=2$ and $L(E_8) \downarrow X = 30 /$ $\!\!  28 /$ $\!\!  26^2 /$ $\!\!  24 /$ $\!\!  22 /$ $\!\!  18^4 /$ $\!\! 16^5 /$ $\!\!  14^2 /$ $\!\!  12^7 /$ $\!\!  10^2 /$ $\!\!  6 /$ $\!\!  4^2 /$ $\!\!  2^6 /$ $\!\! 0^4$. As usual, we suppose there exists an $L$-irreducible subgroup $Y$ of a Levi factor $L$ with the same composition factors as $X$ on $L(E_8)$. Since $p=3$ there are a few Levi subgroups $L$, that although they have four or fewer trivial composition factors on $L(E_8)$, do not have an $L'$-irreducible subgroup $A_1$. Using Table \ref{levie8}, we find that the possibilities for $L'$ are $E_7$, $A_1 E_6$, $D_7$, $A_2 D_5$, $A_1 D_5$, $A_2 D_4$, $A_7$, $A_1 A_5$,  $A_3^2$ and $A_1^2 A_2^2$. We immediately rule out $L'$ being $A_1 E_6$, $A_1 D_5$ or $A_1 A_5$ since they have $2$-dimensional composition factors on $L(E_8)$. We also rule out $L'$ being $A_2 D_4$, $A_3^2$ or $A_1^2 A_2^2$ since they do not have at least two composition factors of dimension at least 27. 

Now suppose that $Y$ is contained in $A_7$. Then $Y$ acts as $1^{[a]} \otimes 1^{[b]} \otimes 1^{[c]}$ ($a,b,c$ distinct). Therefore, $V_{A_7}(\lambda_1 + \lambda_7) \downarrow Y = 2^{[a]} \otimes 2^{[b]} \otimes 2^{[c]} /$ $\!\!  2^{[a]} \otimes 2^{[b]} /$ $\!\!  2^{[a]} \otimes 2^{[c]} /$ $\!\!  2^{[b]} \otimes 2^{[c]} /$ $\!\!  2^{[a]} /$ $\!\!  2^{[b]} /$ $\!\!  2^{[c]} /$ $\!\!  0$. In particular, $L(E_8) \downarrow Y$ has at least three 9-dimensional composition factors, a contradiction. 

Now suppose $Y$ is contained in $E_7$. Then $Y$ is conjugate to $E_7(\#3)$, $E_7(\#10)$, $E_7(\#11)$ or $E_7(\#12)$ by Theorem \ref{thmE7}. From Table \ref{levie8}, we have $L(E_8) \downarrow E_7 = \lambda_1 /$ $\!\! \lambda_7^2 /$ $\!\! 0^3$. Since $L(E_8) \downarrow Y$ has exactly two 27-dimensional composition factors, which are isomorphic to each other (both are $26 = 2 \otimes 2^{[1]} \otimes 2^{[2]}$) it follows that $V_{E_7}(\lambda_7) \downarrow Y$ has exactly one 27-dimensional composition factor or $V_{E_7}(\lambda_1) \downarrow Y$ has two isomorphic 27-dimensional composition factors. Using Table \ref{E7tabcomps}, we see that this is not true for $E_7(\#10)$, $E_7(\#11)$ and $E_7(\#12)$. Therefore $Y$ is conjugate to $E_7(\#3)$. But then $L(E_8) \downarrow Y$ has a 2-dimensional composition factor coming from $V_{E_7}(\lambda_1) \downarrow Y$, a contradiction. 

Suppose $Y$ is contained in $D_7$. Then since $Y$ is $D_7$-irreducible and $p=3$, it acts on $V_{D_7}(\lambda_1)$ as $2^{[a]} + 2^{[b]} +2^{[c]} + 1^{[d]} \otimes 1^{[e]} + 0$ ($a,b,c$ distinct; $d \neq e$), $2^{[a]} + 2^{[b]} + 1^{[c]} \otimes 1^{[d]} + 1^{[e]} \otimes 1^{[f]}$ ($a \neq b$; $c \neq d$; $e \neq f$; $\{c,d\} \neq \{e,f\}$) or $2^{[a]} \otimes 2^{[b]} + 1^{[c]} \otimes 1^{[d]} + 0$ ($a \neq b$; $c \neq d$). The first action is impossible, since $V_{D_7}(\lambda_1)$ occurs as a multiplicity two composition factor in $L(E_8) \downarrow D_7$ and $L(E_8) \downarrow Y$ only has two non-isomorphic $3$-dimensional composition factors. Similarly, the latter action is also impossible, since $L(E_8) \downarrow Y$ only has one $9$-dimensional composition factor. So $Y$ acts as $2^{[a]} + 2^{[b]} + 1^{[c]} \otimes 1^{[d]} + 1^{[e]} \otimes 1^{[f]}$ ($a \neq b$, $c \neq d$, $e \neq f$ and $\{c,d\} \neq \{e,f\}$) on $V_{D_7}(\lambda_1)$. It follows that \begin{align*} V_{D_7}(\lambda_2) \downarrow Y = & 2^{[r]} \otimes 2^{[s]} / 2^{[r]} \otimes 1^{[t]} \otimes 1^{[u]} / 2^{[r]} \otimes 1^{[v]} \otimes 1^{[w]} / 2^{[r]} / 2^{[s]} \otimes 1^{[t]} \otimes 1^{[u]} / \\ & 2^{[s]} \otimes 1^{[v]} \otimes 1^{[w]} / 2^{[s]} / 2^{[t]} / 1^{[t]} \otimes 1^{[u]} \otimes 1^{[v]} \otimes 1^{[w]} / 2^{[u]} / 2^{[v]} / 2^{[w]} \end{align*} and \begin{align*} V_{D_7}(\lambda_6) \downarrow Y = V_{D_7}(\lambda_7) \downarrow Y = & 1^{[r]} \otimes 1^{[s]} \otimes 1^{[t]} \otimes 1^{[v]} / 1^{[r]} \otimes 1^{[s]} \otimes 1^{[t]} \otimes 1^{[w]} / \\ & 1^{[r]} \otimes 1^{[s]} \otimes 1^{[u]} \otimes 1^{[v]} / 1^{[r]} \otimes 1^{[s]} \otimes 1^{[u]} \otimes 1^{[w]}. \end{align*} Hence $L(E_8) \downarrow Y$ has no $27$-dimensional composition factors, a contradiction. Therefore $Y$ does not exist and $X$ is $E_8$-irreducible by Lemma \ref{wrongcomps}. 

The final step for $M=D_8$ is to consider the case $p=2$, where $X$ is one of $E_8(\#n)$ where $n=8, 26, 27, 28, 29$ or $30$. As with the previous cases, we use Lemma \ref{wrongcomps} to prove $X$ is $E_8$-irreducible. Lemma \ref{bada1p2} contains all $L$-irreducible $A_1$ subgroups of Levi factors $L$ when $p=2$. For such a subgroup $Y$, one can write down the composition factors of $L(E_8) \downarrow Y$ and compare them to those of $L(E_8) \downarrow X$ from Table \ref{E8tabcomps}. In all cases it is straightforward to show they never match and we leave the details to the reader. This completes the case $M = D_8$.       

We next consider the case $M = A_8$. Using Lemma \ref{class}, we see that $X$ acts on $V_{A_8}(\lambda_1)$ as $8$ $(p \geq 11)$ or $2 \otimes 2^{[r]}$ $(p \geq 3; r \neq 0)$. In both cases $X$ preserves an orthogonal form on $V_{A_8}(\lambda_1)$ and is hence contained in $B_4$. By \cite[Table~8.1]{LS3}, we have that this subgroup $B_4$ is also contained in $D_8$ acting irreducibly on the natural module. So in the first case, $X$ is contained in $D_8$ and is conjugate to $E_8(\#9)$. In the second case, $X$ acts as $W(3) \otimes 1^{[r]} + 1^{[r]} \otimes W(3)$ on $V_{D_8}(\lambda_1)$ (by \cite[Prop 2.13]{LS3}). If $p =3$ then $X$ is $D_8$-reducible by Lemma \ref{class} and hence $E_8$-reducible. If $p > 3$, then $X$ is conjugate to $E_8(\#21^{\{0,r,0,r\}})$ and hence $E_8$-irreducible. 

Now let $M = \bar{A}_1 E_7$. The projection of $X$ to $E_7$ is $E_7$-irreducible and so by Theorem \ref{thmE7}, it is $E_7$-conjugate to a subgroup in Table \ref{E7tab}. Let $Y$ be the projection of $X$ to $E_7$ so $X$ is a diagonal subgroup of $\bar{A}_1 Y$. We now analyse the different possibilities for $Y$ from Theorem \ref{thmE7}. Suppose $Y$ is contained in $\bar{A}_1 D_6$. Then $X$ is contained in $\bar{A}_1^2 D_6$ which is a subgroup of $D_8$, and has hence already been considered. 

Next, suppose $Y$ is $E_7(\#15)$ or $E_7(\#16)$ and so $Y$ is contained in $G_2 C_3$. Consider the first case, $Y = E_7(\#15)$. Then $X \hookrightarrow \bar{A}_1 A_1 A_1$ via $(1^{[r]},1^{[s]},1^{[t]})$ $(rst=0; s \neq t)$, where the second factor $A_1$ is maximal in $G_2$ and the third factor $A_1$ is maximal in $C_3$. Corollary \ref{notrivs} shows that $X$ is $E_8$-irreducible, yielding $E_8(\#31)$. Now let $Y = E_7(\#16)$ and so $X \hookrightarrow \bar{A}_1 A_1 A_1 A_1 < \bar{A}_1 G_2 C_3$ via $(1^{[r]},1^{[s]},1^{[t]},1^{[u]})$ $(rstu=0; s \neq t; t \neq u)$, where the second factor is maximal in $G_2$ and $A_1 A_1 < C_3$ acts as $(2,1)$ on $V_{C_3}(100)$. If $p > 7$ then Corollary \ref{notrivs} shows that $X$ is $E_8$-irreducible. When $p=7$, Corollary \ref{notrivs} applies except when $r=t=1$, $s=u=0$. Using the restriction of $L(E_8)$ to $E_8(\#32)$ in Table \ref{E8tabcomps}, we calculate that $L(E_8) \downarrow X = 58 /$ $\!\! 44 /$ $\!\! 36 /$ $\!\! 34 /$ $\!\! 30^3 /$ $\!\! 28 /$ $\!\! 26^2 /$ $\!\! 22 /$ $\!\! 14^4 /$ $\!\! 12^2 /$ $\!\! 10^2 /$ $\!\! 2^3 /$ $\!\! 0$.  Suppose $Z$ is an $L$-irreducible subgroup of a Levi factor $L$ having the same composition factors as $X$ on $L(E_8)$. Since $L(E_8) \downarrow X$ has only one trivial composition factor, the possibilities for $L'$ are $D_7$, $A_7$, $A_1 E_6$, $A_1 A_6$, $A_2 D_5$, $A_1 E_6$ and $A_3 A_4$. Suppose $L' = D_7$.  From Table \ref{levie8}, we see that $V_{D_7}(\lambda_1) \downarrow Z$ occurs as a multiplicity two composition factor of $L(E_8) \downarrow Z$. This is a contradiction, because $L(E_8) \downarrow X$ does not have a set of composition factors that form two isomorphic 14-dimensional modules. Now suppose $L' = A_7$. By considering the even multiplicity $8$-dimensional composition factors of $X$, it follows that $Z$ acts as $10 = 3 \otimes 1^{[1]}$ on $V_{A_7}(\lambda_1)$. Since $V_{A_7}(\lambda_2)$ occurs a composition factor of $L(E_8) \downarrow A_7$, it follows that $Z$ has a composition factor of high weight $18$ on $L(E_8)$. This is a contradiction. Suppose $L' = A_1 E_6$. Then $L(E_8) \downarrow A_1 E_6$ has a $2$-dimensional composition factor and therefore $Z$ does, a contradiction. Now suppose $L' = A_1 A_6$. Then the projection of $Z$ to $A_6$ acts as $6$ on $V_{A_6}(\lambda_1)$, by Lemma \ref{class}. From Table \ref{levie8}, we have $L(E_8) \downarrow A_1 A_6$ has a composition factor $(V_{A_1}(0),V_{A_6}(\lambda_1))$ and therefore $L(E_8) \downarrow Z$ has a composition factor of dimension 7, a contradiction.  Now let $L' = A_2 D_5$. The only composition factors of $L(E_8) \downarrow A_2 D_5$ with dimension at least 35 are $(V_{A_2}(00),V_{D_5}(\lambda_2))$, $(V_{A_2}(10),V_{D_5}(\lambda_4))$ and $(V_{A_2}(01),V_{D_5}(\lambda_5))$. The composition factor of high weight $34$ of $L(E_8) \downarrow X$ has dimension 35 and so one of the three composition factors of dimension at least 35 has $34$ as a composition factor when restricted to $Z$. Since $\lambda_4^* = \lambda_5$ and $10^* = 01$, we have $(V_{A_2}(10),V_{D_5}(\lambda_4)) \downarrow Z = (V_{A_2}(01),V_{D_5}(\lambda_5)) \downarrow Z$ but $34$ occurs with multiplicity one. Therefore, $(V_{A_2}(00),V_{D_5}(\lambda_2)) \downarrow Z = 34 / M_1 / \ldots / M_k$. The sum of the dimensions of $M_1, \ldots, M_k$ is 10 but no set of composition factors of $L(E_8) \downarrow Z$ have dimensions that sum to 10. This is a contradiction, ruling out $A_2 D_5$. Finally, suppose $L' = A_3 A_4$. Then $L(E_8) \downarrow A_3 A_4$ has $(V_{A_3}(000),V_{A_4}(0100))$ as a composition factor. This has dimension 10, but we just noted that $L(E_8) \downarrow Z$ has no set of composition factors whose dimensions sum to 10. This final contradiction shows that $Z$ does not exist and therefore Lemma \ref{wrongcomps} shows $X$ is $E_8$-irreducible when $p=7$. This yields $E_8(\#32)$ in Table \ref{E8tab}.

Next we consider the case where $Y$ (the projection of $X$ to $E_7$) is $E_7(\#17)$ and so contained in $A_1 G_2$. Then $X \hookrightarrow \bar{A}_1 A_1 A_1 < \bar{A}_1 A_1 G_2$ via $(1^{[r]},1^{[s]},1^{[t]})$ $(p \geq 7; rst=0; s \neq t)$ where the third factor $A_1$ is maximal in $G_2$. We see that $X$ is $E_8$-irreducible by Corollary \ref{notrivs}, yielding $E_8(\#33)$. Similarly, when $Y$ is $E_7(\#18)$ we have $X \hookrightarrow \bar{A}_1 A_1 A_1 < \bar{A}_1 A_1 F_4$ via $(1^{[r]},1^{[s]},1^{[t]})$  $(p \geq 13; rst=0)$ where the third factor $A_1$ is maximal in $F_4$. Again, $X$ is $E_8$-irreducible by Corollary \ref{notrivs}, giving $E_8(\#34)$ in Table \ref{E8tab}. 

Suppose $Y$ is $E_7(\#19)$ and so $X \hookrightarrow \bar{A}_1 A_1 A_1$ via $(1^{[r]},1^{[s]},1^{[t]})$ $(p \geq 5; st=0; s \neq t)$ where $A_1 A_1 < E_7$ is maximal. When $p > 5$, Corollary \ref{notrivs} applies. When $p = 5$, there is one trivial composition factor on $L(E_8) \downarrow X$ when $(r,s,t) = (0,1,0)$ or $(0,0,1)$ and none otherwise. We can use Lemma \ref{wrongcomps} in exactly the same way as for $E_8(\#32)$ $(p=7)$ to show $X$ is $E_8$-irreducible. This gives $E_8(\#35)$ in Table \ref{E8tab}. 

Finally, suppose $Y$ is conjugate to $E_7(\#20)$ or $E_7(\#21)$. Then $X$ is $E_8$-irreducible by Corollary \ref{notrivs}, yielding $E_8(\#36)$ and $E_8(\#37)$, respectively. This concludes the $M=\bar{A}_1 E_7$ case. 

Now let $M = A_2 E_6$. The projection of $X$ to $A_2$ acts as $2$ on $V_{A_2}(10)$ and $p \neq 2$. Let $Y$ be the projection of $X$ to $E_6$. By Theorem \ref{thmE6}, we see that $Y$ is contained in $\bar{A}_1 A_5$, $A_2^3$, $A_2 G_2$, $F_4$ or $C_4$. We claim that in all of the cases $X$ is contained in either $D_8$ or $\bar{A}_1 E_7$ and has therefore already been considered. If $Y$ is contained in $\bar{A}_1 A_5$ then $X$ is contained in $\bar{A}_1 A_2 A_5$, which is a subgroup of $\bar{A}_1 E_7$. If $Y$ is contained in $A_2^3$ then it is also contained in $C_4$ by the proof of Theorem \ref{thmE6}. So when $Y$ is either $E_6(\#3)$ or $E_6(\#6)$, $X$ is contained in $A_1 C_4$. The irreducible subgroup $A_1$ of $A_2$ is the centraliser of a graph automorphism of $A_2$ and similarly, $C_4$ is the centraliser in $E_6$ of a graph automorphism of $E_6$. By \cite[Table 11]{car}, we have $N_{E_8}(A_2 E_6) = (A_2 E_6).2$ where the involution acts as a graph automorphism on both the $A_2$ and the $E_6$ factors. Therefore, there exists an involution $t$ in $E_8$ such that $A_1 C_4 < C_{E_8}(t)^\circ$. By \cite[Table 4.3.1]{gls3}, we have $C_{E_8}(t)^\circ$ is either $D_8$ or $\bar{A}_1 E_7$ and hence $X$ is contained in $D_8$ or $\bar{A}_1 E_7$. In fact, we have $C_{E_8}(t)^\circ = D_8$.  

Next, suppose that $Y$ is contained in $A_2 G_2$. Then the factor $G_2$ is generated by root subgroups of $E_8$ and hence $X$ is contained in $G_2 C_{E_8}(G_2)^\circ = G_2 F_4$. In particular, the projection of $X$ to $F_4$ is contained in $A_2 A_2$. The proof of Theorem \ref{thmF4} shows that the $A_2 A_2$-irreducible $A_1$ subgroups of $A_2 A_2$ are also contained in $\bar{A}_1 C_3 < F_4$. Therefore, $X$ is contained in $\bar{A}_1 G_2 C_3 < \bar{A}_1 E_7$, as required. Finally, suppose $Y$ is contained in $F_4$. Then $X < F_4 C_{E_8}(F_4)^\circ = F_4 G_2$. Moreover, the projection of $X$ to $G_2$ is contained in the maximal subgroup $A_2$. By Theorem \ref{thmG2}, this is also contained in $\bar{A}_1 A_1 < G_2$. Therefore $X < \bar{A}_1 A_1 F_4$ and is hence a subgroup of $\bar{A}_1 E_7$.  

Now let $M = A_4^2$. Using Lemma \ref{class}, the only $A_4$-irreducible $A_1$ subgroups act as $4$ on $V_{A_4}(\lambda_1)$ ($p \geq 5)$ and are hence contained in $B_2 < A_4$. Therefore $X$ is contained in $B_2^2 < A_4^2$. By \cite[p. 63]{LS3}, we have that $B_2^2$ is also contained in $D_8$. Hence $X$ has already been considered in the $D_8$ case and is in fact conjugate to $E_8(\#3)$. 

Now let $M = G_2 F_4$. By Theorem \ref{thmG2}, the projection of $X$ to $G_2$ is either contained in $\bar{A}_1 A_1$ or is maximal with $p \geq 7$. In the first case, $X$ is contained in $\bar{A}_1 E_7$ and has already been considered. So suppose the projection of $X$ to $G_2$ is maximal. Now consider the projection of $X$ to $F_4$. By Theorem \ref{thmF4}, this is contained in $B_4$, $\bar{A}_1 C_3$, $A_1 G_2$ or is maximal with $p \geq 13$. In the first case $X$ is contained in $D_8$, since $B_4 C_{E_8}(B_4)^\circ = B_4 B_3 < D_8$ and in the second case $X$ is contained in $\bar{A}_1 E_7$. Now suppose the projection of $X$ to $F_4$ is $F_4(\#11)$ and hence contained in $A_1 G_2$. The factor $G_2$ of $M$ and the factor $G_2$ of $A_1 G_2$ are conjugate in $E_8$. Furthermore, $N_{E_8}(A_1 G_2^2)$ contains an involution swapping the $G_2$ factors. Thus, up to $E_8$-conjugacy, $X \hookrightarrow A_1 A_1^2 < A_1 G_2^2$ via $(1^{[r]},1^{[s]},1^{[t]})$ ($rs = 0$; $r \neq s$; $r \neq t$; $s \leq t$). We claim that if $s = t$ then $X$ is contained in $\bar{A}_1 E_7$. Indeed, $X$ is contained in the centraliser of an involution in $N_{E_8}(A_1 G_2^2)$ when $s=t$ and the connected centraliser of that involution is $\bar{A}_1 E_7$. In fact, $X$ is conjugate to $E_8(\#32^{\{r,s,r\}})$. When $s \neq t$, then $X$ is $E_8$-irreducible by Corollary \ref{notrivs}, yielding $E_8(\#39)$. 

The last case to consider is when the projection of $X$ to $F_4$ is maximal and hence conjugate to $F_4(\#10)$ $(p \geq 13)$. Then $X \hookrightarrow A_1 A_1 < G_2 F_4$ via $(1^{[r]},1^{[s]})$ $(rs=0)$ with the first factor $A_1$ maximal in $G_2$ and the second maximal in $F_4$. If $r \neq s$, then $X$ is $E_8$-irreducible by Corollary \ref{notrivs}, yielding $E_8(\#38)$. If $r = s$ then Theorem \ref{A1samecomp} shows that $X$ is conjugate to $E_8(\#11^{\{0,0\}})$ and has already been considered.   

Let $M = B_2$ $(p \geq 5)$. There are two cases to consider. Either $X$ is contained in $A_1^2$ and acts as $1 \otimes 1^{[r]} + 0$ $(r \neq 0)$ on $V_{B_2}(10)$ or $X$ is maximal in $M$ and acts as $4$ on $V_{B_2}(10)$. In the first case, $X$ is contained in the connected centraliser of an involution in $B_2$. Hence $X$ is contained in the connected centraliser of an involution in $E_8$, which is either $D_8$ or $\bar{A}_1 E_7$ and so $X$ has been considered already. 

Now consider the second case, in which $X$ is a maximal subgroup $A_1$ of $M$. If $p \geq 11$ then Theorem \ref{A1samecomp} shows that $X$ is conjugate to $E_8(\#10)$. When $p=7$, we have $X$ is contained in a parabolic subgroup of $E_8$, as proved in \cite[3.3]{LS1}. When $p=5$, we will show that $X$ is contained in an $A_7$-parabolic subgroup of $E_8$ and is hence $E_8$-reducible. To do this, we will use the same method as \cite[Lemma 7.9]{tho1}; we show that $S = A_1(25) < X$ fixes the same subspaces as $X$ on $L(E_8)$ and then show that $X$ fixes an $8$-dimensional abelian subalgebra that is ad-nilpotent of exponent 3 i.e. $(\text{ad } a)^3 = 0$ for all $a$. 

Firstly, Lemma \ref{wrongcomps} along with Table \ref{levie8} shows that the only parabolic subgroup $X$ can be contained in is an $A_7$-parabolic. 

To show $S$ and $X$ fix the same subspaces of $L(E_8)$ we use Lemma \ref{subspaces}. We have $L(E_8) \downarrow X = 18^2 /$ $\!\!  16 /$ $\!\!  14^3 /$ $\!\!  12^4 /$ $\!\!  10^5 /$ $\!\!  8^6 /$ $\!\!  6^8 /$ $\!\!  4 /$ $\!\!  2^3 /$ $\!\!  0^3$ and therefore conditions (i) and (iii) hold. To show condition (ii) holds it suffices to check that the Weyl modules of high weight $18$, $16$, $12$, $10$, $8$ and $6$ are still indecomposable when restricted to $S$. We then check this in Magma \cite{magma}: We construct $S \cong \text{PSL}(2,25) \cong \text{PSL}(V)$ and the $S$-modules $S^n(V)$. In each case, we use the inbuilt ``Socle'' function in Magma to find that the socle of $S^n(V)$ is irreducible and thus $S^n(V)$ is indecomposable for each integer $n$ in the list of high weights. Therefore, $X$ and $S$ fix the same subspaces of $L(E_8)$. 

The existence of $M = B_2$ when $p=5$ is proved in \cite[Lemma 5.1.6]{LS1} using \cite[6.7]{se2}. In particular, if $\alpha$ is the long simple root and $\beta$ is the short simple root in a basis for $M$ then the $A_1$ generated by $x_{\pm \alpha}(t)$ is contained in the subsystem subgroup $\bar{A}_1 A_5$ and the $A_1$ generated by $x_{\pm \beta}(t)$ is contained in the subsystem subgroup $A_2 D_5$. Using this, we can write down the generators $x_{\pm \alpha}(t)$, $x_{\pm \beta}(t)$ of $M$ in terms of generators of $E_8$. From these generators we construct $B_2(25)$ in Magma as a subgroup of the inbuilt finite group of Lie type $E_8(25)$ and then construct $S$ as a maximal subgroup of $B_2(25)$. We now use the inbuilt functionality of Magma to construct the Lie algebra $L(E_8)$ as a module for $S$ and then, again using the inbuilt ``Submodules'' function, we find all $8$-dimensional $S$-submodules of $L(E_8)$. We find that there is a unique such $S$-submodule that is an abelian subalgebra, and it is ad-nilpotent of exponent 3.     

So $S$ and therefore $X$ fixes an $8$-dimensional abelian subalgebra of $L(E_8)$ that is ad-nilpotent of exponent 3. Exponentiating this subalgebra yields an $8$-dimensional unipotent subgroup of $E_8$, normalised by $X$. Therefore $X$ is contained in a parabolic subgroup of $E_8$, as required. 

Now let $M = A_1 A_2$ $(p \geq 5)$. Then the projection of $X$ to $A_2$ acts as $2$ on $V_{A_2}(10)$ and is the centraliser of a graph automorphism of $A_2$. By \cite[Table 11]{car}, we have $N_{E_7}(A_1 A_2) = (A_1 A_2).2$ and therefore $X$ is contained in the centraliser of an involution in $E_8$. Using \cite[Table 4.3.1]{gls3} we find this centraliser is either $D_8$ or $\bar{A}_1 E_7$. One calculates that it is the latter and $X$ is conjugate to $E_8(\#34^{\{r,s,r\}})$.  

Let $M$ be one of the classes of maximal $A_1$ subgroups. Then they are $E_8$-irreducible by Theorem \ref{maximalexcep}, yielding $E_8(\#40)$, $E_8(\#41)$ and $E_8(\#42)$. 

Finally, we check there are no more $E_8$-conjugacies between any of the irreducible $A_1$ subgroups by comparing the composition factors in Table \ref{E8tabcomps}. This completes the proof of Theorem \ref{thmE8}.  
\end{proof}

\section{Corollaries} \label{cors}

In this section we give the proofs of Corollaries \ref{A1subgroups} to \ref{nongcr}. Let $G$ be an exceptional algebraic group over an algebraically closed field of characteristic $p$. Corollary \ref{A1subgroups} is an immediate consequence of Theorems \ref{thmG2} to \ref{thmE8}, which prove $G$ contains a $G$-irreducible subgroup $A_1$ unless $G = E_6$ and $p=2$. 

For Corollary \ref{A1overgroups}, we consider Tables \ref{G2tab} to \ref{E8tab} when $p =2,3$. We see that all $G_2$-irreducible $A_1$ subgroups of $G_2$ are contained in $A_1 \tilde{A}_1$ from Table \ref{G2tab}. When $p=2$, Table \ref{F4tab} shows that $B_4$ contains all of the $F_4$-irreducible $A_1$ subgroups of $F_4$ (in fact the same is true for $C_4$). If $p=3$, we notice that $B_4$ and $\bar{A}_1 C_3$ both contain $F_4$-irreducible $A_1$ subgroups by Theorem \ref{thmF4}. Now suppose $G = E_6$. Then if $p=2$ there are no $E_6$-irreducible $A_1$ subgroups. When $p=3$, the $E_6$-irreducible $A_1$ subgroups $E_6(\#2)$ and $E_6(\#3)$ are listed as subgroups of $A_1 A_5$ and $A_2^3$, respectively. However, $E_6(\#2) < \bar{A}_1 C_3 < \bar{A}_1 A_5$ and $\bar{A}_1 C_3$ is also contained in $C_4$. Similarly, in the $M = A_2^3$ case of the proof of Theorem \ref{thmE6}, $E_6(\#3)$ is proved to be contained in $C_4$ (acting as $1 \otimes 1^{[r]} \otimes 1^{[s]}$ on $V_{C_4}(\lambda_1)$). Therefore, $C_4$ contains a conjugacy class representative of each $E_6$-irreducible subgroup when $p=3$. For $G = E_7$ and $E_8$ the result follows immediately from Tables \ref{E7tab} and \ref{E8tab}, respectively. 

Corollaries \ref{corconj} and \ref{corconjmin} follow from careful consideration of the composition factors listed in Tables \ref{G2tabcomps} to \ref{E8tabcomps}.

For the proof of Corollary \ref{nongcr} we first need the following three results. 

\begin{lem} \label{tensorsquare}

Let $X$ be an algebraic group of type $A_1$ over an algebraically closed field of characteristic $p=2$. Let $W$ be the $X$-module $1^{[a]} \otimes 1^{[b]} \otimes 1^{[c]} \otimes 1^{[d]}$ with $a,b,c,d$ distinct. Then the socle of $W \otimes W$ is a $1$-dimensional trivial module.   

\end{lem}

\begin{proof}
It suffices to show that
\vspace{-0.2cm}
\[ S_n:= \textrm{dim}(\textrm{Hom}_X (V_{A_1}(n),W \otimes W)) = \begin{cases} 0 \textrm{\ if \ } n \neq 0 \\ 1 \textrm{\ if \ } n=0. \end{cases}\] 
\vspace{-0.1cm}
Firstly, suppose $n=0$. Then $\textrm{Hom}_X(0,W \otimes W) \cong \textrm{Hom}_X(W,W) \cong K$ since $W$ is self-dual and irreducible. Therefore $S_0 = 1$ as required. Now suppose $n \neq 0$ and $S_n \neq 0$. Then $V_{A_1}(n)$ is a composition factor of $W \otimes W = (1^{[a]} \otimes 1^{[a]}) \otimes  (1^{[b]} \otimes 1^{[b]}) \otimes  (1^{[c]} \otimes 1^{[c]}) \otimes  (1^{[d]} \otimes 1^{[d]})$. Since $1 \otimes 1 = T(2) = 0 | 2 | 0$ it follows that $V_{A_1}(n)$ is isomorphic to $1^{[a+1]}$, $1^{[a+1]} \otimes 1^{[b+1]}$, $1^{[a+1]} \otimes 1^{[b+1]} \otimes 1^{[c+1]}$ or $1^{[a+1]} \otimes 1^{[b+1]} \otimes 1^{[c+1]} \otimes 1^{[d+1]}$, up to relabelling of $a,b,c,d$. It remains to check that none of these modules have non-zero homomorphisms to $W \otimes W$. First consider $V_{A_1}(n) \cong 1^{[a+1]}$. Then $\textrm{Hom}_X(1^{[a+1]}, W \otimes W) \cong \textrm{Hom}_X(1^{[a+1]} \otimes W, W)$. If $a+1 \not \in \{b,c,d\}$ then $1^{[a+1]} \otimes W$ is irreducible and not isomorphic to $W$, hence $S_n = 0$. So $a+1 \in \{b,c,d\}$ and we may assume that $a+1=b$. Noting that $1 \otimes 1 \otimes 1 = 3 \oplus 1^2$ (the composition factors are clear and there are no non-trivial extensions between any of them) we have
\vspace{-0.05cm}
\begin{align*}
\textrm{Hom}_X(1^{[a+1]}, W \otimes W) \cong & \ \textrm{Hom}_X(1^{[a+1]} \otimes 1^{[a+1]} \otimes 1^{[a+1]}, 1^{[a]} \otimes 1^{[a]} \otimes 1^{[c]} \otimes 1^{[c]} \otimes 1^{[d]} \otimes 1^{[d]}) \\  
\cong & \ \textrm{Hom}_X((1^{[a+1]} \otimes 1^{[a+2]}) \oplus (1^{[a+1]})^2, 1^{[a]} \otimes 1^{[a]} \otimes 1^{[c]} \otimes 1^{[c]} \otimes 1^{[d]} \otimes 1^{[d]}) \\
\cong & \ \textrm{Hom}_X(1^{[a+1]} \otimes 1^{[a+2]}, 1^{[a]} \otimes 1^{[a]} \otimes 1^{[c]} \otimes 1^{[c]} \otimes 1^{[d]} \otimes 1^{[d]}) \oplus \\
& \ \textrm{Hom}_X(1^{[a+1]}, 1^{[a]} \otimes 1^{[a]} \otimes 1^{[c]} \otimes 1^{[c]} \otimes 1^{[d]} \otimes 1^{[d]})^2 \\
\cong & \ \textrm{Hom}_X(1^{[a]} \otimes 1^{[a+1]} \otimes 1^{[a+2]} \otimes 1^{[c]} \otimes 1^{[d]}, 1^{[a]} \otimes 1^{[c]} \otimes 1^{[d]}) \oplus \\
& \ \textrm{Hom}_X(1^{[a]} \otimes 1^{[a+1]} \otimes 1^{[c]} \otimes 1^{[d]}, 1^{[a]} \otimes 1^{[c]} \otimes 1^{[d]})^2 \\
=: & \ A \oplus B^2.
\end{align*}
We have $1^{[a]} \otimes 1^{[c]} \otimes 1^{[d]}$ and $1^{[a]} \otimes 1^{[a+1]} \otimes 1^{[c]} \otimes 1^{[d]}$ (recall that $a+1 = b$ and that $a,b,c,d$ are distinct) are irreducible non-isomorphic modules and so $B = 0$. Furthermore, $1^{[a]} \otimes 1^{[a+1]} \otimes 1^{[a+2]} \otimes 1^{[c]} \otimes 1^{[d]}$ is irreducible if and only if $a+2 \not \in \{c,d\}$. Therefore $A = 0$ unless $a+2 \in \{c,d\}$. We may therefore assume $a+2 = c$ and just consider $A$:
\begin{align*}
A \cong & \ \textrm{Hom}_X(1^{[a+2]} \otimes 1^{[a+2]} \otimes 1^{[a+2]}, 1^{[a]} \otimes 1^{[a]} \otimes 1^{[a+1]} \otimes 1^{[d]} \otimes 1^{[d]})& \ \\
 \cong & \ \textrm{Hom}_X(1^{[a+2]} \otimes 1^{[a+3]}, 1^{[a]} \otimes 1^{[a]} \otimes 1^{[a+1]} \otimes 1^{[d]} \otimes 1^{[d]}) \oplus \\
& \ \textrm{Hom}_X(1^{[a+2]}, 1^{[a]} \otimes 1^{[a]} \otimes 1^{[a+1]} \otimes 1^{[d]} \otimes 1^{[d]})^2 \\
 \cong & \ \textrm{Hom}_X(1^{[a]} \otimes 1^{[a+1]} \otimes 1^{[a+2]} \otimes 1^{[a+3]} \otimes 1^{[d]}, 1^{[a]} \otimes 1^{[d]}) \oplus \\
& \ \textrm{Hom}_X(1^{[a]} \otimes 1^{[a+1]} \otimes 1^{[a+2]} \otimes 1^{[d]}, 1^{[a]} \otimes 1^{[d]})^2 \\
=: & \ C \oplus D^2. 
\end{align*}
As before, $D = 0$ and $C=0$ unless $a+3 = d$. So finally, we consider $C$ when $a+3 =d$.  
\begin{align*}
C \cong & \ \textrm{Hom}_X(1^{[a+3]} \otimes 1^{[a+3]} \otimes 1^{[a+3]}, 1^{[a]} \otimes 1^{[a]} \otimes 1^{[a+1]} \otimes 1^{[a+2]})& \ \\
 \cong & \ \textrm{Hom}_X(1^{[a+3]} \otimes 1^{[a+4]}, 1^{[a]} \otimes 1^{[a]} \otimes 1^{[a+1]} \otimes 1^{[a+2]}) \oplus \\
& \ \textrm{Hom}_X(1^{[a+3]}, 1^{[a]} \otimes 1^{[a]} \otimes 1^{[a+1]} \otimes 1^{[a+2]})^2 \\
 \cong & \ \textrm{Hom}_X(1^{[a]} \otimes 1^{[a+1]} \otimes 1^{[a+2]} \otimes 1^{[a+3]} \otimes 1^{[a+4]}, 1^{[a]}) \oplus \\
& \ \textrm{Hom}_X(1^{[a]} \otimes 1^{[a+1]} \otimes 1^{[a+2]} \otimes 1^{[a+3]}, 1^{[a]})^2 \\
= & \ 0.  
\end{align*}
We have proved that $\textrm{Hom}_X(1^{[a+1]}, W \otimes W) = 0$ and so $S_n= 0$, a contradiction. The three remaining cases are similar and in each one we find that $S_n = 0$, a contradiction. This completes the proof.  
\end{proof}

\begin{lem} \label{A1sinA7}
Let $X$ be a simple algebraic group  of type $A_1$ over an algebraically closed field of characteristic $p=2$. Let $W_1$ and $W_2$ be the $X$-modules $1 \otimes 1^{[r]} \otimes 1^{[s]}$ and $0 | (2 + 2^{[r]} + 2^{[s]}) | 0$, respectively. Then $\bigwedge^2(W_1)$ has a $1$-dimensional socle, whereas $\bigwedge^2(W_2)$ has a $7$-dimensional socle. In particular, $\bigwedge^2(W_1)$ is not isomorphic to $\bigwedge^2(W_2)$.  
\end{lem}

\begin{proof}
First, we claim that the socle of $W_1 \otimes W_1$ is $1$-dimensional. This follows from a similar calculation to that in the previous lemma. Since $\bigwedge^2(W_1)$ is a submodule of $W_1 \otimes W_1$, it follows that the socle of $\bigwedge^2(W_1)$ is also $1$-dimensional. Now we consider $\bigwedge^2(W_2)$. We claim that the socle is isomorphic to $2 + 2^{[r]} + 2^{[s]} + 0$. To prove this we consider $X$ as a diagonal subgroup of $Y \cong A_1^3$ via $(1,1^{[r]},1^{[s]})$ and let $W_3$ be the $Y$-module $(0,0,0) | ((2,0,0) + (0,2,0) + (0,0,2))| (0,0,0)$, so $W_3 \downarrow X = W_2$. The first step is to show that the socle of $W_3$ is isomorphic to $(2,0,0) + (0,2,0) + (0,0,2) + (0,0,0)$. Define a finite subgroup $S$ of $Y$, as the direct product of subgroups $\text{SL}(2,4) = \text{SL}(V) < A_1$ for each of the three simple factors of $Y$. We then construct $W_3 \downarrow S$ in Magma: Given the natural module $(V,0,0)$ for the first factor of $S$, we have that the tensor product $U_1 = (V,0,0) \otimes (V,0,0) = (0,0,0) | (V^{[1]},0,0) |(0,0,0)$. We then do the same for the second and third factors, forming $U_2$ and $U_3$ respectively. It is then straightforward to form the module $W_3 \downarrow S = (0,0,0) | ((V^{[1]},0,0) + (0,V^{[1]},0) + (0,0,V^{[1]})) | (0,0,0)$ from the direct sum of $U_1$, $U_2$ and $U_3$. We then use the inbuilt functions for the wedge square and the socle of a module in Magma to conclude that $\text{Soc}(\bigwedge^2(W_3) \downarrow S) \cong (V^{[1]},0,0) + (0,V^{[1]},0) + (0,0,V^{[1]}) + (0,0,0)$. Now we check the three conditions of Lemma \ref{subspaces}, applied to $S < Y$ acting on $\bigwedge^2(W_3)$. The $Y$-composition factors of $\bigwedge^2(W_3)$ are $(2,2,0) /$ $\!\!  (2,0,2) /$ $\!\!  (0,2,2) /$ $\!\!  (2,0,0)^2 /$ $\!\!  (0,2,0)^2 /$ $\!\!  (0,0,2)^2 /$ $\!\!  (0,0,0)^4$. Therefore conditions (i) and (iii) hold and it remains to show that the restriction map $\text{Ext}^1_Y(M,N) \rightarrow \text{Ext}^1_S(M,N)$ is injective for all pairs of $Y$-composition factors of $\bigwedge^2(W_3)$. Using the K\"{u}nneth formula \cite[10.85]{rot} and Lemma \ref{h1fora1}, we have $\text{Ext}^1_Y(M,N) = 0$ unless $M = (2,0,0)$ and $N = (2,2,0)$, $(2,0,2)$ or $(0,0,0)$ (up to swapping $M$, $N$ and cycling the three $A_1$ factors), in which case it is $1$-dimensional. The map $\text{Ext}^1_Y((2,0,0),(0,0,0)) \rightarrow \text{Ext}^1_S((2,0,0),(0,0,0))$ is injective since the non-trivial extension $(2,0,0)|(0,0,0)$ is found in the tensor product $(1,0,0) \otimes (1,0,0)$ for both $Y$ and $S$. Similarly, the non-trivial extension $(2,0,0)|(2,2,0)$ is found in $(2,1,0) \otimes (0,1,0)$ and $(2,0,0)|(2,0,2)$ is found in $(2,0,1) \otimes (0,0,1)$ for both $Y$ and $S$. We now apply Lemma \ref{subspaces} to conclude that $Y$ and $S$ fix the same subspaces of $\bigwedge^2(W_3)$. Therefore the socle of $\bigwedge^2(W_3)$ as a $Y$-module is $(2,0,0) + (0,2,0) + (0,0,2) + (0,0,0)$. It follows that the socle of $\bigwedge^2(W_2)$ as an $X$-module has dimension at least $7$. In fact, using Lemma \ref{subspaces} again, this time applied to $X < Y$ acting on $\bigwedge^2(W_3)$ implies that the socle of $\bigwedge^2(W_2)$ as an $X$-module has dimension $7$. 
\end{proof}

\begin{lem} \label{bada1b2p5}
Let $G = E_8$, $p=5$ and $M$ be a maximal subgroup $B_2$. Suppose $X$ is a maximal subgroup $A_1$ of $M$. Then $X$ is conjugate to $Y < A_8$ acting as $W(8) = 8 | 0$ on $V_{A_8}(\lambda_1)$.    
\end{lem}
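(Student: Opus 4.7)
The proof builds on the analysis in the $M = B_2$ case of Theorem \ref{thmE8}, where we already established that $X$ is contained in an $A_7$-parabolic $P = QL$ of $E_8$: recall that there we constructed $X$ explicitly in Magma (using the embeddings of its long and short root subgroups in $A_1 A_5$ and $A_2 D_5$ respectively) and showed that $X$ normalises an $8$-dimensional abelian subalgebra of $L(E_8)$ that is ad-nilpotent of exponent $3$. Exponentiating this subalgebra yields an $8$-dimensional unipotent subgroup of $E_8$ normalised by $X$, and we argued via Lemma \ref{wrongcomps} and Table \ref{levie8} that the only parabolic containing $X$ has Levi factor of type $A_7$.

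The plan is first to pin down the Levi projection $\bar{X}$ of $X$ in $L = A_7 T_1$. Using the composition factors $L(E_8) \downarrow X = 18^2/16/14^3/12^4/10^5/8^6/6^8/4/2^3/0^3$ computed in the $M=B_2$ case and comparing with the decomposition $L(E_8) \downarrow A_7 = (\lambda_1 + \lambda_7)/\lambda_1/\lambda_2/\lambda_3/\lambda_5/\lambda_6/\lambda_7/0$ from Table \ref{levie8}, I will determine $V_{A_7}(\lambda_1) \downarrow \bar{X}$. Since $V_{A_7}(\lambda_k) = \Lambda^k V_{A_7}(\lambda_1)$ and $V_{A_7}(\lambda_1 + \lambda_7)$ is a submodule of $V_{A_7}(\lambda_1) \otimes V_{A_7}(\lambda_1)^*$, the composition factors on $L(E_8)$ determine $V_{A_7}(\lambda_1) \downarrow \bar{X}$, and a check of candidate 8-dimensional $A_1$-modules in characteristic $5$ shows that the only consistent possibility is the irreducible $8 = V_{A_1}(8) = 3 \otimes 1^{[1]}$. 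In particular, $\bar{X}$ is $A_7$-irreducible.

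Next I will locate $Y$ in the same parabolic. Since $V_{A_8}(\lambda_1) \downarrow Y = W(8) = 8|0$, the subgroup $Y$ fixes a unique $1$-space of $V_{A_8}(\lambda_1)$, so $Y$ is contained in the stabiliser of this $1$-space, an $A_7$-parabolic of $A_8$; this in turn is contained in an $A_7$-parabolic of $E_8$, which we may take to be $P$ after conjugation. The image of $Y$ in $L$ acts as $8$ on $V_{A_7}(\lambda_1)$, matching $\bar{X}$. A direct computation of $L(E_8) \downarrow Y$ from $V_{A_8}(\lambda_1) \downarrow Y = 8|0$ and $L(E_8) \downarrow A_8 = W(\lambda_1 + \lambda_8)/\lambda_3/\lambda_5$ (Theorem \ref{maximalexcep}) then shows that $L(E_8) \downarrow Y$ has the same composition factors as $L(E_8) \downarrow X$.

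The final step is to conclude that $X$ and $Y$ are $E_8$-conjugate. Both are non-$L$-cr complements to $Q = R_u(P)$ lying above the common Levi subgroup $\bar{X} \cong A_1$ of $A_7$. Such complements are classified up to $Q$-conjugacy by successive $1$-cohomology groups on the filtration levels $Q(i)/Q(i+1)$, viewed as $\bar{X}$-modules. I will analyse these levels using the known structure of the $A_7$-parabolic in $E_8$ (with its $T_1$-grading from $Q = Q(1)$ through the top level) and restrict each level to $\bar{X}$ via $V_{A_7}(\lambda_1) \downarrow \bar{X} = 8$, then compute the relevant $H^1$ groups to show there is a unique non-trivial $Q$-conjugacy class of such complements. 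Alternatively, and more practically, this final identification can be verified directly in Magma by extending the construction of $X$ from the $M=B_2$ case of Theorem \ref{thmE8} and producing an explicit conjugating element between $X$ and (a chosen realisation of) $Y$. The main obstacle is precisely this last step: classifying non-$G$-cr complements to a unipotent radical is delicate, and the long filtration on $Q$ means the cohomological bookkeeping is non-trivial; a direct computational verification is likely the cleanest route.
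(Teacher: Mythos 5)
Your overall route is the paper's: place $X$ in an $A_7$-parabolic $P=QL$ with Levi projection acting as $8 = 3\otimes 1^{[1]}$ on $V_{A_7}(\lambda_1)$, observe that $Y$ sits in the same parabolic over the same Levi subgroup $Z$, and then use the filtration of $Q$ by levels ($\lambda_5$, $\lambda_2$, $\lambda_7$ as $A_7$-modules) together with $H^1(Z,Q(i)/Q(i+1))$ and the action of $Z(L)$ to show there is a unique $E_8$-class of non-$E_8$-cr complements in $QZ$. The paper carries this out exactly as you sketch, citing \cite[Corollary 3.9]{Andersen} for the level-wise cohomology ($H^1$ vanishes on the first two levels and is $K$ on $Q(3)\cong\lambda_7$), then \cite{dav} to assemble $H^1(Z,Q)\cong K$ and \cite[Lemma 3.19]{littho} to collapse the nonzero classes under $Z(L)$.

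There is one step you assert rather than prove, and it is load-bearing: that $X$ and $Y$ are both \emph{non}-$E_8$-cr, i.e.\ that neither is $Q$-conjugate into the Levi copy $Z$ itself. The cohomological count produces exactly two $E_8$-classes of complements over $Z$ --- the $G$-cr one and a single non-$G$-cr one --- so conjugacy of $X$ and $Y$ follows only once both are placed in the latter. Matching composition factors on $L(E_8)$ cannot do this, since those factors include $0^3$ and are shared with the $A_7$-irreducible subgroup $Z$ of the Levi. The paper resolves this by computing (in Magma) the actual module structure of $L(E_8)$ restricted to each of $X$ and $Y$, namely $W(18)+W(18)^*+T(16)+T(12)^2+T(10)^3+14^3+T(6)^3+4$, and noting that neither has a trivial \emph{submodule}; since any subgroup of the Levi $L=A_7T_1$ fixes the nonzero vector $L(Z(L)^\circ)$, this rules out containment in an $A_7$ Levi and certifies both subgroups as non-$E_8$-cr. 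For $Y$ the indecomposability of $W(8)=8|0$ only shows $Y$ is not $A_8$-cr, which does not immediately preclude an $E_8$-conjugate of $Y$ lying in an $A_7$ Levi. Your fallback of exhibiting an explicit conjugating element in Magma would of course also close the argument, but as written the cohomological branch of your proof has this hole.
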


\begin{proof} 
In the $M = B_2$ case of the proof of Theorem \ref{thmE8} it is proved that $X$ is contained in an $A_7$-parabolic subgroup of $G$. The composition factors of $X$ acting on $L(E_8)$ are $18^2 /$ $\!\!  16 /$ $\!\!  14^3 /$ $\!\!  12^4 /$ $\!\!  10^5 /$ $\!\!  8^6 /$ $\!\!  6^8 /$ $\!\!  4 /$ $\!\!  2^3 /$ $\!\!  0^3$ and it follows that the projection of $X$ to $A_7$ acts as $3 \otimes 1^{[1]}$ on $V_{A_7}(\lambda_1)$. Let $P = Q L$ be an $A_7$-parabolic subgroup of $E_8$ containing $X$ and let $Z$ be an subgroup $A_1$ of $L' = A_7$ acting as $3 \otimes 1^{[1]}$ on $V_{A_7}(\lambda_1)$, so the projection of $X$ to $A_7$ is $A_7$-conjugate to $Z$. By definition, $Y$ is a subgroup of an $A_7$-parabolic subgroup of $A_8$ and hence of $E_8$, with the projection of $Y$ to $A_7$ also $A_7$-conjugate to $Z$. By using the construction of $X$ in Magma from the $M = B_2$ case of the proof of Theorem \ref{thmE8} and the fact that $L(E_8) \downarrow A_8 = (\lambda_1 + \lambda_7) \oplus \lambda_3 \oplus \lambda_6$ we calculate that $X$ and $Y$ act the same way on $L(E_8)$, specifically as $W(18) + W(18)^* + T(16) + T(12)^2 + T(10)^3 + 14^3 + T(6)^3 + 4$. In particular, neither $X$ nor $Y$ have a trivial submodule on $L(E_8)$ and are thus not contained in an $A_7$ Levi subgroup. Therefore, both $X$ and $Y$ are non-$E_8$-cr. To prove that $X$ and $Y$ are conjugate it remains to show that there is only one $E_8$-conjugacy class of non-$E_8$-cr $A_1$ subgroups in $Q Z$. To do this we use the results and methods described in \cite{dav} and \cite{littho}.      

We first consider the action of $Z$ on the levels of $Q$. The action of $L'$ on the levels of $Q$ are as follows
\begin{align*}
Q/Q(2) \downarrow A_7 &= \lambda_5, \\
Q(2)/Q(3) \downarrow A_7 &= \lambda_2, \\
Q(3) \downarrow A_7 &= \lambda_7,
\end{align*}
and restricting to $Z < L'$ we have
\begin{align*}
Q/Q(2) \downarrow Z &= 18 + T(12) + T(10), \\
Q(2)/Q(3) \downarrow Z &= 14 + 10 + T(6), \\
Q(3) \downarrow Z &= 8. 
\end{align*}
In particular, using Lemma \ref{h1fora1} we see $H^1(Z,Q(i) / Q(i+1)) = 0$ for $i=1,2$ and $H^1(Z,Q(3)) \cong K$. It follows from \cite[Proposition 3.2.6, Lemma 3.2.11]{dav} that $H^1(Z,Q) \cong K$. Moreover, by \cite[Lemma 3.20]{littho}, there is at most one $E_8$-conjugacy class of non-$E_8$-cr $A_1$ subgroups contained in $QZ$ by considering the action of the $1$-dimensional non-trivial torus $Z(L)$. Since $X$ and $Y$ are non-$E_8$-cr and contained in $QZ$, there is exactly one class and so $X$ and $Y$ are $E_8$-conjugate. 
\end{proof}

It remains to prove Corollary \ref{nongcr}. The strategy for the proof is as follows. For each exceptional algebraic group $G$ we find all $M$-irreducible $A_1$ subgroups that are not $G$-irreducible from the proofs of Theorems \ref{thmG2} to \ref{thmE8}. Given such a subgroup $X$ we then check whether it satisfies the hypothesis of Corollary \ref{nongcr}. That is to say, we check whether $X$ is contained reducibly in another reductive, maximal connected subgroup $M'$, or $X$ is contained in a Levi subgroup of $G$. To do this we use the composition factors of $X$ on the minimal or adjoint module for $G$, using restriction from $M$. Of course, since $X$ is $G$-reducible there must exist some subgroup $Y \cong X$ inside a Levi factor $L'$ having the same composition factors as $X$. Therefore, we will require the exact module structure of $X$ acting on either the minimal module or adjoint module for $G$ to prove that $X$ is not contained in $L'$. 

\begin{pf}{Proof of Corollary \ref{nongcr}} 

First consider $G = G_2$. The proof of Theorem \ref{thmG2} shows that the only $M$-irreducible subgroup $A_1$ that is $G_2$-reducible is $X = A_1 \hookrightarrow M = A_1 \tilde{A}_1$ via $(1,1)$ when $p=2$. We need to check whether $X$ satisfies the hypothesis of Corollary \ref{nongcr}. The only subgroups of $G_2$ that have the same composition factors as $X$ are a Levi $\bar{A}_1$ and $A_1 < A_2$ embedded via $W(2)$. However, \cite[Theorem 1]{g2dav} shows that $X$ is not conjugate to either of these subgroups. Therefore $X$ is not contained reducibly in another reductive, maximal connected subgroup nor is it contained in a Levi subgroup of $G$. Hence $X$ satisfies the hypothesis of Corollary \ref{nongcr} and is listed in Table \ref{cortab}. 

Now let $G = F_4$. By the proof of Theorem \ref{thmF4}, there are no conjugacy classes of $M$-irreducible $A_1$ subgroups which are $F_4$-reducible. Indeed, the $B_4$-irreducible subgroups acting as $1 \otimes 1^{[r]} \otimes 1^{[s]}$ on $V_{B_4}(\lambda_1)$ when $p=2$ are shown to be conjugate to $B_4$-reducible subgroups acting as $0 | (2 + 2^{[r]} + 2^{[s]}) | 0$. Similarly for the $C_4$-irreducible subgroups acting as $1 \otimes 1^{[r]} \otimes 1^{[s]}$ on $V_{C_4}(\lambda_1)$ when $p=2$. Therefore there are no $A_1$ subgroups satisfying the hypothesis of Corollary \ref{nongcr}.  

Next, suppose $G=E_6$ and consider the $M$-irreducible $A_1$ subgroups that are $E_6$-reducible. These are all found in the proof of Theorem \ref{thmE6}. Let $X$ be such a subgroup. If $X$ is contained in a maximal $A_2$ then $p=5$ and we claim that $X$ is contained in $\bar{A}_1 A_5$ via $(1,W(5))$, hence $X$ does not satisfy the hypothesis of Corollary \ref{nongcr}. This is proved in \cite[Section 4.1]{littho} by showing there is only one conjugacy class of non-$E_6$-cr subgroups of type $A_1$ with the same composition factors as $X$ on $V_{56}$. Now suppose $X$ is contained in $C_4$ $(p \neq 2)$. If $X$ is contained in $\bar{A}_1 C_3 < C_4$ then $X$ is also contained in $\bar{A}_1 A_5$. By the proof of Theorem \ref{thmE6}, every $\bar{A}_1 A_5$-irreducible subgroup $A_1$ is $E_6$-irreducible. Hence $X$ is contained reducibly in $\bar{A}_1 A_5$ and so does not satisfy the hypothesis of the corollary. Similarly, if $X < C_2^2$ then $X$ is contained in a $D_5$ Levi subgroup and does not satisfy the hypothesis. Finally, suppose $X$ is contained in $F_4$. Then $X$ is contained in $B_4$ and hence contained in a $D_5$ Levi subgroup. Therefore there are no $A_1$ subgroups satisfying the hypothesis of Corollary \ref{nongcr}. 

For $G = E_7$ the result is checked in the same way as for $E_6$. First consider $A_7$-irreducible $A_1$ subgroups, all of which are $E_7$-reducible. If $p>2$ then such subgroups are contained in $C_4$, which is contained in an $E_6$ Levi subgroup by \cite[Table 8.2]{LS3} and so do not satisfy the hypothesis of the corollary . If $p=2$, then we need to consider a subgroup $X$ acting on $V_{A_7}(\lambda_1)$ as $W_1 = 1 \otimes 1^{[r]} \otimes 1^{[s]}$ $(0 < r < s)$. We claim that $X$ satisfies the hypothesis of Corollary \ref{nongcr}. To prove this, we start by considering the action of $X$ on $V_{56}$. We have $V_{56} \downarrow A_7 = V_{A_7}(\lambda_2) + V_{A_7}(\lambda_6)$ and hence $V_{56} \downarrow X = (\bigwedge^2(W_1))^2$. By Lemma \ref{A1sinA7}, we have $\bigwedge^2(W_1)$ is indecomposable and hence $X$ has two direct summands of dimension $28$ on $V_{56}$. From this, it follows that $X$ satisfies the hypothesis of Corollary \ref{nongcr} or $X$ is conjugate to an $M$-reducible subgroup $A_1$ of $A_7$. Suppose the latter is true. Considering the $X$-composition factors of $V_{56}$, it follows that $X$ is conjugate to $Y$, where $V_{A_7}(\lambda_1) \downarrow Y = W_2 = 0| (2 + 2^{[r]} + 2^{[s]}) | 0$. Now, we have $V_{56} \downarrow Y = (\bigwedge^2(W_2))^2$. Lemma \ref{A1sinA7} shows that $X$ and $Y$ are non-$\text{GL}(56,K)$-conjugate, and thus non-$E_7$-conjugate. This contradiction proves that $X$ satisfies the hypothesis of the corollary.

Now let $X$ be the irreducible subgroup $A_1$ contained in the maximal subgroup $A_2$ when $p=5,7$. Then $X$ is $E_7$-reducible and is in fact non-$E_7$-cr and conjugate to $Y = A_1 < A_7$ with $V_{A_7}(\lambda_1) \downarrow A_1 = W(7)$ for both $p=5, 7$. Indeed, by considering its composition factors on $V_{56}$, we find that the only Levi subgroup that can contain $X$ is $E_6$. However, in the case $M=A_2$ in the proof of Theorem \ref{thmE7}, we calculated that $V_{56} \downarrow X = (0|12|0)^2 + (4|8|4)^2$ when $p=7$ and therefore $X$ is not contained in $E_6$ since $E_6$ has a trivial direct summand on $V_{56}$. When $p=5$, we have $L(E_7) \downarrow A_2 = 44 + 11$. We know $V_{A_2}(11) \downarrow X = 4 + 2$ and need to find $V_{A_2}(44) \downarrow X$. To do this, we first use the fact that $40$ and $04$ are tilting, so their tensor product is also tilting and thus $40 \otimes 04 = 44 + T(33) + T(22)$.  Furthermore, $40 = S^4(10)$ and hence $40 \downarrow X = S^4(2) =  T(8) + 4$. Combining this with usual high weight calculations yields that $L(E_7) \downarrow X = T(16) +  14 + T(12) + T(10)^2 + T(8) + 4^3 + 2$.  In particular, we find that $X$ has no trivial direct summands on $L(E_7)$ and is therefore not contained in $E_6$. Thus $X$ is non-$E_7$-cr. To prove that $X$ is conjugate to $Y < A_7$, we need to prove there is only one $E_7$-conjugacy class of $A_1$ subgroups contained in an $E_6$-parabolic when $p=5,7$. This is done in \cite[Sections 5.1, 6.1]{littho}. 

Next, we consider the maximal subgroup $A_1 A_1$ when $p=5$. Then $X \hookrightarrow A_1 A_1$ via $(1,1)$ is $E_7$-reducible. In fact, $X$ is non-$E_7$-cr and conjugate to $Y = A_1 \hookrightarrow A_1 A_1< A_2 A_5$ via $(1,1)$ where the first factor $A_1$ acts as $2$ on $V_{A_2}(10)$ and the second factor $A_1$ acts as $W(5)$ on $V_{A_5}(\lambda_1)$. We will prove that $X$ is non-$E_7$-cr and the second statement is proved in \cite[Section 5.3]{littho}. Looking for a contradiction we suppose $X$ is $E_7$-cr. Then $X$ is contained irreducibly in some Levi factor $L'$. By considering the composition factors of $X$ on $V_{56}$, we have $L' = A_1 A_2 A_3$. From \cite[Table 10.2]{LS1}, we have $V_{56} \downarrow A_1 A_1 = ((2,3) |((6,3) + (2,5))| (2,3)) + (4,1)$. We can construct such a module for $A_1(25) \times A_1(25)$ in Magma and use it to show that $V_{56} \downarrow X = 9 + W(7) + W(7)^* + T(5)^3$. The action of $A_1 A_2 A_3$ on $V_{56}$ is completely reducible and has a direct summand of dimension 4. All of the direct summands of $X$ on $V_{56}$ have dimension at least 8, a contradiction. It follows that $X$ is non-$E_7$-cr but is contained reducibly in $A_2 A_5$ and so does not satisfy the hypothesis of Corollary \ref{nongcr}. 

The last $E_7$-reducible subgroups to consider are $X_1 \hookrightarrow A_1 A_1 < A_1 G_2$ via $(1,1)$ where the second factor $A_1$ is maximal in $G_2$ and $X_2 \hookrightarrow A_1 A_1 < G_2 C_3$ via $(1,1)$ where the first factor $A_1$ is maximal in $G_2$ and the second is maximal in $C_3$, both when $p=7$. From the proof of Theorem \ref{thmE7}, we see that $X_1$ and $X_2$ are $E_7$-reducible and by considering their composition factors on $V_{56}$, they are contained in an $A_1 A_2 A_3$-parabolic subgroup. Both $X_1$ and $X_2$ act on $V_{56}$ as $T(11) + T(9)^2 + T(7)$ , checked using the restriction of $V_{56}$ to $A_1 G_2$ and $G_2 C_3$ from \cite[Table 10.2]{LS1}. As $V_{56} \downarrow A_1 A_2 A_3$ has no direct summands of dimension at least $14$, it follows that neither $X_1$ nor $X_2$ are contained in a $A_1 A_2 A_3$ Levi subgroup and both satisfy the hypothesis of the corollary. Furthermore, it is shown in \cite[Section 6.2]{littho} that $X_1$ is conjugate to $X_2$, and hence only $X_1$ appears in Table \ref{cortab}.          

Finally, suppose $G=E_8$. First consider $X = A_1 < D_8$ acting as $1 \otimes 1^{[r]} \otimes 1^{[s]} \otimes 1^{[t]}$ $(0 < r < s < t)$ when $p=2$, where $X$ is the $E_8$-reducible class of such subgroups contained in $B_4 (\ddagger)$, as in \cite[Lemma 7.1]{tho1}. We claim that $X$ satisfies the hypothesis of Corollary \ref{nongcr}. To prove this, we will first show that $X$ has a $120$-dimensional indecomposable summand on $L(E_8)$. When $p=2$ we have $L(E_8) \downarrow D_8 = (0|\lambda_2|0) + \lambda_7$ and in particular, $\bigwedge^2(\lambda_1) = 0| \lambda_2 | 0$ is a direct summand. Furthermore, $\bigwedge^2(\lambda_1)$ is a submodule of $\lambda_1 \otimes \lambda_1$. Lemma \ref{tensorsquare} shows that the socle of  $\lambda_1 \otimes \lambda_1 \downarrow X$ is a $1$-dimensional trivial module and therefore simple. It follows that  $\bigwedge^2(\lambda_1)$ has a simple socle and is thus indecomposable. Therefore $X$ has a 120-dimensional indecomposable summand on $L(E_8)$. Now suppose $X$ does not satisfy the hypothesis of the corollary. Then $X$ is a subgroup of a Levi subgroup or another reductive, maximal connected subgroup of $E_8$. Since $X$ has a $120$-dimensional indecomposable summand, it follows that $X$ is an $E_7$-irreducible subgroup of $E_7$. As $p=2$, it follows from Corollary \ref{A1overgroups} that $X$ is contained in $\bar{A}_1 D_6$. But the largest dimensional indecomposable summand of $L(E_8) \downarrow \bar{A}_1 D_6$ is $(V_{A_1}(1),V_{A_6}(\lambda_6))$, which has dimension $64$. This is a contradiction and hence $X$ satisfies the hypothesis of the corollary.  

Next, we note that the $E_8$-reducible subgroup contained $A_8$-irreducibly in $A_8$ when $p=3$, is shown to be $D_8$-reducible in the proof of Theorem \ref{thmE8} and hence does not satisfy the hypothesis of the corollary.  

The last subgroup $A_1$ to consider is a maximal subgroup $A_1$ of $M = B_2$ when $p=5,7$, let this be $X$. The proof of \cite[Proposition 3.3.3]{LS1} shows that $X$ is contained in $A_8$ acting as $W(8) = 8 | 4$ when $p=7$. When $p=5$, Lemma \ref{bada1b2p5} shows that $X$ is also contained in $A_8$ acting as $W(8) = 8 | 0$. Therefore in both cases $X$ is contained reducibly in another reductive maximal connected subgroup of $E_8$ and so does not satisfy the hypothesis of Corollary \ref{nongcr}.
\qed
\end{pf}

\section{Tables} \label{tabs}

In this section we give the tables of composition factors for the $G$-irreducible $A_1$ subgroups from Theorems \ref{thmG2} to \ref{thmE8} acting on the minimal and adjoint modules for $G$. The tables use the unique identifier given to $G$-irreducible $A_1$ subgroups in Tables \ref{G2tab} to \ref{E8tab} and the composition factors are calculated by restriction from a reductive, maximal connected subgroup $M$. The notation used in the tables is described in Section \ref{nota}. The composition factors of $M$ on the minimal and adjoint modules of $G$ are listed in Theorem \ref{maximalexcep}.     

\begin{longtable}{>{\raggedright\arraybackslash}p{0.07\textwidth - 2\tabcolsep}>{\raggedright\arraybackslash}p{0.25\textwidth - 2\tabcolsep}>{\raggedright\arraybackslash}p{0.34\textwidth-2\tabcolsep}>{\raggedright\arraybackslash}p{0.34\textwidth-\tabcolsep}@{}}

\caption{The composition factors of the irreducible $A_1$ subgroups of $G_2$. \label{G2tabcomps}} \\

\hline

ID & Comp. factors of $V_7 \downarrow X$ & Comp. factors of $L(G_2) \downarrow X$ \\

\hline

1 & $1^{[r]} \otimes 1^{[s]} /$ $\!\! W(2)^{[s]}$ & $W(2)^{[r]} /$ $\!\! 1^{[r]} \otimes W(3)^{[s]} /$ $\!\! W(2)^{[s]}$ \\

2 & $2^2 /$ $\!\! 0$ & $W(4) /$ $\!\! 2^3$ \\

3 & $6$ & $W(10) /$ $\!\! 2$ \\

\hline

\end{longtable}

\begin{longtable}{>{\raggedright\arraybackslash}p{0.05\textwidth - 2\tabcolsep}>{\raggedright\arraybackslash}p{0.35\textwidth - 2\tabcolsep}>{\raggedright\arraybackslash}p{0.6\textwidth-\tabcolsep}@{}}

\caption{The composition factors of the irreducible $A_1$ subgroups of $F_4$. \label{F4tabcomps}} \\

\hline

ID & Comp. factors of $V_{26} \downarrow X$ & Comp. factors of $L(F_4) \downarrow X$ \\

\hline

1 & $ 1 \otimes 1^{[r]} /$  $\!\! 1 \otimes 1^{[s]} /$ $\!\! 1 \otimes 1^{[t]} /$ $\!\! 1^{[r]} \otimes 1^{[s]} /$ $\!\! 1^{[r]} \otimes 1^{[t]} /$ $\!\!  1^{[s]} \otimes 1^{[t]} /$ $\!\! 0^2$ & $W(2) /$ $\!\! 1 \otimes 1^{[r]} \otimes 1^{[s]} \otimes 1^{[t]} /$ $\!\! 1 \otimes 1^{[r]} /$ $\!\! 1 \otimes 1^{[s]} /$ $\!\! 1 \otimes 1^{[t]} /$ $\!\! W(2)^{[r]} /$ $\!\! 1^{[r]} \otimes 1^{[s]} /$ $\!\! 1^{[r]} \otimes 1^{[t]} /$ $\!\! W(2)^{[s]} /$  $\!\! 1^{[s]} \otimes 1^{[t]} /$ $\!\! W(2)^{[t]}$ \\

2 & $2^{[r]}/$ $\!\! 1^{[r]} \otimes 1^{[s]} \otimes 1^{[t]} /$ $\!\! 1^{[r]} \otimes 1^{[s]} \otimes 1^{[u]} /$ $\!\! 2^{[s]} /$ $\!\! 1^{[t]} \otimes 1^{[u]} /$  $\!\! 0^2$ & $2^{[r]} \otimes 2^{[s]} /$  $\!\! 2^{[r]} \otimes 1^{[t]} \otimes 1^{[u]} /$ $\!\! 2^{[r]} /$ $\!\! 1^{[r]} \otimes 1^{[s]} \otimes 1^{[t]} /$ $\!\! 1^{[r]} \otimes 1^{[s]} \otimes 1^{[u]} /$ $\!\! 2^{[s]} \otimes 1^{[t]} \otimes 1^{[u]} /$  $\!\! 2^{[s]} /$ $\!\! 1^{[t]} \otimes 1^{[u]} /$  $\!\! 2^{[t]} /$ $\!\! 2^{[u]} /$  $\!\! 0^4$  \\

3 & $2 /$ $\!\! 1 \otimes 1^{[r]} \otimes 1^{[s]} \otimes 1^{[t]} /$ $\!\! 2^{[r]} /$ $\!\! 2^{[s]} /$ $\!\! 2^{[t]} /$  $\!\! 0^2$ & $2 \otimes 2^{[r]} /$ $\!\! 2 \otimes 2^{[s]} /$  $\!\! 2 \otimes 2^{[t]} /$ $\!\! 2 /$ $\!\! 1 \otimes 1^{[r]} \otimes 1^{[s]} \otimes 1^{[t]} /$  $\!\! 2^{[r]} \otimes 2^{[s]} /$  $\!\! 2^{[r]} \otimes 2^{[t]} /$  $\!\! 2^{[r]} /$ $\!\! 2^{[s]} \otimes 2^{[t]} /$ $\!\! 2^{[s]} /$ $\!\! 2^{[t]} /$  $\!\! 0^4$ \\

4 & $2 /$ $\!\! (1 \otimes 1^{[r]} \otimes 1^{[s]})^2 /$ $\!\! 2^{[r]} /$ $\!\! 2^{[s]} /$  $\!\! 0$ &  $2 /$  $\!\! 2 \otimes 2^{[r]} /$ $\!\! 2 \otimes 2^{[s]} /$ $\!\! (1 \otimes 1^{[r]} \otimes 1^{[s]})^2 /$ $\!\! 2^{[r]} \otimes 2^{[s]} /$ $\!\! 2^{[r]} /$ $\!\! 2^{[s]}$ \\

5 & $W(3) \otimes 1^{[r]} /$ $\!\! 2 \otimes 2^{[r]} /$ $\!\! 1 \otimes W(3)^{[r]} /$ $\!\! 0$ & $\!\! W(4) \otimes 2^{[r]} /$  $\!\! W(3) \otimes 1^{[r]} /$ $\!\! 2 \otimes W(4)^{[r]} /$  $\!\! 2 /$  $\!\! 1 \otimes W(3)^{[r]} /$ $\!\! 2^{[r]}$ \\

6 & $4^{[r]} /$ $\!\! 3^{[r]} \otimes 1^{[s]} /$ $\!\! 3^{[r]} \otimes 1^{[t]} /$ $\!\! 1^{[s]} \otimes 1^{[t]} /$ $\!\! 0$ & $W(6)^{[r]} /$ $\!\! 4^{[r]} \otimes 1^{[s]} \otimes 1^{[t]} /$ $\!\! 3^{[r]} \otimes 1^{[s]} /$ $\!\! 3^{[r]} \otimes 1^{[t]} /$  $\!\! 2^{[r]} /$  $\!\! 2^{[s]} /$ $\!\! 2^{[t]} $ \\

7 & $10 /$ $\!\! 8 /$ $\!\! 4 /$ $\!\! 0$ & $W(14) /$ $\!\! 10^2 /$ $\!\! 6 /$ $\!\! 4 /$ $\!\! 2$ \\

8 & $1^{[r]} \otimes 5^{[s]} /$ $\!\! W(8)^{[s]} /$ $\!\! 4^{[s]}$ & $2^{[r]} /$ $\!\! 1^{[r]} \otimes W(9)^{[s]} /$ $\!\! 1^{[r]} \otimes 3^{[s]} /$ $\!\! W(10)^{[s]} /$ $\!\! 6^{[s]} /$ $\!\! 2^{[s]}$  \\

9 & $1^{[r]} \otimes 2^{[s]} \otimes 1^{[t]} /$ $\!\! W(4)^{[s]} /$ $\!\! 2^{[s]} \otimes 2^{[t]}$  &  $2^{[r]} /$ $\!\! 1^{[r]} \otimes W(4)^{[s]} \otimes 1^{[t]} /$ $\!\! 1^{[r]} \otimes W(3)^{[t]} /$  $\!\! W(4)^{[s]} \otimes 2^{[t]} /$ $\!\! 2^{[s]} /$ $\!\! 2^{[t]}$ \\

10 & $W(16) /$ $\!\! 8$ & $W(22) /$ $\!\! W(14) /$ $\!\! 10 /$ $\!\! 2$  \\

11 & $4^{[r]} /$ $\!\! 2^{[r]} \otimes 6^{[s]}$ & $4^{[r]} \otimes 6^{[s]} /$  $\!\! 2^{[r]} /$ $\!\! W(10)^{[s]} /$ $\!\! 2^{[s]}$ \\

\hline

\end{longtable}

\pagebreak
\begin{longtable}{>{\raggedright\arraybackslash}p{0.05\textwidth - 2\tabcolsep}>{\raggedright\arraybackslash}p{0.35\textwidth - 2\tabcolsep}>{\raggedright\arraybackslash}p{0.6\textwidth-\tabcolsep}@{}}

\caption{The composition factors of the irreducible $A_1$ subgroups of $E_6$. \label{E6tabcomps}} \\

\hline

ID & Comp. factors of $V_{27} \downarrow X$ & Comp. factors of $L(E_6) \downarrow X$ \\

\hline

1 &  $1^{[r]} \otimes 5^{[s]} /$ $\!\! W(8)^{[s]} /$ $\!\! 4^{[s]} /$ $\!\! 0$ & $2^{[r]} /$  $\!\! 1^{[r]} \otimes W(9)^{[s]} /$  $\!\! 1^{[r]} \otimes 5^{[s]} /$  $\!\! 1^{[r]} \otimes 3^{[s]} /$ $\!\! W(10)^{[s]} /$ $\!\! W(8)^{[s]} /$ $\!\! 6^{[s]} /$ $\!\! 4^{[s]} /$ $\!\! 2^{[s]}$ \\

2 & $1^{[r]} \otimes 2^{[s]} \otimes 1^{[t]} /$ $\!\! W(4)^{[s]} /$ $\!\! 2^{[s]} \otimes 2^{[t]} /$ $\!\! 0$ & $2^{[r]} /$ $\!\! 1^{[r]} \otimes W(4)^{[s]} \otimes 1^{[t]} /$  $\!\! 1^{[r]} \otimes 2^{[s]} \otimes 1^{[t]} /$ $\!\! 1^{[r]} \otimes W(3)^{[t]} /$  $\!\! W(4)^{[s]} \otimes 2^{[t]} /$ $\!\! W(4)^{[s]} /$ $\!\! 2^{[s]} \otimes 2^{[t]} /$ $\!\! 2^{[s]} /$ $\!\! 2^{[t]}$   \\

3 & $2 \otimes 2^{[r]} /$ $\!\! 2 \otimes 2^{[s]} /$ $\!\! 2^{[r]} \otimes 2^{[s]}$ & $W(4) /$ $\!\! (2 \otimes 2^{[r]} \otimes 2^{[s]})^2 /$ $\!\! 2 /$ $\!\! W(4)^{[r]} /$ $\!\! 2^{[r]} /$ $\!\! W(4)^{[s]} /$ $\!\! 2^{[s]}$  \\

4 & $4^{[r]} /$ $\!\! 2^{[r]} \otimes 6^{[s]} /$ $\!\! 0$ & $4^{[r]} \otimes 6^{[s]} /$ $\!\! 4^{[r]} /$ $\!\! 2^{[r]} \otimes 6^{[s]} /$  $\!\! 2^{[r]} /$ $\!\! W(10)^{[s]} /$ $\!\! 2^{[s]}$  \\

5 & $W(16) /$ $\!\! 8 /$ $\!\! 0$ & $W(22) /$ $\!\! W(16) /$ $\!\! W(14) /$ $\!\! 10 /$ $\!\! 8 /$ $\!\! 2$ \\

6 & $W(12) /$ $\!\! 8 /$ $\!\! 4 $ & $W(16) /$ $\!\! W(14) /$ $\!\! 10^2 /$ $\!\! 8 /$ $\!\! 6 /$ $\!\! 4 /$ $\!\! 2$ \\

\hline

\end{longtable}

\begin{longtable}{>{\raggedright\arraybackslash}p{0.05\textwidth - 2\tabcolsep}>{\raggedright\arraybackslash}p{0.35\textwidth - 2\tabcolsep}>{\raggedright\arraybackslash}p{0.6\textwidth-\tabcolsep}@{}}

\caption{The composition factors of the irreducible $A_1$ subgroups of $E_7$. \label{E7tabcomps}} \\

\hline

ID & Comp. factors of $V_{56} \downarrow X$ & Comp. factors of $L(E_7) \downarrow X$ \\

\hline

1 & $1^{[r]} \otimes 5^{[s]} \otimes 1^{[t]} /$ $\!\! W(8)^{[s]} \otimes 1^{[t]} /$ $\!\! 4^{[s]} \otimes 1^{[t]} /$ $\!\! 3^{[t]}$ & $2^{[r]} /$ $\!\! 1^{[r]} \otimes W(9)^{[s]} /$ $\!\! 1^{[r]} \otimes 5^{[s]} \otimes 2^{[t]} /$  $\!\! 1^{[r]} \otimes 3^{[s]} /$ $\!\! W(10)^{[s]} /$ $\!\! W(8)^{[s]} \otimes 2^{[t]} /$ $\!\! 6^{[s]} /$   $\!\! 4^{[s]} \otimes 2^{[t]} /$ $\!\! 2^{[s]} /$ $\!\! 2^{[t]}$    \\

2 & $1^{[r]} \otimes 5^{[s]} \otimes 1^{[t]} /$ $\!\! W(9)^{[s]} /$ $\!\! 5^{[s]} \otimes 2^{[t]} /$  $\!\! 3^{[s]}$ & $2^{[r]} /$ $\!\! 1^{[r]} \otimes W(8)^{[s]} \otimes 1^{[t]} /$ $\!\! 1^{[r]} \otimes 4^{[s]} \otimes 1^{[t]} /$ $\!\! 1^{[r]} \otimes 3^{[s]} /$ $\!\! W(10)^{[s]} /$ $\!\! W(8)^{[s]} \otimes 2^{[t]} /$ $\!\! 6^{[s]} /$ $\!\! 4^{[s]} \otimes 2^{[t]} /$ $\!\! 2^{[s]} /$ $\!\! 2^{[t]}$ \\

3 & $1^{[r]} \otimes 2^{[s]} \otimes 1^{[t]} \otimes 1^{[u]} /$ $\!\!W(4)^{[s]} \otimes 1^{[u]} /$ $\!\! 2^{[s]} \otimes 2^{[t]} \otimes 1^{[u]} /$ $\!\! W(3)^{[u]}$ & $2^{[r]} /$ $\!\! 1^{[r]} \otimes W(4)^{[s]} \otimes 1^{[u]} /$ $\!\! 1^{[r]} \otimes 2^{[s]} \otimes 2^{[t]} \otimes 1^{[u]} /$ $\!\! 1^{[r]} \otimes W(3)^{[u]} /$ $\!\! W(4)^{[s]} \otimes 2^{[t]} /$ $\!\! W(4)^{[s]} \otimes 2^{[u]} /$ $\!\! 2^{[s]} \otimes 2^{[t]} \otimes 2^{[u]} /$ $\!\! 2^{[s]} /$ $\!\! 2^{[t]} /$ $\!\! 2^{[u]}$  \\

4 & $1^{[r]} \otimes 10^{[s]} /$ $\!\!  1^{[r]} /$ $\!\!  W(15)^{[s]} /$ $\!\!  9^{[s]} /$ $\!\!  5^{[s]} $ &  $2^{[r]} /$ $\!\! 1^{[r]} \otimes W(15)^{[s]} /$ $\!\! 1^{[r]} \otimes 9^{[s]} /$ $\!\! 1^{[r]} \otimes 5^{[s]} /$ $\!\! W(18)^{[s]} /$ $\!\! W(14)^{[s]} /$ $\!\! (10^{[s]})^2 /$ $\!\! 6^{[s]} /$ $\!\! 2^{[s]}$ \\

5 & $1^{[r]} \otimes 8^{[s]} /$ $\!\!  1^{[r]} \otimes 2^{[t]} /$ $\!\!  10^{[s]} \otimes 1^{[t]} /$ $\!\!  4^{[s]} \otimes 1^{[t]}$  & $2^{[r]} /$ $\!\! 1^{[r]} \otimes 10^{[s]} \otimes 1^{[t]} /$ $\!\! 1^{[r]} \otimes 4^{[s]} \otimes 1^{[t]} /$ $\!\! W(14)^{[s]} /$ $\!\! 10^{[s]} /$  $\!\! 8^{[s]} \otimes 2^{[t]} /$ $\!\! 6^{[s]} /$ $\!\! 2^{[s]} /$ $\!\! 2^{[t]}$ \\

6 & $1^{[r]} \otimes 6^{[s]} /$ $\!\!  1^{[r]} \otimes 4^{[t]} /$ $\!\!  6^{[s]} \otimes 3^{[t]} /$ $\!\!  3^{[t]}$  & $2^{[r]} /$ $\!\! 1^{[r]} \otimes 6^{[s]} \otimes 3^{[t]} /$ $\!\! 1^{[r]} \otimes 3^{[t]} /$ $\!\! W(10)^{[s]} /$ $\!\! 6^{[s]} /$ $\!\! 6^{[s]} \otimes 4^{[t]} /$ $\!\! 2^{[s]} /$ $\!\! 6^{[t]} /$ $\!\! 2^{[t]}$ \\

7 & $1^{[r]} \otimes 6^{[s]} /$ $\!\!  1^{[r]} \otimes 1^{[t]} \otimes 1^{[u]} /$ $\!\!  1^{[r]} /$ $\!\!  6^{[s]} \otimes 1^{[t]} /$ $\!\!  6^{[s]} \otimes 1^{[u]} /$ $\!\!  1^{[t]} /$ $\!\!  1^{[u]}$  &  $2^{[r]} /$ $\!\! 1^{[r]} \otimes 6^{[s]} \otimes  1^{[t]} /$ $\!\! 1^{[r]} \otimes 6^{[s]} \otimes 1^{[u]} /$ $\!\! 1^{[r]} \otimes 1^{[t]} /$  $\!\! 1^{[r]} \otimes 1^{[u]} /$ $\!\! W(10)^{[s]} /$  $\!\! 6^{[s]} \otimes 1^{[t]} \otimes 1^{[u]} /$ $\!\! (6^{[s]})^2 /$  $\!\! 2^{[s]} /$ $\!\! 1^{[t]} \otimes 1^{[u]} /$ $\!\! 2^{[t]} /$ $\!\! 2^{[u]} /$ \\

8 & $1^{[r]} \otimes 4^{[s]} /$ $\!\!  1^{[r]} \otimes 2^{[t]} /$ $\!\!  1^{[r]} \otimes 1^{[u]} \otimes 1^{[v]} /$ $\!\! 3^{[s]} \otimes 1^{[t]} \otimes 1^{[u]} /$ $\!\! 3^{[s]} \otimes 1^{[t]} \otimes 1^{[v]}$  &  $W(6)^{[s]} /$  $\!\! 2^{[r]} /$ $\!\! 1^{[r]} \otimes 3^{[s]} \otimes 1^{[t]} \otimes 1^{[u]} /$ $\!\! 1^{[r]} \otimes 3^{[s]} \otimes 1^{[t]} \otimes 1^{[v]} /$ $\!\! 4^{[s]} \otimes 2^{[t]} /$ $\!\! 4^{[s]} \otimes 1^{[u]} \otimes 1^{[v]} /$ $\!\! 2^{[s]} /$ $\!\! 2^{[t]} \otimes 1^{[u]} \otimes 1^{[v]} /$ $\!\! 2^{[t]} /$ $\!\! 2^{[u]} /$ $\!\!  2^{[v]}$  \\

9 & $1^{[r]} \otimes 4^{[s]} /$ $\!\!  1^{[r]} \otimes 2^{[s]} /$ $\!\!1^{[r]} \otimes 1^{[t]} \otimes 1^{[u]} /$ $\!\! 4^{[s]} \otimes 1^{[t]} /$  $\!\! 4^{[s]} \otimes 1^{[u]} /$  $\!\! 2^{[s]} \otimes 1^{[t]} /$ $\!\!  2^{[s]} \otimes 1^{[u]}$ &  $2^{[r]} /$ $\!\! 1^{[r]} \otimes 4^{[s]} \otimes 1^{[t]} /$ $\!\! 1^{[r]} \otimes 4^{[s]} \otimes  1^{[u]} /$ $\!\! 1^{[r]} \otimes 2^{[s]} \otimes 1^{[t]} /$  $\!\! 1^{[r]} \otimes 2^{[s]} \otimes  1^{[u]} /$  $\!\! (W(6)^{[s]})^2 /$  $\!\! 4^{[s]} \otimes 1^{[t]} \otimes 1^{[u]} /$ $\!\! 2^{[s]} \otimes 1^{[t]} \otimes 1^{[u]} /$ $\!\! 4^{[s]} /$  $\!\! (2^{[s]})^3 /$ $\!\! 2^{[t]} /$ $\!\! 2^{[u]}$ \\

10 & $1^{[r]} \otimes 2^{[s]} \otimes 2^{[t]} /$ $\!\!  1^{[r]} \otimes 2^{[u]} /$ $\!\!  W(3)^{[s]} \otimes 1^{[t]} \otimes 1^{[u]} /$ $\!\!  W(3)^{[t]} \otimes 1^{[s]} \otimes 1^{[u]}$  &  $2^{[r]} /$ $\!\! 1^{[r]} \otimes W(3)^{[s]} \otimes 1^{[t]} \otimes 1^{[u]} /$ $\!\! 1^{[r]} \otimes 1^{[s]} \otimes W(3)^{[t]} \otimes 1^{[u]} /$ $\!\! W(4)^{[s]} \otimes 2^{[t]} /$ $\!\! 2^{[s]} \otimes W(4)^{[t]} /$  $\!\! 2^{[s]} \otimes 2^{[t]} \otimes 2^{[u]} /$ $\!\! 2^{[s]} /$  $\!\! 2^{[t]} /$ $\!\! 2^{[u]}$     \\

11 & $1^{[r]} \otimes 2^{[s]} /$ $\!\!  1^{[r]} \otimes 2^{[t]} /$ $\!\!  1^{[r]} \otimes 2^{[u]} /$ $\!\!  1^{[r]} \otimes 2^{[v]} /$ $ (1^{[s]} \otimes 1^{[t]} \otimes 1^{[u]} \otimes 1^{[v]})^2$ & $2^{[r]} /$ $\!\! (1^{[r]} \otimes 1^{[s]} \otimes 1^{[t]} \otimes 1^{[u]} \otimes 1^{[v]})^2 /$ $\!\! 2^{[s]} /$ $\!\! 2^{[s]} \otimes 2^{[t]} /$ $\!\! 2^{[s]} \otimes 2^{[u]} /$ $\!\! 2^{[s]} \otimes 2^{[v]} /$ $\!\! 2^{[t]} /$  $\!\! 2^{[t]} \otimes 2^{[u]} /$ $\!\! 2^{[t]} \otimes 2^{[v]} /$ $\!\! 2^{[u]} /$ $\!\! 2^{[u]} \otimes 2^{[v]} /$ $\!\! 2^{[v]}$ \\

12 &  $1 \otimes 1^{[r]} \otimes 1^{[s]} /$ $\!\!  1 \otimes 1^{[t]} \otimes 1^{[u]} /$ $\!\!  1 \otimes 1^{[v]} \otimes 1^{[w]} /$ $\!\!  1^{[r]} \otimes 1^{[t]} \otimes 1^{[v]} /$ $\!\!  1^{[r]} \otimes 1^{[u]} \otimes 1^{[w]} /$ $\!\!  1^{[s]} \otimes 1^{[t]} \otimes 1^{[w]} /$ $\!\!  1^{[s]} \otimes 1^{[u]} \otimes 1^{[v]}$ & $W(2) /$ $\!\! 1 \otimes 1^{[r]} \otimes 1^{[u]} \otimes 1^{[v]} /$ $\!\! 1 \otimes 1^{[r]} \otimes 1^{[t]} \otimes 1^{[w]} /$ $\!\! 1 \otimes 1^{[s]} \otimes 1^{[t]} \otimes 1^{[v]} /$ $\!\! 1 \otimes 1^{[s]} \otimes 1^{[u]} \otimes 1^{[w]} /$ $\!\! W(2)^{[r]} /$ $\!\! 1^{[r]} \otimes 1^{[s]} \otimes 1^{[t]} \otimes 1^{[u]} /$ $\!\! 1^{[r]} \otimes 1^{[s]} \otimes 1^{[v]} \otimes 1^{[w]} /$ $\!\! W(2)^{[s]} /$  $\!\! W(2)^{[t]} /$ $\!\! 1^{[t]} \otimes 1^{[u]} \otimes 1^{[v]} \otimes 1^{[w]} /$ $\!\! W(2)^{[u]} /$ $\!\! W(2)^{[v]} /$ $\!\! W(2)^{[w]} /$ \\

13 & $1^{[r]} \otimes 2^{[s]} /$ $\!\! 1^{[r]} \otimes 2^{[t]} /$ $\!\! 1^{[r]} \otimes 2^{[u]} /$ $\!\! 1^{[r]} \otimes 1^{[v]} \otimes 1^{[w]} /$ $\!\! (1^{[r]})^2 /$ $\!\! 1^{[s]} \otimes 1^{[t]} \otimes 1^{[u]} \otimes 1^{[v]} /$ $\!\! 1^{[s]} \otimes 1^{[t]} \otimes 1^{[u]} \otimes 1^{[w]}$  &  $2^{[r]} /$ $\!\! 1^{[r]} \otimes 1^{[s]} \otimes 1^{[t]} \otimes 1^{[u]} \otimes 1^{[v]} /$ $\!\! 1^{[r]} \otimes 1^{[s]} \otimes 1^{[t]} \otimes 1^{[u]} \otimes 1^{[w]} /$ $\!\! 2^{[s]} \otimes 2^{[t]} /$ $\!\! 2^{[s]} \otimes 2^{[u]} /$ $\!\! 2^{[s]} \otimes 1^{[v]} \otimes 1^{[w]} /$ $\!\! (2^{[s]})^2 /$ $\!\! 2^{[t]} \otimes 2^{[u]} /$  $\!\! 2^{[t]} \otimes 1^{[v]} \otimes 1^{[w]} /$ $\!\! (2^{[t]})^2 /$ $\!\! 2^{[u]} \otimes 1^{[v]} \otimes 1^{[w]} /$  $\!\! (2^{[u]})^2 /$ $\!\! 2^{[v]} /$  $\!\! (1^{[v]} \otimes 1^{[w]})^2 /$  $\!\! 2^{[w]} /$ $\!\!  0^6$ \\

14 & $1^{[r]} \otimes 2^{[s]} /$ $\!\!  1^{[r]} \otimes 2^{[t]} /$ $\!\!  1^{[r]} \otimes 2^{[u]} /$ $\!\!  1^{[r]} \otimes 2^{[v]} /$ $\!\!  1^{[r]} \otimes 2^{[w]} /$ $\!\!  (1^{[r]})^2 /$ $\!\!  1^{[s]} \otimes 1^{[t]} \otimes 1^{[u]} \otimes 1^{[v]} \otimes 1^{[w]}$ & $2^{[r]} /$ $\!\! 1^{[r]} \otimes 1^{[s]} \otimes 1^{[t]} \otimes 1^{[u]} \otimes 1^{[v]} \otimes 1^{[w]}  /$  $\!\!  2^{[s]} \otimes 2^{[t]} /$ $\!\! 2^{[s]} \otimes 2^{[u]} /$ $\!\!  2^{[s]} \otimes 2^{[v]} /$ $\!\! 2^{[s]} \otimes 2^{[w]} /$  $\!\! (2^{[s]})^2 /$  $\!\! 2^{[t]} \otimes 2^{[u]} /$ $\!\! 2^{[t]} \otimes 2^{[v]} /$ $\!\! 2^{[t]} \otimes 2^{[w]} /$  $\!\! (2^{[t]})^2 /$ $\!\!  2^{[u]} \otimes 2^{[v]} /$ $\!\! 2^{[u]} \otimes 2^{[w]} /$ $\!\! (2^{[u]})^2 /$ $\!\! 2^{[v]} \otimes 2^{[w]} /$  $\!\! (2^{[v]})^2 /$ $\!\! (2^{[w]})^2 /$ $\!\! 0^6 $ \\
 
15 &  $6^{[r]} \otimes 5^{[s]} /$ $\!\! W(9)^{[s]} /$ $\!\! 3^{[s]}$  & $W(10)^{[r]} /$ $\!\! 6^{[r]} \otimes W(8)^{[s]} /$ $\!\! 6^{[r]} \otimes 4^{[s]} /$ $\!\! 2^{[r]} /$ 
  $\!\! W(10)^{[s]} /$ $\!\! 6^{[s]} /$  $\!\! 2^{[s]}$  \\

16 & $6^{[r]} \otimes 2^{[s]} \otimes 1^{[t]} /$ $\!\! 4^{[s]} \otimes 1^{[t]} /$ $\!\! 3^{[t]}$ & $W(10)^{[r]} /$  $\!\! 6^{[r]} \otimes 4^{[s]} /$  $\!\! 6^{[r]} \otimes 2^{[s]} \otimes 2^{[t]} /$ $\!\! 2^{[r]} /$  $\!\! 4^{[s]} \otimes 2^{[t]} /$ $\!\! 2^{[s]} /$ $\!\! 2^{[t]}$ \\ 

17 & $3^{[r]} \otimes 6^{[s]} /$ $\!\! 1^{[r]} \otimes W(10)^{[s]} /$ $\!\! 1^{[r]} \otimes 2^{[s]}$  & $4^{[r]} \otimes 6^{[s]} /$  $\!\! 2^{[r]} \otimes W(12)^{[s]} /$ $\!\! 2^{[r]} \otimes 8^{[s]} /$ $\!\! 2^{[r]} \otimes 4^{[s]} /$ $\!\! 2^{[r]} /$ $\!\! W(10)^{[s]} /$  $\!\! 2^{[s]}$ \\

18 & $3^{[r]} /$ $\!\! 1^{[r]} \otimes W(16)^{[s]} /$ $\!\! 1^{[r]} \otimes 8^{[s]}$ & $2^{[r]} \otimes W(16)^{[s]} /$ $\!\! 2^{[r]} \otimes 8^{[s]} /$ $\!\! 2^{[r]} /$  $\!\! W(22)^{[s]} /$ $\!\! W(14)^{[s]} /$ $\!\! W(10)^{[s]} /$ $\!\! 2^{[s]}$  \\

19 &  $W(6)^{[r]} \otimes 3^{[s]} /$ $\!\! 4^{[r]} \otimes 1^{[s]} /$ $\!\! 2^{[r]} \otimes 5^{[s]}$  & $W(6)^{[r]} \otimes 4^{[s]} /$ $\!\! 4^{[r]} \otimes W(6)^{[s]} /$ $\!\! 4^{[r]} \otimes 2^{[s]} /$ $\!\! 2^{[r]} \otimes W(8)^{[s]} /$ $\!\! 2^{[r]} \otimes 4^{[s]} /$ $\!\! 2^{[r]} /$ $\!\! 2^{[s]}$  \\

20 & $W(21) / 15 / 11 / 5$  & $W(26) /$ $\!\! W(22) /$ $\!\! W(18) /$ $\!\! 16 /$ $\!\! 14 /$ $\!\! 10^2 /$ $\!\! 6 /$ $\!\! 2$ \\

21 & $W(27) / 17 / 9$  & $W(34) /$ $\!\! W(26) /$ $\!\! W(22) /$ $\!\! 18 /$ $\!\! 14 /$ $\!\! 10 /$ $\!\! 2$ \\

\hline

\end{longtable}

\begin{longtable}{>{\raggedright\arraybackslash}p{0.05\textwidth - 2\tabcolsep}>{\raggedright\arraybackslash}p{0.95\textwidth-\tabcolsep}@{}}

\caption{The composition factors of the irreducible $A_1$ subgroups of $E_8$. \label{E8tabcomps}} \\

\hline

ID & Composition Factors of $L(E_8) \downarrow X$ \\

\hline

1 & $W(16)^{[r]} /$ $\!\! W(14)^{[r]} /$ $\!\! (W(12)^{[r]} \otimes 2^{[s]})^2 /$ $\!\! (10^{[r]})^2 /$ $\!\! (8^{[r]} \otimes 2^{[s]})^2 /$ $\!\! 8^{[r]} /$ $\!\! 6^{[r]} /$ $\!\! (4^{[r]} \otimes 2^{[s]})^2 /$ $\!\! 4^{[r]} /$ $\!\! 2^{[r]} /$ $\!\! 4^{[s]} /$ $\!\! 2^{[s]}$ \\

2 & $W(15)^{[r]} \otimes 1^{[s]} /$ $\!\! W(14)^{[r]} /$ $\!\! W(12)^{[r]} \otimes 2^{[s]} /$ $\!\! W(11)^{[r]} \otimes 1^{[s]} /$ $\!\! 10^{[r]} /$ $\!\! 9^{[r]} \otimes 1^{[s]} /$ $\!\! 8^{[r]} \otimes 2^{[s]} /$ $\!\! 7^{[r]} \otimes 3^{[s]} /$ $\!\! 6^{[r]} /$ $\!\! 5^{[r]} \otimes 1^{[s]} /$ $\!\! 4^{[r]} \otimes 2^{[s]} /$ $\!\! 3^{[r]} \otimes 1^{[s]} /$ $\!\! 2^{[r]} /$ $\!\! 2^{[s]}$ \\

3 & $W(8) /$ $\!\! (W(6) \otimes 4^{[r]})^2 /$ $\!\! W(6) /$ $\!\! (4 \otimes W(6)^{[r]})^2 /$ $\!\! (4 \otimes 2^{[r]})^2 /$ $\!\! 4 /$ $\!\! (2 \otimes 4^{[r]})^2 /$ $\!\! 2 /$ $\!\! W(8)^{[r]} /$ $\!\! W(6)^{[r]} /$ $\!\! 4^{[r]} /$ $\!\! 2^{[r]} $  \\

4 & $W(7) \otimes 3^{[r]} /$ $\!\!  W(6) \otimes 4^{[r]} /$ $\!\!  W(6) /$ $\!\!  W(5) \otimes 3^{[r]} /$ $\!\!  4 \otimes W(6)^{[r]} /$ $\!\!  4 \otimes 2^{[r]} /$ $\!\!  3 \otimes W(7)^{[r]} /$ $\!\!  3 \otimes W(5)^{[r]} /$ $\!\! 3 \otimes 1^{[r]} /$ $\!\! 2 \otimes 4^{[r]} /$ $\!\!  2 /$ $\!\!  1 \otimes 3^{[r]} /$ $\!\!  W(6)^{[r]} /$ $\!\!  2^{[r]}$  \\

5 & $W(10)^{[r]} /$ $\!\! W(9)^{[r]} \otimes 1^{[t]} /$ $\!\! W(8)^{[r]} \otimes 2^{[s]} /$ $\!\! W(8)^{[r]} \otimes 1^{[s]} \otimes 1^{[u]} /$ $\!\! 6^{[r]} /$ $\!\! 5^{[r]} \otimes 2^{[s]} \otimes 1^{[t]}/$ $\!\! 5^{[r]} \otimes 1^{[s]} \otimes 1^{[t]} \otimes 1^{[u]} /$ $\!\! 4^{[r]} \otimes 2^{[s]} /$ $\!\! 4^{[r]} \otimes 1^{[s]} \otimes 1^{[u]}/$ $\!\! 3^{[r]} \otimes 1^{[t]}/$ $\!\! 2^{[r]} /$ $\!\! 3^{[s]} \otimes 1^{[u]}/$ $\!\! 2^{[s]} /$ $\!\! 2^{[t]} /$ $\!\! 2^{[u]} $ \\

6 & $W(4)^{[r]} \otimes 2^{[s]} /$ $\!\!  W(4)^{[r]} \otimes 1^{[s]} \otimes 1^{[u]} /$ $\!\!  W(4)^{[r]} \otimes 2^{[t]} /$ $\!\!  W(4)^{[r]} \otimes 1^{[t]} \otimes 1^{[v]} /$ $\!\!  2^{[r]} \otimes 2^{[s]} \otimes 2^{[t]} /$ $\!\!  2^{[r]} \otimes 2^{[s]} \otimes 1^{[t]} \otimes 1^{[v]} /$ $\!\!  2^{[r]} \otimes 1^{[s]} \otimes 2^{[t]} \otimes 1^{[v]} /$ $\!\!  2^{[r]} \otimes 1^{[s]} \otimes 1^{[t]} \otimes 1^{[u]} \otimes 1^{[v]} /$ $\!\!  2^{[r]} /$ $\!\!  3^{[s]} \otimes 1^{[v]} /$ $\!\!  2^{[s]} /$ $\!\!  3^{[t]} \otimes 1^{[u]} /$ $\!\!  2^{[t]} /$ $\!\!  2^{[u]} /$ $\!\!  2^{[v]}$  \\

7 & $W(9)^{[r]} \otimes 1^{[s]} /$ $\!\!  W(8)^{[r]} \otimes 2^{[s]} /$ $\!\!  W(7)^{[r]} \otimes 3^{[s]} /$ $\!\!  W(6)^{[r]} \otimes 4^{[s]} /$ $\!\!  W(6)^{[r]} /$ $\!\!  W(5)^{[r]} \otimes 3^{[s]} /$ $\!\!  W(5)^{[r]} \otimes 1^{[s]} /$ $\!\!  (4^{[r]} \otimes 2^{[s]})^2 /$ $\!\!  3^{[r]} \otimes W(5)^{[s]} /$ $\!\!  3^{[r]} \otimes 1^{[s]} /$ $\!\!  2^{[r]} \otimes 4^{[s]} /$ $\!\!  2^{[r]} /$ $\!\!  1^{[r]} \otimes 3^{[s]} /$ $\!\! 2^{[s]}$     \\

8 & $W(3)  \otimes 1^{[r]} \otimes 1^{[s]} \otimes 1^{[t]} /$ $\!\!  W(2)  \otimes W(2)^{[r]} \otimes W(2)^{[s]} /$ $\!\!  W(2)  \otimes W(2)^{[r]} \otimes W(2)^{[t]} /$ $\!\!  W(2)  \otimes W(2)^{[s]} \otimes W(2)^{[t]} /$  $\!\!  W(2) /$ $\!\!  1  \otimes W(3)^{[r]} \otimes 1^{[s]} \otimes 1^{[t]} /$ $\!\!  1  \otimes 1^{[r]} \otimes W(3)^{[s]} \otimes 1^{[t]} /$ $\!\!  1  \otimes 1^{[r]} \otimes 1^{[s]} \otimes W(3)^{[t]} /$ $\!\!  W(2)^{[r]} \otimes W(2)^{[s]} \otimes W(2)^{[t]} /$ $\!\!  W(2)^{[r]} /$ $\!\!  W(2)^{[s]} /$ $\!\!  W(2)^{[t]}$ \\

9 & $W(4) /$ $\!\!  (2 \otimes 2^{[r]} \otimes 2^{[s]})^2 /$ $\!\!  (2 \otimes 2^{[r]} \otimes 2^{[t]})^2 /$ $\!\!  (2 \otimes 2^{[s]} \otimes 2^{[t]})^2 /$ $\!\!  2 /$ $\!\!  W(4)^{[r]} /$ $\!\!  (2^{[r]} \otimes 2^{[s]} \otimes 2^{[t]})^2 /$ $\!\! 2^{[r]} /$ $\!\!  W(4)^{[s]} /$ $\!\!  2^{[s]} /$ $\!\!  W(4)^{[t]} /$ $\!\!  2^{[t]} $ \\

10 & $W(28) /$ $\!\!  W(26) /$ $\!\!  W(22)^2 /$ $\!\!  W(18)^2 /$ $\!\!  16 /$ $\!\!  14^3 /$ $\!\!  10^2 /$ $\!\!  8 /$ $\!\!  6 /$ $\!\!  4 /$ $\!\!  2 $ \\

11 & $W(22)^{[r]} /$ $\!\!  W(21)^{[r]} \otimes 1^{[s]} /$ $\!\!  W(18) /$ $\!\!  W(15)^{[r]} \otimes 1^{[s]} /$ $\!\!  W(14)^{[r]} /$ $\!\!  12^{[r]} \otimes 2^{[s]} /$ $\!\!  11^{[r]} \otimes 1^{[s]} /$ $\!\!  10^{[r]} /$ $\!\!  9^{[r]} \otimes 1^{[s]} /$ $\!\!  6^{[r]} /$ $\!\!  3^{[r]} \otimes 1^{[s]} /$ $\!\!  2^{[r]} /$ $\!\!  2^{[s]}$  \\

12 & $W(18)^{[r]} /$ $\!\!  W(15)^{[r]} \otimes 3^{[s]} /$ $\!\!  W(14)^{[r]} /$ $\!\!  10^{[r]} \otimes 4^{[s]} /$ $\!\!  10^{[r]} /$ $\!\!  9^{[r]} \otimes 3^{[s]} /$ $\!\!  6^{[r]} /$ $\!\!  5^{[r]} \otimes 3^{[s]} /$ $\!\!  2^{[r]} /$ $\!\!  6^{[s]} /$ $\!\!  2^{[s]}$  \\

13 & $W(18)^{[r]} /$ $\!\!  W(15)^{[r]} \otimes 1^{[s]} /$ $\!\!  W(15)^{[r]} \otimes 1^{[t]} /$ $\!\!  W(14)^{[r]} /$ $\!\!  10^{[r]} \otimes 1^{[s]} \otimes 1^{[t]} /$ $\!\!  (10^{[r]})^2 /$ $\!\!  9^{[r]} \otimes 1^{[s]} /$ $\!\!  9^{[r]} \otimes 1^{[t]} /$ $\!\!  6^{[r]} /$ $\!\!  5^{[r]} \otimes 1^{[s]} /$ $\!\!  5^{[r]} \otimes 1^{[t]} /$ $\!\!  2^{[r]} /$ $\!\!  2^{[s]} /$ $\!\!  1^{[s]} \otimes 1^{[t]} /$ $\!\!  2^{[t]}$  \\

14 & $W(14)^{[r]} /$ $\!\!  10^{[r]} \otimes 6^{[s]} /$ $\!\!  (10^{[r]})^2 /$ $\!\!  8^{[r]} \otimes 6^{[s]} /$ $\!\!  6^{[r]} /$ $\!\!  4^{[r]} \otimes 6^{[s]} /$ $\!\!  4^{[r]} /$ $\!\!  2^{[r]} /$ $\!\!  10^{[s]} /$ $\!\!  6^{[s]} /$ $\!\!  2^{[s]}$ \\

15 & $W(14)^{[r]} /$ $\!\!  10^{[r]} \otimes 1^{[s]} \otimes 1^{[t]} /$ $\!\!  10^{[r]} \otimes 1^{[s]} \otimes 1^{[u]} /$ $\!\!  10^{[r]} /$ $\!\!  8^{[r]} \otimes 2^{[s]} /$ $\!\!  8^{[r]} \otimes 1^{[t]} \otimes 1^{[u]} /$ $\!\!  6^{[r]} /$ $\!\!  4^{[r]} \otimes 1^{[s]} \otimes 1^{[t]} /$ $\!\!  4^{[r]} \otimes 1^{[s]} \otimes 1^{[u]} /$ $\!\!  2^{[r]} /$ $\!\!  2^{[s]} \otimes 1^{[t]} \otimes 1^{[u]} /$ $\!\!  2^{[s]} /$ $\!\! 2^{[t]} /$ $\!\!  2^{[u]} $  \\

16 & $W(10)^{[r]} /$ $\!\! 6^{[r]} \otimes 3^{[s]} \otimes 1^{[t]} /$ $\!\! 6^{[r]} \otimes 2^{[s]} \otimes 2^{[t]} /$ $\!\! 6^{[r]} \otimes 1^{[s]} \otimes 3^{[t]} /$ $\!\! 6^{[r]} /$ $\!\! 2^{[r]} /$ $\!\! 4^{[s]} \otimes 2^{[t]} /$ $\!\! 3^{[s]} \otimes 1^{[t]} /$ $\!\! 2^{[s]} \otimes 4^{[t]} /$ $\!\! 2^{[s]} /$ $\!\! 1^{[s]} \otimes 3^{[t]} /$ $\!\! 2^{[t]}$ \\

17 & $W(10)^{[r]} /$ $\!\!  6^{[r]} \otimes 4^{[s]} /$ $\!\!  6^{[r]} \otimes 3^{[s]} \otimes 1^{[t]} /$ $\!\!  6^{[r]} \otimes 3^{[s]} \otimes 1^{[u]} /$ $\!\!  6^{[r]} \otimes 1^{[t]} \otimes 1^{[u]} /$ $\!\!  6^{[r]} /$ $\!\!  2^{[r]} /$ $\!\!  6^{[s]} /$ $\!\!  4^{[s]} \otimes 1^{[t]} \otimes 1^{[u]} /$ $\!\!  3^{[s]} \otimes 1^{[t]} /$ $\!\!  3^{[s]} \otimes 1^{[u]} /$ $\!\!  2^{[s]} /$ $\!\!  2^{[t]} /$ $\!\!  2^{[u]}$ \\

18 & $W(10)^{[r]} /$ $\!\!  6^{[r]} \otimes 1^{[s]} \otimes 1^{[t]} /$ $\!\!  6^{[r]} \otimes 1^{[s]} \otimes 1^{[u]} /$ $\!\!  6^{[r]} \otimes 1^{[s]} \otimes 1^{[v]} /$ $\!\!  6^{[r]} \otimes 1^{[t]} \otimes 1^{[u]} /$ $\!\!  6^{[r]} \otimes 1^{[t]} \otimes 1^{[v]} /$ $\!\!  6^{[r]} \otimes 1^{[u]} \otimes 1^{[v]} /$ $\!\!  (6^{[r]})^2 /$ $\!\!  2^{[r]} /$ $\!\!  2^{[s]} /$ $\!\!  1^{[s]} \otimes 1^{[t]} \otimes 1^{[u]} \otimes 1^{[v]} /$ $\!\!  1^{[s]} \otimes 1^{[t]} /$ $\!\!  1^{[s]} \otimes 1^{[u]} /$ $\!\!  1^{[s]} \otimes 1^{[v]} /$ $\!\!  2^{[t]} /$ $\!\!  1^{[t]} \otimes 1^{[u]} /$ $\!\!  1^{[t]} \otimes 1^{[v]} /$ $\!\!  2^{[u]} /$ $\!\!  1^{[u]} \otimes 1^{[v]} /$ $\!\!  2^{[v]}$ \\

19 & $W(10)^{[r]} /$ $\!\!  6^{[r]} \otimes 2^{[s]} /$ $\!\!  (6^{[r]} \otimes 1^{[s]} \otimes 1^{[t]} \otimes 1^{[u]})^2 /$ $\!\!  6^{[r]} \otimes 2^{[t]} /$ $\!\!  6^{[r]} \otimes 2^{[u]} /$ $\!\!  6^{[r]} /$ $\!\!  2^{[r]} /$ $\!\!  2^{[s]} \otimes 2^{[t]} /$ $\!\!  2^{[s]} \otimes 2^{[u]} /$ $\!\!  2^{[s]} /$ $\!\!  (1^{[s]} \otimes 1^{[t]} \otimes 1^{[u]})^2 /$ $\!\!  2^{[t]} \otimes 2^{[u]} /$ $\!\!  2^{[t]} /$ $\!\!  2^{[u]}$ \\

20 & $W(6) /$ $\!\!  4 \otimes 4^{[r]} /$ $\!\!  4 \otimes 4^{[s]} /$ $\!\!  4 /$ $\!\!  (3 \otimes 3^{[r]} \otimes 3^{[s]})^2 /$ $\!\!  2 /$ $\!\!  W(6)^{[r]} /$ $\!\!  4^{[r]} \otimes 4^{[s]} /$ $\!\!  4^{[r]} /$ $\!\!  2^{[r]} /$ $\!\!  W(6)^{[s]} /$ $\!\!  4^{[s]} /$ $\!\!  2^{[s]}$  \\

21 & $W(6)^{[r]} /$ $\!\!  4^{[r]} \otimes 4^{[s]} /$ $\!\!  4^{[r]} \otimes 2^{[t]} /$ $\!\!  4^{[r]} \otimes 2^{[u]} /$ $\!\!  (3^{[r]} \otimes 3^{[s]} \otimes 1^{[t]} \otimes 1^{[u]})^2 /$ $\!\!  2^{[r]} /$ $\!\!  W(6)^{[s]} /$ $\!\!  4^{[s]} \otimes 2^{[t]} /$ $\!\!  4^{[s]} \otimes 2^{[u]} /$ $\!\!  2^{[s]} /$ $\!\!  2^{[t]} \otimes 2^{[u]} /$ $\!\!  2^{[t]} /$ $\!\!  2^{[u]}$ \\

22 & $W(6)^{[r]} /$ $\!\!  4^{[r]} \otimes 2^{[s]} /$ $\!\!  4^{[r]} \otimes 3^{[t]} \otimes 1^{[u]} /$ $\!\!  3^{[r]} \otimes 1^{[s]} \otimes 4^{[t]} /$ $\!\!  3^{[r]} \otimes 1^{[s]} \otimes 3^{[t]} \otimes 1^{[u]} /$ $\!\!  3^{[r]} \otimes 1^{[s]} \otimes 2^{[u]} /$ $\!\!  2^{[r]} /$ $\!\!  2^{[s]} \otimes 3^{[t]} \otimes 1^{[u]} /$ $\!\!  2^{[s]} /$ $\!\!  W(6)^{[t]} /$ $\!\!  4^{[t]} \otimes 2^{[u]} /$ $\!\!  2^{[t]} /$ $\!\!  2^{[u]}$ \\

23 & $W(6)^{[r]} /$ $\!\!  4^{[r]} \otimes 2^{[s]} /$ $\!\!  4^{[r]} \otimes 1^{[t]} \otimes 1^{[u]} /$ $\!\!  4^{[r]} \otimes 1^{[v]} \otimes 1^{[w]} /$ $\!\!  3^{[r]} \otimes 1^{[s]} \otimes 1^{[t]} \otimes 1^{[v]} /$ $\!\!   3^{[r]} \otimes 1^{[s]} \otimes 1^{[t]} \otimes 1^{[w]} /$ $\!\!   3^{[r]} \otimes 1^{[s]} \otimes 1^{[u]} \otimes 1^{[v]} /$ $\!\!   3^{[r]} \otimes 1^{[s]} \otimes 1^{[u]} \otimes 1^{[w]} /$ $\!\!  2^{[r]} /$ $\!\!  2^{[s]} \otimes 1^{[t]} \otimes 1^{[u]} /$ $\!\!  2^{[s]} \otimes 1^{[v]} \otimes 1^{[w]} /$ $\!\!  2^{[s]} /$ $\!\!  2^{[t]} /$ $\!\!  1^{[t]} \otimes 1^{[u]} \otimes 1^{[v]} \otimes 1^{[w]} /$ $\!\!  2^{[u]} /$ $\!\!  2^{[v]} /$ $\!\!  2^{[w]}$ \\

24 & $W(4)^{[r]} \otimes 2^{[s]} /$ $\!\!  W(3)^{[r]} \otimes 1^{[s]} \otimes 1^{[t]} \otimes 1^{[u]} /$ $\!\!   W(3)^{[r]} \otimes 1^{[s]} \otimes 1^{[t]} \otimes 1^{[v]} /$ $\!\!  2^{[r]} \otimes W(4)^{[s]} /$ $\!\!  2^{[r]} \otimes 2^{[s]} \otimes 2^{[t]} /$ $\!\!   2^{[r]} \otimes 2^{[s]} \otimes 1^{[u]} \otimes 1^{[v]} /$ $\!\!  2^{[r]} /$ $\!\!  1^{[r]} \otimes W(3)^{[s]} \otimes 1^{[t]} \otimes 1^{[u]} /$ $\!\!  1^{[r]} \otimes W(3)^{[s]} \otimes 1^{[t]} \otimes 1^{[v]} /$ $\!\!  2^{[s]} /$ $\!\!  2^{[t]} \otimes 1^{[u]} \otimes 1^{[v]} /$ $\!\!  2^{[t]} /$ $\!\!  2^{[u]} /$ $\!\!  2^{[v]}$ \\

25 & $2^{[r]} \otimes 2^{[s]} /$ $\!\!  2^{[r]} \otimes 2^{[t]} /$ $\!\!  2^{[r]} \otimes 2^{[u]} /$ $\!\!  2^{[r]} \otimes 1^{[v]} \otimes 1^{[w]} /$ $\!\!  2^{[r]} /$ $\!\!  (1^{[r]} \otimes 1^{[s]} \otimes 1^{[t]} \otimes 1^{[u]} \otimes 1^{[v]})^2 /$ $\!\!  (1^{[r]} \otimes 1^{[s]} \otimes 1^{[t]} \otimes 1^{[u]} \otimes 1^{[w]})^2 /$ $\!\!  2^{[s]} \otimes 2^{[t]} /$ $\!\!  2^{[s]} \otimes 2^{[u]} /$ $\!\!  2^{[s]} \otimes 1^{[v]} \otimes 1^{[w]} /$ $\!\!  2^{[s]} /$ $\!\!  2^{[t]} \otimes 2^{[u]} /$ $\!\!  2^{[t]} \otimes 1^{[v]} \otimes 1^{[w]} /$ $\!\!  2^{[t]} /$ $\!\!  2^{[u]} \otimes 1^{[v]} \otimes 1^{[w]} /$ $\!\!  2^{[u]} /$ $\!\!  2^{[v]} /$ $\!\!  2^{[w]} $  \\

26 & $W(2) /$ $\!\!  1 \otimes 1^{[r]} \otimes 1^{[s]} \otimes 1^{[t]} /$ $\!\!  1 \otimes 1^{[r]} \otimes 1^{[u]} \otimes 1^{[v]} /$ $\!\!  1 \otimes 1^{[r]} \otimes 1^{[w]} \otimes 1^{[x]} /$ $\!\!  1 \otimes 1^{[s]} \otimes 1^{[u]} \otimes 1^{[w]} /$ $\!\!  1 \otimes 1^{[s]} \otimes 1^{[v]} \otimes 1^{[x]} /$ $\!\!  1 \otimes 1^{[t]} \otimes 1^{[u]} \otimes 1^{[x]} /$ $\!\!  1 \otimes 1^{[t]} \otimes 1^{[v]} \otimes 1^{[w]} /$ $\!\!  W(2)^{[r]} /$ $\!\!  1^{[r]} \otimes 1^{[s]} \otimes 1^{[u]} \otimes 1^{[x]} /$ $\!\!  1^{[r]} \otimes 1^{[s]} \otimes 1^{[v]} \otimes 1^{[w]} /$ $\!\!  1^{[r]} \otimes 1^{[t]} \otimes 1^{[u]} \otimes 1^{[w]} /$ $\!\!  1^{[r]} \otimes 1^{[t]} \otimes 1^{[v]} \otimes 1^{[x]} /$ $\!\!  W(2)^{[s]} /$ $\!\!  1^{[s]} \otimes 1^{[t]} \otimes 1^{[u]} \otimes 1^{[v]} /$ $\!\!  1^{[s]} \otimes 1^{[t]} \otimes 1^{[w]} \otimes 1^{[x]} /$ $\!\!  W(2)^{[t]} /$ $\!\!  W(2)^{[u]} /$ $\!\!  1^{[u]} \otimes 1^{[v]} \otimes 1^{[w]} \otimes 1^{[x]} /$ $\!\!  W(2)^{[v]} /$ $\!\!  W(2)^{[w]} /$ $\!\!  W(2)^{[x]}   $ \\

27 & $2^{[r]} \otimes 2^{[s]} /$ $\!\!  2^{[r]} \otimes 2^{[t]} /$ $\!\!  2^{[r]} \otimes 1^{[u]} \otimes 1^{[v]} /$ $\!\!  2^{[r]} \otimes 1^{[w]} \otimes 1^{[x]} /$ $\!\!  (2^{[r]})^2 /$ $\!\!  1^{[r]} \otimes 1^{[s]} \otimes 1^{[t]} \otimes 1^{[u]} \otimes 1^{[w]} /$ $\!\!  1^{[r]} \otimes 1^{[s]} \otimes 1^{[t]} \otimes 1^{[v]} \otimes 1^{[x]} /$ $\!\!  1^{[r]} \otimes 1^{[s]} \otimes 1^{[t]} \otimes 1^{[u]} \otimes 1^{[x]} /$ $\!\!  1^{[r]} \otimes 1^{[s]} \otimes 1^{[t]} \otimes 1^{[v]} \otimes 1^{[w]} /$ $\!\!  2^{[s]} \otimes 2^{[t]} /$ $\!\!  2^{[s]} \otimes 1^{[u]} \otimes 1^{[v]} /$ $\!\!  2^{[s]} \otimes 1^{[w]} \otimes 1^{[x]} /$ $\!\!  (2^{[s]})^2 /$ $\!\!  2^{[t]} \otimes 1^{[u]} \otimes 1^{[v]} /$ $\!\!  2^{[t]} \otimes 1^{[w]} \otimes 1^{[x]} /$ $\!\!  (2^{[t]})^2 /$ $\!\! 2^{[u]} /$ $\!\!  1^{[u]} \otimes 1^{[v]} \otimes 1^{[w]} \otimes 1^{[x]} /$ $\!\!  (1^{[u]} \otimes 1^{[v]})^2 /$ $\!\! 2^{[v]} /$ $\!\! 2^{[w]} /$ $\!\!  (1^{[w]} \otimes 1^{[x]})^2 /$ $\!\! 2^{[x]} /$ $\!\!  0^8$ \\

28 & $2 \otimes 2^{[r]} /$ $\!\! 2 \otimes 2^{[s]} /$ $\!\! 2 \otimes 1^{[u]} \otimes 1^{[v]} \otimes 1^{[w]} /$ $\!\! 2^2 /$ $\!\! 1 \otimes 1^{[r]} \otimes 1^{[s]} \otimes 2^{[t]} /$ $\!\! 1 \otimes 1^{[r]} \otimes 1^{[s]} \otimes 1^{[t]} \otimes 1^{[u]} \otimes 1^{[v]} /$  $\!\! 1 \otimes 1^{[r]} \otimes 1^{[s]} \otimes 2^{[u]} /$ $\!\! 1 \otimes 1^{[r]} \otimes 1^{[s]} \otimes 2^{[v]} /$ $\!\! (1 \otimes 1^{[r]} \otimes 1^{[s]})^2 /$ $\!\! 2^{[r]} \otimes 2^{[s]}  /$ $\!\! (2^{[r]})^2 /$ $\!\! 2^{[r]} \otimes 1^{[t]} \otimes 1^{[u]} \otimes 1^{[v]} /$ $\!\! 2^{[s]} \otimes 1^{[t]} \otimes 1^{[u]} \otimes 1^{[v]} /$ $\!\! (2^{[s]})^2 /$ $\!\! 2^{[t]} \otimes 2^{[u]} /$ $\!\! 2^{[t]} \otimes 2^{[v]} /$ $\!\! (2^{[t]})^2 /$ $\!\! (1^{[t]} \otimes 1^{[u]} \otimes 1^{[v]})^2 /$ $\!\! 2^{[u]} \otimes 2^{[v]} /$ $\!\! (2^{[u]})^2 /$ $\!\! (2^{[v]})^2 /$ $\!\! 0^8$ \\

29 & $2^{[r]} \otimes 2^{[s]} /$ $\!\! 2^{[r]} \otimes 2^{[t]} /$ $\!\! 2^{[r]} \otimes 2^{[u]} /$ $\!\! 2^{[r]} \otimes 2^{[v]} /$ $\!\! 2^{[r]} \otimes 1^{[w]} \otimes 1^{[x]} /$ $\!\! (2^{[r]})^2 /$ $\!\! 1^{[r]} \otimes 1^{[s]} \otimes 1^{[t]} \otimes 1^{[u]} \otimes 1^{[v]} \otimes 1^{[w]} /$ $\!\! 1^{[r]} \otimes 1^{[s]} \otimes 1^{[t]} \otimes 1^{[u]} \otimes 1^{[v]} \otimes 1^{[x]} /$ $\!\! 2^{[s]} \otimes 2^{[t]} /$ $\!\! 2^{[s]} \otimes 2^{[u]} /$ $\!\! 2^{[s]} \otimes 2^{[v]} /$ $\!\! 2^{[s]} \otimes 1^{[w]} \otimes 1^{[x]} /$ $\!\! (2^{[s]})^2 /$ $\!\!  2^{[t]} \otimes 2^{[u]} /$ $\!\! 2^{[t]} \otimes 2^{[v]} /$ $\!\! 2^{[t]} \otimes 1^{[w]} \otimes 1^{[x]} /$ $\!\! (2^{[t]})^2 /$ $\!\! 2^{[u]} \otimes 2^{[v]} /$ $\!\! 2^{[u]} \otimes 1^{[w]} \otimes 1^{[x]} /$ $\!\! (2^{[u]})^2 /$ $\!\! 2^{[v]} \otimes 1^{[w]} \otimes 1^{[x]} /$ $\!\! (2^{[v]})^2 /$ $\!\! 2^{[w]} /$ $\!\! (1^{[w]} \otimes 1^{[x]})^2 /$ $\!\! 2^{[x]} /$ $\!\! 0^8$ \\

30 & $2 \otimes 2^{[r]} /$ $\!\! 2 \otimes 2^{[s]} /$ $\!\! 2 \otimes 2^{[t]} /$ $\!\! 2 \otimes 2^{[u]} /$ $\!\! 2 \otimes 2^{[v]} /$ $\!\! 2 \otimes 2^{[w]} /$ $\!\! 2^2 /$ $\!\! 1 \otimes 1^{[r]} \otimes 1^{[s]} \otimes 1^{[t]} \otimes 1^{[u]} \otimes 1^{[v]} \otimes 1^{[w]} /$ $\!\! 2^{[r]} \otimes 2^{[s]} /$ $\!\! 2^{[r]} \otimes 2^{[t]} /$ $\!\! 2^{[r]} \otimes 2^{[u]} /$ $\!\! 2^{[r]} \otimes 2^{[v]} /$ $\!\! 2^{[r]} \otimes 2^{[w]} /$ $\!\! (2^{[r]})^2 /$ $\!\! 2^{[s]} \otimes 2^{[t]} /$ $\!\! 2^{[s]} \otimes 2^{[u]} /$ $\!\! 2^{[s]} \otimes 2^{[v]} /$ $\!\! 2^{[s]} \otimes 2^{[w]} /$ $\!\! (2^{[s]})^2 /$ $\!\!  2^{[t]} \otimes 2^{[u]} /$ $\!\! 2^{[t]} \otimes 2^{[v]} /$ $\!\! 2^{[t]} \otimes 2^{[w]} /$ $\!\! (2^{[t]})^2 /$ $\!\! 2^{[u]} \otimes 2^{[v]} /$ $\!\! 2^{[u]} \otimes 2^{[w]} /$ $\!\! (2^{[u]})^2 /$ $\!\! 2^{[v]} \otimes 2^{[w]} /$ $\!\! (2^{[v]})^2 /$ $\!\! (2^{[w]})^2 /$ $\!\! 0^8$ \\

31 & $2^{[r]} /$ $\!\!1^{[r]} \otimes 6^{[s]} \otimes 5^{[t]} /$ $\!\! 1^{[r]} \otimes W(9)^{[t]} /$ $\!\! 1^{[r]} \otimes 3^{[t]} /$ $\!\! W(10)^{[s]} /$  $\!\! 6^{[s]} \otimes W(8)^{[t]} /$ $\!\! 6^{[s]} \otimes 4^{[t]} /$ $\!\! 2^{[s]} /$ $\!\! W(10)^{[t]} /$ $\!\! 6^{[t]} /$ $\!\! 2^{[t]}$ \\

32 & $2^{[r]} /$ $\!\! 1^{[r]} \otimes 6^{[s]} \otimes 2^{[t]} \otimes 1^{[u]} /$ $\!\!1^{[r]} \otimes  4^{[t]} \otimes 1^{[u]} /$ $\!\! 1^{[r]} \otimes  3^{[u]} /$ $\!\! W(10)^{[s]} /$ $\!\! 6^{[s]} \otimes 4^{[t]} /$ $\!\! 6^{[s]} \otimes 2^{[t]} \otimes 2^{[u]} /$  $\!\! 2^{[s]} /$ $\!\! 4^{[t]} \otimes 2^{[u]} /$ $\!\! 2^{[t]} /$ $\!\! 2^{[u]}$     \\

33 & $2^{[r]} /$ $\!\!1^{[r]} \otimes  3^{[s]} \otimes 6^{[t]} /$  $\!\! 1^{[r]} \otimes 1^{[s]} \otimes W(10)^{[t]} /$ $\!\!1^{[r]} \otimes 1^{[s]} \otimes 2^{[t]} /$  $\!\! 4^{[s]} \otimes 6^{[t]}/ $ $\!\! 2^{[s]} \otimes W(12)^{[t]} /$  $\!\! 2^{[s]} \otimes W(8)^{[t]} /$ $\!\! 2^{[s]} \otimes 4^{[t]} /$  $\!\! 2^{[s]} /$ $\!\! W(10)^{[t]} /$ $\!\! 2^{[t]}$ \\

34 & $2^{[r]} /$  $\!\! 1^{[r]} \otimes 3^{[s]} /$ $\!\!1^{[r]} \otimes 1^{[s]} \otimes W(16)^{[t]} /$ $\!\!1^{[r]} \otimes  1^{[s]} \otimes 8^{[t]} /$ $\!\! 2^{[s]} \otimes W(16)^{[t]} /$ $\!\! 2^{[s]} \otimes 8^{[t]} /$  $\!\! 2^{[s]} /$ $\!\! W(22)^{[t]} /$ $\!\! W(14)^{[t]} /$ $\!\! 10^{[t]} /$ $\!\! 2^{[t]}$   \\

35 & $2^{[r]} /$ $\!\! 1^{[r]} \otimes W(6)^{[s]} \otimes 3^{[t]} /$ $\!\! 1^{[r]} \otimes 4^{[s]} \otimes 1^{[t]} /$ $\!\! 1^{[r]} \otimes 2^{[s]} \otimes W(5)^{[t]} /$ $\!\! W(6)^{[s]} \otimes 4^{[t]} /$ $\!\! 4^{[s]} \otimes W(6)^{[t]} /$ $\!\! 4^{[s]} \otimes 2^{[t]} /$ $\!\! 2^{[s]} \otimes W(8)^{[t]} /$  $\!\! 2^{[s]} \otimes 4^{[t]} /$ $\!\! 2^{[s]} /$ $\!\! 2^{[t]}$ \\

36 & $2^{[r]} /$ $\!\! 1^{[r]} \otimes W(21)^{[s]} /$ $\!\! 1^{[r]} \otimes 15^{[s]} /$ $\!\! 1^{[r]} \otimes 11^{[s]} /$ $\!\! 1^{[r]} \otimes 5^{[s]} /$ $\!\! W(26)^{[s]} /$ $\!\! W(22)^{[s]} /$ $\!\! W(18)^{[s]} /$ $\!\! 16^{[s]} /$ $\!\! 14^{[s]} /$ $\!\! (10^{[s]})^2 /$ $\!\! 6^{[s]} /$ $\!\! 2^{[s]} $ \\

37 & $2^{[r]} /$ $\!\! 1^{[r]} \otimes W(27)^{[s]} /$ $\!\! 1^{[r]} \otimes  17^{[s]} /$ $\!\! 1^{[r]} \otimes  9^{[s]} /$ $\!\! W(34)^{[s]} /$ $\!\! W(26)^{[s]} /$ $\!\! W(22)^{[s]} /$ $\!\! 18^{[s]} /$ $\!\! 14^{[s]} /$ $\!\! 10^{[s]} /$  $\!\! 2^{[s]}$   \\

38 & $10^{[r]} /$ $\!\! 6^{[r]} \otimes W(16)^{[s]} /$ $\!\! 6^{[r]} \otimes 8^{[s]} /$ $\!\! 2^{[r]} /$ $\!\! W(22)^{[s]} /$ $\!\! W(14)^{[s]} /$ $\!\! 10^{[s]} /$ $\!\! 2^{[s]}$   \\

39 & $W(10)^{[r]} /$  $\!\! 6^{[r]} \otimes 4^{[s]} /$ $\!\! 6^{[r]} \otimes 2^{[s]} \otimes 6^{[t]} /$ $\!\! 2^{[r]} /$ $\!\! 4^{[s]} \otimes 6^{[t]} /$ $\!\! 2^{[s]} /$ $\!\! W(10)^{[t]} /$ $\!\! 2^{[t]}$   \\

40 & $W(38) /$ $\!\! W(34) /$ $\!\! W(28) /$ $\!\! W(26) /$ $\!\! 22^2 /$ $\!\! 18 /$ $\!\! 16 /$ $\!\! 14 /$ $\!\! 10 /$ $\!\! 6 /$ $\!\! 2 $ \\

41 & $W(46) /$ $\!\! W(38) /$ $\!\! W(34) /$ $\!\! 28 /$ $\!\! 26 /$ $\!\! 22 /$ $\!\!18 /$ $\!\! 14 /$ $\!\!10 /$ $\!\! 2$ \\

42 & $W(58) /$ $\!\! W(46) /$ $\!\! W(38) /$ $\!\! W(34) /$ $\!\! 26 /$ $\!\! 22 /$ $\!\! 14 /$ $\!\! 2$ \\

\hline

\end{longtable}

\section{Conditions for conjugacy class representatives of $G$-irreducible $A_1$ subgroups of $G = E_7$ and $E_8$}

In this section we present the tables referred to within Table \ref{E7tab} and \ref{E8tab}. They give the extra restrictions on the field twists in certain embeddings of $M$-irreducible $A_1$ subgroups of $M = A_1 D_6$ when $G = E_7$ and $M = D_8$ when $G = E_8$, namely $E_7(\#12)$, $E_8(\#23)$, $E_8(\#26)$ and $E_8(\#27)$. These restrictions ensure there is no repetition of conjugacy classes and further, that each conjugacy class is $G$-irreducible. The restrictions are given in rows of the tables: the first column lists all equalities amongst a subset of the field twists; the second column lists any further requirements. So an ordered set $\{0, r, \ldots \}$ is permitted if it satisfies the conditions in the first and second column of a row of the table (and the set of field twists satisfying each row are mutually exclusive).

We give an example. Consider $X = E_7(\#12)$. Then $X$ is a diagonal subgroup of $A_1^7 < A_1 D_6$ via $(1,1^{[r]}, 1^{[s]}, 1^{[t]}, 1^{[u]}, 1^{[v]}, 1^{[w]})$. Table \ref{conditions1} gives the conditions that an ordered set $0, r, s, t, u , v, w$ needs to satisfy. So $0, r, \ldots, w$ satisfy the conditions of the first row if: $0, r, \ldots, w$ are all distinct;  $r$ is the smallest integer of $r, s, \ldots, w$; and $t$ is the smallest integer of $t, u, v, w$. Similarly, $0, r, \ldots, w$ satisfy the conditions of the ninth row if: $r=0$; $s=t$; $0,s,u,v,w$ are distinct; and $u < v$. 

\begin{longtable}{>{\raggedright\arraybackslash}p{0.4\textwidth-2\tabcolsep}>{\raggedright\arraybackslash}p{0.43\textwidth-\tabcolsep}@{}}
\caption{Conditions on field twists for $E_7(\#12)$} \label{conditions1} \\

\hline \noalign{\smallskip}

All equalities among $0, r, \ldots, w$ & Further requirements on $0, r, \ldots, w$ \\

\hline 

none & $r < \text{min}\{s,t,u,v,w\}$ and $t<\text{min}\{u,v,w\}$ \\

$r=s$ & $t<\text{min}\{u,v,w\}$ and $v<w$ \\

$r=t$ & $s < v$ \\

$r=s=t$ & none \\

$r=t=w$ & $s < u,v$ \\

$r=s=t=v$ & $u < w$ \\

$r=t$ and $u=v=w$ & none \\

$r = 0$ & $t < \text{min}\{u,v,w\}$ and $v < w$ \\

$r=0$ and $s=t$ & $u < v$ \\

$r=0$ and $t=u$ & $v < w$ \\

$r=0$ and $s=t=u$ & $v < w$ \\

$r=0$ and $s=t$ and $u=w$ & none \\

$r=s=0$ & $t < u < v < w$ \\

$r=t=0$ & $s < u < v$ \\

$r=t=0$ and $s=w$ & $u<v$ \\

$r=s=t=0$ & $u < v < w$ \\

\end{longtable}

In the following table, note that the first column lists all equalities occurring among the elements $r, s$ and all equalities among the elements $t, u, v, w$ only. So $r$ is still permitted to be equal to $t$, for example, in all of the conditions in the first column (this is ruled out by the conditions in the second column for the third and fourth rows).   

\begin{longtable}{>{\raggedright\arraybackslash}p{0.5\textwidth-2\tabcolsep}>{\raggedright\arraybackslash}p{0.5\textwidth-\tabcolsep}@{}}
\caption{Conditions on field twists for $E_8(\#23)$} \label{23conditions} \\

\hline \noalign{\smallskip}

All equalities among $r, s$ and all equalities among $t, u, v, w$ & Further requirements on $r, \ldots, w$ \\

\hline \noalign{\smallskip}

none & $t < u$ and $t < v$ and $v < w$ \\

$r=s$ & $t < u < v < w$ \\

$r=s$ and $t=u$ & $r < t$ and $v < w$ \\

$r = s$ and $t = u = v$ & $r \neq t$ \\

$t=u$ & $s < t$ and $v < w$ \\

$t=v$ & $u < w$ \\

$t=u=v$ & none \\

\end{longtable}

\begin{longtable}{>{\raggedright\arraybackslash}p{0.5\textwidth-2\tabcolsep}>{\raggedright\arraybackslash}p{0.5\textwidth-\tabcolsep}@{}}
\caption{Conditions on field twists for $E_8(\#26)$} \label{8conditions1} \\

\hline \noalign{\smallskip}

All equalities among $0, r, \ldots, w$ & Further requirements on $0, r, \ldots, w$ \\

\hline \noalign{\smallskip}

none & $r < s$ and $s < \text{min}\{t, u\}$ and $u < \text{min}\{v, w, x\}$ \\

$r=s$ & $u<v, w, x$ and $w<x$ \\

$r=s=t$ & $u < v < w < x$ \\

$r=s=u$ & $t < v < w$ \\

$r=s=t=u$ & $v < w < x$ \\

$r=s$ and $t=u$ & $v < w$ \\

$r=s$ and $u=v$ & $w < x$ \\

$r=s$ and $t=u=v$ & $w < x$ \\

$r=s$ and $t=u$ and $v = w$ & none \\

$r = 0$ & $s < \text{min}\{t,u\}$ and $u < \text{min}\{v, w\}$ and $w < x$ \\

$r = 0$ and $s=u$ & $t<v$ and $w<x$ \\

$r = 0$ and $s=u=w$ & $t < v < x$ \\

$r = 0$ and $s=u$ and $t=w$ & none \\

$r = 0$ and $s = u$ and $t = w$ and $v = x$ & none \\

$r = s = 0$ & $u < v < w< x$ \\

$r=s=0$ and $t=u$ & $v < w < x$ \\

$r = s = u = 0$ & $t < v < w < x$ \\

\end{longtable}

\begin{longtable}{>{\raggedright\arraybackslash}p{0.35\textwidth-2\tabcolsep}>{\raggedright\arraybackslash}p{0.45\textwidth-\tabcolsep}@{}}
\caption{Conditions on field twists for $E_8(\#27)$} \label{27conditions} \\

\hline \noalign{\smallskip}

All equalities amongst $u, v, w, x$ & Further requirements on $r, \ldots, x$ \\

\hline \noalign{\smallskip}

none & $r < s < t$ and $u < v$ and $u < w$ and $w < x$ \\

$u=w$ & $r < s < t$ and $v < x$ \\

\end{longtable}

\subsection*{Acknowledgements} 

This paper was partially prepared as part of the author's Ph.D. qualification under the supervision of \linebreak Prof.\ M.\ Liebeck, with financial support from the EPSRC. The author would like to thank \linebreak Prof.\ M.\ Liebeck for suggesting the problem and his help in solving it. He would also like to thank Dr\ D.\ Stewart and Dr\ A.\ Litterick for many valuable conversations along the way.  Finally, he would like to thank the anonymous referee for their helpful corrections and suggestions. 

\begin{appendices} 
\section{Levi subgroups} \label{app}

Let $G$ be an exceptional algebraic group over an algebraically closed field. This section contains tables of composition factors for Levi subgroups of $G$ on the minimal and adjoint modules for $G$. If $L'$ is simple then these are found in \cite[Tables 8.1--8.7]{LS3}. If $L'$ is not simple then the composition factors are deduced from those of a maximal subsystem subgroup containing $L'$.

\begin{longtable}{p{0.1\textwidth - 2\tabcolsep}>{\raggedright\arraybackslash}p{0.3\textwidth-2\tabcolsep}>{\raggedright\arraybackslash}p{0.6\textwidth-\tabcolsep}@{}}

\caption{The composition factors for the action of Levi subgroups of $G_2$ on $V_{7}$ and $L(G_2)$. \label{levig2}} \\

\hline \noalign{\smallskip}

Levi $L'$ & Comp. factors of $V_{7} \downarrow L'$ & Comp. factors of $L(G_2) \downarrow L'$ \\

\hline \noalign{\smallskip}

$A_1$ & $1^2 /$ $\!\! 0^{3}$ & $W(2) /$ $\!\! 1^{4} /$ $\!\! 0^{3}$ \\

$\tilde{A}_1$ & $W(2) /$ $\!\! 1^2$ & $W(3)^2 /$ $\!\! W(2) /$ $\!\! 0^{3}$ \\

\hline

\end{longtable}

\begin{longtable}{p{0.1\textwidth - 2\tabcolsep}>{\raggedright\arraybackslash}p{0.3\textwidth-2\tabcolsep}>{\raggedright\arraybackslash}p{0.6\textwidth-\tabcolsep}@{}}

\caption{The composition factors for the action of Levi subgroups of $F_4$ on $V_{26}$ and $L(F_4)$. \label{levif4}} \\

\hline \noalign{\smallskip}

Levi $L'$ & Comp. factors of $V_{26} \downarrow L'$ & Comp. factors of $L(F_4) \downarrow L'$ \\

\hline \noalign{\smallskip}

$B_3$ & $W(100) /$ $\!\! 001^2 /$ $\!\! 000^3$ & $W(100)^2 /$ $\!\! W(010) /$ $\!\! 001^2 /$ $\!\! 000$ \\

$B_2$ & $W(10) /$ $\!\! 01^4 /$ $\!\! 00^5$ & $W(10)^4 /$ $\!\! W(02) /$ $\!\! 01^4 /$ $\!\! 00^6$ \\

$C_3$ & $100^2 /$ $\!\! W(010)$ & $W(200) /$ $\!\! W(001)^2 /$ $\!\! 000^3$ \\

$A_2 \tilde{A}_1$ & $(10,1) /$ $\!\! (10,0) /$ $\!\! (01,1) /$ $\!\! (01,0) /$ $\!\! (00,W(2)) /$ $\!\! (00,1)^2 /$ $\!\! (00,0)$ & $(W(11),0) /$ $\!\! (10,W(2)) /$ $\!\! (10,1) /$ $\!\! (10,0) /$ $\!\! (01,W(2)) /$ $\!\! (01,1) /$ $\!\! (01,0) /$ $\!\! (00,W(2)) /$ $\!\! (00,1)^2 /$ $\!\! (00,0)$  \\

$\tilde{A}_2 A_1$ & $(10,1) /$ $\!\! (10,0) /$ $\!\! (01,1) /$ $\!\! (01,0) /$ $\!\! (W(11),0)$ & $(W(20),1) /$ $\!\! (W(20),0) /$ $\!\! (W(11),0) /$ $\!\! (W(02),1) /$ $\!\! (W(02),0) /$ $\!\! (00,W(2)) /$ $\!\! (00,1)^2 /$ $\!\! (00,0)$ \\

$A_2$ & $10^3 /$ $\!\! 01^3 /$ $\!\! 00^8$ & $W(11) /$ $\!\! 10^6 /$ $\!\! 01^6 /$ $\!\! 00^8$ \\

$\tilde{A}_2$ & $10^3 /$ $\!\! 01^3 /$ $\!\! W(11)$ & $W(20)^3 /$ $\!\! W(11) /$ $\!\! W(02)^3 /$ $\!\! 00^8$ \\

$A_1 \tilde{A}_1$ & $(1,1)^2 /$ $\!\! (1,0)^2 /$ $\!\! (0,W(2)) /$ $\!\! (0,1)^4 /$  $\!\! (0,0)^3$ & $(W(2),0) /$ $\!\! (1,W(2))^2 /$ $\!\! (1,1)^2 /$ $\!\! (1,0)^4 /$ $\!\! (0,W(2))^3 /$  $\!\! (0,1)^4 /$ $\!\! (0,0)^4$ \\

$A_1$ & $1^6 /$ $\!\! 0^{14}$ & $W(2) /$ $\!\! 1^{14} /$ $\!\! 0^{21}$ \\

$\tilde{A}_1$ & $W(2) /$ $\!\! 1^8 /$ $\!\! 0^7$ & $W(2)^7 /$ $\!\! 1^8 /$ $\!\! 0^{15}$ \\

\hline

\end{longtable}

\begin{longtable}{p{0.1\textwidth - 2\tabcolsep}>{\raggedright\arraybackslash}p{0.3\textwidth-2\tabcolsep}>{\raggedright\arraybackslash}p{0.6\textwidth-\tabcolsep}@{}}

\caption{The composition factors for the action of Levi subgroups of $E_6$ on $V_{27}$ and $L(E_6)$. \label{levie6}} \\

\hline \noalign{\smallskip}

Levi $L'$ & Comp. factors of $V_{27} \downarrow L'$ & Comp. factors of $L(E_6) \downarrow L'$ \\

\hline \noalign{\smallskip}

$D_5$ & $\lambda_1 /$ $\!\! \lambda_4 /$ $\!\! 0 $ & $W(\lambda_2) /$ $\!\! \lambda_4 /$ $\!\! \lambda_5 /$ $\!\! 0$ \\

$D_4$ & $1000 /$ $\!\! 0010 /$ $\!\! 0001 /$ $\!\! 0000^3$  & $1000^2 /$ $\!\! W(0100) /$  $\!\! 0010^2 /$ $\!\! 0001^2 /$ $\!\! 0000^3$ \\

$A_5$ & $\lambda_1^2 /$ $\!\! \lambda_4$ & $W(\lambda_1 + \lambda_5) /$ $\!\! \lambda_3^2 /$ $\!\! 0^3$ \\

$A_1 A_4$ & $(1,1000) /$ $\!\! (1,0000) /$ $\!\! (0,0010) /$ $\!\! (0,0001) $ & $(W(2),0000) /$ $\!\! (1,0100) /$ $\!\! (1,0010) /$ $\!\! (0,W(1001)) /$ $\!\! (0,1000) /$  $\!\! (0,0001) /$ $\!\! (0,0000)$ \\

$A_1 A_2^2$ & $(1,01,00) /$  $\!\! (1,00,10) /$ $\!\! (0,10,01) /$ $\!\! (0,01,00) /$ $\!\! (0,00,10)$  & $(W(2),00,00) /$ $\!\! (1,10,10) /$ $\!\! (1,01,01) /$ $\!\! (1,00,00)^2 /$ $\!\! (0,W(11),00) /$ $\!\! (0,10,10) /$  $\!\! (0,01,01) /$  $\!\! (0,00,W(11)) /$    $\!\! (0,00,00)$ \\

$A_4$ & $1000^2 /$ $\!\! 0010 /$ $\!\! 0001 /$ $\!\! 0000^2$ & $W(1001) /$ $\!\! 1000 /$ $\!\! 0100^2 /$ $\!\! 0010^2 /$ $\!\! 0001 /$ $\!\! 0000^4$ \\

$A_1 A_3$ & $(1,100) /$ $\!\! (1,000)^2 /$ $\!\! (0,010) /$ $\!\! (0,001)^2 /$ $\!\! (0,000)$ & $(W(2),000) /$ $\!\! (1,100) /$ $\!\! (1,010)^2 /$ $\!\! (1,001) /$ $\!\! (0,W(101)) /$  $\!\! (0,100)^2 /$ $\!\! (0,001)^2 /$   $\!\! (0,000)^4$ \\

$A_2^2$ & $(10,01) /$ $\!\! (01,00)^3 /$ $\!\! (00,10)^3$  & $(W(11),00) /$ $\!\! (10,10)^3 /$ $\!\! (01,01)^3 /$ $\!\! (00,W(11)) /$ $\!\! (00,00)^8$ \\

$A_1^2 A_2$ & $(1,1,00) /$ $\!\! (1,0,10) /$ $\!\! (1,0,00) /$ $\!\! (0,1,01) /$ $\!\! (0,1,00) /$ $\!\! (0,0,10) /$ $\!\! (0,0,01) /$ $\!\! (0,0,00)$ & $(W(2),0,00) /$ $\!\! (1,1,10) /$ $\!\! (1,1,01) /$  $\!\! (1,0,10) /$ $\!\! (1,0,01) /$ $\!\! (1,0,00)^2 /$ $\!\! (0,W(2),00) /$ $\!\! (0,1,10) /$ $\!\! (0,1,01) /$    $\!\! (0,1,00)^2 /$ $\!\! (0,0,W(11)) /$   $\!\! (0,0,10) /$  $\!\! (0,0,01) /$  $\!\! (0,0,00)^2$ \\ 

$A_3$ & $100^2 /$ $\!\! 010 /$ $\!\! 001^2 /$ $\!\! 000^5$ & $W(101) /$ $\!\! 100^4 /$ $\!\! 010^4 /$ $\!\! 001^4 /$ $\!\! 000^7$ \\

$A_1 A_2$ & $(0,10)^3 /$ $\!\! (1,00)^3 /$ $\!\! (1,01) /$ $\!\! (0,01) /$ $\!\! (0,00)^3$ & $ (W(2),00) /$ $\!\! (0,W(11)) /$ $\!\! (1,10)^3 /$ $\!\! (1,01)^3 /$  $\!\! (1,00)^2 /$ $\!\! (0,10)^3 /$  $\!\! (0,01)^3 /$ $\!\! (0,00)^9$ \\

$A_1^3$ & $(1,1,0) /$ $\!\! (1,0,1) /$ $\!\! (1,0,0)^2 /$ $\!\! (0,1,1) /$  $\!\! (0,1,0)^2 /$ $\!\! (0,0,1)^2 /$ $\!\! (0,0,0)^3$ & $(W(2),0,0) /$ $\!\! (1,1,1)^2 /$ $\!\! (1,1,0)^2 /$ $\!\! (1,0,1)^2 /$ $\!\! (1,0,0)^4 /$ $\!\! (0,W(2),0) /$ $\!\! (0,1,1)^2 /$  $\!\! (0,1,0)^4 /$ $\!\! (0,0,W(2)) /$   $\!\! (0,0,1)^4 /$ $\!\! (0,0,0)^5$ \\ 

$A_2$ & $10^3 /$ $\!\! 01^3 /$ $\!\! 00^9$ & $W(11) /$ $\!\! 10^9 /$ $\!\! 01^9 /$ $\!\! 00^{16}$ \\

$A_1^2$ & $(1,1) /$ $\!\! (1,0)^4 /$ $\!\! (0,1)^4 /$ $\!\! (0,0)^7$ & $(W(2),0) /$ $\!\! (1,1)^6 /$ $\!\! (1,0)^8 /$ $\!\! (0,W(2)) /$  $\!\! (0,1)^8 /$ $\!\! (0,0)^{16}$ \\ 

$A_1$ & $1^6 /$ $\!\! 0^{15}$ & $W(2) /$ $\!\! 1^{20} /$ $\!\! 0^{35}$ \\

\hline

\end{longtable}

\begin{longtable}{p{0.1\textwidth - 2\tabcolsep}>{\raggedright\arraybackslash}p{0.38\textwidth-2\tabcolsep}>{\raggedright\arraybackslash}p{0.52\textwidth-\tabcolsep}@{}}

\caption{The composition factors for the action of Levi subgroups of $E_7$ on $V_{56}$ and $L(E_7)$. \label{levie7}} \\

\hline \noalign{\smallskip}

Levi $L'$ & Comp. factors of $V_{56} \downarrow L'$ & Comp. factors of $L(E_7) \downarrow L'$ \\

\hline \noalign{\smallskip}

$E_6$ & $\lambda_1 /$ $\!\! \lambda_6 /$ $\!\! 0^2$ & $\lambda_1 /$ $\!\! W(\lambda_2) /$  $\!\! \lambda_6 /$ $\!\! 0$  \\

$D_6$ & $\lambda_1^2 /$ $\!\! \lambda_5$ & $W(\lambda_2) /$ $\!\! \lambda_6^2 /$ $\!\! 0^3$ \\

$A_1 D_5$ & $(1, \lambda_1) /$ $\!\! (1,0)^2 /$ $\!\! (0,\lambda_4) /$ $\!\! (0,\lambda_5)$ & $(W(2),0) /$ $\!\! (1,\lambda_4) /$ $\!\! (1,\lambda_5) /$ $\!\! (0,\lambda_1)^2 /$ $\!\! (0,W(\lambda_2) /$ $\!\! (0,0)$  \\

$D_5$ & $\lambda_1^2 /$ $\!\! \lambda_4 /$ $\!\! \lambda_5 /$ $\!\! 0^4$ & $\lambda_1^2 /$ $\!\! W(\lambda_2) /$  $\!\! \lambda_4^2 /$ $\!\! \lambda_5^2 /$ $\!\! 0^4$  \\

$A_1 D_4$ & $(1,1000) /$ $\!\! (1,0000)^4 /$ $\!\! (0,0010)^2 /$ $\!\! (0,0001)^2$ & $(W(2),0000) /$ $\!\! (1,0010)^2 /$ $\!\! (1,0001)^2 /$ $\!\! (0,1000)^4 /$ $\!\! (0,W(0100)) /$ $\!\! (0,0000)^6$ \\

$D_4$ & $1000^2 /$ $\!\! 0010^2 /$ $\!\! 0001^2 /$ $\!\! 0000^8$ & $1000^4 /$ $\!\! W(0100) /$  $\!\! 0010^4 /$ $\!\! 0001^4 /$ $\!\! 0000^9$ \\

$A_6$ & $\lambda_1 /$ $\!\! \lambda_2 /$ $\!\! \lambda_5 /$ $\!\! \lambda_6$ & $W(\lambda_1 + \lambda_6) /$ $\!\! \lambda_1 /$ $\!\! \lambda_3 /$ $\!\! \lambda_4 /$ $\!\! \lambda_6 /$ $\!\! 0$  \\

$A_1 A_5$ & $(1,\lambda_1) /$ $\!\! (1,\lambda_5) /$ $\!\! (0,\lambda_1) /$ $\!\! (0,\lambda_3) /$ $\!\! (0,\lambda_5)$ & $(W(2),0) /$ $\!\! (1,\lambda_2) /$ $\!\! (1,\lambda_4) /$ $\!\! (1,0)^2 /$ $\!\! (0,W(\lambda_1 + \lambda_5)) /$  $\!\! (0,\lambda_2) /$  $\!\! (0,\lambda_4) /$ $\!\! (0,0)$ \\

$A_2 A_4$ & $(10,1000) /$ $\!\! (10,0000) /$ $\!\! (01,0001) /$ $\!\! (01,0000) /$ $\!\! (00,0100) /$ $\!\! (00,0010)$ & $(W(11),0000) /$ $\!\! (10,0010) /$ $\!\! (10,0001) /$$\!\! (01,1000) /$ $\!\! (01,0100) /$  $(00,W(1001)) /$ $\!\! (00,1000) /$ $\!\! (00,0001) /$  $\!\! (00,0000)$  \\

$A_1 A_2 A_3$ & $(1,10,000) /$ $\!\! (1,01,000) /$ $\!\! (1,00,010) /$ $\!\! (0,10,100) /$ $\!\! (0,01,001) /$ $\!\! (0,00,100) /$  $\!\! (0,00,001)$ & $(W(2),00,000) /$ $\!\! (1,10,001) /$ $\!\! (1,01,100) /$ $\!\! (1,00,100) /$ $\!\! (1,00,001) /$ $\!\! (0,W(11),000) /$  $\!\! (0,10,010) /$ $\!\! (0,10,000) /$ $\!\! (0,01,010) /$  $\!\! (0,01,000) /$ $\!\! (0,00,W(101)) /$  $\!\! (0,00,000)$ \\

$A_5$ & $\lambda_1^3 /$ $\!\! \lambda_3 /$ $\!\! \lambda_5^3$  & $W(\lambda_1 + \lambda_5) /$ $\!\! \lambda_2^3 /$ $\!\! \lambda_4^3 /$ $\!\! 0^8$   \\

$A_5'$ & $\lambda_1^2 /$ $\!\! \lambda_2 /$ $\!\! \lambda_4 /$ $\!\! \lambda_5^2 /$ $\!\! 0^2$ & $W(\lambda_1 + \lambda_5) /$ $\!\! \lambda_1^2 /$ $\!\! \lambda_2 /$ $\!\! \lambda_3^2 /$ $\!\! \lambda_4 /$ $\!\! \lambda_5^2 /$ $\!\! 0^4$ \\

$A_1 A_4$ & $(1,1000) /$ $\!\! (1,0000)^2 /$ $\!\! (1,0001) /$ $\!\! (0,1000) /$  $\!\! (00,0100) /$ $\!\! (00,0010) /$ $\!\! (0,0001) /$  $\!\! (0,0000)^2$ & $(W(2),0000) /$ $\!\! (1,0000)^2 /$ $\!\! (1,1000) /$  $\!\! (1,0100) /$  $\!\! (1,0010) /$ $\!\! (1,0001) /$ $\!\! (0,W(1001)) /$ $\!\! (0,1000)^2 /$ $\!\! (0,0100) /$ $\!\! (0,0010) /$ $\!\! (0,0001)^2 /$  $\!\! (0,0000)^2$  \\

$A_2 A_3$ & $(10,100) /$ $\!\! (10,000)^2 /$ $\!\! (01,001) /$ $\!\! (01,000)^2 /$ $\!\! (00,100) /$ $\!\! (00,010)^2 /$  $\!\! (00,001)$ & $(W(11),000) /$  $\!\! (10,010) /$ $\!\! (10,001)^2 /$ $\!\! (10,000) /$ $\!\! (01,100)^2 /$ $\!\! (01,010) /$  $\!\! (01,000) /$ $\!\! (00,W(101)) /$ $\!\! (00,100)^2 /$ $\!\! (00,001)^2 /$  $\!\! (00,000)^4$ \\

$A_1^2 A_3$ & $(1,1,000)^2 /$ $\!\! (1,0,010) /$ $\!\! (1,0,000)^2 /$  $\!\! (0,1,100) /$  $\!\! (0,1,001) /$ $\!\! (0,0,100)^2 /$ $\!\! (0,0,001)^2$ & $(W(2),0,000) /$ $\!\! (1,1,100) /$ $\!\! (1,1,001) /$ $\!\! (1,0,100)^2 /$ $\!\! (1,0,001)^2 /$ $\!\! (0,W(2),000) /$   $\!\! (0,1,010)^2 /$ $\!\! (0,1,000)^4 /$ $\!\! (0,0,W(101)) /$ $\!\! (0,0,010)^2 /$  $\!\! (0,0,000)^4$ \\

$A_4$ & $ 1000^3 /$ $\!\! 0100 /$ $\!\! 0010 /$ $\!\! 0001^3 /$ $\!\! 0000^6 $ & $W(1001) /$ $\!\! 1000^4 /$ $\!\! 0100^3 /$ $\!\! 0010^3 /$ $\!\! 0001^4 /$ $\!\! 0000^9$  \\

$A_1 A_3$ & $(1,010) /$  $\!\! (1,000)^6 /$  $\!\! (0,100)^4 /$ $\!\! (0,001)^4$ & $(W(2),000) /$ $\!\! (0,W(101)) /$ $\!\! (1,100)^4 /$ $\!\! (1,001)^4 /$ $\!\! (0,010)^6 /$ $\!\! (0,000)^{15} $ \\

$(A_1 A_3)'$ & $(1,100) /$ $\!\! (1,001) /$  $\!\! (1,000)^4 /$ $\!\! (0,100)^2 /$ $\!\! (0,010)^2 /$   $\!\! (0,001)^2 /$ $\!\! (0,000)^4$ & $(W(2),000) /$ $\!\! (1,100)^2 /$ $\!\! (1,010)^2 /$ $\!\! (1,001)^2 /$ $\!\! (1,000)^4 /$ $\!\! (0,W(101)) /$  $\!\! (0,100)^4 /$ $\!\! (0,010)^2 /$ $\!\! (0,001)^4 /$  $\!\! (0,000)^7$ \\

$A_2^2$ & $(10,10) /$ $\!\! (10,00)^3 /$  $\!\! (01,01) /$ $\!\! (01,00)^3 /$ $\!\! (00,10)^3 /$  $\!\! (00,01)^3 /$  $\!\! (00,00)^2$ & $(W(11),00) /$ $\!\! (10,10)^3 /$ $\!\! (10,01) /$ $\!\! (10,00)^3 /$ $\!\! (01,10) /$ $\!\! (01,01)^3 /$ $\!\! (01,00)^3 /$ $\!\! (00,W(11)) /$   $\!\! (00,10)^3 /$ $\!\! (00,01)^3 /$ $\!\! (00,00)^{9} $ \\

$A_1^2 A_2$ &  $(1,1,00)^2 /$ $\!\! (1,0,10) /$ $\!\! (1,0,01) /$ $\!\! (1,0,00)^2 /$  $\!\! (0,1,10) /$ $\!\! (0,1,01) /$ $\!\! (0,1,00)^2 /$ $\!\! (0,0,10)^2 /$   $\!\! (0,0,01)^2 /$ $\!\! (0,0,00)^4$ & $(W(2),0,00) /$ $\!\! (1,1,10) /$ $\!\! (1,1,01) /$ $\!\! (1,1,00)^2 /$ $\!\! (1,0,10)^2 /$ $\!\! (1,0,01)^2 /$ $\!\! (1,0,00)^4 /$ $\!\! (0,W(2),00) /$   $\!\! (0,1,10)^2 /$ $\!\! (0,1,01)^2 /$  $\!\! (0,1,00)^4 /$  $\!\! (0,0,W(11)) /$   $\!\! (0,0,10)^3 /$ $\!\! (0,0,01)^3 /$ $\!\! (0,0,00)^5$ \\

$A_1^4$ & $(1,1,1,0) /$ $\!\! (1,0,0,1)^2 /$ $\!\! (1,0,0,0)^4 /$ $\!\! (0,1,0,1)^2 /$ $\!\! (0,1,0,0)^4 /$ $\!\! (0,0,1,1)^2 /$   $\!\! (0,0,1,0)^4$ & $(W(2),0,0,0) /$ $\!\! (1,1,0,1)^2 /$  $\!\! (1,1,0,0)^4 /$ $\!\! (1,0,1,1)^2 /$ $\!\! (1,0,1,0)^4 /$ $\!\! (0,W(2),0,0) /$ $\!\! (0,1,1,1)^2 /$  $\!\! (0,1,1,0)^4 /$ $\!\! (0,0,W(2),0) /$ $\!\! (0,0,0,W(2)) /$ $\!\! (0,0,0,1)^8 / (0,0,0,0)^9$ \\

$A_3$ & $100^4 /$ $\!\! 010^2 /$ $\!\! 001^4 /$  $\!\! 000^{12}$ & $W(101) /$ $\!\! 100^8 /$ $\!\! 010^{6} /$ $\!\! 001^8 /$  $\!\! 000^{18}$  \\

$A_1 A_2$ &  $ (1,10) /$ $\!\! (1,01) /$ $\!\! (1,00)^6 /$ $\!\! (0,10)^4 /$ $\!\! (0,01)^4 /$ $\!\! (0,00)^8$ & $(W(2),00) /$    $\!\! (1,10)^4 /$ $\!\! (1,01)^4 /$ $\!\! (1,00)^8 /$ $\!\! (0,W(11)) /$ $\!\! (0,10)^7 /$ $\!\! (0,01)^7 /$ $\!\! (0,00)^{16}$ \\

$A_1^3$ & $(1,1,0)^2 /$ $\!\! (1,0,1)^2 /$ $\!\! (1,0,0)^4 /$ $\!\! (0,1,1)^2 /$  $\!\! (0,1,0)^4 /$ $\!\! (0,0,1)^4 /$ $\!\! (0,0,0)^8$ & $(W(2),0,0) /$ $\!\! (1,1,1)^2 /$ $\!\! (1,1,0)^4 /$ $\!\! (1,0,1)^4 /$ $\!\! (1,0,0)^8 /$ $\!\! (0,W(2),0) /$  $\!\! (0,1,1)^4 /$   $\!\! (0,1,0)^8 /$ $\!\! (0,0,W(2)) /$ $\!\! (0,0,1)^8 /$ $ \!\! (0,0,0)^{12}$ \\

$(A_1^3)'$ & $(1,1,1) /$ $\!\! (1,0,0)^8 /$ $\!\! (0,1,0)^8 /$ $\!\! (0,0,1)^8$ & $(W(2),0,0) /$ $\!\! (1,1,0)^8 /$ $\!\! (1,0,1)^8 /$ $\!\! (0,W(2),0) /$  $\!\! (0,1,1)^8 /$ $\!\! (0,0,W(2)) /$  $\!\! (0,0,0)^{28}$  \\

$A_2$ & $10^6 /$ $\!\! 01^6 /$ $\!\! 00^{20}$ & $W(11) /$ $\!\! 10^{15} /$ $\!\! 01^{15} /$ $\!\! 00^{35}$ \\

$A_1^2$ & $(1,1)^2 /$ $\!\! (1,0)^8 /$ $\!\! (0,1)^8 /$ $\!\! (0,0)^{16}$ & $(W(2),0) /$ $\!\! (1,1)^7 /$ $\!\! (1,0)^{16} /$ $\!\! (0,W(2)) /$  $\!\! (0,1)^{16} /$ $\!\! (0,0)^{31}$ \\

$A_1$ & $1^{12} /$ $\!\! 0^{32}$ & $W(2) /$ $\!\! 1^{32} /$ $\!\! 0^{66}$ \\

\hline

\end{longtable}

\begin{longtable}{p{0.1\textwidth - 2\tabcolsep}>{\raggedright\arraybackslash}p{0.9\textwidth-\tabcolsep}@{}}

\caption{The composition factors for the action of Levi subgroups of $E_8$ on $L(E_8)$. \label{levie8}} \\

\hline \noalign{\smallskip}

Levi $L'$ & Composition factors of $L(E_8) \downarrow L'$ \\

\hline \noalign{\smallskip}

$E_7$ & $W(\lambda_1) /$ $\!\! \lambda_7^2 /$ $\!\! 0^3$ \\

$A_1 E_6$ & $(W(2),0) /$ $\!\! (1,\lambda_1) /$ $\!\! (1,\lambda_6) /$ $\!\! (1,0)^2 /$ $\!\! (0,\lambda_1) /$ $\!\! (0,W(\lambda_2)) /$  $\!\! (0,\lambda_6) /$ $\!\! (0,0)$ \\

$E_6$ & $\lambda_1^3 /$ $\!\! W(\lambda_2) /$  $\!\! \lambda_6^3 /$ $\!\! 0^8$ \\

$D_7$ & $\lambda_1^2 /$ $\!\! W(\lambda_2) /$  $\!\! \lambda_6 /$ $\!\! \lambda_7 /$ $\!\! 0$  \\

$A_2 D_5$ & $(W(11),0) /$ $\!\! (10,\lambda_1) /$  $\!\! (10,\lambda_4) /$ $\!\! (10,0) /$ $\!\! (01,\lambda_1) /$ $\!\! (01,\lambda_5) /$ $\!\! (01,0) /$ $\!\! (00,W(\lambda_2)) /$  $\!\! (00,\lambda_4) /$  $\!\! (00,\lambda_5) /$ $\!\! (00,0)$ \\

$D_6$ & $ \lambda_1^4 /$ $\!\! W(\lambda_2) /$  $\!\! \lambda_5^2 /$ $\!\! \lambda_6^2 /$ $\!\! 0^6$ \\

$A_1 D_5$ & $(W(2),0) /$ $\!\! (1,\lambda_1)^2 /$ $\!\! (1,\lambda_4) /$ $\!\! (1,\lambda_5) /$ $\!\! (1,0)^4 /$  $\!\! (0,\lambda_1)^2 /$ $\!\! (0,W(\lambda_2)) /$  $\!\! (0,\lambda_4)^2 /$  $\!\! (0,\lambda_5)^2 /$ $\!\! (0,0)^4$ \\

$A_2 D_4$ & $(W(11),0000) /$ $\!\! (10,1000) /$ $\!\! (10,0010) /$ $\!\! (10,0001) /$ $\!\! (10,0000)^3 /$ $\!\! (01,1000) /$ $\!\! (01,0010) /$ $\!\! (01,0001) /$ $\!\! (01,0000)^3 /$  $\!\! (00,1000)^2 /$ $\!\! (00,W(0100)) /$  $\!\! (00,0010)^2 /$ $\!\! (00,0001)^2 /$ $\!\! (00,0000)^{2}$ \\

$D_5$ & $\lambda_1^6 /$ $\!\! W(\lambda_2) /$  $\!\! \lambda_4^4 /$ $\!\! \lambda_5^4 /$ $\!\! 0^{15}$ \\

$A_1 D_4$ & $(W(2),0000) /$  $\!\! (1,1000)^2 /$ $\!\! (1,0010)^2 /$ $\!\! (1,0001)^2 /$ $\!\! (1,0000)^8 /$ $\!\! (0,1000)^4 /$ $\!\! (0,W(0100)) /$ $\!\! (0,0010)^4 /$ $\!\! (0,0001)^4 /$ $\!\! (0,0000)^{9}$ \\

$D_4$ & $1000^8 /$ $\!\! W(0100) /$  $\!\! 0010^8 /$ $\!\! 0001^8 /$ $\!\! 0000^{28}$ \\

$A_7$ & $W(\lambda_1 + \lambda_7) /$ $\!\! \lambda_1 /$ $\!\! \lambda_2 /$ $\!\! \lambda_3 /$ $\!\! \lambda_5 /$ $\!\! \lambda_6 /$ $\!\! \lambda_7 /$ $\!\! 0$ \\

$A_3 A_4$ & $(W(101),0000) /$ $\!\! (100,1000) /$ $\!\! (100,0010) /$ $\!\! (100,0000) /$ $\!\! (010,1000) /$ $\!\! (010,0001) /$ $\!\! (001,0100) /$ $\!\! (001,0001) /$ $\!\! (001,0000) /$ $\!\! (000,W(1001)) /$  $ \!\! (000,0100) /$ $\!\! (000,0010) /$ $\!\! (000,0000)$ \\

$A_1 A_6$ & $(W(2),0) /$ $\!\! (1,\lambda_1) /$ $\!\! (1,\lambda_2) /$ $\!\! (1,\lambda_5) /$ $\!\! (1,\lambda_6) /$ $\!\! (0,W(\lambda_1 + \lambda_6)) /$  $\!\! (0,\lambda_1) /$ $\!\! (0,\lambda_3)/$ $\!\! (0,\lambda_4)/$ $\!\! (0,\lambda_6) /$ $\!\! (0,0)$ \\

$A_1 A_2 A_4$ & $(W(2),00,0000) /$  $\!\! (1,10,0001) /$ $\!\! (1,10,0000) /$ $\!\! (1,01,1000) /$ $\!\! (1,01,0000) /$ $\!\! (1,00,0100) /$ $\!\! (1,00,0010) /$ $\!\! (0,W(11),0000) /$  $\!\! (0,10,1000) /$ $\!\! (0,10,0100) /$ $\!\! (0,01,0010) /$ $\!\! (0,01,0001) /$ $\!\! (0,00,W(1001)) /$ $\!\! (0,00,1000) /$ $\!\! (0,00,0001) /$ $\!\! (0,00,0000)$ \\

$A_6$ & $W(\lambda_1 + \lambda_6) /$ $\!\! \lambda_1^3 /$ $\!\! \lambda_2^2 /$ $\!\! \lambda_3 /$ $\!\! \lambda_4 /$ $\!\! \lambda_5^2 /$ $\!\! \lambda_6^3 /$ $\!\! 0^4$  \\

$A_1 A_5$ & $(W(2),0) /$  $\!\! (1,\lambda_1)^2 /$ $\!\! (1,\lambda_2) /$ $\!\! (1,\lambda_4) /$ $\!\! (1,\lambda_5)^2 /$ $\!\! (1,0)^2 /$ $\!\! (0,W(\lambda_1 + \lambda_5)) /$ $\!\! (0,\lambda_1)^2 /$ $\!\! (0,\lambda_2) /$ $\!\! (0,\lambda_3)^2/$ $\!\! (0,\lambda_4) /$ $\!\! (0,\lambda_5)^2 /$ $\!\! (0,0)^4$ \\

$A_2 A_4$ & $(W(11),0000) /$ $\!\! (10,1000) /$ $\!\! (10,0100) /$ $\!\! (10,0001)^2 /$ $\!\! (10,0000)^2 /$ $\!\! (01,1000)^2 /$   $\!\! (01,0010) /$ $\!\! (01,0001) /$ $\!\! (01,0000)^2 /$ $\!\! (00,W(1001)) /$ $\!\! (00,1000) /$ $\!\! (00,0100)^2 /$ $\!\! (00,0010)^2 /$   $\!\! (00,0001) /$ $\!\! (00,0000)^4$ \\

$A_1^2 A_4$ & $(W(2),0,0000) /$ $\!\! (1,1,1000) /$ $\!\! (1,1,0001) /$ $\!\! (1,1,0000)^2 /$ $\!\! (1,0,1000) /$ $\!\! (1,0,0100) /$   $\!\! (1,0,0010) /$ $\!\! (1,0,0001) /$  $\!\! (1,0,0000)^2 /$ $\!\! (0,W(2),0000) /$ $\!\! (0,1,1000) /$ $\!\! (0,1,0100) /$ $\!\! (0,1,0010) /$ $\!\! (0,1,0001) /$ $\!\! (0,1,0000)^2 /$ $\!\! (0,0,W(1001)) /$ $\!\! (0,0,1000)^2 /$ $\!\! (0,0,0100) /$  $\!\! (0,0,0010) /$ $\!\! (0,0,0001)^2 /$ $\!\! (0,0,0000)^2$ \\

$A_3^2$ & $(W(101),000) /$  $\!\! (100,100) /$ $\!\! (100,010) /$ $\!\! (100,001) /$ $\!\! (100,000)^2 /$ $\!\! (010,100) /$ $\!\! (010,001) /$ $\!\! (010,000)^2  /$ $\!\! (001,100) /$  $\!\! (001,010) /$ $\!\! (001,001) /$ $\!\! (001,000)^2 /$ $\!\! (000,W(101)) /$  $\!\! (000,100)^2 /$ $\!\! (000,010)^2 /$ $\!\! (000,001)^2 /$ $\!\! (000,000)^{2}$ \\

$A_1 A_2 A_3$ & $(W(2),00,000) /$ $\!\! (1,10,001) /$ $\!\! (1,10,000)^2 /$ $\!\! (1,01,100) /$ $\!\! (1,01,000)^2 /$ $\!\! (1,00,100) /$ $\!\! (1,00,010)^2 /$ $\!\! (1,00,001) /$ $\!\! (0,W(11),000) /$   $\!\! (0,10,100)^2 /$ $\!\! (0,10,010) /$ $\!\! (0,10,000) /$ $\!\! (0,01,010) /$ $\!\! (0,01,001)^2 /$ $\!\!(0,01,000) /$ $\!\! (0,00,W(101)) /$ $\!\! (0,00,100)^2 /$ $\!\! (0,00,001)^2 /$ $\!\! (0,00,000)^4$ \\

$A_1^2 A_2^2$ & $(W(2),0,00,00) /$  $\!\!(1,1,10,00) /$ $\!\!(1,1,01,00) /$ $\!\!(1,1,00,10) /$ $\!\!(1,1,00,01) /$ $\!\!(1,0,10,01) /$ $\!\!(1,0,10,00) /$ $\!\!(1,0,01,10) /$ $\!\!(1,0,01,00) /$ $\!\!(1,0,00,10) /$ $\!\!(1,0,00,01) /$ $\!\!(1,0,00,00)^2 /$ $\!\! (0,W(2),00,00) /$ $\!\!(0,1,10,10) /$ $\!\!(0,1,10,00) /$ $\!\!(0,1,01,01) /$ $\!\!(0,1,01,00) /$ $\!\!(0,1,00,10) /$ $\!\!(0,1,00,01) /$ $\!\!(0,1,00,00)^2 /$ $\!\! (0,0,W(11),00) /$ $\!\!(0,0,10,10) /$ $\!\!(0,0,10,01) /$ $\!\!(0,0,10,00) /$ $\!\!(0,0,01,10) /$ $\!\!(0,0,01,01) /$  $\!\!(0,0,01,00) /$ $\!\! (0,0,00,W(11)) /$ $\!\!(0,0,00,10) /$ $\!\!(0,0,00,01) /$ $\!\!(0,0,00,00)^2 $ \\

$A_5$ & $W(\lambda_1 + \lambda_5) /$ $\!\! \lambda_1^6 /$ $\!\! \lambda_2^3 /$ $\!\! \lambda_3^2 /$ $\!\! \lambda_4^3 /$ $\!\! \lambda_5^6 /$ $\!\! 0^{11}$  \\
 
$A_1 A_4$ & $(W(2),0000) /$ $\!\! (1,1000)^3 /$ $\!\! (1,0100) /$  $\!\! (1,0010) /$  $\!\! (1,0001)^3 /$ $\!\! (1,0000)^6 /$ $\!\! (0,W(1001)) /$ $\!\! (0,1000)^4 /$ $\!\! (0,0100)^3 /$ $\!\! (0,0010)^3 /$ $\!\! (0,0001)^4 /$  $\!\! (0,0000)^9$ \\

$A_2 A_3$ & $(W(11),000) /$ $\!\! (10,100)^2 /$ $\!\! (10,010) /$ $\!\! (10,001)^2 /$ $\!\! (10,000)^5 /$ $\!\! (01,100)^2 /$  $\!\! (01,010) /$ $\!\! (01,001)^2 /$  $\!\! (01,000)^5 /$ $\!\! (00,W(101)) /$ $\!\! (00,100)^4 /$ $\!\! (00,010)^4 /$ $\!\! (00,001)^4 /$ $\!\! (00,000)^7$ \\

$A_1^2 A_3$ & $(W(2),0,000) /$ $\!\! (1,1,100) /$   $\!\! (1,1,001) /$ $\!\! (1,1,000)^4 /$ $\!\! (1,0,100)^2 /$ $\!\! (1,0,010)^2 /$ $\!\! (1,0,001)^2 /$ $\!\! (1,0,000)^4 /$ $\!\! (0,W(2),000) /$ $\!\! (0,1,100)^2 /$ $\!\! (0,1,010)^2 /$ $\!\! (0,1,001)^2 /$ $\!\! (0,1,000)^4 /$  $\!\! (0,0,W(101)) /$ $\!\! (0,0,100)^4 /$ $\!\! (0,0,010)^2 /$ $\!\! (0,0,001)^4 /$ $\!\! (0,0,000)^7$ \\

$A_1 A_2^2$ & $(W(2),00,00) /$ $\!\!(1,10,01) /$ $\!\!(1,10,00)^3 /$ $\!\!(1,01,10) /$ $\!\!(1,01,00)^3 /$ $\!\!(1,00,10)^3 /$ $\!\!(1,00,01)^3 /$   $\!\!(1,00,00)^2 /$ $\!\! (0,W(11),00) /$  $\!\!(0,10,10)^3 /$  $\!\!(0,10,01) /$ $\!\!(0,10,00)^3 /$ $\!\!(0,01,10) /$ $\!\!(0,01,01)^3 /$ $\!\!(0,01,00)^3 /$ $\!\! (0,00,W(11)) /$ $\!\!(0,00,10)^3 /$ $\!\!(0,00,01)^3 /$ $\!\!(0,00,00)^9 $ \\

$A_1^3 A_2$ & $(W(2),0,0,00) /$ $\!\!(1,1,1,00)^2 /$  $\!\!(1,1,0,10) /$ $\!\!(1,1,0,01) /$ $\!\!(1,1,0,00)^2 /$ $\!\!(1,0,1,10) /$ $\!\!(1,0,1,01) /$ $\!\!(1,0,1,00)^2 /$ $\!\!(1,0,0,10)^2 /$ $\!\! (1,0,0,01)^2 /$ $\!\!(1,0,0,00)^4 /$ $\!\! (0,W(2),0,00) /$  $\!\!(0,1,1,10) /$ $\!\!(0,1,1,01) /$ $\!\!(0,1,1,00)^2 /$ $\!\!(0,1,0,10)^2 /$ $\!\!(0,1,0,01)^2 /$  $\!\!(0,1,0,00)^4 /$ $\!\! (0,0,W(2),00) /$  $\!\!(0,0,1,10)^2 /$ $\!\!(0,0,1,01)^2 /$ $\!\!(0,0,1,00)^4 /$ $\!\! (0,0,0,W(11)) /$ $\!\!(0,0,0,10)^3 /$ $\!\!(0,0,0,01)^3 /$ $\!\!(0,0,0,00)^5 $ \\

$A_4$ & $W(1001) /$ $\!\! 1000^{10} /$ $\!\! 0100^5 /$ $\!\! 0010^5 /$ $\!\! 0001^{10} /$ $\!\! 0000^{24}$  \\

$A_1 A_3$ & $(W(2),000) /$  $\!\! (1,100)^4 /$  $\!\! (1,010)^2 /$ $\!\! (1,001)^4 /$ $\!\! (1,000)^{12} /$ $\!\! (0,W(101)) /$ $\!\! (0,100)^8 /$ $\!\! (0,010)^6 /$ $\!\! (0,001)^8 /$ $\!\! (0,000)^{18}$ \\

$A_2^2$ & $(W(11),00) /$ $\!\!(10,10)^3 /$ $\!\!(10,01)^3 /$ $\!\!(10,00)^9 /$ $\!\!(01,10)^3 /$ $\!\!(01,01)^3 /$ $\!\!(01,00)^9 /$ $\!\! (00,W(11)) /$ $\!\!(00,10)^9 /$ $\!\!(00,01)^9 /$ $\!\!(00,00)^{16} $ \\

$A_1^2 A_2$ & $(W(2),0,00) /$  $\!\!(1,1,10) /$ $\!\!(1,1,01) /$ $\!\!(1,1,00)^6 /$ $\!\!(1,0,10)^4 /$ $\!\!(1,0,01)^4 /$ $\!\!(1,0,00)^8 /$ $\!\! (0,W(2),00) /$  $\!\!(0,1,10)^4 /$ $\!\!(0,1,01)^4 /$ $\!\!(0,1,00)^8 /$ $\!\! (0,0,W(11)) /$ $\!\!(0,0,10)^7 /$ $\!\!(0,0,01)^7 /$ $\!\!(0,0,00)^{16} $ \\

$A_1^4$ & $(W(2),0,0,0) /$    $\!\!(1,1,1,0)^2 /$  $\!\!(1,1,0,1)^2 /$ $\!\!(1,1,0,0)^4 /$ $\!\!(1,0,1,1)^2 /$ $\!\!(1,0,1,0)^4 /$ $\!\!(1,0,0,1)^4 /$ $\!\!(1,0,0,0)^8 /$ $\!\! (0,W(2),0,0) /$ $\!\!(0,1,1,1)^2 /$ $\!\!(0,1,1,0)^4 /$ $\!\!(0,1,0,1)^4 /$ $\!\!(0,1,0,0)^8 /$ $\!\! (0,0,W(2),0) /$ $\!\!(0,0,1,1)^4 /$ $\!\!(0,0,1,0)^8 /$ $\!\! (0,0,0,W(2)) /$  $\!\!(0,0,0,1)^8 /$ $\!\!(0,0,0,0)^{12} $ \\

$A_3$ & $W(101) /$ $\!\! 100^{16} /$ $\!\! 010^{10} /$ $\!\! 001^{16} /$  $\!\! 000^{45}$ \\

$A_1 A_2$ & $(W(2),00) /$  $\!\! (1,10)^{6} /$ $\!\! (1,01)^{6} /$ $\!\! (1,00)^{20} /$  $\!\! (0,W(11)) /$ $\!\! (0,10)^{15} /$ $\!\! (0,01)^{15} /$ $\!\! (0,00)^{35}$  \\

$A_1^3$ & $(W(2),0,0) /$  $\!\! (1,1,1)^{2} /$ $\!\! (1,1,0)^{8} /$ $\!\! (1,0,1)^{8} /$ $\!\! (1,0,0)^{16} /$ $\!\! (0,W(2),0) /$  $\!\! (0,1,1)^{8} /$   $\!\! (0,1,0)^{16} /$ $\!\! (0,0,W(2)) /$  $\!\! (0,0,1)^{16} /$ $\!\! (0,0,0)^{31}$  \\

$A_2$ & $W(11) /$ $\!\! 10^{27} /$ $\!\! 01^{27} /$ $\!\! 00^{78}$ \\

$A_1^2$ & $(W(2),0) /$ $\!\! (1,1)^{12} /$ $\!\! (1,0)^{32} /$ $\!\! (0,W(2)) /$ $\!\! (0,1)^{32} /$  $\!\! (0,0)^{66}$ \\

$A_1$ &  $W(2) / 1^{56} / 0^{133}$ \\

\hline

\end{longtable}
\end{appendices}

\bibliographystyle{amsplain}

\bibliography{../biblio}

\end{document}